\documentclass[12pt]{amsart}
\usepackage{amssymb,amsmath,tabularx,mathrsfs,mathbbol,yfonts,upgreek}
\usepackage{amsthm,verbatim,comment}
\usepackage[bookmarks=true]{hyperref}
\usepackage{pstricks,pst-node,pst-plot}
\usepackage{geometry}
\usepackage{stmaryrd}
\usepackage{paralist}
\usepackage[all]{xy}
\usepackage{array}
\usepackage{url}
\usepackage{bbm}
\usepackage{dsfont}
\usepackage{longtable}
\numberwithin{equation}{section}

\DeclareSymbolFontAlphabet{\mathbb}{AMSb}
\DeclareSymbolFontAlphabet{\mathbbl}{bbold}

\newtheorem{thm}{Theorem}[section]

\newtheorem{lem}[thm]{Lemma}

\newtheorem{cor}[thm]{Corollary}

\theoremstyle{definition}

\newtheorem{eg}[thm]{Example}

\newtheorem*{rem*}{Remarks}

\newtheoremstyle{case}{}{}{}{}{}{:}{ }{}
\theoremstyle{case}

\newcommand{\F}{\mathbb{F}}

\title[On Terwilliger $\F$-algebras of factorial association schemes II]{On Terwilliger $\F$-algebras of factorial association schemes II}
\begin{document}
\author{Yu Jiang}
\address[Y. Jiang]{School of Mathematical Sciences, Anhui University (Qingyuan Campus), No. 111, Jiulong Road, Hefei, 230601, China}
\email[Y. Jiang]{jiangyu@ahu.edu.cn}
\begin{abstract}
The Terwilliger algebras of association schemes over an arbitrary field $\F$ were called the Terwilliger $\F$-algebras of association schemes in \cite{J2}. In \cite{HJ}, He and Jiang studied the Terwilliger $\F$-algebras of factorial association schemes. In this paper, we continue studying the Terwilliger $\F$-algebras of factorial association schemes. We get all block idempotents of the Terwilliger $\F$-algebras of factorial association schemes. We get the $\F$-dimensions, the centers, the Jacobson radicals of the block algebras of the Terwilliger $\F$-algebras of factorial association schemes.
\end{abstract}
\maketitle
\noindent
\textbf{Keywords.} {Terwilliger $\F$-algebra; Block idempotent; $\F$-Dimension; Center; Radical}\\
\textbf{Mathematics Subject Classification 2020.} 05E30 (primary), 05E16 (secondary)
\vspace{-1.5em}
\section{Introduction}
Association schemes on nonempty finite sets, briefly called schemes, are important objects in algebraic combinatorics. Many different tools are used to study schemes.

The subconstituent algebras of commutative schemes, introduced by Terwilliger in \cite{T1}, are new tools for studying schemes. They are finite-dimensional semisimple associative $\mathbb{C}$-algebras and are now known as the Terwilliger algebras of commutative schemes. In \cite{Han}, Hanaki showed that the Terwilliger algebras can be defined for an arbitrary scheme and an arbitrary commutative unital ring. In \cite{J2}, the Terwilliger algebras of schemes over a field $\F$ were called the Terwilliger $\F$-algebras of schemes. So the Terwilliger algebras of commutative schemes are their Terwilliger $\mathbb{C}$-algebras.

The Terwilliger $\mathbb{C}$-algebras of many commutative schemes have been investigated (for example, see \cite{BST, CD, LMP, LM, LMW, M, T1, T2, T3, TY}). In general, the study of the Terwilliger $\F$-algebras of schemes is not sufficient (see \cite{Her}). In \cite{BST}, the authors suggested that studying the Terwilliger $\mathbb{C}$-algebras of factorial schemes is interesting. In \cite{HJ}, He and Jiang studied the Terwilliger $\F$-algebras of factorial schemes. In this paper, we continue studying the Terwilliger $\F$-algebras of factorial schemes. Our main results are as follows: We get all block idempotents of the Terwilliger $\F$-algebras of factorial schemes (see Theorem \ref{T;Idempotent}). We get the $\F$-dimensions, the centers, the Jacobson radicals of the block algebras of the Terwilliger $\F$-algebras of factorial schemes (see Theorems \ref{T;Dimension}, \ref{T;Center}, \ref{T;Jacobson}, respectively). These results may contribute to studying the algebraic structures of the Terwilliger $\F$-algebras of symmetric schemes.

The organization of this paper is as follows: In Section 2, we list the basic notation and the preliminaries of this paper. In Section 3, we complete the proof of Theorem \ref{T;Idempotent}. We present the proofs of Theorems \ref{T;Dimension}, \ref{T;Center}, \ref{T;Jacobson} in Sections 4, 5, 6, respectively.
\section{Basic notation and preliminaries}
In this section, we list the basic notation and the preliminaries of this paper. For a background on the theory of association schemes, the reader may refer to \cite{B, Z}.
\subsection{Conventions}
Let $\mathbb{N}_0$ be the set of nonnegative integers. For any $g, h\in\mathbb{N}_0$, let $[g, h]=\{a: a\in\mathbb{N}_0, g\leq a\leq h\}$ and $[g,\infty)=\{a: a\in\mathbb{N}_0, g\leq a\}$. For any $g\!\in\![1,\infty)$ and sets $\mathbb{U}_1, \mathbb{U}_2,\ldots, \mathbb{U}_g$, let $\prod_{h=1}^g\mathbb{U}_h$ be the cartesian product $\mathbb{U}_1\times \mathbb{U}_2\times\cdots\times\mathbb{U}_g$ and $\mathbf{i}_j$ be the $j$th-entry of an element $\mathbf{i}$ in $\prod_{h=1}^g\mathbb{U}_h$ for any $j\in[1,g]$. An association scheme on a nonempty finite set is briefly called a scheme. Fix a field $\F$ of characteristic $p$. Let $\delta_{g, h}$ be the Kronecker delta of arbitrary symbols $g$ and $h$ whose values are in $\F$. The addition, the multiplication, and the scalar multiplication displayed in this paper are the usual $\F$-matrix operations. Let $\F_p$ be the prime subfield of $\F$. Let $\mathbb{Z}$ be the ring of integers. For any $g\in\mathbb{Z}$, let $\overline{g}$ be the image of $g$ under the unique unital ring homomorphism from $\mathbb{Z}$ to $\F_p$. For any subset $\mathbb{U}$ of an $\F$-linear space $\mathbb{V}$, let $\langle \mathbb{U}\rangle_\mathbb{V}$ be the $\F$-linear subspace of $\mathbb{V}$ spanned by $\mathbb{U}$. All $\F$-algebras displayed in this paper shall be finite-dimensional associative unital $\F$-algebras. All modules displayed in this paper shall be finite-dimensional left modules of a given $\F$-algebra.
\subsection{Schemes}
Let $\mathbb{X}$, $\mathbb{E}$ be nonempty finite sets. Let $\{R_a: a\!\in\!\mathbb{E}\}$ be a partition of $\mathbb{X}\times\mathbb{X}$. Call $\mathfrak{S}\!=\!(\mathbb{X}, \{R_a: a\!\in\!\mathbb{E}\})$ a {\em scheme} if the following conditions hold together:
\begin{enumerate}[(S1)]
\item There is a unique $0_{\mathfrak{S}}\in\mathbb{E}$ such that $R_{0_{\mathfrak{S}}}$ equals the diagonal set $\{(a,a): a\!\in\!\mathbb{X}\}$.
\item For any $g\!\in\!\mathbb{E}$, there is a unique $g^*\!\in\!\mathbb{E}$ such that $R_{g^*}$ equals $\{(a, b): (b,a)\in R_g\}$.
\item For any $g, h, i\in\mathbb{E}$, $(x,y), (\widetilde{x},\widetilde{y})\!\in\! R_h$, there is a constant $p_{g,i}^h\in[0,\infty)$ such that
$$p_{g,i}^h=|\{a: (x,a)\in R_g, (a,y)\in R_i\}|=|\{a: (\widetilde{x},a)\in R_g, (a,\widetilde{y})\in R_i\}|.$$
\end{enumerate}

From now on, let $\mathfrak{S}=(\mathbb{X}, \{R_a: a\in\mathbb{E}\})$ be a fixed scheme. Call $\mathfrak{S}$ a {\em symmetric scheme} if $g^*=g$ for any $g\in\mathbb{E}$. Call $\mathfrak{S}$ a {\em commutative scheme}
if $p_{g,i}^h=p_{i,g}^h$ for any $g, h, i\in\mathbb{E}$. According to (S2) and (S3), notice that the symmetric schemes are commutative schemes. Let $x R_g=\{a: (x,a)\in R_g\}$ and $k_g=p_{g,g^*}^{0_{\mathfrak{S}}}$ for any $x\in\mathbb{X}$ and $g\in\mathbb{E}$. For any $x, y\in\mathbb{X}$ and $g\in\mathbb{E}$, notice that $k_g=|xR_g|=|yR_g|\in[1,\infty)$ by (S2) and (S3). For any $g\!\in\![0,\infty)$, call $\mathfrak{S}$ a {\em $g'$-valenced scheme} if $g\!\nmid\! k_h$ for any $h\in\mathbb{E}$.

Let $g\in[1,\infty)$. Let $\mathbb{X}_h$ and $\mathbb{E}_h$ be nonempty finite sets for any $h\in[1, g]$. For any $h\in[1, g]$,
let $\{R_{a, h}: a\in\mathbb{E}_h\}$ be a partition of $\mathbb{X}_h\times\mathbb{X}_h$. For any $h\in[1, g]$, let $\mathfrak{S}_h=(\mathbb{X}_h, \{R_{a, h}: a\in\mathbb{E}_h\})$ be a scheme. For any $\mathbf{x}, \mathbf{y}\in\prod_{h=1}^g\mathbb{X}_h$ and $\mathbf{i}\in\prod_{h=1}^g\mathbb{E}_h$, let $\mathbf{x}=_\mathbf{i}\mathbf{y}$ if $(\mathbf{x}_h, \mathbf{y}_h)\!\in\!R_{\mathbf{i}_h, h}$ for any $h\!\in\![1, g]$. Let $R_\mathbf{i}\!=\!\{(\mathbf{a}, \mathbf{b})\!:\! \mathbf{a}, \mathbf{b}\!\in\!\prod_{h=1}^g\mathbb{X}_h, \mathbf{a}\!=_\mathbf{i}\!\mathbf{b}\}$ for any $\mathbf{i}\in\prod_{h=1}^g\mathbb{E}_h$. Recall that $(\prod_{h=1}^g\mathbb{X}_h, \{R_\mathbf{a}: \mathbf{a}\in\prod_{h=1}^g\mathbb{E}_h\})$ is also a defined scheme (see \cite{B, Z}). Call $\mathfrak{S}$ the {\em direct product} of $\mathfrak{S}_1, \mathfrak{S}_2, \ldots, \mathfrak{S}_g$ if the equations $\mathbb{X}=\prod_{h=1}^g\mathbb{X}_h$, $\mathbb{E}=\prod_{h=1}^g\mathbb{E}_h$, and $\{R_a: a\in\mathbb{E}\}=\{R_\mathbf{a}: \mathbf{a}\in\prod_{h=1}^g\mathbb{E}_h\}$ hold together. If $\mathfrak{S}$ is the direct product of $\mathfrak{S}_1, \mathfrak{S}_2, \ldots, \mathfrak{S}_g$, notice that   $0_\mathfrak{S}=(0_{\mathfrak{S}_1}, 0_{\mathfrak{S}_2},\ldots, 0_{\mathfrak{S}_g})$. If $\mathfrak{S}$ is the direct product of $\mathfrak{S}_1, \mathfrak{S}_2, \ldots, \mathfrak{S}_g$, notice that $\mathfrak{S}$ is a symmetric scheme if and only if $\mathfrak{S}_1, \mathfrak{S}_2,\ldots, \mathfrak{S}_g$ are symmetric schemes. The following lemma is required.
\begin{lem}\label{L;Lemma2.1}\cite[Theorem 2.6.1 (iv)]{Z}
Assume that $g\in[1,\infty)$. Assume that $\mathfrak{S}$ is the direct product of the schemes $\mathfrak{S}_1, \mathfrak{S}_2, \ldots, \mathfrak{S}_g$ and $\mathbf{h}, \mathbf{i}, \mathbf{j}\in\mathbb{E}$. Then $p_{\mathbf{h},\mathbf{j}}^\mathbf{i}=\prod_{k=1}^g p_{\mathbf{h}_k,\mathbf{j}_k}^{\mathbf{i}_k}$.
\end{lem}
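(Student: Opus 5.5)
We prove Lemma \ref{L;Lemma2.1} by counting directly, using the product structure of $\mathbb{X}$ and of the relations $R_\mathbf{a}$. The count defining $p_{\mathbf{h},\mathbf{j}}^\mathbf{i}$ splits into independent counts in each coordinate, and the number of choices for an element of a cartesian product is the product of the numbers of choices for its entries.

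Fix $(\mathbf{x},\mathbf{y})\in R_\mathbf{i}$. By definition, this means $(\mathbf{x}_k,\mathbf{y}_k)\in R_{\mathbf{i}_k,k}$ for every $k\in[1,g]$. By (S3) for $\mathfrak{S}$, the number $p_{\mathbf{h},\mathbf{j}}^\mathbf{i}$ is the cardinality of
$$\mathbb{A}=\{\mathbf{a}\in\prod_{k=1}^g\mathbb{X}_k: (\mathbf{x},\mathbf{a})\in R_\mathbf{h}, (\mathbf{a},\mathbf{y})\in R_\mathbf{j}\}.$$
Unwinding the definition of $=_\mathbf{h}$ and $=_\mathbf{j}$, an element $\mathbf{a}$ lies in $\mathbb{A}$ if and only if $(\mathbf{x}_k,\mathbf{a}_k)\in R_{\mathbf{h}_k,k}$ and $(\mathbf{a}_k,\mathbf{y}_k)\in R_{\mathbf{j}_k,k}$ for every $k\in[1,g]$.

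For each $k$, set
$$\mathbb{A}_k=\{a\in\mathbb{X}_k:(\mathbf{x}_k,a)\in R_{\mathbf{h}_k,k}, (a,\mathbf{y}_k)\in R_{\mathbf{j}_k,k}\}.$$
The previous paragraph shows that $\mathbb{A}=\prod_{k=1}^g\mathbb{A}_k$ as sets, with $\mathbf{a}\mapsto(\mathbf{a}_1,\ldots,\mathbf{a}_g)$ the identity. Since $(\mathbf{x}_k,\mathbf{y}_k)\in R_{\mathbf{i}_k,k}$, (S3) applied to the scheme $\mathfrak{S}_k$ gives $|\mathbb{A}_k|=p_{\mathbf{h}_k,\mathbf{j}_k}^{\mathbf{i}_k}$, and this value does not depend on the choice of $(\mathbf{x},\mathbf{y})$. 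Therefore
$$p_{\mathbf{h},\mathbf{j}}^\mathbf{i}=|\mathbb{A}|=\prod_{k=1}^g|\mathbb{A}_k|=\prod_{k=1}^g p_{\mathbf{h}_k,\mathbf{j}_k}^{\mathbf{i}_k}.$$

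There is no real obstacle in this argument. The only points needing care are that the relations of $\mathfrak{S}$ are exactly the $R_\mathbf{a}$ defined coordinatewise, and that membership in $\mathbb{A}$ imposes independent conditions on the separate coordinates, which is what justifies the product decomposition of $\mathbb{A}$. Alternatively, one could induct on $g$ with base case $g=1$, but the direct count above already handles all $g$ at once.
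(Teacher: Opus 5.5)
Your proof is correct: the set $\mathbb{A}$ of intermediate points factors as $\prod_{k=1}^g\mathbb{A}_k$, and (S3) in each $\mathfrak{S}_k$ gives $|\mathbb{A}_k|=p_{\mathbf{h}_k,\mathbf{j}_k}^{\mathbf{i}_k}$. The paper gives no argument of its own and only cites \cite[Theorem 2.6.1 (iv)]{Z}; your coordinatewise count is the standard proof of that fact, and it handles all $g$ at once instead of doing two factors and inducting.
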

Let $\mathbb{U}$ be a set and $|\mathbb{U}|\in[2,\infty)$. Let $R(\mathbb{U})_0$ and $R(\mathbb{U})_1$ be $\{(a, a): a\in\mathbb{U}\}$ and $\{(a, b): a, b\in\mathbb{U}, a\neq b\}$, respectively. This implies that $\{R(\mathbb{U})_0, R(\mathbb{U})_1\}$ is a partition of $\mathbb{U}\times\mathbb{U}$. Moreover, recall that $(\mathbb{U}, \{R(\mathbb{U})_0, R(\mathbb{U})_1\})$ is a defined scheme (see \cite[Example 1.2]{B}). Call $\mathfrak{S}$ the {\em trivial scheme} induced from $\mathbb{U}$ if the equations $\mathbb{X}=\mathbb{U}$, $\mathbb{E}=[0,1]$, and $\{R_a: a\in\mathbb{E}\}=\{R(\mathbb{U})_0, R(\mathbb{U})_1\}$ hold together. If $\mathfrak{S}$ is the trivial scheme induced from $\mathbb{U}$, notice that $0_\mathfrak{S}\!=\!0$. The following lemma is required.
\begin{lem}\label{L;Lemma2.2}\cite[Example 1.2]{B}
Assume that $\mathbb{U}$ is a set, $|\mathbb{U}|\!\in\![2,\infty)$, $\mathfrak{S}$ is the trivial scheme induced from $\mathbb{U}$,  $\mathbb{V}\!=\!\{(0, 0, 0), (0,1, 1), (1,0, 1), (1,1, 0), (1,1,1)\}$. Then, for any $g, h, i\in\mathbb{E}$, $p_{g, i}^h\neq 0$ only if $(g, h, i)\in\mathbb{V}$. Moreover, $p_{0,0}^0=p_{0,1}^1=p_{1,0}^1=1$, $p_{1,1}^0\!=\!|\mathbb{U}|\!-\!1$, $p_{1,1}^1\!=\!|\mathbb{U}|-2$. In particular, $\{(a, b, c)\!: a, b, c\!\in\![0,1], \ p_{a, c}^b\!\neq\! 0\}$ is equal to
\[\begin{cases}
\mathbb{V}\setminus\{(1,1,1)\}, &\text{if $|\mathbb{U}|=2$},\\
\mathbb{V}, &\text{if $|\mathbb{U}|\neq2$}.
\end{cases}\]
\end{lem}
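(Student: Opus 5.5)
The plan is to compute each intersection number $p_{g,i}^h$ for $g,h,i\in[0,1]$ directly from the definition in (S3). Here $R_0=R(\mathbb{U})_0$ is equality and $R_1=R(\mathbb{U})_1$ is inequality. I fix $(x,y)\in R_h$ and count the set
$$\{a\in\mathbb{U}: (x,a)\in R_g,\ (a,y)\in R_i\}.$$
The first step is to note that the count only depends on whether $x=y$ (the case $h=0$) or $x\neq y$ (the case $h=1$). This already confirms that $(\mathbb{U},\{R(\mathbb{U})_0,R(\mathbb{U})_1\})$ satisfies (S3), and I may take that as given anyway.

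The second step is the case analysis over the eight triples $(g,h,i)$.
\begin{enumerate}[(i)]
\item If $g=0$, then the only possible $a$ is $a=x$. So $p_{0,i}^h=1$ exactly when $(x,y)\in R_i$, that is, when $i=h$, and $p_{0,i}^h=0$ otherwise. This gives $p_{0,0}^0=p_{0,1}^1=1$ and shows that $(0,0,1)$ and $(0,1,0)$ contribute nothing.
\item If $i=0$, the symmetric argument with $a=y$ gives $p_{1,0}^1=1$ and $p_{1,0}^0=0$.
\item If $g=i=1$ and $h=0$ (so $x=y$), the admissible $a$ are those with $a\neq x$. There are $|\mathbb{U}|-1$ of them, so $p_{1,1}^0=|\mathbb{U}|-1$.
\item If $g=i=1$ and $h=1$ (so $x\neq y$), the admissible $a$ are those with $a\notin\{x,y\}$. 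There are $|\mathbb{U}|-2$ of them, so $p_{1,1}^1=|\mathbb{U}|-2$.
\end{enumerate}
Collecting these cases shows that every triple outside $\mathbb{V}$ gives $p_{g,i}^h=0$, and it yields the listed values.

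For the final claim I check which of the five values attached to $\mathbb{V}$ can vanish. Three of them equal $1$. The value $|\mathbb{U}|-1$ is at least $1$, since $|\mathbb{U}|\geq 2$. The only value that can be zero is $|\mathbb{U}|-2$, and it vanishes exactly when $|\mathbb{U}|=2$. Hence the support set is $\mathbb{V}\setminus\{(1,1,1)\}$ when $|\mathbb{U}|=2$ and $\mathbb{V}$ otherwise.

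There is no real obstacle here. The only point needing care is matching the index convention: the paper writes $p_{g,i}^h$, while the triples in $\mathbb{V}$ are listed as $(g,h,i)$. This order must be kept consistent when reading off the cases $(0,1,1)$ and $(1,0,1)$.
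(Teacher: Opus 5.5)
Your proof is correct: the count from (S3) covers all eight triples, keeps the $(g,h,i)\leftrightarrow p_{g,i}^h$ correspondence straight, and correctly identifies $p_{1,1}^1=|\mathbb{U}|-2$ as the only value that can vanish. The paper gives no argument of its own here and simply cites \cite[Example 1.2]{B}; your direct case count is the routine verification behind that citation.
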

Fix $n\in[1,\infty)$ and a set sequence $\mathbb{U}_1, \mathbb{U}_2,\ldots, \mathbb{U}_n$ whose cardinalities are at least two.
For any $\mathbb{V}\subseteq[1,n]$, let $\mathbb{V}^\circ=\{a: a\in\mathbb{V}, |\mathbb{U}_a|\neq 2\}$. For any $\mathbb{V}\subseteq[1,n]$, let $\mathbb{V}^+=\{a: a\in\mathbb{V}, p\mid |\mathbb{U}_a|-1\}$ and $\mathbb{V}^-=\{a: a\in\mathbb{V}, |\mathbb{U}_a|\neq 2, p\nmid |\mathbb{U}_a|-1\}$. In particular, let $n_+=|[1,n]^+|$ and $n_-=|[1,n]^-|$. So $n_++n_-=|[1,n]^\circ|$.
For any sets $\mathbb{V}$ and $\mathbb{W}$, let
$\mathbb{V}\triangle\mathbb{W}\!=\!(\mathbb{V}\setminus\mathbb{W})\!\cup\!(\mathbb{W}\setminus\mathbb{V})$.
Let $[0,1]^n$ be the cartesian product of $n$ copies of $[0,1]$. So each element in $[0,1]^n$ is a $n$-tuple whose entries are in $[0,1]$. For any $\mathbf{g}\in[0,1]^n$, let $\mathbbm{g}=\{a: a\in[1, n],\mathbf{g}_a=1\}$. For any $\mathbf{g}, \mathbf{h}\in[0,1]^n$, let $\mathbf{g}\preceq\mathbf{h}$ if $\mathbbm{g}\subseteq\mathbbm{h}$. Let $\mathbf{0}$ be the $n$-tuple with all-zero entries. Let $\mathbf{1}$ be the $n$-tuple with all-one entries. So $\mathbf{0}\preceq\!\mathbf{g}\!\preceq\mathbf{1}$ for any $\mathbf{g}\!\in\![0,1]^n$. The proof of the following lemma is trivial.
\begin{lem}\label{L;Lemma2.3}
Assume that $\mathbf{g}, \mathbf{h}\in[0,1]^n$. Then $\mathbf{g}=\mathbf{h}$ if and only if $\mathbbm{g}\!=\!\mathbbm{h}$. In particular, $\preceq$ is a partial order on $[0,1]^n$ with the minimal element $\mathbf{0}$ and the maximal element $\mathbf{1}$. Moreover, for any $\mathbb{U}\subseteq [1,n]$, there is a unique $\mathbf{i}\in[0,1]^n$ such that $\mathbbm{i}=\mathbb{U}$.
\end{lem}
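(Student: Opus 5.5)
The plan is to treat the map $\mathbf{g}\mapsto\mathbbm{g}$ from $[0,1]^n$ to the power set of $[1,n]$ as the characteristic-function correspondence and show it is a bijection. Every other claim in Lemma \ref{L;Lemma2.3} then follows by transporting structure along this bijection.

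First, I will prove the equivalence. If $\mathbf{g}=\mathbf{h}$, then trivially $\mathbbm{g}=\mathbbm{h}$. Conversely, suppose $\mathbbm{g}=\mathbbm{h}$ and take $a\in[1,n]$. Since every entry lies in $[0,1]$, we have $\mathbf{g}_a=1$ if $a\in\mathbbm{g}$ and $\mathbf{g}_a=0$ otherwise, and the same holds for $\mathbf{h}$. Hence $\mathbf{g}_a=\mathbf{h}_a$ for every $a$, so $\mathbf{g}=\mathbf{h}$. This gives injectivity of the map.

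Next, for surjectivity, given $\mathbb{U}\subseteq[1,n]$ I will define $\mathbf{i}\in[0,1]^n$ by $\mathbf{i}_a=1$ if $a\in\mathbb{U}$ and $\mathbf{i}_a=0$ otherwise. Then $\mathbbm{i}=\mathbb{U}$ by construction, and uniqueness of such $\mathbf{i}$ follows from the injectivity just proved.

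Finally, for the order claims, $\preceq$ is by definition the pullback of inclusion along an injective map. Reflexivity and transitivity are inherited directly from $\subseteq$. Antisymmetry follows because $\mathbbm{g}\subseteq\mathbbm{h}\subseteq\mathbbm{g}$ forces $\mathbbm{g}=\mathbbm{h}$, and hence $\mathbf{g}=\mathbf{h}$ by the first part. For the extremal elements, $\mathbbm{0}=\varnothing$ and $\mathbbm{1}=[1,n]$, so $\mathbf{0}\preceq\mathbf{g}\preceq\mathbf{1}$ for all $\mathbf{g}$. There is no real obstacle in any of these steps; the only point requiring care is using the first part to obtain antisymmetry.
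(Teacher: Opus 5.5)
Your proposal is correct and takes the same approach as the paper. The paper states that the proof of Lemma \ref{L;Lemma2.3} is trivial and omits it; your characteristic-function bijection between $[0,1]^n$ and the subsets of $[1,n]$, with antisymmetry of $\preceq$ obtained from injectivity, supplies exactly those omitted details.
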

Let $\mathfrak{Tri}(\mathbb{U}_1), \mathfrak{Tri}(\mathbb{U}_2), \ldots, \mathfrak{Tri}(\mathbb{U}_n)$ be the corresponding trivial schemes induced from $\mathbb{U}_1, \mathbb{U}_2, \ldots, \mathbb{U}_n$. Call $\mathfrak{S}$ the {\em factorial scheme} of $\mathfrak{Tri}(\mathbb{U}_1), \mathfrak{Tri}(\mathbb{U}_2), \ldots, \mathfrak{Tri}(\mathbb{U}_n)$ if $\mathfrak{S}$ is the direct product of $\mathfrak{Tri}(\mathbb{U}_1), \mathfrak{Tri}(\mathbb{U}_2), \ldots, \mathfrak{Tri}(\mathbb{U}_n)$ (see \cite[Pages 344, 345]{B}). If $\mathfrak{S}$ is the factorial scheme of $\mathfrak{Tri}(\mathbb{U}_1), \mathfrak{Tri}(\mathbb{U}_2), \ldots, \mathfrak{Tri}(\mathbb{U}_n)$, notice that $\mathbb{X}=\prod_{h=1}^n\mathbb{U}_h$, $\mathbb{E}=[0,1]^n$, and $0_\mathfrak{S}=\mathbf{0}$. If $\mathfrak{S}$ is the factorial scheme of $\mathfrak{Tri}(\mathbb{U}_1), \mathfrak{Tri}(\mathbb{U}_2), \ldots, \mathfrak{Tri}(\mathbb{U}_n)$, notice that $\mathfrak{S}$ is a symmetric scheme as $\mathfrak{Tri}(\mathbb{U}_1),\mathfrak{Tri}(\mathbb{U}_2),\ldots, \mathfrak{Tri}(\mathbb{U}_n)$ are symmetric schemes. In particular, if $\mathfrak{S}$ is the factorial scheme of $\mathfrak{Tri}(\mathbb{U}_1), \mathfrak{Tri}(\mathbb{U}_2), \ldots, \mathfrak{Tri}(\mathbb{U}_n)$, notice that $\mathfrak{S}$ is also a commutative scheme. The following two lemmas are required.
\begin{lem}\label{L;Lemma2.4}
Assume that $\mathfrak{S}$ is the factorial scheme of $\mathfrak{Tri}(\mathbb{U}_1), \mathfrak{Tri}(\mathbb{U}_2), \ldots,\! \mathfrak{Tri}(\mathbb{U}_n)$. Assume that $\mathbf{g}, \mathbf{h}, \mathbf{i}\in\mathbb{E}$. Then $p_{\mathbf{g}, \mathbf{i}}^\mathbf{h}\neq 0$ if and only if $\mathbbm{g}\triangle\mathbbm{i}\subseteq\mathbbm{h}\subseteq(\mathbbm{g}\triangle\mathbbm{i})\cup(\mathbbm{g}\cap\mathbbm{i})^\circ$.
\end{lem}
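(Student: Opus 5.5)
By Lemma \ref{L;Lemma2.1}, applied to the factorial scheme as the direct product of $\mathfrak{Tri}(\mathbb{U}_1),\ldots,\mathfrak{Tri}(\mathbb{U}_n)$, we have $p_{\mathbf{g},\mathbf{i}}^{\mathbf{h}}=\prod_{k=1}^n p_{\mathbf{g}_k,\mathbf{i}_k}^{\mathbf{h}_k}$. Each factor is an intersection number of $\mathfrak{Tri}(\mathbb{U}_k)$. Since all intersection numbers lie in $[0,\infty)$, the product is nonzero if and only if every factor is nonzero. So the plan is to characterize, coordinate by coordinate, when $p_{\mathbf{g}_k,\mathbf{i}_k}^{\mathbf{h}_k}\neq 0$, and then translate the coordinatewise conditions into the set-theoretic condition of the statement.

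For the coordinatewise step I will use the final assertion of Lemma \ref{L;Lemma2.2}. It says that $p_{a,c}^b\neq0$ for $\mathfrak{Tri}(\mathbb{U}_k)$ if and only if $(a,b,c)\in\mathbb{V}$ when $|\mathbb{U}_k|\neq 2$, and if and only if $(a,b,c)\in\mathbb{V}\setminus\{(1,1,1)\}$ when $|\mathbb{U}_k|=2$. Reading off $\mathbb{V}=\{(0,0,0),(0,1,1),(1,0,1),(1,1,0),(1,1,1)\}$ with $(a,b,c)=(\mathbf{g}_k,\mathbf{h}_k,\mathbf{i}_k)$ gives three cases:
\begin{enumerate}[(i)]
\item If $\mathbf{g}_k\neq\mathbf{i}_k$, i.e.\ $k\in\mathbbm{g}\triangle\mathbbm{i}$, the factor is nonzero exactly when $\mathbf{h}_k=1$.
\item If $\mathbf{g}_k=\mathbf{i}_k=0$, i.e.\ $k\notin\mathbbm{g}\cup\mathbbm{i}$, the factor is nonzero exactly when $\mathbf{h}_k=0$.
\item If $\mathbf{g}_k=\mathbf{i}_k=1$, i.e.\ $k\in\mathbbm{g}\cap\mathbbm{i}$, the factor is nonzero when $\mathbf{h}_k=0$ (via $(1,0,1)$). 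It is nonzero when $\mathbf{h}_k=1$ if and only if $|\mathbb{U}_k|\neq2$, i.e.\ $k\in(\mathbbm{g}\cap\mathbbm{i})^\circ$.
\end{enumerate}

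The final step assembles these conditions over all $k\in[1,n]$. Condition (i) for all $k$ says exactly $\mathbbm{g}\triangle\mathbbm{i}\subseteq\mathbbm{h}$. Conditions (ii) and (iii) together say that any $k\in\mathbbm{h}$ lying outside $\mathbbm{g}\triangle\mathbbm{i}$ must lie in $\mathbbm{g}\cap\mathbbm{i}$ with $|\mathbb{U}_k|\neq2$. In other words, $\mathbbm{h}\subseteq(\mathbbm{g}\triangle\mathbbm{i})\cup(\mathbbm{g}\cap\mathbbm{i})^\circ$. Here I use that $[1,n]$ is the disjoint union of $\mathbbm{g}\triangle\mathbbm{i}$, $\mathbbm{g}\cap\mathbbm{i}$, and the complement of $\mathbbm{g}\cup\mathbbm{i}$. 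Conversely, the two inclusions recover all three coordinatewise conditions, which gives both directions of the equivalence.

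There is no real obstacle. The only point needing care is the bookkeeping in (iii): both values of $\mathbf{h}_k$ are allowed when $|\mathbb{U}_k|\neq 2$, so the condition there is an upper bound on $\mathbbm{h}$ only, not an equality.
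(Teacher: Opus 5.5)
Your proposal is correct and follows the same route as the paper: the paper's proof simply cites Lemmas \ref{L;Lemma2.1}, \ref{L;Lemma2.2}, and \ref{L;Lemma2.3}, i.e.\ the product formula, the coordinatewise nonvanishing pattern for trivial schemes, and the translation between $n$-tuples and subsets. Your three-case analysis and the reassembly into the two inclusions are exactly the details that this citation leaves implicit.
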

\begin{proof}
The desired lemma follows from the combination of Lemmas \ref{L;Lemma2.1}, \ref{L;Lemma2.2}, \ref{L;Lemma2.3}.
\end{proof}
\begin{lem}\label{L;Lemma2.5}
Assume that $\mathfrak{S}$ is the factorial scheme of $\mathfrak{Tri}(\mathbb{U}_1), \mathfrak{Tri}(\mathbb{U}_2), \ldots, \mathfrak{Tri}(\mathbb{U}_n)$ and $\mathbf{g}\in\mathbb{E}$. Then $k_\mathbf{g}=\prod_{h\in\mathbbm{g}}(|\mathbb{U}_h|-1)$, where the product over $\varnothing$ is equal to one.
\end{lem}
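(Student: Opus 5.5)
The plan is to compute $k_\mathbf{g}$ coordinatewise. The valency $k_\mathbf{g}$ is defined as $p_{\mathbf{g},\mathbf{g}^*}^{\mathbf{0}}$, and $\mathfrak{S}$ is the direct product of the trivial schemes $\mathfrak{Tri}(\mathbb{U}_1),\ldots,\mathfrak{Tri}(\mathbb{U}_n)$, so Lemma \ref{L;Lemma2.1} should reduce everything to valencies in the trivial schemes, which Lemma \ref{L;Lemma2.2} supplies directly.

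\emph{Step 1: simplify $k_\mathbf{g}$.} Since $\mathfrak{S}$ is symmetric, $\mathbf{g}^*=\mathbf{g}$. Also $0_\mathfrak{S}=\mathbf{0}$. Hence $k_\mathbf{g}=p_{\mathbf{g},\mathbf{g}}^{\mathbf{0}}$.

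\emph{Step 2: factor over coordinates.} Lemma \ref{L;Lemma2.1} gives
$$k_\mathbf{g}=\prod_{h=1}^n p_{\mathbf{g}_h,\mathbf{g}_h}^{0},$$
where each factor is an intersection number of $\mathfrak{Tri}(\mathbb{U}_h)$, whose identity relation is indexed by $0$.

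\emph{Step 3: evaluate each factor.} By Lemma \ref{L;Lemma2.2}:
\begin{itemize}
\item if $\mathbf{g}_h=0$, then $p_{0,0}^0=1$;
\item if $\mathbf{g}_h=1$, then $p_{1,1}^0=|\mathbb{U}_h|-1$.
\end{itemize}
The factors equal to one drop out, and the remaining indices are exactly those $h$ with $\mathbf{g}_h=1$, that is, $h\in\mathbbm{g}$. This yields $\prod_{h\in\mathbbm{g}}(|\mathbb{U}_h|-1)$. When $\mathbf{g}=\mathbf{0}$ every factor is one, which matches the convention that the empty product equals one.

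There is no real obstacle here. The only point needing care is Step 1: one must remember that $\mathfrak{S}$ is symmetric, so that $\mathbf{g}^*=\mathbf{g}$, and that the identity index of the direct product is $\mathbf{0}=(0,\ldots,0)$, so Lemma \ref{L;Lemma2.1} applies with upper index $\mathbf{0}$.
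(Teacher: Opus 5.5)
Your proof is correct and follows the paper's own route: the paper also derives this by applying Lemma \ref{L;Lemma2.1} to factor $k_\mathbf{g}=p_{\mathbf{g},\mathbf{g}}^{\mathbf{0}}$ coordinatewise and then reading off $p_{0,0}^0=1$ and $p_{1,1}^0=|\mathbb{U}_h|-1$ from Lemma \ref{L;Lemma2.2}. Your explicit use of $\mathbf{g}^*=\mathbf{g}$ and $0_\mathfrak{S}=\mathbf{0}$ simply fills in details the paper leaves implicit.
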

\begin{proof}
The desired lemma follows from an application of Lemmas \ref{L;Lemma2.1} and \ref{L;Lemma2.2}.
\end{proof}
\subsection{Algebras}
Let $\mathbb{A}$ be an $\F$-algebra with the zero element $0_\mathbb{A}$ and the identity element $1_\mathbb{A}$. Let $e, f\in\mathbb{A}$. Call $e$ an {\em idempotent} of $\mathbb{A}$ if $e^2=e$. Call $e$ and $f$ a pair of {\em orthogonal idempotents} of $\mathbb{A}$ if $e, f$ are idempotents of $\mathbb{A}$ and $ef=fe=0_\mathbb{A}$. Call $e$ a {\em primitive idempotent} of $\mathbb{A}$ if $e$ is a nonzero idempotent of $\mathbb{A}$ and $e$ is not a sum of pairwise orthogonal nonzero idempotents of $\mathbb{A}$. Call the unital $\F$-subalgebra of $\mathbb{A}$ generated by $\{a\!:\! a\!\in\! \mathbb{A}, ab\!=\!ba\ \forall\ b\!\in\!\mathbb{A}\}$ the {\em center} of $\mathbb{A}$. Let $\mathrm{Z}(\mathbb{A})$ be the center of $\mathbb{A}$. Call $e$ a {\em block idempotent} of $\mathbb{A}$ if $e$ is a primitive idempotent of $\mathrm{Z}(\mathbb{A})$. Let $\mathrm{Bl}(\mathbb{A})$ be the set of the block idempotents of $\mathbb{A}$. Notice that the distinct elements in $\mathrm{Bl}(\mathbb{A})$ are pairwise orthogonal idempotents of $\mathbb{A}$ and $1_\mathbb{A}$ is the sum of all elements in $\mathrm{Bl}(\mathbb{A})$.

Let $\mathbb{U}\mathbb{V}=\langle\{ab: a\in\mathbb{U}, b\in\mathbb{V}\}\rangle_\mathbb{A}$ and $\mathbb{U}e=\mathbb{U}\{e\}$ for any subsets $\mathbb{U}, \mathbb{V}$ of $\mathbb{A}$. If $e$ is an idempotent of $\mathrm{Z}(\mathbb{A})$, notice that $\mathbb{A}e$ is an $\F$-subalgebra of $\mathbb{A}$ with the identity element $e$ and the $\F$-subalgebra $\mathbb{A}e$ of $\mathbb{A}$ satisfies the equation $\mathrm{Z}(\mathbb{A}e)=\mathrm{Z}(\mathbb{A})e$. In particular, for the case $e\!\in\!\mathrm{Bl}(\mathbb{A})$, call the $\F$-subalgebra $\mathbb{A}e$ of $\mathbb{A}$ a {\em block algebra} of $\mathbb{A}$.

For any $\F$-linear subspace $\mathbb{U}$ of a given $\F$-linear space $\mathbb{V}$, let $\mathbb{V}/\mathbb{U}$ be the quotient $\F$-linear space of $\mathbb{V}$ with respect to $\mathbb{U}$. Let $\mathbb{I}$ be a two-sided ideal of $\mathbb{A}$. Let $\mathbb{A}/\mathbb{I}$ be the quotient $\F$-algebra of $\mathbb{A}$ with respect to $\mathbb{I}$. Call $\mathbb{I}$ an {\em indecomposable two-sided ideal} of $\mathbb{A}$ if $\mathbb{I}\neq\{0_\mathbb{A}\}$ and $\mathbb{I}$ is not a direct sum of two nonzero two-sided ideals of $\mathbb{A}$. So $\mathbb{A}$ is a direct sum of indecomposable two-sided ideals of $\mathbb{A}$. If $e\in\mathrm{Bl}(\mathbb{A})$, notice that the block algebra $\mathbb{A}e$ of $\mathbb{A}$ is an indecomposable two-sided ideal of $\mathbb{A}$. So $\mathbb{A}$ is the direct sum of all pairwise distinct block algebras of $\mathbb{A}$. Call $\mathbb{I}$ a {\em minimal two-sided ideal} of $\mathbb{A}$ if $\mathbb{I}\neq\{0_\mathbb{A}\}$ and no nonzero two-sided ideal of $\mathbb{A}$ is properly contained in $\mathbb{I}$. Then the minimal two-sided ideals of $\mathbb{A}$ are indecomposable two-sided ideals of $\mathbb{A}$. In general, the indecomposable two-sided ideals of $\mathbb{A}$ are not necessary to be minimal two-sided ideals of $\mathbb{A}$. Call $\mathbb{A}$ a {\em semisimple $\F$-algebra} if all indecomposable two-sided ideals of $\mathbb{A}$ are minimal two-sided ideals of $\mathbb{A}$. Call $\mathbb{I}$ a {\em nilpotent two-sided ideal} of $\mathbb{A}$ if there is $g\in[1,\infty)$ such that $h_1h_2\cdots h_g=0_\mathbb{A}$ for any $h_1, h_2, \ldots, h_g\in\mathbb{I}$. If $\mathbb{I}$ is a nilpotent two-sided ideal of $\mathbb{A}$ and $g\in[1,\infty)$, call $g$ the {\em nilpotent index} of $\mathbb{I}$ if $g$ is the smallest choice in $[1,\infty)$ that satisfies the equation $h_1h_2\cdots h_g=0_\mathbb{A}$ for any $h_1, h_2, \ldots, h_g\in\mathbb{I}$. The sum of all nilpotent two-sided ideals of $\mathbb{A}$ is called the {\em Jacobson radical} of $\mathbb{A}$. Let $\mathrm{Rad}(\mathbb{A})$ be the Jacobson radical of $\mathbb{A}$. Notice that $\mathrm{Rad}(\mathbb{A})$ is a nilpotent two-sided ideal of $\mathbb{A}$. The following two lemmas are required.
\begin{lem}\label{L;Lemma2.6}\cite[Theorem 3.1.6, Proposition 3.2.4]{DK}
Assume that $\mathbb{A}$ is an $\F$-algebra. Then $\mathbb{A}$ is a semisimple $\F$-algebra if and only if $\mathrm{Rad}(\mathbb{A})=\{0_\mathbb{A}\}$. Moreover, assume that $e$ is an idempotent of $\mathrm{Z}\!({\mathbb{A}})$. Then the  $\F$-subalgebra $\mathbb{A}e$ of $\mathbb{A}$ satisfies the equations
$$\mathrm{Rad}(\mathbb{A}e)=\mathrm{Rad}(\mathbb{A})e\ \text{and}\ \mathrm{Rad}(\mathrm{Z}(\mathbb{A}e))=\mathrm{Rad}(\mathrm{Z}(\mathbb{A}))e.$$
\end{lem}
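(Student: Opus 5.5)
This is a cited standard result, \cite[Theorem 3.1.6, Proposition 3.2.4]{DK}, so the plan is to recall the classical arguments in the present setting.

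\textbf{First assertion.} I would prove the two directions separately.

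Suppose $\mathrm{Rad}(\mathbb{A})=\{0_\mathbb{A}\}$. Then $\mathbb{A}$ has no nonzero nilpotent left ideals, since such a left ideal $\mathbb{L}$ generates the two-sided ideal $\mathbb{L}\mathbb{A}$, which is again nilpotent. By the standard argument, every minimal left ideal is then generated by an idempotent and is a direct summand. Hence $\mathbb{A}$ is a direct sum of minimal left ideals, and Wedderburn--Artin gives a decomposition of $\mathbb{A}$ into simple two-sided ideals. Let $\mathbb{I}$ be an indecomposable two-sided ideal. Then $\mathbb{I}$ is a sum of some of these simple components, because it is stable under multiplication by the central idempotents cutting them out. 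Indecomposability forces $\mathbb{I}$ to be a single simple component, so $\mathbb{I}$ is minimal.

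Conversely, suppose every indecomposable two-sided ideal is minimal. Write $\mathbb{A}=\bigoplus_e \mathbb{A}e$ with $e\in\mathrm{Bl}(\mathbb{A})$. Each block $\mathbb{A}e$ is indecomposable, hence minimal as a two-sided ideal. Now $\mathrm{Rad}(\mathbb{A})e$ is a two-sided ideal contained in $\mathbb{A}e$. It cannot equal $\mathbb{A}e$, because $e$ is not nilpotent. By minimality, $\mathrm{Rad}(\mathbb{A})e=\{0_\mathbb{A}\}$ for every block, so $\mathrm{Rad}(\mathbb{A})=\{0_\mathbb{A}\}$.

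\textbf{Second assertion.} Let $e$ be a central idempotent. Then $\mathbb{A}=\mathbb{A}e\oplus\mathbb{A}(1_\mathbb{A}-e)$ is a direct product of algebras, and the radical of a finite product is the product of the radicals.

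I would verify this directly.
\begin{itemize}
\item $\mathrm{Rad}(\mathbb{A})e$ is a nilpotent two-sided ideal of $\mathbb{A}e$, because $e$ is central. Hence $\mathrm{Rad}(\mathbb{A})e\subseteq\mathrm{Rad}(\mathbb{A}e)$.
\item Conversely, a nilpotent ideal $\mathbb{J}$ of $\mathbb{A}e$ is also a two-sided ideal of $\mathbb{A}$, since $a j=(ae)j$ for $a\in\mathbb{A}$ and $j\in\mathbb{J}$. It is nilpotent, so $\mathbb{J}\subseteq\mathrm{Rad}(\mathbb{A})$. Since $\mathbb{J}=\mathbb{J}e$, we get $\mathbb{J}\subseteq\mathrm{Rad}(\mathbb{A})e$.
\end{itemize}

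For the center, I would use the fact already noted in the paper that $\mathrm{Z}(\mathbb{A}e)=\mathrm{Z}(\mathbb{A})e$. Applying the previous paragraph to the commutative algebra $\mathrm{Z}(\mathbb{A})$ and its idempotent $e$ then yields the last equation.

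\textbf{Main obstacle.} The only nontrivial step is the semisimplicity direction, which requires the Wedderburn structure theory. I would simply invoke the cited result \cite{DK} there.
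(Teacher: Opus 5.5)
Your proposal is correct. The paper gives no argument for this lemma at all: it is quoted from \cite{DK}. Your route is therefore a genuinely more self-contained alternative to a bare citation.

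For the second assertion you avoid structure theory entirely. You use only three inputs: that $e$ is central, that $\mathrm{Rad}$ is the sum of all nilpotent two-sided ideals (the paper's definition), and that a nilpotent ideal of $\mathbb{A}e$ is automatically an ideal of $\mathbb{A}$ because $j=ej$. The center version then follows by applying the same fact to the commutative algebra $\mathrm{Z}(\mathbb{A})$ together with $\mathrm{Z}(\mathbb{A}e)=\mathrm{Z}(\mathbb{A})e$. This is exactly the form in which the lemma is later used, in Lemmas \ref{L;Lemma5.8} and \ref{L;Lemma6.1}.

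For the first assertion, note that the paper defines semisimplicity as ``every indecomposable two-sided ideal is minimal''. This is not the usual textbook formulation, and the citation silently identifies the two notions. Your argument actually supplies the translation. You prove ``semisimple $\Rightarrow$ $\mathrm{Rad}(\mathbb{A})=\{0_\mathbb{A}\}$'' directly from the paper's definition, using the block decomposition and the remark in Section 2.3 that each $\mathbb{A}e$ with $e\in\mathrm{Bl}(\mathbb{A})$ is an indecomposable two-sided ideal. For the other direction you pass through Wedderburn--Artin, then observe that an indecomposable two-sided ideal splits along the central idempotents of the simple components and so is a single minimal one.

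The citation buys brevity. Your version leaves only the Wedderburn--Artin decomposition as a black box, and it checks compatibility with the paper's definition rather than assuming it.
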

\begin{lem}\label{L;Lemma2.7}
Assume that $p>0$ and $\mathbb{A}$ is an $\F$-algebra. Assume that $e$ is an idempotent of $\mathbb{A}$ and $f\in\mathrm{Rad}(\mathbb{A})$. Assume that $g\in[1,\infty)$ and $h_1, h_2,\ldots, h_g$ are pairwise orthogonal idempotents of $\mathbb{A}$. Assume that $e$ satisfies the equation $eh_i=h_ie$ for any $i\!\in\![1,g]$. Assume that $e\!\in\!\langle\{f, h_1, h_2,\ldots, h_g\}\rangle_\mathbb{A}$. Then $e\!\in\!\langle\{h_1, h_2,\ldots, h_g\}\rangle_\mathbb{A}$.
\end{lem}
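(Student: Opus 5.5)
The idea is to reduce to one idempotent $h_j$ at a time. Inside the corner algebra $h_j\mathbb{A}h_j$, the component $eh_j$ will be a scalar multiple of $h_j$ plus a radical element that commutes with it. Raising to a large $p$-power (this is where $p>0$ is used) then removes the radical part.

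Write $e=cf+\sum_{i=1}^g a_ih_i$ with $c,a_1,\ldots,a_g\in\F$, and put $h=\sum_{i=1}^g h_i$.

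\emph{Step 1: $e=eh$.} Since the $h_i$ are pairwise orthogonal idempotents, $h$ is an idempotent and $h_i(1_\mathbb{A}-h)=0_\mathbb{A}$ for every $i$. Hence $e(1_\mathbb{A}-h)=cf(1_\mathbb{A}-h)$, which lies in $\mathrm{Rad}(\mathbb{A})$ because $\mathrm{Rad}(\mathbb{A})$ is a two-sided ideal. On the other hand, $e$ commutes with $h$, so $e(1_\mathbb{A}-h)$ is a product of two commuting idempotents and is therefore an idempotent. An idempotent in a nilpotent ideal is $0_\mathbb{A}$. So $e=eh=\sum_{j=1}^g eh_j$.

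\emph{Step 2: the component $u_j=eh_j$.} Fix $j$ and assume $h_j\neq 0_\mathbb{A}$ (otherwise there is nothing to show). Since $eh_j=h_je$, we have $u_j=h_jeh_j$, and $u_j$ is an idempotent of $\mathbb{A}$. Multiplying the expression for $e$ by $h_j$ on both sides and using orthogonality gives
$$u_j=a_jh_j+r_j,\qquad r_j=c\,h_jfh_j\in\mathrm{Rad}(\mathbb{A}).$$
All three elements lie in $h_j\mathbb{A}h_j$, where $h_j$ is the identity. So $a_jh_j$ commutes with $u_j$, and hence $r_j=u_j-a_jh_j$ commutes with $a_jh_j$.

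\emph{Step 3: the Frobenius trick.} By commutativity and characteristic $p>0$, for every $m\in[1,\infty)$ we have
$$u_j=u_j^{p^m}=(a_jh_j+r_j)^{p^m}=a_j^{p^m}h_j+r_j^{p^m}.$$
Since $\mathrm{Rad}(\mathbb{A})$ is nilpotent, $r_j^{p^m}=0_\mathbb{A}$ once $p^m$ is at least its nilpotent index. Thus $u_j=a_j^{p^m}h_j$.

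\emph{Conclusion.} Choosing one such $m$ for all $j$ simultaneously, Step 1 gives $e=\sum_{j=1}^g a_j^{p^m}h_j\in\langle\{h_1,\ldots,h_g\}\rangle_\mathbb{A}$.

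The only delicate point is Step 3. The binomial expansion collapses only because $r_j$ commutes with $a_jh_j$, and that comes from working in the corner $h_j\mathbb{A}h_j$ rather than with $e$ and $f$ directly, since $f$ itself need not commute with anything. Step 1 is what lets us discard the part of $e$ outside $h$ without any such argument.
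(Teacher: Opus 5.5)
Your proof is correct, but it is more roundabout than the paper's, which applies the same Frobenius trick once, globally.

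Write $s=\sum_{k=1}^g a_kh_k$, so that $cf=e-s$. Since $e$ commutes with every $h_k$, it commutes with $s$. Take $q=p^m$ at least the nilpotent index of $\mathrm{Rad}(\mathbb{A})$. The binomial theorem in characteristic $p$ then gives $0_{\mathbb{A}}=(cf)^q=(e-s)^q=e^q-s^q=e-\sum_{k=1}^g a_k^qh_k$. This uses $e^q=e$, the identity $(-1)^q=-1$ in $\F$, and orthogonality of the $h_k$ to compute $s^q$.

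So the diagnosis in your last paragraph is off. You never need $f$ to commute with anything, because $cf$ is itself a difference of two commuting elements. The corner reduction in Steps 2--3 is therefore not needed for the expansion to collapse. Step 1 is also subsumed, since the identity above already writes $e$ as a combination of the $h_k$.

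What your route buys is a clearer local picture. Each $eh_j$ is an idempotent of $h_j\mathbb{A}h_j$ that differs from $a_jh_j$ by a radical element, and Step 1 records the useful fact that a nilpotent ideal contains no nonzero idempotent. It gains no generality, though: Step 2 still relies on $eh_j=h_je$, which is exactly the hypothesis the one-line argument uses.
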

\begin{proof}
Assume that $j$ is a $p$-power and $j$ is no less than the nilpotent index of $\mathrm{Rad}(\mathbb{A})$. This implies that $e-\sum_{k=1}^gc_k h_k=c_ff$ and $(c_ff)^j=0_\mathbb{A}$ for some $c_1, c_2, \ldots, c_g, c_f\in\F$. So the desired lemma follows from the hypotheses and the Binomial Theorem.
\end{proof}
\subsection{Terwilliger $\F$-algebras of schemes}
For any $g\in[1,\infty)$, let $\mathrm{M}_g(\F)$ be the full matrix $\F$-algebra of $(g\times g)$-matrices whose entries are in $\F$. For any $g, h\in[1, \infty)$, let $g\mathrm{M}_h(\F)$ be the direct sum of $g$ copies of $\mathrm{M}_h(\F)$. For any $g, h, i, j\in[1,\infty)$, notice that $g=i$ and $h=j$ if and only if $g\mathrm{M}_h(\F)\cong i\mathrm{M}_j(\F)$ as $\F$-algebras. Let $\mathbb{U}$ be a nonempty finite set. Let $\mathrm{M}_\mathbb{U}(\F)$ be the full matrix $\F$-algebra of square $\F$-matrices whose rows and columns are labeled by the elements in $\mathbb{U}$. So $\mathrm{M}_\mathbb{U}(\F)\cong\mathrm{M}_{|\mathbb{U}|}(\F)$ as $\F$-algebras. For any $x,y\in\mathbb{U}$, let $M_{x,y}(\mathbb{U})$ be the $\{\overline{0}, \overline{1}\}$-matrix in $\mathrm{M}_\mathbb{U}(\F)$ whose unique nonzero entry is the $(x,y)$-entry. Let $O$ be the zero matrix in $\mathrm{M}_\mathbb{X}(\F)$. Let $I$ be the identity matrix in $\mathrm{M}_\mathbb{X}(\F)$. Let $M^T$ be the transpose of $M$ for any $M\in\mathrm{M}_\mathbb{X}(\F)$.

For any $g\in\mathbb{E}$, let $A_g$ be the {\em adjacency $\F$-matrix} of $\mathfrak{S}$ with respect to $R_g$. For any $x\in\mathbb{X}$ and $g\in\mathbb{E}$, let $E_g^*(x)$ be the {\em dual $\F$-idempotent} of $\mathfrak{S}$ with respect to $x$ and $R_g$. For any $x\in\mathbb{X}$ and $g\!\in\!\mathbb{E}$, recall that $A_g$ and $E_g^*(x)$ are defined by the equations
\begin{align*}
A_g=\sum_{(y, z)\in R_g}M_{y, z}(\mathbb{X})\ \text{and}\ E_g^*(x)=\sum_{y\in xR_g}M_{y, y}(\mathbb{X}).
\end{align*}
In particular, if $\mathfrak{S}$ is a symmetric scheme, notice that $A_g^T=A_g$ for any $g\in\mathbb{E}$. In general, for any $x\in\mathbb{X}$ and $g\in\mathbb{E}$, notice that $A_g$ and $E_g^*(x)$
can satisfy the equations
\begin{align}\label{Eq;1}
A_g^T=A_{g^*},\ E_g^*(x)^T=E_g^*(x),
\end{align}
\begin{align}\label{Eq;2}
E_g^*(x)E_h^*(x)=\delta_{g,h}E_g^*(x),
\end{align}
\begin{align}\label{Eq;3}
\text{and}\ A_{0_\mathfrak{S}}=I=\sum_{i\in\mathbb{E}}E_i^*(x).
\end{align}
Call an $\F$-subalgebra of $\mathrm{M}_\mathbb{X}(\F)$ the {\em Terwilliger $\F$-algebra} of $\mathfrak{S}$ with respect to $x$ if it is generated by $\{A_a: a\in\mathbb{E}\}\cup\{E_a^*(x): a\in\mathbb{E}\}$. Let $\mathbb{T}(x)$ be the Terwilliger $\F$-algebra of $\mathfrak{S}$ with respect to $x$. So $\mathbb{T}(x)$ is a unital $\F$-subalgebra of $\mathrm{M}_\mathbb{X}(\F)$ by Equation \eqref{Eq;3}. Equation \eqref{Eq;1} implies that $M\!\in\!\mathbb{T}(x)$ if and only if $M^T\in\mathbb{T}(x)$. In particular, notice that the map sending $M$ to $M^T$ for any $M\in\mathrm{Z}(\mathbb{T}(x))$ is an $\F$-linear isomorphism from $\mathrm{Z}(\mathbb{T}(x))$ to $\mathrm{Z}(\mathbb{T}(x))$. Moreover, notice that the map sending $M$ to $M^T$ for any $M\in\mathrm{Rad}(\mathbb{T}(x))$ is an $\F$-linear isomorphism from $\mathrm{Rad}(\mathbb{T}(x))$ to $\mathrm{Rad}(\mathbb{T}(x))$. For any $g, h, i\in\mathbb{E}$, recall that $E_g^*(x)A_hE_i^*(x)\!\neq\! O$ if and only if $p_{g^*,i}^h\!\neq\!0$ (see \cite[Lemma 3.2]{Han}). So $\mathbb{T}(x)$ has an $\F$-linearly independent subset $\{E_a^*(x)A_bE_c^*(x)\!: a, b, c\!\in\!\mathbb{E}, p_{a^*, c}^b\!\neq \!0\}$ by Equation \eqref{Eq;2}. In general, the algebraic structures of $\mathbb{T}(x)$ and $\mathrm{Rad}(\mathbb{T}(x))$ may depend on $\F$ and the choice of $x$ (see \cite[5.1]{Han}). For some recent progress on the algebraic structures of $\mathbb{T}(x)$ and $\mathrm{Rad}(\mathbb{T}(x))$, the reader may refer to \cite{CX, Han, HJ, J1, J2, J3}.

In \cite{HJ}, He and Jiang studied the Terwilliger $\F$-algebras of factorial schemes. In this paper, we continue studying the Terwilliger $\F$-algebras of factorial schemes. For this purpose, we need to introduce the necessary notation in \cite{HJ}. From now on, let $\mathfrak{S}$ be the factorial scheme of $\mathfrak{Tri}(\mathbb{U}_1), \mathfrak{Tri}(\mathbb{U}_2), \ldots, \mathfrak{Tri}(\mathbb{U}_n)$. Recall that $\mathbb{E}=[0,1]^n$. We shall quote the fact that $\mathfrak{S}$ is a symmetric scheme without citation. We shall also quote the fact that $\mathfrak{S}$ is a commutative scheme without citation. From now on, fix $\mathbf{x}\in\mathbb{X}$. For convenience, we abbreviate $\mathbb{T}=\mathbb{T}(\mathbf{x})$ and $E_\mathbf{g}^*=E_\mathbf{g}^*(\mathbf{x})$ for any $\mathbf{g}\in \mathbb{E}$.

Let $\mathbb{P}=\{(\mathbf{a}, \mathbf{b}, \mathbf{c}): \mathbf{a}, \mathbf{b}, \mathbf{c}\in\mathbb{E}, \mathbbm{a}\triangle\mathbbm{c}\subseteq\mathbbm{b}\subseteq(\mathbbm{a}\triangle\mathbbm{c})\cup(\mathbbm{a}\cap\mathbbm{c})^\circ\}$ and $\mathbf{g}, \mathbf{h}, \mathbf{i}\in\mathbb{E}$. By Lemma \ref{L;Lemma2.3}, $(\mathbf{g}, \mathbf{h}, \mathbf{i})\in\mathbb{P}$ if and only if $(\mathbf{i}, \mathbf{h}, \mathbf{g})\in\mathbb{P}$. Lemmas \ref{L;Lemma2.3} and \ref{L;Lemma2.4} imply that $(\mathbf{g}, \mathbf{h}, \mathbf{i})\in\mathbb{P}$ if and only if $p_{\mathbf{g},\mathbf{i}}^\mathbf{h}\neq 0$. Let $\mathbb{P}_{\mathbf{g}, \mathbf{h}, \mathbf{i}}\!=\!\{\mathbf{a}:
\mathbf{a}\in\mathbb{E},\mathbbm{g}\triangle\mathbbm{i}\subseteq\mathbbm{a}\subseteq\mathbbm{h}\}$. So $\mathbb{P}_{\mathbf{g}, \mathbf{h}, \mathbf{i}}=\mathbb{P}_{\mathbf{i},\mathbf{h},\mathbf{g}}$. If $(\mathbf{g}, \mathbf{h}, \mathbf{i})\in\mathbb{P}$, Lemma \ref{L;Lemma2.4} implies that $\mathbf{h}\in\mathbb{P}_{\mathbf{g}, \mathbf{h}, \mathbf{i}}$. If $(\mathbf{g}, \mathbf{h}, \mathbf{i})\in\mathbb{P}$, let
\begin{align}\label{Eq;4}
B_{\mathbf{g}, \mathbf{h}, \mathbf{i}}=\sum_{\mathbf{j}\in\mathbb{P}_{\mathbf{g}, \mathbf{h},\mathbf{i}}}E_\mathbf{g}^*A_\mathbf{j}E_\mathbf{i}^*
\end{align}
and notice that $B_{\mathbf{g}, \mathbf{h}, \mathbf{i}}, B_{\mathbf{i}, \mathbf{h}, \mathbf{g}}$ are defined matrices in $\mathbb{T}$. Notice that $\mathbb{P}_{\mathbf{g}, \mathbf{0}, \mathbf{g}}\!\!=\!\!\{\mathbf{0}\}$ and
$B_{\mathbf{g},\mathbf{0},\mathbf{g}}\!\!=\!\!E_\mathbf{g}^*$ by combining Lemma \ref{L;Lemma2.3}, Equations \eqref{Eq;4}, \eqref{Eq;3}, \eqref{Eq;2}. By Lemma \ref{L;Lemma2.3}, let $\mathbf{g}^+$ be the element in $\mathbb{E}$ that satisfies the equation $\{a: a\in[1,n], \mathbf{g}^+_a\!=\!1\}\!=\!\mathbbm{g}^+$. If $\mathbbm{i}=\mathbbm{g}\cup\mathbbm{h}$, let $\mathbf{g}\cup\mathbf{h}=\mathbf{i}$ by Lemma \ref{L;Lemma2.3}.
If $\mathbbm{i}=\mathbbm{g}\cap\mathbbm{h}$, let $\mathbf{g}\cap\mathbf{h}=\mathbf{i}$ by Lemma \ref{L;Lemma2.3}. So $\mathbf{g}\cup\mathbf{h}=\mathbf{h}\cup\mathbf{g}$ and
$\mathbf{g}\cap\mathbf{h}=\mathbf{h}\cap\mathbf{g}$ by Lemma \ref{L;Lemma2.3}. As $(\mathbf{g}\cap\mathbf{h})\cap\mathbf{i}=\mathbf{g}\cap(\mathbf{h}\cap\mathbf{i})$ by Lemma \ref{L;Lemma2.3}, let $\mathbf{g}\cap\mathbf{h}\cap\mathbf{i}=(\mathbf{g}\cap\mathbf{h})\cap\mathbf{i}=\mathbf{g}\cap(\mathbf{h}\cap\mathbf{i})$. If $\mathbbm{i}=\mathbbm{g}\setminus\mathbbm{h}$, let $\mathbf{g}\setminus\mathbf{h}=\mathbf{i}$ by Lemma \ref{L;Lemma2.3}. If $(\mathbf{g}, \mathbf{h}, \mathbf{g})\in\mathbb{P}$, then $(\mathbf{i}, \mathbf{h}\cap\mathbf{i},\mathbf{i})\in\mathbb{P}$ by Lemma \ref{L;Lemma2.3}. If $(\mathbf{1}, \mathbf{g}, \mathbf{1})\in\mathbb{P}$, let
\begin{align}\label{Eq;5}
C_\mathbf{g}=\sum_{\mathbf{j}\in\mathbb{E}}\overline{k_{\mathbf{g}\setminus\mathbf{j}}}B_{\mathbf{j},\mathbf{g}\cap\mathbf{j},\mathbf{j}}
\end{align}
and notice that $C_\mathbf{g}$ is a defined matrix in $\mathbb{T}$. As $\mathbf{0}\setminus\mathbf{g}=\mathbf{0}\cap\mathbf{g}=\mathbf{0}$ by Lemma \ref{L;Lemma2.3}, notice that $C_\mathbf{0}=I$ by
combining Lemma \ref{L;Lemma2.5}, Equations \eqref{Eq;5}, \eqref{Eq;3}. If $\mathbbm{i}=\mathbbm{g}\triangle\mathbbm{h}$, let $\mathbf{g}\triangle\mathbf{h}=\mathbf{i}$ by Lemma \ref{L;Lemma2.3}. If $\mathbbm{i}=(\mathbbm{g}\triangle\mathbbm{h})\cup(\mathbbm{g}\cap\mathbbm{h})^\circ$, let $\mathbf{g}\mathbf{h}=\mathbf{i}$ by Lemma \ref{L;Lemma2.3}. So $\mathbf{g}\triangle\mathbf{h}=\mathbf{h}\triangle\mathbf{g}$ and $\mathbf{g}\mathbf{h}=\mathbf{h}\mathbf{g}$ by Lemma \ref{L;Lemma2.3}. By Lemmas \ref{L;Lemma2.3} and \ref{L;Lemma2.4}, notice that $(\mathbf{g}, \mathbf{h}, \mathbf{i})\in\mathbb{P}$, $p_{\mathbf{g}, \mathbf{i}}^\mathbf{h}\neq 0$, and $\mathbf{g}\triangle\mathbf{i}\preceq\mathbf{h}\preceq\mathbf{g}\mathbf{i}$ are pairwise equivalent. Let $\mathbf{j}, \mathbf{k}, \mathbf{l}\in\mathbb{E}$. If $\mathbbm{l}\!=\!(\mathbbm{g}\triangle\mathbbm{k})\cup((\mathbbm{g}\cap\mathbbm{k})^\circ\setminus\mathbbm{i})\cup
((\mathbbm{h}\cup\mathbbm{j})\cap(\mathbbm{g}\cap\mathbbm{i}\cap\mathbbm{k})^\circ)$, let $[\mathbf{g}, \mathbf{h}, \mathbf{i}, \mathbf{j},\mathbf{k}]\!=\!\mathbf{l}$ by Lemma \ref{L;Lemma2.3}. Notice that $(\mathbf{g}, [\mathbf{g}, \mathbf{h}, \mathbf{i}, \mathbf{j},\mathbf{k}], \mathbf{k})\in\mathbb{P}$. In particular, if $(\mathbf{g}, \mathbf{h}, \mathbf{g}), (\mathbf{g}, \mathbf{i}, \mathbf{g})\in\mathbb{P}$, then $\mathbf{h}\cup\mathbf{i}=[\mathbf{g}, \mathbf{h}, \mathbf{g}, \mathbf{i},\mathbf{g}]$ by Lemma \ref{L;Lemma2.3}. The following lemmas and examples are required.
\begin{lem}\label{L;Lemma2.8}\cite[Theorems 3.13, 3.23]{HJ}
$\mathbb{T}$ has an $\F$-basis $\{B_{\mathbf{a}, \mathbf{b}, \mathbf{c}}: (\mathbf{a}, \mathbf{b}, \mathbf{c})\in\mathbb{P}\}$. Moreover, assume that $(\mathbf{g},\mathbf{h},\mathbf{i}), (\mathbf{j}, \mathbf{k}, \mathbf{l})\in\mathbb{P}$. Then $B_{\mathbf{g}, \mathbf{h}, \mathbf{i}}B_{\mathbf{j},
\mathbf{k},\mathbf{l}}\!=\!\delta_{\mathbf{i},\mathbf{j}}\overline{k_{\mathbf{h}\cap\mathbf{i}\cap\mathbf{k}}}B_{\mathbf{g},[\mathbf{g},\mathbf{h},\mathbf{i}, \mathbf{k}, \mathbf{l}], \mathbf{l}}.$
\end{lem}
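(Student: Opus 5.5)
The proof I have in mind uses the tensor-product structure of $\mathfrak{S}$. It has two parts: first the basis statement, then the multiplication formula, which is then checked one coordinate at a time.

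\textbf{Basis.} We already know that $\{E_\mathbf{a}^*A_\mathbf{b}E_\mathbf{c}^*: (\mathbf{a},\mathbf{b},\mathbf{c})\in\mathbb{P}\}$ is $\F$-linearly independent. Fix $\mathbf{g},\mathbf{i}$. By \eqref{Eq;4}, the elements $B_{\mathbf{g},\mathbf{h},\mathbf{i}}$ are obtained from the $E_\mathbf{g}^*A_\mathbf{h}E_\mathbf{i}^*$ by summation over the interval $\{\mathbf{j}:\mathbf{g}\triangle\mathbf{i}\preceq\mathbf{j}\preceq\mathbf{h}\}$ of the poset $([0,1]^n,\preceq)$. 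This transition matrix is unitriangular, so Möbius inversion on the Boolean lattice gives
$$E_\mathbf{g}^*A_\mathbf{h}E_\mathbf{i}^*=\sum_{\mathbf{j}}\overline{(-1)^{|\mathbbm{h}\setminus\mathbbm{j}|}}\,B_{\mathbf{g},\mathbf{j},\mathbf{i}}.$$
Since every $\mathbf{j}$ with $\mathbf{g}\triangle\mathbf{i}\preceq\mathbf{j}\preceq\mathbf{h}$ again satisfies $(\mathbf{g},\mathbf{j},\mathbf{i})\in\mathbb{P}$, the $B$'s are linearly independent and span the same space $\mathbb{W}$ as the $E^*AE^*$'s. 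By \eqref{Eq;3}, $\mathbb{W}$ contains every $A_\mathbf{h}=\sum_{\mathbf{g},\mathbf{i}}E_\mathbf{g}^*A_\mathbf{h}E_\mathbf{i}^*$ and every $E_\mathbf{g}^*$. Hence $\mathbb{W}=\mathbb{T}$ as soon as $\mathbb{W}$ is closed under multiplication, and the product formula gives exactly that.

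\textbf{Product formula.} Identify $\mathrm{M}_\mathbb{X}(\F)$ with $\bigotimes_{a=1}^n\mathrm{M}_{\mathbb{U}_a}(\F)$. Then $A_\mathbf{j}=\bigotimes_a A_{\mathbf{j}_a,a}$ and $E_\mathbf{g}^*=\bigotimes_a E^*_{\mathbf{g}_a}(\mathbf{x}_a)$, where the factors are the adjacency matrices and dual idempotents of $\mathfrak{Tri}(\mathbb{U}_a)$. The sum in \eqref{Eq;4} is over a product of independent coordinate choices, so it factorizes as well:
$$B_{\mathbf{g},\mathbf{h},\mathbf{i}}=\bigotimes_a E^*_{\mathbf{g}_a}M_aE^*_{\mathbf{i}_a}.$$
Here the local middle factor is:
\begin{itemize}
\item $M_a=A_{1,a}$ if $a\in\mathbbm{g}\triangle\mathbbm{i}$;
\item $M_a=A_{0,a}=I$ if $a\notin\mathbbm{h}$;
\item $M_a=J_a=A_{0,a}+A_{1,a}$ (the all-ones matrix) if $a\in\mathbbm{h}\setminus(\mathbbm{g}\triangle\mathbbm{i})$, in which case $\mathbf{g}_a=\mathbf{i}_a$.
\end{itemize}
The product of two tensor products is the tensor product of the local products, and $E^*_{\mathbf{i}_a}E^*_{\mathbf{j}_a}=\delta_{\mathbf{i}_a,\mathbf{j}_a}E^*_{\mathbf{i}_a}$ gives the factor $\delta_{\mathbf{i},\mathbf{j}}$. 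So it remains to multiply local factors in $\mathfrak{Tri}(\mathbb{U}_a)$. Write $E_0^*$ for the projection onto $\mathbf{x}_a$ and $E_1^*$ for the complementary projection. The computation is a finite case table over the possible values of $\mathbf{g}_a,\mathbf{i}_a,\mathbf{l}_a$ and the three possible middle factors. Typical entries are:
\begin{itemize}
\item $E_1^*J_aE_1^*\cdot E_1^*J_aE_1^*=\overline{|\mathbb{U}_a|-1}\,E_1^*J_aE_1^*$;
\item $E_1^*A_1E_0^*\cdot E_0^*A_1E_1^*=E_1^*J_aE_1^*$;
\item $E_0^*A_1E_1^*\cdot E_1^*A_1E_0^*=\overline{|\mathbb{U}_a|-1}\,E_0^*$ (this case has $a\in\mathbbm{h}\cap\mathbbm{i}\cap\mathbbm{k}$);
\item $E_1^*A_1E_1^*=E_1^*J_aE_1^*-E_1^*$.
\end{itemize}
The scalar $\overline{|\mathbb{U}_a|-1}$ appears precisely when $a\in\mathbbm{h}\cap\mathbbm{i}\cap\mathbbm{k}$. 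Multiplying over $a$ gives $\overline{k_{\mathbf{h}\cap\mathbf{i}\cap\mathbf{k}}}$ by Lemma \ref{L;Lemma2.5}. In each case the local result should again be of the form $E^*_{\mathbf{g}_a}M'_aE^*_{\mathbf{l}_a}$ with $M'_a\in\{I,A_1,J_a\}$, matching the description of $[\mathbf{g},\mathbf{h},\mathbf{i},\mathbf{k},\mathbf{l}]$ in coordinate $a$.

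\textbf{Main obstacle.} The step I expect to be hardest is verifying coordinate by coordinate that the resulting local type agrees with the definition of $[\mathbf{g},\mathbf{h},\mathbf{i},\mathbf{k},\mathbf{l}]$. The delicate coordinates are those with $|\mathbb{U}_a|=2$ (the $^\circ$ condition). There $E_1^*$ has rank one, so $E_1^*A_1E_1^*=O$ and $E_1^*J_aE_1^*=E_1^*$, and the local factor collapses from $J_a$ to $I$. For example, when $\mathbf{g}_a=\mathbf{l}_a=1$ and $\mathbf{i}_a=0$, the local product $E_1^*J_aE_1^*$ must be recorded as membership in $(\mathbbm{g}\cap\mathbbm{k})^\circ\setminus\mathbbm{i}$ only when $|\mathbb{U}_a|\neq 2$. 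I would organise this check as a table indexed by $(\mathbf{g}_a,\mathbf{i}_a,\mathbf{l}_a)$ and by whether $a$ lies in $\mathbbm{h}$ and in $\mathbbm{k}$, treating $|\mathbb{U}_a|=2$ separately. In every row the answer should follow from Lemma \ref{L;Lemma2.2}.
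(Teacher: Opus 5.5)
Your argument is correct. It is also not a reconstruction of anything in the paper: the paper gives no proof of this lemma and simply imports it from \cite[Theorems 3.13, 3.23]{HJ}, so you are supplying a self-contained derivation.

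The checkable steps hold up:
\begin{itemize}
\item The factorization $B_{\mathbf{g},\mathbf{h},\mathbf{i}}=\bigotimes_a E^*_{\mathbf{g}_a}M_aE^*_{\mathbf{i}_a}$ is valid, because $\mathbb{P}$ and $\mathbb{P}_{\mathbf{g},\mathbf{h},\mathbf{i}}$ are both defined coordinatewise.
\item The unitriangular change of basis on $\{\mathbf{j}:\mathbf{g}\triangle\mathbf{i}\preceq\mathbf{j}\preceq\mathbf{h}\}$ and the generation argument via \eqref{Eq;3} work over any $\F$.
\item The case table you defer does close. Take $\mathbf{i}=\mathbf{j}$ and any $(\mathbf{g}_a,\mathbf{i}_a,\mathbf{l}_a)\in[0,1]^3$. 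The local product is $\overline{1}$ or $\overline{|\mathbb{U}_a|-1}$ times the $a$th factor of $B_{\mathbf{g},[\mathbf{g},\mathbf{h},\mathbf{i},\mathbf{k},\mathbf{l}],\mathbf{l}}$. The second scalar occurs exactly when $a\in\mathbbm{h}\cap\mathbbm{i}\cap\mathbbm{k}$.
\item The condition $|\mathbb{U}_a|=2$ matters in only two rows. In row $(1,0,1)$ we get $E_1^*J_aE_1^*=E_1^*$, and correspondingly $a\notin(\mathbbm{g}\cap\mathbbm{l})^\circ$. In row $(1,1,1)$, membership of $(\mathbf{g},\mathbf{h},\mathbf{i})$ and $(\mathbf{i},\mathbf{k},\mathbf{l})$ in $\mathbb{P}$ already forces $a\notin\mathbbm{h}\cup\mathbbm{k}$.
\end{itemize}
Two small remarks. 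With the lemma's arguments, the middle piece of the bracket is $(\mathbbm{g}\cap\mathbbm{l})^\circ\setminus\mathbbm{i}$, not $(\mathbbm{g}\cap\mathbbm{k})^\circ\setminus\mathbbm{i}$; the $\mathbf{k}$ in the paper's generic definition occupies the fifth slot. Also, $E_1^*A_1E_1^*$ never occurs as a local factor of any $B$, so that entry of your table is not needed.

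What your route buys is structural insight. Because $\mathbb{P}$ is a product of local conditions, your basis argument actually shows $\mathbb{T}=\bigotimes_{a=1}^n\mathbb{T}_a$ inside $\bigotimes_a\mathrm{M}_{\mathbb{U}_a}(\F)$. Here $\mathbb{T}_a$ is the Terwilliger $\F$-algebra of $\mathfrak{Tri}(\mathbb{U}_a)$ at $\mathbf{x}_a$. The opaque bracket $[\mathbf{g},\mathbf{h},\mathbf{i},\mathbf{k},\mathbf{l}]$ and the scalar $\overline{k_{\mathbf{h}\cap\mathbf{i}\cap\mathbf{k}}}$ then become one-coordinate facts.

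The local algebra $\mathbb{T}_a$ is:
\begin{itemize}
\item $\mathrm{M}_2(\F)$ when $|\mathbb{U}_a|=2$;
\item $\mathrm{M}_2(\F)\oplus\F$ when $a\in[1,n]^-$;
\item a one-block $5$-dimensional algebra with a $3$-dimensional radical of nilpotent index $3$ when $a\in[1,n]^+$.
\end{itemize}
From this, the later results of the paper can be read off almost immediately: the $2^{n_-}$ blocks of Theorem \ref{T;Idempotent}, the dimensions in Theorem \ref{T;Dimension}, the centers in Theorem \ref{T;Center}, and the radical index $2n_++1$ in Theorem \ref{T;Jacobson}. The paper instead treats the lemma as a black box and does all subsequent work in the global $B$-basis. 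That keeps the notation uniform with \cite{HJ}, but it hides this product structure.
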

\begin{lem}\label{L;Lemma2.9}\cite[Theorem 6.9]{HJ}
$\mathrm{Rad}(\mathbb{T})$ has an $\F$-basis $\{B_{\mathbf{a}, \mathbf{b}, \mathbf{c}}\!:\!(\mathbf{a}, \mathbf{b}, \mathbf{c})\!\in\!\mathbb{P}, p\!\mid\! k_\mathbf{b}\}$. Moreover, the nilpotent index of $\mathrm{Rad}(\mathbb{T})$ is equal to $2n_++1$. Furthermore, the following are equivalent: $\mathrm{Rad}(\mathbb{T})=\{O\}$; $n_+=0$; and $\mathfrak{S}$ is a $p'$-valenced scheme.
\end{lem}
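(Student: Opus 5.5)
The plan is to set $\mathbb{J}=\langle\{B_{\mathbf{a},\mathbf{b},\mathbf{c}}: (\mathbf{a},\mathbf{b},\mathbf{c})\in\mathbb{P}, p\mid k_\mathbf{b}\}\rangle_\mathbb{T}$ and prove three things: $\mathbb{J}$ is a two-sided ideal, $\mathbb{J}$ is nilpotent with nilpotent index exactly $2n_++1$, and $\mathbb{T}/\mathbb{J}$ is semisimple. Then $\mathbb{J}\subseteq\mathrm{Rad}(\mathbb{T})$ because $\mathbb{J}$ is a nilpotent ideal. Conversely, $\mathrm{Rad}(\mathbb{T})\subseteq\mathbb{J}$ because the image of $\mathrm{Rad}(\mathbb{T})$ in $\mathbb{T}/\mathbb{J}$ is a nilpotent ideal, hence zero by Lemma \ref{L;Lemma2.6}. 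The basis statement then follows from Lemma \ref{L;Lemma2.8}.

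First I reformulate the divisibility condition. By Lemma \ref{L;Lemma2.5}, $p\mid k_\mathbf{b}$ if and only if $\mathbbm{b}\cap[1,n]^+\neq\varnothing$. Note that $a\in[1,n]^+$ forces $|\mathbb{U}_a|\neq2$, since otherwise $p\mid 1$; hence $[1,n]^+\subseteq[1,n]^\circ$. This gives the final equivalences at once: $\mathrm{Rad}(\mathbb{T})=\{O\}$ iff $\mathbb{J}=\{O\}$ iff no $\mathbf{b}$ meets $[1,n]^+$ (take $\mathbf{b}=\mathbf{a}=\mathbf{c}$ with $(\mathbf{a},\mathbf{b},\mathbf{a})\in\mathbb{P}$ for a single index in $[1,n]^+\subseteq[1,n]^\circ$) iff $n_+=0$ iff $\mathfrak{S}$ is $p'$-valenced.

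\textbf{Ideal property.} I use Lemma \ref{L;Lemma2.8}: $B_{\mathbf{g},\mathbf{h},\mathbf{i}}B_{\mathbf{i},\mathbf{k},\mathbf{l}}=\overline{k_{\mathbf{h}\cap\mathbf{i}\cap\mathbf{k}}}B_{\mathbf{g},\mathbf{m},\mathbf{l}}$ with $\mathbf{m}=[\mathbf{g},\mathbf{h},\mathbf{i},\mathbf{k},\mathbf{l}]$. Suppose $a\in\mathbbm{h}\cap[1,n]^+$; the case $a\in\mathbbm{k}$ is symmetric. If $a\in\mathbbm{h}\cap\mathbbm{i}\cap\mathbbm{k}$, the scalar vanishes. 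Otherwise I do a case check using $\mathbbm{g}\triangle\mathbbm{i}\subseteq\mathbbm{h}\subseteq(\mathbbm{g}\triangle\mathbbm{i})\cup(\mathbbm{g}\cap\mathbbm{i})^\circ$ and its analogue for $(\mathbf{i},\mathbf{k},\mathbf{l})$.
\begin{itemize}
\item If $a\notin\mathbbm{i}$, then $a\in\mathbbm{g}$, and $a\in\mathbbm{k}$ iff $a\in\mathbbm{l}$. So either $a\in\mathbbm{g}\triangle\mathbbm{l}$, or $a\in(\mathbbm{g}\cap\mathbbm{l})^\circ\setminus\mathbbm{i}$.
\item If $a\in\mathbbm{i}$ and $a\notin\mathbbm{k}$, then $a\in\mathbbm{l}$. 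So either $a\in\mathbbm{g}\triangle\mathbbm{l}$, or $a\in\mathbbm{h}\cap(\mathbbm{g}\cap\mathbbm{i}\cap\mathbbm{l})^\circ$.
\end{itemize}
In every surviving case $a\in\mathbbm{m}$, so $p\mid k_\mathbf{m}$.

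\textbf{Nilpotency.} For a product of basis elements $B_{\mathbf{g}_0,\mathbf{h}_1,\mathbf{g}_1}B_{\mathbf{g}_1,\mathbf{h}_2,\mathbf{g}_2}\cdots$ of $\mathbb{J}$, I track each coordinate $a\in[1,n]^+$ separately. The condition $\mathbbm{g}_{t-1}\triangle\mathbbm{g}_t\subseteq\mathbbm{h}_t$, together with the vanishing of the scalar when $a\in\mathbbm{h}\cap\mathbbm{i}\cap\mathbbm{k}$, should show the following. Along the chain, coordinate $a$ can lie in the accumulated middle index at most twice without killing the product: roughly once when entering the outer index and once when leaving it. An induction on the length, carrying the accumulated middle index $\mathbf{m}$, then gives that any product of $2n_++1$ elements of $\mathbb{J}$ is zero by pigeonhole. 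For sharpness I exhibit a nonzero product of length $2n_+$: walk the outer indices $\mathbf{0}\to\{a_1\}\to\mathbf{0}\to\{a_2\}\to\cdots$ through the coordinates $a_1,\ldots,a_{n_+}$ of $[1,n]^+$, so that each scalar $k_{\mathbf{h}\cap\mathbf{i}\cap\mathbf{k}}$ stays $k_\mathbf{0}=1$. I expect making this bookkeeping precise to be the main obstacle.

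\textbf{Semisimplicity of $\mathbb{T}/\mathbb{J}$.} The quotient has basis the images of $B_{\mathbf{a},\mathbf{b},\mathbf{c}}$ with $\mathbbm{b}\cap[1,n]^+=\varnothing$. For these, all structure constants $\overline{k_{\mathbf{h}\cap\mathbf{i}\cap\mathbf{k}}}$ are nonzero units, since coordinates with $|\mathbb{U}_a|=2$ contribute $1$ and those in $[1,n]^-$ contribute nonzero residues. I would try to show that the quotient decomposes as a tensor product over coordinates of the small algebras coming from single trivial schemes: matrix-unit-like algebras for coordinates in $[1,n]^-$ or of size two, and $2\times2$-matrix-type or $\F\oplus\F$ pieces for coordinates in $[1,n]^+$ once $\mathbb{J}$ is killed. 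Each such factor is semisimple by explicit matrix units. Since tensor products of split semisimple algebras are semisimple, Lemma \ref{L;Lemma2.6} finishes the argument.
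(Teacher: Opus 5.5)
The paper does not prove this lemma; it imports it from \cite{HJ}, so your plan has to stand on its own. Much of it does. Your case check that $\mathbb{J}$ is a two-sided ideal is correct, and the sandwich $\mathbb{J}\subseteq\mathrm{Rad}(\mathbb{T})\subseteq\mathbb{J}$ via a semisimple quotient is sound.

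\textbf{The gap: your lower-bound witness is zero.} Let $\mathbf{e}_t\in\mathbb{E}$ be the tuple whose only nonzero entry is the $a_t$-th. On your walk $\mathbf{0}\to\mathbf{e}_1\to\mathbf{0}$ the outer indices have disjoint supports, so $\mathbb{P}$ forces both middle indices to be $\mathbf{e}_1$. Lemma \ref{L;Lemma2.8} then gives
\[
B_{\mathbf{0},\mathbf{e}_1,\mathbf{e}_1}B_{\mathbf{e}_1,\mathbf{e}_1,\mathbf{0}}=\overline{k_{\mathbf{e}_1\cap\mathbf{e}_1\cap\mathbf{e}_1}}\,B_{\mathbf{0},\mathbf{0},\mathbf{0}}=\overline{|\mathbb{U}_{a_1}|-1}\,E_\mathbf{0}^*=O .
\]
The scalar is $k_{\mathbf{e}_1}$, not $k_\mathbf{0}$, because the intermediate outer index $\mathbf{e}_1$ meets both middle indices. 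Combinatorially, you are counting the $|\mathbb{U}_{a_1}|-1$ returns to $\mathbf{x}$ through neighbours differing in coordinate $a_1$, and that count vanishes mod $p$. So your product dies after two factors. In a coordinate $a\in[1,n]^+$, entering and then leaving the outer index ($0\to1\to0$) is killed. Only leaving and then re-entering ($1\to0\to1$) survives, and your walk does the former.

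\textbf{The fix.} Run the walk the other way, as in the proof of Lemma \ref{L;Lemma6.3}. We have $B_{\mathbf{1},\mathbf{e}_t,\mathbf{1}\setminus\mathbf{e}_t}B_{\mathbf{1}\setminus\mathbf{e}_t,\mathbf{e}_t,\mathbf{1}}=B_{\mathbf{1},\mathbf{e}_t,\mathbf{1}}$ with scalar $k_\mathbf{0}=1$. Then $\prod_{t=1}^{n_+}B_{\mathbf{1},\mathbf{e}_t,\mathbf{1}}=B_{\mathbf{1},\mathbf{1}^+,\mathbf{1}}\neq O$ is a nonzero product of $2n_+$ elements of $\mathbb{J}$.

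\textbf{A cleaner route for all three steps.} Your upper-bound bookkeeping is only sketched. The observation you reach only at the end makes everything rigorous at once. Both $E_\mathbf{g}^*$ and $\sum_{\mathbf{j}\in\mathbb{P}_{\mathbf{g},\mathbf{h},\mathbf{i}}}A_\mathbf{j}$ factor coordinatewise. Hence $B_{\mathbf{g},\mathbf{h},\mathbf{i}}$ is literally the Kronecker product of the corresponding one-coordinate elements. By Lemma \ref{L;Lemma2.8}, $\mathbb{T}=\mathbb{T}_1\otimes\cdots\otimes\mathbb{T}_n$, where $\mathbb{T}_a$ is the Terwilliger algebra of $\mathfrak{Tri}(\mathbb{U}_a)$ with respect to $\mathbf{x}_a$. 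The factors are as follows.
\begin{itemize}
\item If $|\mathbb{U}_a|=2$, then $\mathbb{T}_a\cong\mathrm{M}_2(\F)$.
\item If $a\in[1,n]^-$, then $\mathbb{T}_a\cong\mathrm{M}_2(\F)\oplus\F$.
\item If $a\in[1,n]^+$, let $R_a$ be the span of the three basis elements with middle index $1$. It is an ideal with $R_a^3=0\neq R_a^2$, and $\mathbb{T}_a/R_a\cong\F\oplus\F$. There is no $2\times2$ piece here, contrary to your description.
\end{itemize}
Then $\mathbb{J}=\sum_{a\in[1,n]^+}\mathbb{T}_1\otimes\cdots\otimes R_a\otimes\cdots\otimes\mathbb{T}_n$. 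In any product of $2n_++1$ elements of $\mathbb{J}$, some $a$ occurs three times, and the $a$-th tensor factor then lies in $R_a^3=0$. Finally, $\mathbb{T}/\mathbb{J}\cong\bigotimes_a\mathbb{T}_a/R_a$ (with $R_a=0$ for $a\notin[1,n]^+$) is a direct sum of full matrix algebras over $\F$.
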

\begin{lem}\label{L;Lemma2.10}\cite[Theorems 4.10, 4.13]{HJ}
$\mathrm{Z}(\mathbb{T})$ has an $\F$-basis $\{C_\mathbf{a}: (\mathbf{1},\mathbf{a}, \mathbf{1})\in\mathbb{P}\}$. Moreover, assume that $(\mathbf{1}, \mathbf{g}, \mathbf{1}), (\mathbf{1}, \mathbf{h}, \mathbf{1})\in\mathbb{P}$. Then
$C_\mathbf{g}C_\mathbf{h}=\overline{k_{\mathbf{g}\cap\mathbf{h}}}C_{\mathbf{g}\cup\mathbf{h}}$.
\end{lem}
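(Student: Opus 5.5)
The plan is to prove the two assertions separately: first the product formula, which is a direct computation, and then the basis statement, which needs centrality, linear independence and spanning. Throughout I use that $k$ is multiplicative over disjoint unions of supports: by Lemma \ref{L;Lemma2.5}, $k_\mathbf{a}k_\mathbf{b}=k_{\mathbf{a}\cup\mathbf{b}}$ whenever $\mathbbm{a}\cap\mathbbm{b}=\varnothing$.

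\textbf{Product formula.} Fix $(\mathbf{1},\mathbf{g},\mathbf{1}),(\mathbf{1},\mathbf{h},\mathbf{1})\in\mathbb{P}$ and expand $C_\mathbf{g}C_\mathbf{h}$ via Equation \eqref{Eq;5}. By Lemma \ref{L;Lemma2.8}, only diagonal pairs survive:
$$B_{\mathbf{j},\mathbf{g}\cap\mathbf{j},\mathbf{j}}B_{\mathbf{j},\mathbf{h}\cap\mathbf{j},\mathbf{j}}=\overline{k_{\mathbf{g}\cap\mathbf{h}\cap\mathbf{j}}}B_{\mathbf{j},[\mathbf{j},\mathbf{g}\cap\mathbf{j},\mathbf{j},\mathbf{h}\cap\mathbf{j},\mathbf{j}],\mathbf{j}}.$$
The paper already notes that $[\mathbf{j},\mathbf{g}\cap\mathbf{j},\mathbf{j},\mathbf{h}\cap\mathbf{j},\mathbf{j}]=(\mathbf{g}\cap\mathbf{j})\cup(\mathbf{h}\cap\mathbf{j})=(\mathbf{g}\cup\mathbf{h})\cap\mathbf{j}$. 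For the coefficients, the multiset union of $\mathbbm{g}\setminus\mathbbm{j}$, $\mathbbm{h}\setminus\mathbbm{j}$ and $\mathbbm{g}\cap\mathbbm{h}\cap\mathbbm{j}$ equals the multiset union of $(\mathbbm{g}\cup\mathbbm{h})\setminus\mathbbm{j}$ and $\mathbbm{g}\cap\mathbbm{h}$. Hence
$$k_{\mathbf{g}\setminus\mathbf{j}}k_{\mathbf{h}\setminus\mathbf{j}}k_{\mathbf{g}\cap\mathbf{h}\cap\mathbf{j}}=k_{\mathbf{g}\cap\mathbf{h}}k_{(\mathbf{g}\cup\mathbf{h})\setminus\mathbf{j}},$$
and summing over $\mathbf{j}$ gives $C_\mathbf{g}C_\mathbf{h}=\overline{k_{\mathbf{g}\cap\mathbf{h}}}C_{\mathbf{g}\cup\mathbf{h}}$. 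One should also check that $(\mathbf{1},\mathbf{g}\cup\mathbf{h},\mathbf{1})\in\mathbb{P}$; this holds because $\mathbbm{g},\mathbbm{h}\subseteq[1,n]^\circ$.

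\textbf{Centrality.} It suffices to check $C_\mathbf{g}B_{\mathbf{a},\mathbf{b},\mathbf{c}}=B_{\mathbf{a},\mathbf{b},\mathbf{c}}C_\mathbf{g}$ for every $(\mathbf{a},\mathbf{b},\mathbf{c})\in\mathbb{P}$, using Lemma \ref{L;Lemma2.8}.
\begin{itemize}
\item The left side is $\overline{k_{\mathbf{g}\setminus\mathbf{a}}k_{\mathbf{g}\cap\mathbf{a}\cap\mathbf{b}}}B_{\mathbf{a},\mathbf{l},\mathbf{c}}$ with $\mathbf{l}=[\mathbf{a},\mathbf{g}\cap\mathbf{a},\mathbf{a},\mathbf{b},\mathbf{c}]$. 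Unwinding the definition and using $\mathbbm{a}\triangle\mathbbm{c}\subseteq\mathbbm{b}\subseteq(\mathbbm{a}\triangle\mathbbm{c})\cup(\mathbbm{a}\cap\mathbbm{c})^\circ$ gives $\mathbbm{l}=\mathbbm{b}\cup(\mathbbm{g}\cap\mathbbm{a}\cap\mathbbm{c})^\circ$.
\item The right side is $\overline{k_{\mathbf{g}\setminus\mathbf{c}}k_{\mathbf{g}\cap\mathbf{c}\cap\mathbf{b}}}B_{\mathbf{a},\mathbf{l}',\mathbf{c}}$, and the same computation gives $\mathbf{l}'=\mathbf{l}$.
\item Since $\mathbbm{b}\supseteq\mathbbm{a}\triangle\mathbbm{c}$, the set $\mathbbm{g}\setminus(\mathbbm{a}\cap\mathbbm{b})$ is the disjoint union of $\mathbbm{g}\setminus(\mathbbm{a}\cup\mathbbm{c})$ and $(\mathbbm{g}\cap\mathbbm{a}\cap\mathbbm{c})\setminus\mathbbm{b}$. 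This expression is symmetric in $\mathbf{a},\mathbf{c}$, and it is the complement of $\mathbbm{g}\cap\mathbbm{a}\cap\mathbbm{b}$ inside $\mathbbm{g}$ after removing $\mathbbm{g}\setminus\mathbbm{a}$. Multiplicativity of $k$ then shows that the two coefficients agree.
\end{itemize}

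\textbf{Linear independence.} The coefficient of $B_{\mathbf{1},\mathbf{g},\mathbf{1}}$ in $C_\mathbf{g}$ is $\overline{k_{\mathbf{0}}}=\overline{1}$. Every other $C_{\mathbf{g}'}$ involves $B_{\mathbf{1},\cdot,\mathbf{1}}$ only through $B_{\mathbf{1},\mathbf{g}',\mathbf{1}}$. Independence then follows from Lemma \ref{L;Lemma2.8}.

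\textbf{Spanning (main obstacle).} Let $Z\in\mathrm{Z}(\mathbb{T})$ and expand it in the basis of Lemma \ref{L;Lemma2.8}.
\begin{itemize}
\item Commuting with every $E_\mathbf{a}^*=B_{\mathbf{a},\mathbf{0},\mathbf{a}}$ forces $Z$ into the span of the $B_{\mathbf{a},\mathbf{b},\mathbf{a}}$.
\item Subtract $\sum_\mathbf{g}z_\mathbf{g}C_\mathbf{g}$, where $z_\mathbf{g}$ is the coefficient of $B_{\mathbf{1},\mathbf{g},\mathbf{1}}$ in $Z$. This gives a central $Z'$ with no $B_{\mathbf{1},\cdot,\mathbf{1}}$-component.
\item To show $Z'=O$, fix $\mathbf{a}$ and use the "transport" elements $B_{\mathbf{1},\mathbf{1}\setminus\mathbf{a},\mathbf{a}}$. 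The triple $(\mathbf{1},\mathbf{1}\setminus\mathbf{a},\mathbf{a})$ lies in $\mathbb{P}$ since $\mathbbm{1}\triangle\mathbbm{a}=[1,n]\setminus\mathbbm{a}$.
\item Comparing the two sides of $B_{\mathbf{1},\mathbf{1}\setminus\mathbf{a},\mathbf{a}}Z'=Z'B_{\mathbf{1},\mathbf{1}\setminus\mathbf{a},\mathbf{a}}$ via Lemma \ref{L;Lemma2.8}, the left side lies in the span of the $B_{\mathbf{1},\cdot,\mathbf{a}}$. The right side is $O$, because $Z'$ has no $\mathbf{1}$-row component.
\end{itemize}
The delicate point is that the left-side coefficients are $\overline{k_{(\mathbf{1}\setminus\mathbf{a})\cap\mathbf{a}\cap\mathbf{b}}}=\overline{k_\mathbf{0}}=\overline{1}$, so they are nonzero. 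I also need to check that the map $\mathbf{b}\mapsto[\mathbf{1},\mathbf{1}\setminus\mathbf{a},\mathbf{a},\mathbf{b},\mathbf{a}]$ is injective on $\{\mathbf{b}:\mathbbm{b}\subseteq\mathbbm{a}^\circ\}$. It should equal $(\mathbf{1}\setminus\mathbf{a})\cup\mathbf{b}$, which is clearly injective. If so, every $B_{\mathbf{a},\mathbf{b},\mathbf{a}}$-coefficient of $Z'$ vanishes, so $Z'=O$ and the $C_\mathbf{g}$ span $\mathrm{Z}(\mathbb{T})$.
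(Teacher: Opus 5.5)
The paper does not prove this lemma; it quotes it from \cite{HJ} (Theorems 4.10 and 4.13 there). Your proposal is correct and is an independent, self-contained route.

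You derive everything from the multiplication rule of Lemma \ref{L;Lemma2.8} together with Lemma \ref{L;Lemma2.5}. The product formula and centrality become structure-constant checks, and linear independence is read off from the $B_{\mathbf{1},\mathbf{g},\mathbf{1}}$-coordinates. Spanning rests on the identity $B_{\mathbf{1},\mathbf{1}\setminus\mathbf{a},\mathbf{a}}B_{\mathbf{a},\mathbf{b},\mathbf{a}}=B_{\mathbf{1},(\mathbf{1}\setminus\mathbf{a})\cup\mathbf{b},\mathbf{a}}$. By it, a central element in the span of the $B_{\mathbf{a},\mathbf{b},\mathbf{a}}$ that vanishes in the $E^*_{\mathbf{1}}$-corner is zero.

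I checked your bracket computations $\mathbbm{b}\cup(\mathbbm{g}\cap\mathbbm{a}\cap\mathbbm{c})^\circ$ and $(\mathbf{1}\setminus\mathbf{a})\cup\mathbf{b}$, and the multiset identity behind $C_\mathbf{g}C_\mathbf{h}=\overline{k_{\mathbf{g}\cap\mathbf{h}}}C_{\mathbf{g}\cup\mathbf{h}}$; they are right. Your route makes the lemma a consequence of Lemma \ref{L;Lemma2.8} alone, whereas the citation only buys brevity.

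As an aside, your coordinatewise checks reflect a structural fact. Each $B_{\mathbf{a},\mathbf{b},\mathbf{c}}$ is a pure tensor of local basis elements, so $\mathbb{T}$ is the tensor product of the Terwilliger $\F$-algebras of the $\mathfrak{Tri}(\mathbb{U}_t)$. Correspondingly, $C_\mathbf{g}$ is the pure tensor with factor $(|\mathbb{U}_t|-1)E^*_0+E^*_1JE^*_1$ for $t\in\mathbbm{g}$ and $I$ otherwise, where $E^*_0,E^*_1$ are the local dual idempotents and $J$ is the all-one matrix on $\mathbb{U}_t$. Then $\mathrm{Z}(\mathbb{A}\otimes\mathbb{B})=\mathrm{Z}(\mathbb{A})\otimes\mathrm{Z}(\mathbb{B})$ gives an even shorter proof.

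One step needs repair. In the centrality check, the set identity you state is false. The set $\mathbbm{g}\setminus(\mathbbm{a}\cap\mathbbm{b})$ also contains $\mathbbm{g}\cap(\mathbbm{c}\setminus\mathbbm{a})$, which lies in neither of your two pieces, so the sentence does not establish the symmetry you want.

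The correct statement is this. The left coefficient is $\overline{k_\mathbf{m}}$ with $\mathbbm{m}=(\mathbbm{g}\setminus\mathbbm{a})\cup(\mathbbm{g}\cap\mathbbm{a}\cap\mathbbm{b})=\mathbbm{g}\setminus(\mathbbm{a}\setminus\mathbbm{b})=\mathbbm{g}\setminus((\mathbbm{a}\cap\mathbbm{c})\setminus\mathbbm{b})$. The last equality uses $\mathbbm{a}\setminus\mathbbm{c}\subseteq\mathbbm{b}$. Interchanging $\mathbf{a}$ and $\mathbf{c}$ gives the right coefficient, and this set is symmetric in $\mathbf{a},\mathbf{c}$. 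With that fix the proof is complete.
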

\begin{lem}\label{L;Lemma2.11}\cite[Corollary 5.22]{HJ}
$\mathrm{Rad}(\mathrm{Z}(\mathbb{T}))$ has an $\F$-basis $\{C_\mathbf{a}\!:\! (\mathbf{1}, \mathbf{a}, \mathbf{1})\!\in\!\mathbb{P}, p\!\mid\! k_\mathbf{a}\}$. Moreover, the nilpotent index of $\mathrm{Rad}(\mathrm{Z}(\mathbb{T}))$ is equal to $n_+\!+\!1$. Furthermore, the following are equivalent: $\mathrm{Rad}(\mathrm{Z}(\mathbb{T}))=\{O\}$; $n_+=0$; and $\mathfrak{S}$ is a $p'$-valenced scheme.
\end{lem}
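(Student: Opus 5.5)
The statement is Lemma \ref{L;Lemma2.11}. The plan is to work entirely inside the commutative algebra $\mathrm{Z}(\mathbb{T})$, using the basis and multiplication rule of Lemma \ref{L;Lemma2.10}. Set $\mathbb{J}=\langle\{C_\mathbf{a}: (\mathbf{1},\mathbf{a},\mathbf{1})\in\mathbb{P},\ p\mid k_\mathbf{a}\}\rangle_{\mathrm{Z}(\mathbb{T})}$. The goal is to show that $\mathbb{J}$ is a nilpotent ideal of nilpotent index exactly $n_++1$, and that $\mathrm{Z}(\mathbb{T})/\mathbb{J}$ is semisimple; by Lemma \ref{L;Lemma2.6} this gives $\mathbb{J}=\mathrm{Rad}(\mathrm{Z}(\mathbb{T}))$.

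First I record two facts that follow from the definitions. The condition $(\mathbf{1},\mathbf{a},\mathbf{1})\in\mathbb{P}$ means $\mathbbm{a}\subseteq[1,n]^\circ$. By Lemma \ref{L;Lemma2.5}, $k_\mathbf{a}=\prod_{h\in\mathbbm{a}}(|\mathbb{U}_h|-1)$, so for such $\mathbf{a}$ we have $p\mid k_\mathbf{a}$ if and only if $\mathbbm{a}^+\neq\varnothing$. Moreover, $k$ is multiplicative over disjoint unions.

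\emph{Ideal and nilpotency.} Take $C_\mathbf{g}\in\mathrm{Z}(\mathbb{T})$ and $C_\mathbf{a}\in\mathbb{J}$. Then $C_\mathbf{g}C_\mathbf{a}=\overline{k_{\mathbf{g}\cap\mathbf{a}}}C_{\mathbf{g}\cup\mathbf{a}}$, and $\mathbbm{a}^+\subseteq(\mathbbm{g}\cup\mathbbm{a})^+$, so $C_{\mathbf{g}\cup\mathbf{a}}\in\mathbb{J}$; hence $\mathbb{J}$ is an ideal. By multilinearity, for nilpotency it suffices to consider products of basis elements $C_{\mathbf{a}_1}\cdots C_{\mathbf{a}_m}$ with every $\mathbbm{a}_i^+\neq\varnothing$. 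Multiplying from the left, this product equals $C_{\mathbf{a}_1\cup\cdots\cup\mathbf{a}_m}$ times the scalar $\prod_{i=2}^m\overline{k_{(\mathbf{a}_1\cup\cdots\cup\mathbf{a}_{i-1})\cap\mathbf{a}_i}}$. If this scalar is nonzero, then $(\mathbbm{a}_1\cup\cdots\cup\mathbbm{a}_{i-1})\cap\mathbbm{a}_i^+=\varnothing$ for every $i$. So the sets $\mathbbm{a}_i^+$ would be pairwise disjoint nonempty subsets of $[1,n]^+$, forcing $m\le n_+$. Therefore $\mathbb{J}^{n_++1}=\{O\}$.

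\emph{Sharpness.} Let $t_1,\dots,t_{n_+}$ be the elements of $[1,n]^+$, and let $\mathbf{e}_{t}$ be the element of $\mathbb{E}$ with $\mathbbm{e}_t=\{t\}$. The product $C_{\mathbf{e}_{t_1}}\cdots C_{\mathbf{e}_{t_{n_+}}}$ involves only disjoint intersections, so every scalar is $\overline{k_\mathbf{0}}=\overline{1}$ and the product equals $C_{\mathbf{1}^+}\neq O$. Hence the index is exactly $n_++1$; when $n_+=0$ we have $\mathbb{J}=\{O\}$ and index $1$.

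\emph{The quotient is semisimple.} This is the main step. Let $\mathbb{W}$ be the span of the $C_\mathbf{a}$ with $\mathbbm{a}\subseteq[1,n]^-$. It is closed under multiplication, because unions of subsets of $[1,n]^-$ stay in $[1,n]^-$, and it is a vector-space complement to $\mathbb{J}$. Hence $\mathbb{W}\cong\mathrm{Z}(\mathbb{T})/\mathbb{J}$ as $\F$-algebras. For $\mathbbm{a}\subseteq[1,n]^-$ the scalar $\overline{k_\mathbf{a}}$ is invertible, so I rescale to $D_\mathbf{a}=\overline{k_\mathbf{a}}^{-1}C_\mathbf{a}$. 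Multiplicativity of $k$ over disjoint unions gives $k_{\mathbf{g}\cap\mathbf{h}}k_{\mathbf{g}\cup\mathbf{h}}=k_\mathbf{g}k_\mathbf{h}$, and therefore $D_\mathbf{g}D_\mathbf{h}=D_{\mathbf{g}\cup\mathbf{h}}$. Thus $\mathbb{W}$ is the semigroup algebra of the semilattice $(2^{[1,n]^-},\cup)$. By Möbius inversion, the elements $\sum_{\mathbf{b}\succeq\mathbf{a}}(-\overline{1})^{|\mathbbm{b}\setminus\mathbbm{a}|}D_\mathbf{b}$ form $2^{n_-}$ pairwise orthogonal idempotents that span $\mathbb{W}$. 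So $\mathbb{W}\cong\F^{2^{n_-}}$ is semisimple, and $\mathbb{J}=\mathrm{Rad}(\mathrm{Z}(\mathbb{T}))$.

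\emph{Equivalences.} $\mathrm{Rad}(\mathrm{Z}(\mathbb{T}))=\{O\}$ if and only if no admissible $\mathbf{a}$ has $\mathbbm{a}^+\neq\varnothing$, that is, if and only if $n_+=0$. By Lemma \ref{L;Lemma2.5}, $n_+=0$ holds if and only if $p\nmid k_\mathbf{g}$ for all $\mathbf{g}\in\mathbb{E}$, since indices outside $[1,n]^\circ$ contribute factor $1$. This is exactly the condition that $\mathfrak{S}$ is $p'$-valenced.
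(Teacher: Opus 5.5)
Your proof is correct, and it takes a genuinely different route: the paper gives no proof of this lemma but quotes it from \cite[Corollary 5.22]{HJ}. You derive it directly from the basis and product rule $C_\mathbf{g}C_\mathbf{h}=\overline{k_{\mathbf{g}\cap\mathbf{h}}}C_{\mathbf{g}\cup\mathbf{h}}$ of Lemma \ref{L;Lemma2.10} together with Lemma \ref{L;Lemma2.5}.

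The key steps all check out. The disjointness of the sets $\mathbbm{a}_i^+$ gives the nilpotency bound. The product $C_{\mathbf{e}_{t_1}}\cdots C_{\mathbf{e}_{t_{n_+}}}=C_{\mathbf{1}^+}$ witnesses sharpness, which is the same device the paper later uses for blocks in Lemma \ref{L;Lemma5.8}. The identity $k_{\mathbf{g}\cap\mathbf{h}}k_{\mathbf{g}\cup\mathbf{h}}=k_\mathbf{g}k_\mathbf{h}$ makes the rescaled elements multiply by union.

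The M\"obius idempotents are most cleanly justified via characters. For $S\subseteq[1,n]^-$, let $\chi_S$ send $\overline{k_\mathbf{b}}^{-1}C_\mathbf{b}$ to $\overline{1}$ if $\mathbbm{b}\subseteq S$ and to $\overline{0}$ otherwise. The matrix of these characters is unitriangular, so $\mathbb{W}\cong\F^{2^{n_-}}$ in every characteristic.

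Two small touch-ups are needed:
\begin{enumerate}
\item Lemma \ref{L;Lemma2.6} alone does not give $\mathbb{J}=\mathrm{Rad}(\mathrm{Z}(\mathbb{T}))$. Add that $\mathbb{J}\subseteq\mathrm{Rad}(\mathrm{Z}(\mathbb{T}))$ holds by the paper's definition of the radical as the sum of nilpotent ideals. Then note that the image of $\mathrm{Rad}(\mathrm{Z}(\mathbb{T}))$ in the semisimple quotient is a nilpotent ideal, hence zero.
\item Rename your $D_\mathbf{a}$, since it collides with the paper's $D_{\mathbf{g},\mathbf{h},\mathbf{i}}$ and $D_m$.
\end{enumerate}

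Your route gives more than the statement. Since $\mathbb{W}$ is a unital subalgebra of $\mathrm{Z}(\mathbb{T})$ complementing the nilpotent ideal $\mathbb{J}$, its $2^{n_-}$ M\"obius idempotents are already orthogonal idempotents of $\mathrm{Z}(\mathbb{T})$ summing to $I$. Each is primitive there: a nontrivial splitting would survive modulo $\mathbb{J}$, because a nilpotent ideal contains no nonzero idempotent. Hence they are exactly the block idempotents:
\[
\mathrm{Bl}(\mathbb{T})=\Bigl\{\sum_{\mathbbm{a}\subseteq\mathbbm{b}\subseteq[1,n]^-}(\overline{-1})^{|\mathbbm{b}\setminus\mathbbm{a}|}\,\overline{k_\mathbf{b}}^{-1}C_\mathbf{b}:\ \mathbf{a}\in\mathbb{E},\ \mathbbm{a}\subseteq[1,n]^-\Bigr\}.
\]
This recovers the count in Theorem \ref{T;Idempotent} without Lemmas \ref{L;Lemma3.1}--\ref{L;Lemma3.19}. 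As a check, in Example \ref{E;Example3.22} with $p\neq2$, $\mathbf{g}=(0,1)$, $\mathbf{h}=(1,0)$, the element $\overline{2}^{-1}C_\mathbf{g}$ is exactly $D_{\mathbf{0},\mathbf{0},\mathbf{0}}+D_{\mathbf{g},\mathbf{g},\mathbf{g}}+D_{\mathbf{h},\mathbf{0},\mathbf{h}}+D_{\mathbf{1},\mathbf{g},\mathbf{1}}$.

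The paper's longer route has its own payoff: it writes each block idempotent as a sum of the orthogonal idempotents $D_{\mathbf{a},\mathbf{b},\mathbf{a}}$ over a class $\mathbb{C}_g$, which is what Sections 4--6 rely on. Note also that your argument is self-contained only relative to Lemma \ref{L;Lemma2.10}, which is itself quoted from \cite{HJ}.
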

\begin{eg}\label{E;Example2.12}
Assume that $n=|\mathbb{U}_1|=2$, $|\mathbb{U}_2|=3$, $\mathbf{g}=(0,1)$, $\mathbf{h}=(1,0)$. Then $\mathbb{E}\!=\!\{\mathbf{0}, \mathbf{g}, \mathbf{h}, \mathbf{1}\}$. Then $\mathbb{T}$ has an $\F$-basis containing exactly $B_{\mathbf{0},\mathbf{0},\mathbf{0}}$, $B_{\mathbf{0},\mathbf{g},\mathbf{g}}$, $B_{\mathbf{0},\mathbf{h},\mathbf{h}}$, $B_{\mathbf{0},\mathbf{1},\mathbf{1}}$, $B_{\mathbf{g},\mathbf{0},\mathbf{g}}$, $B_{\mathbf{g},\mathbf{g},\mathbf{0}}$, $B_{\mathbf{g},\mathbf{g},\mathbf{g}}$, $B_{\mathbf{g},\mathbf{h},\mathbf{1}}$, $B_{\mathbf{g},\mathbf{1},\mathbf{h}}$, $B_{\mathbf{g},\mathbf{1},\mathbf{1}}$, $B_{\mathbf{h},\mathbf{0},\mathbf{h}}$, $B_{\mathbf{h},\mathbf{g},\mathbf{1}}$, $B_{\mathbf{h},\mathbf{h},\mathbf{0}}$, $B_{\mathbf{h},\mathbf{1},\mathbf{g}}$, $B_{\mathbf{1},\mathbf{0},\mathbf{1}}$, $B_{\mathbf{1},\mathbf{g},\mathbf{h}}$, $B_{\mathbf{1},\mathbf{g},\mathbf{1}}$, $B_{\mathbf{1},\mathbf{h},\mathbf{g}}$, $B_{\mathbf{1},\mathbf{1},\mathbf{0}}$, $B_{\mathbf{1},\mathbf{1},\mathbf{g}}$ and $B_{\mathbf{g},\mathbf{1}, \mathbf{1}}B_{\mathbf{1},\mathbf{g}, \mathbf{1}}\!=\!\overline{2}B_{\mathbf{g},\mathbf{1}, \mathbf{1}}$ by Lemmas \ref{L;Lemma2.8} and \ref{L;Lemma2.5}.
\end{eg}
\begin{eg}\label{E;Example2.13}
Assume that $n=|\mathbb{U}_1|=2$, $|\mathbb{U}_2|=3$, $\mathbf{g}=(0,1)$, $\mathbf{h}=(1,0)$. Then $\mathbb{E}=\{\mathbf{0}, \mathbf{g}, \mathbf{h}, \mathbf{1}\}$. Then $\mathrm{Z}(\mathbb{T})$ has an $\F$-basis $\{C_\mathbf{0}, C_\mathbf{g}\}$ by Lemma \ref{L;Lemma2.10}. Moreover, notice that $C_\mathbf{0}C_\mathbf{0}=C_\mathbf{0}$, $C_\mathbf{0}C_\mathbf{g}=C_\mathbf{g}C_\mathbf{0}=C_\mathbf{g}$, $C_\mathbf{g}C_\mathbf{g}=\overline{2}C_\mathbf{g}$ by Lemmas \ref{L;Lemma2.10} and \ref{L;Lemma2.5}.
\end{eg}
Let $n_{\mathbf{g}, \mathbf{h}, \mathbf{i}}=|(\mathbbm{g}\cap\mathbbm{i})^-\setminus\mathbbm{h}|$ and $n_{\mathbf{g}, \mathbf{h}}=n_{\mathbf{g}, \mathbf{h}, \mathbf{g}}+|\mathbbm{g}\setminus(\mathbbm{g}^\circ\cup\mathbbm{h})|$. So $n_{\mathbf{g}, \mathbf{h}, \mathbf{i}}=n_{\mathbf{i},\mathbf{h}, \mathbf{g}}$. If $m\in[0, n_{\mathbf{g},\mathbf{h}}]$, let $\mathbb{U}_{\mathbf{g}, \mathbf{h}, m}=\{\mathbf{a}: \mathbf{a}\in\mathbb{E}, \mathbf{h}\preceq\mathbf{a}\preceq\mathbf{g}, p\nmid k_\mathbf{a}, |\mathbbm{a}\setminus\mathbbm{h}|=m\}$. If $\mathbf{h}\preceq\mathbf{g}$, $p\nmid k_\mathbf{h}$, $q, r\in[0, n_{\mathbf{g}, \mathbf{h}}]$, and $q\neq r$, then $\mathbb{U}_{\mathbf{g}, \mathbf{h}, q}\neq\varnothing=\mathbb{U}_{\mathbf{g}, \mathbf{h}, q}\cap\mathbb{U}_{\mathbf{g}, \mathbf{h}, r}$ by Lemmas \ref{L;Lemma2.3} and \ref{L;Lemma2.5}. If $(\mathbf{g}, \mathbf{h}, \mathbf{i})\in\mathbb{P}$ and $p\nmid k_\mathbf{h}$, then $(\mathbf{g}, \mathbf{q}, \mathbf{i})\in\mathbb{P}$ for any $m\in[0,n_{\mathbf{g}\mathbf{i}, \mathbf{h}}]$ and $\mathbf{q}\!\in\!\mathbb{U}_{\mathbf{g}\mathbf{i}, \mathbf{h}, m}$.
If $p\!\nmid\! k_\mathbf{g}$ or $p\!\nmid\! k_\mathbf{h}$, then $p\!\nmid\! k_{\mathbf{g}\cap\mathbf{h}}$ by Lemma \ref{L;Lemma2.5}. If $(\mathbf{g}, \mathbf{h}, \mathbf{i})\!\in\!\mathbb{P}$ and $p\!\nmid\! k_\mathbf{h}$, let
\begin{align}\label{Eq;6}
D_{\mathbf{g}, \mathbf{h}, \mathbf{i}}=
\sum_{m=0}^{n_{\mathbf{g}\mathbf{i}, \mathbf{h}}}\sum_{\mathbf{q}\in\mathbb{U}_{\mathbf{g}\mathbf{i}, \mathbf{h}, m}}(\overline{-1})^m\overline{k_{\mathbf{i}\cap\mathbf{q}}}^{-1}B_{\mathbf{g}, \mathbf{q}, \mathbf{i}}
\end{align}
and notice that $D_{\mathbf{g}, \mathbf{h}, \mathbf{i}}, D_{\mathbf{i}, \mathbf{h}, \mathbf{g}}$ are defined matrices in $\mathbb{T}$.
For the completeness, let $D_{\mathbf{g}, \mathbf{h}, \mathbf{i}}\!=\!O$ if $(\mathbf{g}, \mathbf{h}, \mathbf{i})\!\in\!\mathbb{P}$ and $p\mid k_\mathbf{h}$. As $(\mathbf{0},\mathbf{0}, \mathbf{0})\in\{(\mathbf{a}, \mathbf{b}, \mathbf{a}):(\mathbf{a}, \mathbf{b}, \mathbf{a})\in\mathbb{P}, p\nmid k_\mathbf{b}\}$ by Lemma \ref{L;Lemma2.5}, notice that $(\mathbf{0},\mathbf{0}, \mathbf{0})\in\{(\mathbf{a}, \mathbf{b}, \mathbf{c}):(\mathbf{a}, \mathbf{b}, \mathbf{c})\in\mathbb{P}, p\nmid k_\mathbf{b}\}$. For any $(\mathbf{g}, \mathbf{h}, \mathbf{g}), (\mathbf{i}, \mathbf{j}, \mathbf{i})\in\mathbb{P}$ and $p\nmid k_\mathbf{h}k_\mathbf{j}$, let
$(\mathbf{g}, \mathbf{h}, \mathbf{g})\approx(\mathbf{i}, \mathbf{j}, \mathbf{i})$ if $\mathbbm{g}^-\setminus\mathbbm{h}=\mathbbm{i}^-\setminus\mathbbm{j}$. By Lemma \ref{L;Lemma2.3}, notice that $\approx$ is an equivalence relation on $\{(\mathbf{a}, \mathbf{b}, \mathbf{a}):(\mathbf{a}, \mathbf{b}, \mathbf{a})\in\mathbb{P}, p\nmid k_\mathbf{b}\}$ with exactly $2^{n_{-}}$ $\approx$-equivalence classes. Let $\mathbb{C}_1, \mathbb{C}_2,\ldots, \mathbb{C}_{2^{n_{-}}}$ be exactly all pairwise distinct $\approx$-equivalence classes of $\{(\mathbf{a}, \mathbf{b}, \mathbf{a}):(\mathbf{a}, \mathbf{b}, \mathbf{a})\in\mathbb{P}, p\nmid k_\mathbf{b}\}$. If $m\in[1, 2^{n_{-}}]$, let
\begin{align}\label{Eq;7}
D_m=\sum_{(\mathbf{q}, \mathbf{r}, \mathbf{q})\in\mathbb{C}_m}D_{\mathbf{q}, \mathbf{r}, \mathbf{q}}
\end{align}
and notice that $D_m$ is a defined matrix in $\mathbb{T}$. For any $(\mathbf{g}, \mathbf{h}, \mathbf{i}), (\mathbf{j}, \mathbf{k}, \mathbf{l})\!\in\!\mathbb{P}$ and $p\!\nmid\! k_\mathbf{h}k_\mathbf{k}$, let $(\mathbf{g}, \mathbf{h}, \mathbf{i})\sim(\mathbf{j}, \mathbf{k}, \mathbf{l})$ if $(\mathbbm{g}\cap\mathbbm{i})^\circ\setminus\mathbbm{h}=(\mathbbm{j}\cap\mathbbm{l})^\circ\setminus\mathbbm{k}$. Notice that $\sim$ is an equivalence relation on $\{(\mathbf{a}, \mathbf{b}, \mathbf{c}): (\mathbf{a}, \mathbf{b}, \mathbf{c})\in\mathbb{P}, p\nmid k_\mathbf{b}\}$. If $m\in[1, 2^{n_{-}}]$, Lemma \ref{L;Lemma2.3} implies that there are exactly $2^{n_+}$ $\sim$-equivalence classes of $\{(\mathbf{a}, \mathbf{b}, \mathbf{c}): (\mathbf{a}, \mathbf{b}, \mathbf{c})\in\mathbb{P}, p\nmid k_\mathbf{b}\}$ having nontrivial intersection with $\mathbb{C}_m$. If $m\in[1,2^{n_{-}}]$, let $\mathbb{C}_{m,1}, \mathbb{C}_{m,2},\ldots, \mathbb{C}_{m, 2^{n+}}$ be all these $\sim$-equivalence classes of $\{(\mathbf{a}, \mathbf{b}, \mathbf{c}): (\mathbf{a}, \mathbf{b}, \mathbf{c})\in\mathbb{P}, p\nmid k_\mathbf{b}\}$. If $m\in[1,2^{n_{-}}]$, let $\mathbb{C}_m(q)=\{\mathbf{a}: \exists\ \mathbf{b}\in\mathbb{E}, (\mathbf{a}, \mathbf{b}, \mathbf{a})\in\mathbb{C}_{m, q}\}$ for any $q\in[1, 2^{n_+}]$. If $m\in[1,2^{n_{-}}]$ and $q\in[1, 2^{n_+}]$, Lemma \ref{L;Lemma2.3} implies that $|\mathbb{C}_m(q)|=2^{n-n_+-n_{\mathbf{r}, \mathbf{s}, \mathbf{r}}}$ for any $(\mathbf{r}, \mathbf{s},\mathbf{r})\in\mathbb{C}_m$. We end the whole section by displaying a sequence of lemmas and a related example.
\begin{lem}\label{L;Lemma2.14}\cite[Lemmas 7.3, 7.13, Theorem 7.15]{HJ}
$\mathbb{T}$ has an $\F$-basis containing exactly all matrices in $\{B_{\mathbf{a}, \mathbf{b}, \mathbf{c}}\!:\!(\mathbf{a}, \mathbf{b}, \mathbf{c})\!\in\!\mathbb{P}, p\mid k_\mathbf{b}\}\cup\{D_{\mathbf{a}, \mathbf{b}, \mathbf{c}}\!:\! (\mathbf{a}, \mathbf{b}, \mathbf{c})\!\in\!\mathbb{P}, p\nmid k_\mathbf{b}\}$. Moreover, assume that $(\mathbf{g}, \mathbf{h}, \mathbf{i}), (\mathbf{j}, \mathbf{k}, \mathbf{l})\in\mathbb{P}$ and $p\nmid k_\mathbf{h}k_\mathbf{k}$. Then $p\nmid k_{[\mathbf{g}, \mathbf{h}, \mathbf{i}, \mathbf{k}, \mathbf{l}]}$ if $\mathbf{i}=\mathbf{j}$ and $(\mathbbm{g}\cap\mathbbm{i})^\circ\setminus\mathbbm{h}\!=\!(\mathbbm{i}\cap\mathbbm{l})^\circ\setminus\mathbbm{k}$. Furthermore, $D_{\mathbf{g}, \mathbf{h},\mathbf{i}}D_{\mathbf{j}, \mathbf{k}, \mathbf{l}}=\delta_{\mathbf{i},\mathbf{j}}\delta_{(\mathbbm{g}\cap\mathbbm{i})^\circ\setminus\mathbbm{h}, (\mathbbm{i}\cap\mathbbm{l})^\circ\setminus\mathbbm{k}}D_{\mathbf{g}, [\mathbf{g}, \mathbf{h},\mathbf{i}, \mathbf{k}, \mathbf{l}], \mathbf{l}}$.
\end{lem}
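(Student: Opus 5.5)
The plan is to verify the three assertions of the statement in order: the basis claim, the $p'$-valency claim, and the product formula. Each reduces to Lemma \ref{L;Lemma2.8} together with a coordinatewise analysis based on Lemmas \ref{L;Lemma2.3}, \ref{L;Lemma2.4} and \ref{L;Lemma2.5}.

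\emph{Basis.} Fix $(\mathbf{g},\mathbf{i})$ and consider the $\F$-span of $\{B_{\mathbf{g},\mathbf{b},\mathbf{i}}: (\mathbf{g},\mathbf{b},\mathbf{i})\in\mathbb{P}\}$. By Equation \eqref{Eq;6}, $D_{\mathbf{g},\mathbf{h},\mathbf{i}}$ equals $\overline{k_{\mathbf{i}\cap\mathbf{h}}}^{-1}B_{\mathbf{g},\mathbf{h},\mathbf{i}}$ (this is the term $m=0$, since $\mathbb{U}_{\mathbf{g}\mathbf{i},\mathbf{h},0}=\{\mathbf{h}\}$) plus an $\F$-combination of $B_{\mathbf{g},\mathbf{q},\mathbf{i}}$ with $\mathbf{h}\prec\mathbf{q}$ strictly and $p\nmid k_\mathbf{q}$. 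Order the triples $(\mathbf{g},\mathbf{b},\mathbf{i})$ with $p\nmid k_\mathbf{b}$ by decreasing $|\mathbbm{b}|$. In this order the transition matrix from $\{B_{\mathbf{g},\mathbf{b},\mathbf{i}}: p\nmid k_\mathbf{b}\}$ to $\{D_{\mathbf{g},\mathbf{b},\mathbf{i}}: p\nmid k_\mathbf{b}\}$ is triangular with nonzero diagonal entries. Keeping the $B$'s with $p\mid k_\mathbf{b}$ unchanged and applying Lemma \ref{L;Lemma2.8} then gives the first assertion.

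\emph{The $p'$-claim.} Write $\mathbf{l}'=[\mathbf{g},\mathbf{h},\mathbf{i},\mathbf{k},\mathbf{l}]$. By Lemma \ref{L;Lemma2.5} it suffices to show $\mathbbm{l}'\cap[1,n]^+=\varnothing$. Note that $a\in[1,n]^+$ forces $|\mathbb{U}_a|\neq2$, since $p\nmid 1$. Also $\mathbbm{h}$ and $\mathbbm{k}$ contain no element of $[1,n]^+$, because $p\nmid k_\mathbf{h}k_\mathbf{k}$, while $\mathbbm{g}\triangle\mathbbm{i}\subseteq\mathbbm{h}$ and $\mathbbm{i}\triangle\mathbbm{l}\subseteq\mathbbm{k}$. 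Now take $a\in[1,n]^+$ and check the three pieces of $\mathbbm{l}'$ in turn.
\begin{itemize}
\item If $a\in\mathbbm{g}\setminus\mathbbm{l}$, then either $a\notin\mathbbm{i}$, so $a\in\mathbbm{g}\triangle\mathbbm{i}\subseteq\mathbbm{h}$, or $a\in\mathbbm{i}$, so $a\in\mathbbm{i}\triangle\mathbbm{l}\subseteq\mathbbm{k}$. Both are contradictions, and $a\in\mathbbm{l}\setminus\mathbbm{g}$ is symmetric.
\item If $a\in(\mathbbm{g}\cap\mathbbm{l})^\circ\setminus\mathbbm{i}$, then $a\in\mathbbm{g}\setminus\mathbbm{i}\subseteq\mathbbm{h}$, a contradiction.
\item The third piece lies inside $\mathbbm{h}\cup\mathbbm{k}$, so it cannot contain $a$.
\end{itemize}
Hence $p\nmid k_{\mathbf{l}'}$. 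The hypothesis $\mathbf{i}=\mathbf{j}$ is only needed so that $\mathbf{l}'$ is the relevant index.

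\emph{Product formula (main obstacle).} If $\mathbf{i}\neq\mathbf{j}$, the product is $O$ by Lemma \ref{L;Lemma2.8}. Otherwise, expand $D_{\mathbf{g},\mathbf{h},\mathbf{i}}D_{\mathbf{i},\mathbf{k},\mathbf{l}}$ using Equation \eqref{Eq;6} and Lemma \ref{L;Lemma2.8} into a double sum over $\mathbf{q},\mathbf{r}$ of
\[
(\overline{-1})^{|\mathbbm{q}\setminus\mathbbm{h}|+|\mathbbm{r}\setminus\mathbbm{k}|}\,\overline{k_{\mathbf{i}\cap\mathbf{q}}}^{-1}\,\overline{k_{\mathbf{l}\cap\mathbf{r}}}^{-1}\,\overline{k_{\mathbf{q}\cap\mathbf{i}\cap\mathbf{r}}}\,B_{\mathbf{g},[\mathbf{g},\mathbf{q},\mathbf{i},\mathbf{r},\mathbf{l}],\mathbf{l}}.
\]
The plan is to show that this sum factors over coordinates $a\in[1,n]$. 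The coefficients are products over coordinates by Lemma \ref{L;Lemma2.5}, and membership of $a$ in the resulting index depends only on the $a$-th entries. So it suffices to check a one-coordinate identity for each $a\in(\mathbbm{g}\cap\mathbbm{i}\cap\mathbbm{l})^-$ not in $\mathbbm{h}\cap\mathbbm{k}$; the other coordinates are forced and contribute trivially. Let $u=|\mathbb{U}_a|$ and set $x=E_1^*A_1E_1^*$ in the local trivial scheme, so that $x^2=\overline{u-1}\,E_1^*+\overline{u-2}\,x$ by Lemma \ref{L;Lemma2.2}. The local factors are then $E_1^*-\overline{u-1}^{-1}x$ (when $a\notin\mathbbm{h}$) and $\overline{u-1}^{-1}x$ (when $a\in\mathbbm{h}$). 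Direct computation shows the following:
\begin{itemize}
\item $E_1^*-\overline{u-1}^{-1}x$ is idempotent;
\item it annihilates $x$;
\item the product of two factors of type $\overline{u-1}^{-1}x$ reproduces the correct $D$-type term.
\end{itemize}
The first two items give the Kronecker factor $\delta_{(\mathbbm{g}\cap\mathbbm{i})^\circ\setminus\mathbbm{h},(\mathbbm{i}\cap\mathbbm{l})^\circ\setminus\mathbbm{k}}$; the last, together with the $\triangle$ and $(\cdot)^\circ\setminus\mathbbm{i}$ coordinates, reassembles into $D_{\mathbf{g},\mathbf{l}',\mathbf{l}}$. The bookkeeping of this factorization, namely matching the index sets $\mathbb{U}_{\mathbf{g}\mathbf{l},\mathbf{l}',m}$ and the signs with the expanded double sum, is where I expect the real work to lie.
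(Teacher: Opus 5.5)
\textbf{The product formula does not go through as written: the local factors and the local quadratic relation are wrong.}

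Your basis argument is correct: for fixed $(\mathbf{g},\mathbf{i})$, the $D_{\mathbf{g},\mathbf{b},\mathbf{i}}$ are triangular against the $B_{\mathbf{g},\mathbf{b},\mathbf{i}}$ with nonzero diagonal $\overline{k_{\mathbf{i}\cap\mathbf{b}}}^{-1}$. Your $p'$-argument is also correct. Your case check in effect shows $\mathbbm{l}'\subseteq\mathbbm{h}\cup\mathbbm{k}$, so the condition on the $\circ$-sets is not needed. However, $\mathbf{i}=\mathbf{j}$ is genuinely used, to get $\mathbbm{i}\triangle\mathbbm{l}\subseteq\mathbbm{k}$, contrary to your remark. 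The paper gives no proof of this lemma; it is quoted from \cite{HJ}.

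The Kronecker-product strategy for the product formula is sound. Each $A_\mathbf{j}$ and $E^*_\mathbf{g}$ is a tensor product of one-coordinate matrices, so each $D_{\mathbf{g},\mathbf{h},\mathbf{i}}$ is a tensor product of local factors. The three computations you list are simply false for the elements you wrote down.

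There are two errors in the local model, writing $x=E_1^*A_1E_1^*$ and $u=|\mathbb{U}_a|$.
\begin{enumerate}
\item By \eqref{Eq;4}, $B_{\mathbf{g},\mathbf{q},\mathbf{i}}$ sums $E^*_\mathbf{g}A_\mathbf{j}E^*_\mathbf{i}$ over \emph{all} $\mathbf{j}$ with $\mathbbm{g}\triangle\mathbbm{i}\subseteq\mathbbm{j}\subseteq\mathbbm{q}$. So at $a\in\mathbbm{g}\cap\mathbbm{i}\cap\mathbbm{q}$ the local piece is $J_a=E_1^*(A_0+A_1)E_1^*=E_1^*+x$, not $x$.
\item Your relation $x^2=\overline{u-1}E_1^*+\overline{u-2}x$ is the relation for the full $A_1$. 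In fact $E_1^*A_1^2E_1^*=E_1^*A_1E_0^*A_1E_1^*+x^2=J_a+x^2$, which gives $x^2=\overline{u-2}E_1^*+\overline{u-3}x$, i.e.\ $J_a^2=\overline{u-1}J_a$.
\end{enumerate}
Consequently, in general $E_1^*-\overline{u-1}^{-1}x$ is not idempotent and does not annihilate $x$. Likewise $(\overline{u-1}^{-1}x)^2\neq\overline{u-1}^{-1}x$.

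The correct local factors are $e_J=\overline{u-1}^{-1}J_a$ for $a\in\mathbbm{h}\cap\mathbbm{g}\cap\mathbbm{i}$, and $e_0=E_1^*-e_J$ for $a\in(\mathbbm{g}\cap\mathbbm{i})^-\setminus\mathbbm{h}$. These are orthogonal idempotents, and $e_0$ kills $J_a$, not $x$.

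Your claim that only coordinates in $(\mathbbm{g}\cap\mathbbm{i}\cap\mathbbm{l})^-$ matter is also false.
\begin{itemize}
\item If $a\in(\mathbbm{g}\cap\mathbbm{i})^-\setminus(\mathbbm{h}\cup\mathbbm{l})$, then the local product is $e_0\cdot E_1^*A_1E_0^*=O$, since $J_aE_1^*A_1E_0^*=\overline{u-1}E_1^*A_1E_0^*$. This is a separate way the Kronecker delta vanishes. The symmetric case is $a\in(\mathbbm{i}\cap\mathbbm{l})^-\setminus(\mathbbm{g}\cup\mathbbm{k})$.
\item At coordinates with $(\mathbf{g}_a,\mathbf{i}_a,\mathbf{l}_a)=(0,1,0)$ or $(1,0,1)$ you need $E_0^*A_1E_1^*A_1E_0^*=\overline{u-1}E_0^*$ and $E_1^*A_1E_0^*A_1E_1^*=J_a$. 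These are what match the normalizations $\overline{k_{\mathbf{i}\cap\mathbf{q}}}^{-1}$ in \eqref{Eq;6}.
\end{itemize}

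With these corrections a case-by-case check over $(\mathbf{g}_a,\mathbf{i}_a,\mathbf{l}_a)$ does yield the formula. That route would be much shorter than the $B$-basis binomial-cancellation computations the paper uses elsewhere (e.g.\ Lemma~\ref{L;Lemma3.6}).
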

\begin{lem}\label{L;Lemma2.15}\cite[Corollary 7.17]{HJ}
Assume that $(\mathbf{g}, \mathbf{h}, \mathbf{i}),(\mathbf{i}, \mathbf{j}, \mathbf{k}), (\mathbf{g}, \mathbf{l}, \mathbf{k})\in\mathbb{P}$ and $p\nmid k_\mathbf{h}k_\mathbf{j}k_\mathbf{l}$. Then $D_{\mathbf{g}, \mathbf{h}, \mathbf{i}}D_{\mathbf{i}, \mathbf{j}, \mathbf{k}}=D_{\mathbf{g}, \mathbf{l}, \mathbf{k}}$ only if $(\mathbbm{g}\cap\mathbbm{i})^\circ\setminus\mathbbm{h}=(\mathbbm{i}\cap\mathbbm{k})^\circ\setminus\mathbbm{j}=
(\mathbbm{g}\cap\mathbbm{k})^\circ\setminus\mathbbm{l}$.
\end{lem}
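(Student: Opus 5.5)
The plan is to derive the conclusion directly from the multiplication rule in Lemma \ref{L;Lemma2.14}, combined with the fact that the matrices $D_{\mathbf{a},\mathbf{b},\mathbf{c}}$ with $p\nmid k_\mathbf{b}$ are nonzero and pairwise distinct.

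First, by Lemma \ref{L;Lemma2.14}, $D_{\mathbf{g},\mathbf{h},\mathbf{i}}D_{\mathbf{i},\mathbf{j},\mathbf{k}}=\delta_{(\mathbbm{g}\cap\mathbbm{i})^\circ\setminus\mathbbm{h},(\mathbbm{i}\cap\mathbbm{k})^\circ\setminus\mathbbm{j}}D_{\mathbf{g},\mathbf{m},\mathbf{k}}$, where $\mathbf{m}=[\mathbf{g},\mathbf{h},\mathbf{i},\mathbf{j},\mathbf{k}]$. Since $p\nmid k_\mathbf{l}$, the matrix $D_{\mathbf{g},\mathbf{l},\mathbf{k}}$ belongs to the $\F$-basis of Lemma \ref{L;Lemma2.14}, so it is not $O$. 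Hence the assumed equality forces the Kronecker delta to be $\overline{1}$. This gives $(\mathbbm{g}\cap\mathbbm{i})^\circ\setminus\mathbbm{h}=(\mathbbm{i}\cap\mathbbm{k})^\circ\setminus\mathbbm{j}=:\mathbb{S}$. By the same lemma, $p\nmid k_\mathbf{m}$, so $D_{\mathbf{g},\mathbf{m},\mathbf{k}}=D_{\mathbf{g},\mathbf{l},\mathbf{k}}$.

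Second, I will show that this equality forces $\mathbf{m}=\mathbf{l}$. Expand both sides in the basis $\{B_{\mathbf{a},\mathbf{b},\mathbf{c}}\}$ of Lemma \ref{L;Lemma2.8} using Equation \eqref{Eq;6}.
\begin{itemize}
\item In $D_{\mathbf{g},\mathbf{m},\mathbf{k}}$, every $\mathbf{q}$ that occurs satisfies $\mathbf{m}\preceq\mathbf{q}$.
\item The term $\mathbf{q}=\mathbf{m}$ does occur, since $\mathbf{m}\in\mathbb{U}_{\mathbf{g}\mathbf{k},\mathbf{m},0}$, and its coefficient $\overline{k_{\mathbf{k}\cap\mathbf{m}}}^{-1}$ is nonzero.
\end{itemize}
Comparing the coefficients of $B_{\mathbf{g},\mathbf{m},\mathbf{k}}$ on both sides gives $\mathbf{l}\preceq\mathbf{m}$. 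By symmetry, $\mathbf{m}\preceq\mathbf{l}$. Lemma \ref{L;Lemma2.3} then yields $\mathbf{l}=\mathbf{m}$. This triangularity argument is the only point that needs some care.

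Finally, I compute $(\mathbbm{g}\cap\mathbbm{k})^\circ\setminus\mathbbm{m}$ from the definition of $[\mathbf{g},\mathbf{h},\mathbf{i},\mathbf{j},\mathbf{k}]$. The part $\mathbbm{g}\triangle\mathbbm{k}$ is disjoint from $\mathbbm{g}\cap\mathbbm{k}$. Removing also $(\mathbbm{g}\cap\mathbbm{k})^\circ\setminus\mathbbm{i}$ and $(\mathbbm{h}\cup\mathbbm{j})\cap(\mathbbm{g}\cap\mathbbm{i}\cap\mathbbm{k})^\circ$ gives
$$(\mathbbm{g}\cap\mathbbm{k})^\circ\setminus\mathbbm{l}=(\mathbbm{g}\cap\mathbbm{i}\cap\mathbbm{k})^\circ\setminus(\mathbbm{h}\cup\mathbbm{j}).$$
It remains to show that this set equals $\mathbb{S}$.
\begin{itemize}
\item Take $a\in\mathbb{S}$. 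From the first description of $\mathbb{S}$, $a\in(\mathbbm{g}\cap\mathbbm{i})^\circ$ and $a\notin\mathbbm{h}$. From the second, $a\in\mathbbm{k}$ and $a\notin\mathbbm{j}$. Hence $a\in(\mathbbm{g}\cap\mathbbm{i}\cap\mathbbm{k})^\circ\setminus(\mathbbm{h}\cup\mathbbm{j})$.
\item Conversely, any element of $(\mathbbm{g}\cap\mathbbm{i}\cap\mathbbm{k})^\circ\setminus(\mathbbm{h}\cup\mathbbm{j})$ lies in $(\mathbbm{g}\cap\mathbbm{i})^\circ\setminus\mathbbm{h}=\mathbb{S}$.
\end{itemize}
This gives all three required equalities.
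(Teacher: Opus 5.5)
Your proof is correct. The paper gives no proof of its own here and simply imports the statement from \cite[Corollary 7.17]{HJ}. Your derivation is the natural one: the product rule of Lemma \ref{L;Lemma2.14} together with $D_{\mathbf{g},\mathbf{l},\mathbf{k}}\neq O$ forces the Kronecker delta to be $\overline{1}$. Your triangularity argument then gives $\mathbf{l}=[\mathbf{g},\mathbf{h},\mathbf{i},\mathbf{j},\mathbf{k}]$, and your set computation of $(\mathbbm{g}\cap\mathbbm{k})^\circ\setminus\mathbbm{l}$ is right.

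For the step $\mathbf{l}=[\mathbf{g},\mathbf{h},\mathbf{i},\mathbf{j},\mathbf{k}]$ there is a slightly quicker alternative to triangularity. Lemma \ref{L;Lemma2.8} gives $\dim_\F\mathbb{T}=|\mathbb{P}|$. The basis in Lemma \ref{L;Lemma2.14} is a family indexed by $\mathbb{P}$ that spans this space, so distinct indices must give distinct matrices.
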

\begin{lem}\label{L;Lemma2.16}\cite[Lemma 8.6]{HJ}
Assume that $g\!\in\![1, 2^{n_-}\!]$ and $h\!\in\![1, 2^{n_+}]$. Then there is a bijection from $\mathbb{C}_{g, h}$ to $\mathbb{C}_g(h)\times\mathbb{C}_g(h)$ sending $(\mathbf{i}, \mathbf{j}, \mathbf{k})$ to $(\mathbf{i}, \mathbf{k})$
for any $(\mathbf{i}, \mathbf{j}, \mathbf{k})\!\in\!\mathbb{C}_{g, h}$.
\end{lem}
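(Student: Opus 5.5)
The plan is to work directly from the definitions, using the fact that a $\sim$-class is determined by a single set. Since $\mathbb{C}_{g,h}$ is a $\sim$-equivalence class, there is a fixed set $\mathbb{S}\subseteq[1,n]$ such that $(\mathbbm{a}\cap\mathbbm{c})^\circ\setminus\mathbbm{b}=\mathbbm{S}$ for every $(\mathbf{a},\mathbf{b},\mathbf{c})\in\mathbb{C}_{g,h}$. Here I write $\mathbbm{S}$ for this common set. Conversely, any $(\mathbf{a},\mathbf{b},\mathbf{c})\in\mathbb{P}$ with $p\nmid k_\mathbf{b}$ and $(\mathbbm{a}\cap\mathbbm{c})^\circ\setminus\mathbbm{b}=\mathbbm{S}$ lies in $\mathbb{C}_{g,h}$. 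So $\mathbb{C}_g(h)$ is exactly the set of $\mathbf{a}$ for which some $\mathbf{b}$ satisfies $(\mathbf{a},\mathbf{b},\mathbf{a})\in\mathbb{P}$, $p\nmid k_\mathbf{b}$ and $\mathbbm{a}^\circ\setminus\mathbbm{b}=\mathbbm{S}$.

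The key observation concerns the indices in $[1,n]^+$. By Lemma \ref{L;Lemma2.5}, $p\nmid k_\mathbf{b}$ holds if and only if $\mathbbm{b}^+=\varnothing$.

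\begin{itemize}
\item Let $(\mathbf{i},\mathbf{j},\mathbf{k})\in\mathbb{C}_{g,h}$. Then $\mathbbm{i}\triangle\mathbbm{k}\subseteq\mathbbm{j}$ and $\mathbbm{j}^+=\varnothing$, so $\mathbbm{i}^+=\mathbbm{k}^+=(\mathbbm{i}\cap\mathbbm{k})^+$.
\item Since $(\mathbbm{i}\cap\mathbbm{k})^+\subseteq(\mathbbm{i}\cap\mathbbm{k})^\circ\setminus\mathbbm{j}=\mathbbm{S}\subseteq(\mathbbm{i}\cap\mathbbm{k})^\circ$, we get $\mathbbm{i}^+=\mathbbm{k}^+=\mathbbm{S}^+$.
\item By the same argument, every $\mathbf{a}\in\mathbb{C}_g(h)$ satisfies $\mathbbm{a}^+=\mathbbm{S}^+$ and $\mathbbm{S}\subseteq\mathbbm{a}^\circ$.
\end{itemize}

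\emph{Well-definedness.} Let $(\mathbf{i},\mathbf{j},\mathbf{k})\in\mathbb{C}_{g,h}$. Using Lemma \ref{L;Lemma2.3}, choose $\mathbf{b}$ with $\mathbbm{b}=\mathbbm{i}^\circ\setminus\mathbbm{S}$. Then:
\begin{itemize}
\item $(\mathbf{i},\mathbf{b},\mathbf{i})\in\mathbb{P}$, because $\varnothing\subseteq\mathbbm{b}\subseteq\mathbbm{i}^\circ$;
\item $\mathbbm{b}^+=\varnothing$, because $\mathbbm{i}^+\subseteq\mathbbm{S}$, so $p\nmid k_\mathbf{b}$;
\item $\mathbbm{i}^\circ\setminus\mathbbm{b}=\mathbbm{S}$, because $\mathbbm{S}\subseteq\mathbbm{i}^\circ$.
\end{itemize}
Hence $(\mathbf{i},\mathbf{b},\mathbf{i})\in\mathbb{C}_{g,h}$, so $\mathbf{i}\in\mathbb{C}_g(h)$. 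Symmetrically $\mathbf{k}\in\mathbb{C}_g(h)$, so the map lands in $\mathbb{C}_g(h)\times\mathbb{C}_g(h)$.

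\emph{Injectivity.} If $(\mathbf{i},\mathbf{j},\mathbf{k})\in\mathbb{C}_{g,h}$, then $\mathbbm{j}$ is forced. Indeed, $\mathbbm{i}\triangle\mathbbm{k}\subseteq\mathbbm{j}\subseteq(\mathbbm{i}\triangle\mathbbm{k})\cup(\mathbbm{i}\cap\mathbbm{k})^\circ$, and $(\mathbbm{i}\cap\mathbbm{k})^\circ\setminus\mathbbm{j}=\mathbbm{S}$. Together these give $\mathbbm{j}=(\mathbbm{i}\triangle\mathbbm{k})\cup((\mathbbm{i}\cap\mathbbm{k})^\circ\setminus\mathbbm{S})$. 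By Lemma \ref{L;Lemma2.3}, $\mathbf{j}$ is therefore determined by $(\mathbf{i},\mathbf{k})$.

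\emph{Surjectivity.} This is the step needing the most care. Given $\mathbf{i},\mathbf{k}\in\mathbb{C}_g(h)$, define $\mathbf{j}$ by the formula from the injectivity step. The conditions $(\mathbf{i},\mathbf{j},\mathbf{k})\in\mathbb{P}$ and $(\mathbbm{i}\cap\mathbbm{k})^\circ\setminus\mathbbm{j}=\mathbbm{S}$ follow once we know $\mathbbm{S}\subseteq(\mathbbm{i}\cap\mathbbm{k})^\circ$. This holds because $\mathbbm{S}\subseteq\mathbbm{i}^\circ\cap\mathbbm{k}^\circ$. It remains to check $\mathbbm{j}^+=\varnothing$:
\begin{itemize}
\item $(\mathbbm{i}\triangle\mathbbm{k})^+=\varnothing$, since $\mathbbm{i}^+=\mathbbm{k}^+=\mathbbm{S}^+$;
\item $((\mathbbm{i}\cap\mathbbm{k})^\circ\setminus\mathbbm{S})^+=\varnothing$, since $\mathbbm{i}^+\subseteq\mathbbm{S}$.
\end{itemize}
So $p\nmid k_\mathbf{j}$, and $(\mathbf{i},\mathbf{j},\mathbf{k})\sim$ every element of $\mathbb{C}_{g,h}$, giving $(\mathbf{i},\mathbf{j},\mathbf{k})\in\mathbb{C}_{g,h}$. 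The main thing to get right throughout is the identity $\mathbbm{a}^+=\mathbbm{S}^+$ for all relevant $\mathbf{a}$, which controls both $p\nmid k_\mathbf{b}$ and $p\nmid k_\mathbf{j}$.
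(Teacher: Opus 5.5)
Your proof is correct and complete. The ingredients are exactly the ones needed:
\begin{itemize}
\item the description of $\mathbb{C}_{g,h}$ as the fiber of the common set $\mathbbm{S}=(\mathbbm{a}\cap\mathbbm{c})^\circ\setminus\mathbbm{b}$;
\item the criterion $p\nmid k_{\mathbf{b}}\iff\mathbbm{b}^+=\varnothing$, which also holds for $p=0$, where both sides are always true;
\item the identities $\mathbbm{i}^+=\mathbbm{k}^+=\mathbbm{S}^+$ and $\mathbbm{S}\subseteq(\mathbbm{i}\cap\mathbbm{k})^\circ$.
\end{itemize}
The forced formula $\mathbbm{j}=(\mathbbm{i}\triangle\mathbbm{k})\cup((\mathbbm{i}\cap\mathbbm{k})^\circ\setminus\mathbbm{S})$ then settles both injectivity and surjectivity.

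The paper gives no argument for this statement; it imports it from [HJ, Lemma 8.6]. So there is no in-text proof to compare against, and your direct verification from the definitions serves as a self-contained replacement.

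Your well-definedness step also gives something extra. It shows that
\[
\mathbb{C}_g(h)=\{\mathbf{a}\in\mathbb{E}: \mathbbm{S}\subseteq\mathbbm{a},\ \mathbbm{a}^+=\mathbbm{S}^+\}.
\]
Moreover, $\mathbbm{S}^-=\mathbbm{r}^-\setminus\mathbbm{s}$ for any $(\mathbf{r},\mathbf{s},\mathbf{r})\in\mathbb{C}_{g,h}\cap\mathbb{C}_g$. Together these recover the count $|\mathbb{C}_g(h)|=2^{n-n_+-n_{\mathbf{r},\mathbf{s},\mathbf{r}}}$, which the paper asserts without proof just before this lemma.

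The only blemish is notational: you introduce the set as $\mathbb{S}$ and then switch to calling it $\mathbbm{S}$.
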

\begin{eg}\label{E;Example2.17}
Assume that $n=|\mathbb{U}_1|=2$, $|\mathbb{U}_2|=3$, $\mathbf{g}=(0,1)$, $\mathbf{h}=(1,0)$. Then $\mathbb{E}=\{\mathbf{0}, \mathbf{g}, \mathbf{h}, \mathbf{1}\}$. If $p\neq2$, then $\mathbb{T}$ has an $\F$-basis containing exactly $D_{\mathbf{0},\mathbf{0},\mathbf{0}}$, $D_{\mathbf{0},\mathbf{g},\mathbf{g}}$, $D_{\mathbf{0},\mathbf{h},\mathbf{h}}$, $D_{\mathbf{0},\mathbf{1},\mathbf{1}}$, $D_{\mathbf{g},\mathbf{0},\mathbf{g}}$, $D_{\mathbf{g},\mathbf{g},\mathbf{0}}$, $D_{\mathbf{g},\mathbf{g},\mathbf{g}}$, $D_{\mathbf{g},\mathbf{h},\mathbf{1}}$, $D_{\mathbf{g},\mathbf{1},\mathbf{h}}$, $D_{\mathbf{g},\mathbf{1},\mathbf{1}}$, $D_{\mathbf{h},\mathbf{0},\mathbf{h}}$, $D_{\mathbf{h},\mathbf{g},\mathbf{1}}$, $D_{\mathbf{h},\mathbf{h},\mathbf{0}}$, $D_{\mathbf{h},\mathbf{1},\mathbf{g}}$, $D_{\mathbf{1},\mathbf{0},\mathbf{1}}$, $D_{\mathbf{1},\mathbf{g},\mathbf{h}}$, $D_{\mathbf{1},\mathbf{g},\mathbf{1}}$, $D_{\mathbf{1},\mathbf{h},\mathbf{g}}$, $D_{\mathbf{1},\mathbf{1},\mathbf{0}}$, $D_{\mathbf{1},\mathbf{1},\mathbf{g}}$ by Lemma \ref{L;Lemma2.14}. For the remaining case $p=2$,  notice that $\mathbb{T}$ has an $\F$-basis containing exactly $B_{\mathbf{0}, \mathbf{g}, \mathbf{g}}$, $B_{\mathbf{0}, \mathbf{1}, \mathbf{1}}$, $B_{\mathbf{g}, \mathbf{g}, \mathbf{0}}$, $B_{\mathbf{g}, \mathbf{g}, \mathbf{g}}$, $B_{\mathbf{g}, \mathbf{1}, \mathbf{h}}$, $B_{\mathbf{g}, \mathbf{1}, \mathbf{1}}$, $B_{\mathbf{h}, \mathbf{g}, \mathbf{1}}$, $B_{\mathbf{h}, \mathbf{1}, \mathbf{g}}$, $B_{\mathbf{1}, \mathbf{g}, \mathbf{h}}$, $B_{\mathbf{1}, \mathbf{g}, \mathbf{1}}$, $B_{\mathbf{1}, \mathbf{1}, \mathbf{0}}$, $B_{\mathbf{1}, \mathbf{1}, \mathbf{g}}$, $D_{\mathbf{0}, \mathbf{0},\mathbf{0}}$, $D_{\mathbf{0}, \mathbf{h},\mathbf{h}}$, $D_{\mathbf{g}, \mathbf{0},\mathbf{g}}$, $D_{\mathbf{g}, \mathbf{h},\mathbf{1}}$, $D_{\mathbf{h}, \mathbf{0},\mathbf{h}}$, $D_{\mathbf{h}, \mathbf{h},\mathbf{0}}$, $D_{\mathbf{1}, \mathbf{0},\mathbf{1}}$, $D_{\mathbf{1}, \mathbf{h},\mathbf{g}}$ and $D_{\mathbf{g}, \mathbf{h},\mathbf{1}}D_{\mathbf{1}, \mathbf{h},\mathbf{g}}=D_{\mathbf{g}, \mathbf{0},\mathbf{g}}$ by Lemma \ref{L;Lemma2.14}.
\end{eg}
\section{Block idempotent of $\mathbb{T}$}
In this section, we get all block idempotents of $\mathbb{T}$. In particular, we get the decomposition of $\mathbb{T}$ into the direct sum of all pairwise distinct block algebras of $\mathbb{T}$. We first display a sequence of lemmas to check the fact $D_g\!\in\!\mathrm{Z}(\mathbb{T})$ for any $g\in[1,2^{n_-}]$.
\begin{lem}\label{L;Lemma3.1}
Assume that $(\mathbf{g}, \mathbf{h}, \mathbf{i})\in\mathbb{P}$ and $\mathbf{j}, \mathbf{k}, \mathbf{l}\in\mathbb{E}$. Assume that $p\nmid k_\mathbf{j}k_\mathbf{k}$ and $\mathbf{j}\preceq \mathbf{k}\preceq\mathbf{i}\mathbf{i}$. Then the conditions $p\nmid k_\mathbf{l}$, $\mathbf{j}\preceq \mathbf{l}\preceq\mathbf{i}\mathbf{i}$, and $[\mathbf{g}, \mathbf{h},\mathbf{i}, \mathbf{k}, \mathbf{i}]=[\mathbf{g}, \mathbf{h},\mathbf{i},\mathbf{l}, \mathbf{i}]$ hold together if and only if there is some
$\mathbb{U}\subseteq(\mathbbm{h}\cap\mathbbm{i})^-\setminus\mathbbm{j}$ such that $\mathbbm{l}=\mathbbm{j}\cup(\mathbbm{k}\setminus(\mathbbm{h}\cup\mathbbm{j}))\cup\mathbb{U}$.
\end{lem}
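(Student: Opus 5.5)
The plan is to compute both sides of the equation $[\mathbf{g}, \mathbf{h},\mathbf{i}, \mathbf{k}, \mathbf{i}]=[\mathbf{g}, \mathbf{h},\mathbf{i},\mathbf{l}, \mathbf{i}]$ explicitly as subsets of $[1,n]$ via Lemma \ref{L;Lemma2.3}, and then reduce everything to elementary set identities. By definition, for any $\mathbf{m}\in\mathbb{E}$ the set attached to $[\mathbf{g}, \mathbf{h},\mathbf{i}, \mathbf{m}, \mathbf{i}]$ is $(\mathbbm{g}\triangle\mathbbm{i})\cup((\mathbbm{g}\cap\mathbbm{i})^\circ\setminus\mathbbm{i})\cup((\mathbbm{h}\cup\mathbbm{m})\cap(\mathbbm{g}\cap\mathbbm{i})^\circ)$. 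The middle term is empty. The third term is disjoint from $\mathbbm{g}\triangle\mathbbm{i}$. Hence the equation holds if and only if $(\mathbbm{h}\cup\mathbbm{k})\cap(\mathbbm{g}\cap\mathbbm{i})^\circ=(\mathbbm{h}\cup\mathbbm{l})\cap(\mathbbm{g}\cap\mathbbm{i})^\circ$, that is, if and only if $(\mathbbm{k}\setminus\mathbbm{h})\cap(\mathbbm{g}\cap\mathbbm{i})^\circ=(\mathbbm{l}\setminus\mathbbm{h})\cap(\mathbbm{g}\cap\mathbbm{i})^\circ$.

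Next I record the relevant containments. Since $\mathbbm{i}\triangle\mathbbm{i}=\varnothing$, the set of $\mathbf{i}\mathbf{i}$ is $\mathbbm{i}^\circ$. Thus the hypothesis $\mathbf{j}\preceq\mathbf{k}\preceq\mathbf{i}\mathbf{i}$ means $\mathbbm{j}\subseteq\mathbbm{k}\subseteq\mathbbm{i}^\circ$, and the condition $\mathbf{j}\preceq\mathbf{l}\preceq\mathbf{i}\mathbf{i}$ means $\mathbbm{j}\subseteq\mathbbm{l}\subseteq\mathbbm{i}^\circ$. From $(\mathbf{g},\mathbf{h},\mathbf{i})\in\mathbb{P}$ we have $\mathbbm{i}\setminus\mathbbm{g}\subseteq\mathbbm{g}\triangle\mathbbm{i}\subseteq\mathbbm{h}$. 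Therefore every subset of $\mathbbm{i}^\circ$ minus $\mathbbm{h}$ already lies in $(\mathbbm{g}\cap\mathbbm{i})^\circ$. So, once $\mathbbm{k},\mathbbm{l}\subseteq\mathbbm{i}^\circ$, the equation above is equivalent to the simpler condition $\mathbbm{k}\setminus\mathbbm{h}=\mathbbm{l}\setminus\mathbbm{h}$.

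Finally I translate the valency conditions with Lemma \ref{L;Lemma2.5}: $p\nmid k_\mathbf{m}$ if and only if $\mathbbm{m}^+=\varnothing$. On $\mathbbm{i}^\circ$ this says $\mathbbm{m}\subseteq\mathbbm{i}^-$, since $\mathbbm{i}^\circ=\mathbbm{i}^+\cup\mathbbm{i}^-$.

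For the forward direction, I put $\mathbb{U}=(\mathbbm{l}\cap\mathbbm{h})\setminus\mathbbm{j}$. This set lies in $\mathbbm{h}\cap\mathbbm{i}^-$, hence in $(\mathbbm{h}\cap\mathbbm{i})^-\setminus\mathbbm{j}$. I then decompose $\mathbbm{l}=\mathbbm{j}\cup((\mathbbm{l}\setminus\mathbbm{h})\setminus\mathbbm{j})\cup\mathbb{U}$ and use $\mathbbm{l}\setminus\mathbbm{h}=\mathbbm{k}\setminus\mathbbm{h}$ to obtain $\mathbbm{l}=\mathbbm{j}\cup(\mathbbm{k}\setminus(\mathbbm{h}\cup\mathbbm{j}))\cup\mathbb{U}$.

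For the converse, I take $\mathbbm{l}$ of the stated form and check three things.
\begin{enumerate}[(i)]
\item $\mathbbm{l}\subseteq\mathbbm{i}^\circ$ contains $\mathbbm{j}$ and has no element of $[1,n]^+$, because $\mathbbm{j},\mathbbm{k}$ have none (as $p\nmid k_\mathbf{j}k_\mathbf{k}$) and $\mathbb{U}\subseteq\mathbbm{i}^-$. This gives $p\nmid k_\mathbf{l}$.
\item $\mathbbm{l}\setminus\mathbbm{h}=(\mathbbm{j}\setminus\mathbbm{h})\cup(\mathbbm{k}\setminus(\mathbbm{h}\cup\mathbbm{j}))=\mathbbm{k}\setminus\mathbbm{h}$, using $\mathbbm{j}\subseteq\mathbbm{k}$ and $\mathbb{U}\subseteq\mathbbm{h}$.
\item By the reduction in the second paragraph, (ii) yields $[\mathbf{g}, \mathbf{h},\mathbf{i}, \mathbf{k}, \mathbf{i}]=[\mathbf{g}, \mathbf{h},\mathbf{i},\mathbf{l}, \mathbf{i}]$.
\end{enumerate}

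The only delicate point is the reduction from the $(\mathbbm{g}\cap\mathbbm{i})^\circ$-restricted equality to $\mathbbm{k}\setminus\mathbbm{h}=\mathbbm{l}\setminus\mathbbm{h}$. This rests on the observation $\mathbbm{i}\setminus\mathbbm{g}\subseteq\mathbbm{h}$, which comes from $(\mathbf{g},\mathbf{h},\mathbf{i})\in\mathbb{P}$ and Lemma \ref{L;Lemma2.4}; I would check it carefully.
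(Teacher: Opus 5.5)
Your proposal is correct and follows essentially the paper's route: you compute the set attached to $[\mathbf{g},\mathbf{h},\mathbf{i},\mathbf{m},\mathbf{i}]$, then use $\mathbbm{i}\setminus\mathbbm{g}\subseteq\mathbbm{h}$ and $\mathbbm{k}\cup\mathbbm{l}\subseteq\mathbbm{i}^\circ$ to reduce the equation to $\mathbbm{k}\setminus\mathbbm{h}=\mathbbm{l}\setminus\mathbbm{h}$, and finally handle the valency conditions with Lemma \ref{L;Lemma2.5}. The paper's criterion $\mathbbm{k}\setminus(\mathbbm{h}\cup\mathbbm{j})=\mathbbm{l}\setminus(\mathbbm{h}\cup\mathbbm{j})$ is equivalent to yours because $\mathbbm{j}\subseteq\mathbbm{k}\cap\mathbbm{l}$; your explicit choice $\mathbb{U}=(\mathbbm{l}\cap\mathbbm{h})\setminus\mathbbm{j}$, and your care to establish $\mathbbm{l}\subseteq\mathbbm{i}^\circ$ before using the reduction in the converse, make the argument slightly more explicit than the paper's.
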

\begin{proof}
As $\mathbbm{i}\setminus\mathbbm{g}\subseteq\mathbbm{h}$ and $\mathbbm{k}\cup\mathbbm{l}\subseteq\mathbbm{i}^\circ$, notice that
$((\mathbbm{g}\cap\mathbbm{i})^\circ\cap\mathbbm{k})\setminus(\mathbbm{h}\cup\mathbbm{j})=\mathbbm{k}\setminus(\mathbbm{h}\cup\mathbbm{j})$ and $((\mathbbm{g}\cap\mathbbm{i})^\circ\cap\mathbbm{l})\setminus(\mathbbm{h}\cup\mathbbm{j})=\mathbbm{l}\setminus(\mathbbm{h}\cup\mathbbm{j})$. As $\mathbbm{i}\setminus\mathbbm{g}\subseteq\mathbbm{h}$ and
$\mathbbm{j}\subseteq\mathbbm{k}\cap\mathbbm{l}\subseteq\mathbbm{i}^\circ$, notice that $((\mathbbm{g}\cap\mathbbm{i})^\circ\cap\mathbbm{j}\cap\mathbbm{k})\setminus\mathbbm{h}=\mathbbm{j}\setminus\mathbbm{h}=((\mathbbm{g}\cap\mathbbm{i})^\circ\cap\mathbbm{j}\cap\mathbbm{l})\setminus\mathbbm{h}$.
As $\mathbbm{g}\triangle\mathbbm{i}\subseteq\mathbbm{h}\subseteq(\mathbbm{g}\triangle\mathbbm{i})\cup(\mathbbm{g}\cap\mathbbm{i})^\circ$, Lemma \ref{L;Lemma2.3} implies that $\mathbbm{h}\cup(\mathbbm{j}\setminus\mathbbm{h})\cup(\mathbbm{k}\setminus(\mathbbm{h}\cup\mathbbm{j}))=\mathbbm{h}\cup(\mathbbm{j}\setminus\mathbbm{h})\cup(\mathbbm{l}\setminus(\mathbbm{h}\cup\mathbbm{j}))$ if and only if $[\mathbf{g}, \mathbf{h},\mathbf{i}, \mathbf{k}, \mathbf{i}]=[\mathbf{g}, \mathbf{h},\mathbf{i},\mathbf{l}, \mathbf{i}]$. This implies that $[\mathbf{g}, \mathbf{h},\mathbf{i}, \mathbf{k}, \mathbf{i}]\!=\![\mathbf{g}, \mathbf{h},\mathbf{i},\mathbf{l}, \mathbf{i}]$ if and only if $\mathbbm{k}\setminus(\mathbbm{h}\cup\mathbbm{j})=\mathbbm{l}\setminus(\mathbbm{h}\cup\mathbbm{j})$. By combining the hypotheses $p\nmid k_\mathbf{j}k_\mathbf{k}k_\mathbf{l}$, $\mathbbm{j}\subseteq\mathbbm{l}\subseteq\mathbbm{i}^\circ$, Lemma \ref{L;Lemma2.5}, the equation $[\mathbf{g}, \mathbf{h},\mathbf{i}, \mathbf{k}, \mathbf{i}]=[\mathbf{g}, \mathbf{h},\mathbf{i},\mathbf{l}, \mathbf{i}]$ implies that $\mathbbm{l}=\mathbbm{j}\cup(\mathbbm{k}\setminus(\mathbbm{h}\cup\mathbbm{j}))\cup\mathbb{U}$ and
$\mathbb{U}\subseteq(\mathbbm{h}\cap\mathbbm{i})^-\setminus\mathbbm{j}$. For the remaining direction, if $\mathbbm{l}=\mathbbm{j}\cup(\mathbbm{k}\setminus(\mathbbm{h}\cup\mathbbm{j}))\cup\mathbb{U}$ and $\mathbb{U}\subseteq(\mathbbm{h}\cap\mathbbm{i})^-\setminus\mathbbm{j}$, then the combination of the hypotheses $p\!\nmid\! k_\mathbf{j}k_\mathbf{k}$, $\mathbbm{j}\subseteq\mathbbm{k}\subseteq\mathbbm{i}^\circ$, and Lemma \ref{L;Lemma2.5} implies that $p\nmid k_\mathbf{l}$, $\mathbf{j}\preceq\mathbf{l}\preceq\mathbf{i}\mathbf{i}$, $\mathbbm{k}\setminus(\mathbbm{h}\cup\mathbbm{j})=\mathbbm{l}\setminus(\mathbbm{h}\cup\mathbbm{j})$, and $[\mathbf{g}, \mathbf{h},\mathbf{i}, \mathbf{k}, \mathbf{i}]=[\mathbf{g}, \mathbf{h},\mathbf{i},\mathbf{l}, \mathbf{i}]$. The desired lemma follows from the above discussion.
\end{proof}
\begin{lem}\label{L;Lemma3.2}
Assume that $(\mathbf{g}, \mathbf{h}, \mathbf{i})\in\mathbb{P}$ and $\mathbf{j}, \mathbf{k},\mathbf{l}\in\mathbb{E}$. Assume that $p\nmid k_\mathbf{j}k_\mathbf{k}k_\mathbf{l}$. Assume that
$\mathbf{j}\preceq\mathbf{k}\preceq\mathbf{i}\mathbf{i}$, $\mathbf{j}\preceq \mathbf{l}\preceq\mathbf{i}\mathbf{i}$, $[\mathbf{g}, \mathbf{h},\mathbf{i}, \mathbf{k}, \mathbf{i}]=[\mathbf{g}, \mathbf{h},\mathbf{i},\mathbf{l}, \mathbf{i}]$, $m\in[0, n_{\mathbf{h}, \mathbf{j}, \mathbf{i}}]$. Assume that $|\mathbbm{l}\setminus\mathbbm{j}|\!=\!|\mathbbm{k}\setminus(\mathbbm{h}\!\cup\!\mathbbm{j})|+m$.
Then the number of the choices of $\mathbf{l}$ is equal to $${n_{\mathbf{h}, \mathbf{j}, \mathbf{i}}\choose m}.$$
\end{lem}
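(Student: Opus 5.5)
By Lemma \ref{L;Lemma3.1}, the hypotheses $p\nmid k_\mathbf{l}$, $\mathbf{j}\preceq\mathbf{l}\preceq\mathbf{i}\mathbf{i}$ and $[\mathbf{g}, \mathbf{h},\mathbf{i}, \mathbf{k}, \mathbf{i}]=[\mathbf{g}, \mathbf{h},\mathbf{i},\mathbf{l}, \mathbf{i}]$ (with $p\nmid k_\mathbf{j}k_\mathbf{k}$ and $\mathbf{j}\preceq\mathbf{k}\preceq\mathbf{i}\mathbf{i}$) hold together exactly when
$$\mathbbm{l}=\mathbbm{j}\cup(\mathbbm{k}\setminus(\mathbbm{h}\cup\mathbbm{j}))\cup\mathbb{U}\quad\text{for some}\quad\mathbb{U}\subseteq(\mathbbm{h}\cap\mathbbm{i})^-\setminus\mathbbm{j}.$$
By Lemma \ref{L;Lemma2.3}, $\mathbf{l}$ is determined by $\mathbbm{l}$. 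So counting the admissible $\mathbf{l}$ reduces to counting the subsets $\mathbb{U}$ that give the prescribed size $|\mathbbm{l}\setminus\mathbbm{j}|$. Note that $(\mathbbm{h}\cap\mathbbm{i})^-\setminus\mathbbm{j}$ has exactly $n_{\mathbf{h},\mathbf{j},\mathbf{i}}$ elements by definition.

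The plan has two steps.

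First, show that the map $\mathbb{U}\mapsto\mathbbm{l}$ is injective and track sizes. Write $\mathbb{V}=\mathbbm{k}\setminus(\mathbbm{h}\cup\mathbbm{j})$. The sets $\mathbbm{j}$, $\mathbb{V}$ and $\mathbb{U}$ are pairwise disjoint:
\begin{itemize}
\item $\mathbb{V}$ avoids $\mathbbm{h}\cup\mathbbm{j}$;
\item $\mathbb{U}\subseteq\mathbbm{h}\setminus\mathbbm{j}$, so $\mathbb{U}$ avoids both $\mathbbm{j}$ and $\mathbb{V}$.
\end{itemize}
Hence $\mathbbm{l}\setminus\mathbbm{j}=\mathbb{V}\sqcup\mathbb{U}$. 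Since $\mathbb{U}=\mathbbm{l}\cap\mathbbm{h}\setminus\mathbbm{j}$ is recovered from $\mathbbm{l}$, distinct $\mathbb{U}$ give distinct $\mathbbm{l}$. Moreover $|\mathbbm{l}\setminus\mathbbm{j}|=|\mathbbm{k}\setminus(\mathbbm{h}\cup\mathbbm{j})|+|\mathbb{U}|$.

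Second, apply the size constraint. The condition $|\mathbbm{l}\setminus\mathbbm{j}|=|\mathbbm{k}\setminus(\mathbbm{h}\cup\mathbbm{j})|+m$ is then equivalent to $|\mathbb{U}|=m$. Therefore the admissible $\mathbf{l}$ are in bijection with the $m$-subsets of a set of size $n_{\mathbf{h},\mathbf{j},\mathbf{i}}$, which gives the count ${n_{\mathbf{h},\mathbf{j},\mathbf{i}}\choose m}$.

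The only delicate point is the disjointness check together with the identity $|(\mathbbm{h}\cap\mathbbm{i})^-\setminus\mathbbm{j}|=n_{\mathbf{h},\mathbf{j},\mathbf{i}}$. Both follow immediately from the definitions of $n_{\mathbf{g},\mathbf{h},\mathbf{i}}$ and of $\mathbb{V}^-$, so I expect no real obstacle.
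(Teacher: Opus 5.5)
Your proposal is correct and follows the paper's route. The paper's proof simply cites Lemmas~\ref{L;Lemma3.1} and \ref{L;Lemma2.3}. Your disjointness argument, namely that $\mathbbm{j}$, $\mathbbm{k}\setminus(\mathbbm{h}\cup\mathbbm{j})$ and $\mathbb{U}$ are pairwise disjoint with $\mathbb{U}=(\mathbbm{l}\cap\mathbbm{h})\setminus\mathbbm{j}$ recoverable from $\mathbbm{l}$, spells out the injectivity and the size count $|\mathbbm{l}\setminus\mathbbm{j}|=|\mathbbm{k}\setminus(\mathbbm{h}\cup\mathbbm{j})|+|\mathbb{U}|$ that the paper leaves implicit.
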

\begin{proof}
The desired lemma follows from an application of Lemmas \ref{L;Lemma3.1} and \ref{L;Lemma2.3}.
\end{proof}
\begin{lem}\label{L;Lemma3.3}
Assume that $(\mathbf{g}, \mathbf{h}, \mathbf{i})\in\mathbb{P}$ and $\mathbf{j}, \mathbf{k},\mathbf{l}\in\mathbb{E}$. Assume that $p\nmid k_\mathbf{j}k_\mathbf{k}k_\mathbf{l}$. Assume that $\mathbf{j}\!\preceq\! \mathbf{k}\!\preceq\!\mathbf{i}\mathbf{i}$ and $\mathbf{j}\!\preceq\! \mathbf{l}\!\preceq\!\mathbf{i}\mathbf{i}$. Then $[\mathbf{g}, \mathbf{h},\mathbf{i}, \mathbf{k}, \mathbf{i}]\!=\![\mathbf{g}, \mathbf{h},\mathbf{i},\mathbf{l}, \mathbf{i}]$ only if $k_{\mathbf{k}\setminus\mathbf{h}}\!=\!k_{\mathbf{l}\setminus\mathbf{h}}$.
\end{lem}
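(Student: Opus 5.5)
We will reduce Lemma~\ref{L;Lemma3.3} to the description of all admissible $\mathbf{l}$ given by Lemma~\ref{L;Lemma3.1}, and then compute the valencies with Lemma~\ref{L;Lemma2.5}.

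First, the hypotheses of Lemma~\ref{L;Lemma3.1} hold here: $(\mathbf{g},\mathbf{h},\mathbf{i})\in\mathbb{P}$, $p\nmid k_\mathbf{j}k_\mathbf{k}$, and $\mathbf{j}\preceq\mathbf{k}\preceq\mathbf{i}\mathbf{i}$. Moreover, $\mathbf{l}$ satisfies $p\nmid k_\mathbf{l}$ and $\mathbf{j}\preceq\mathbf{l}\preceq\mathbf{i}\mathbf{i}$. So if $[\mathbf{g},\mathbf{h},\mathbf{i},\mathbf{k},\mathbf{i}]=[\mathbf{g},\mathbf{h},\mathbf{i},\mathbf{l},\mathbf{i}]$, Lemma~\ref{L;Lemma3.1} yields some $\mathbb{U}\subseteq(\mathbbm{h}\cap\mathbbm{i})^-\setminus\mathbbm{j}$ with
$$\mathbbm{l}=\mathbbm{j}\cup(\mathbbm{k}\setminus(\mathbbm{h}\cup\mathbbm{j}))\cup\mathbb{U}.$$
Since $\mathbb{U}\subseteq\mathbbm{h}$, removing $\mathbbm{h}$ gives
$$\mathbbm{l}\setminus\mathbbm{h}=(\mathbbm{j}\setminus\mathbbm{h})\cup(\mathbbm{k}\setminus(\mathbbm{h}\cup\mathbbm{j}))=(\mathbbm{j}\cup\mathbbm{k})\setminus\mathbbm{h}.$$
Because $\mathbf{j}\preceq\mathbf{k}$, we have $\mathbbm{j}\cup\mathbbm{k}=\mathbbm{k}$, so $\mathbbm{l}\setminus\mathbbm{h}=\mathbbm{k}\setminus\mathbbm{h}$.

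By Lemma~\ref{L;Lemma2.3}, this set equality gives $\mathbf{l}\setminus\mathbf{h}=\mathbf{k}\setminus\mathbf{h}$. Lemma~\ref{L;Lemma2.5} then gives $k_{\mathbf{k}\setminus\mathbf{h}}=\prod_{a\in\mathbbm{k}\setminus\mathbbm{h}}(|\mathbb{U}_a|-1)=k_{\mathbf{l}\setminus\mathbf{h}}$.

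The only delicate point is checking the hypotheses of Lemma~\ref{L;Lemma3.1}, including $p\nmid k_\mathbf{l}$ and $\mathbf{j}\preceq\mathbf{l}\preceq\mathbf{i}\mathbf{i}$, which the statement supplies. The rest is elementary set manipulation, so no real obstacle is expected.
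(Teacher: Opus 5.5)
Your proof is correct and matches the paper's: it too applies Lemma~\ref{L;Lemma3.1} to write $\mathbbm{l}=\mathbbm{j}\cup(\mathbbm{k}\setminus(\mathbbm{h}\cup\mathbbm{j}))\cup\mathbb{U}$ with $\mathbb{U}\subseteq\mathbbm{h}$. From this and $\mathbbm{j}\subseteq\mathbbm{k}\cap\mathbbm{l}$ it deduces $\mathbbm{k}\setminus\mathbbm{h}=\mathbbm{l}\setminus\mathbbm{h}$, and it finishes with Lemma~\ref{L;Lemma2.5}.
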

\begin{proof}
As $\mathbbm{j}\!\subseteq\!\mathbbm{k}\!\cap\!\mathbbm{l}$, notice that $\mathbbm{k}\setminus\mathbbm{h}\!=\!(\mathbbm{j}\setminus\mathbbm{h})\!\cup\!(\mathbbm{k}\setminus(\mathbbm{h}\cup\mathbbm{j}))\!=\!\mathbbm{l}\setminus\mathbbm{h}$
by Lemma \ref{L;Lemma3.1}. The desired lemma follows from the proved equation $\mathbbm{k}\setminus\mathbbm{h}=\mathbbm{l}\setminus\mathbbm{h}$ and Lemma \ref{L;Lemma2.5}.
\end{proof}
\begin{lem}\label{L;Lemma3.4}
Assume that $(\mathbf{g}, \mathbf{h}, \mathbf{i})\in\mathbb{P}$. Then $B_{\mathbf{g}, \mathbf{h}, \mathbf{i}}^T=B_{\mathbf{i}, \mathbf{h}, \mathbf{g}}$. Moreover, assume that
$p\nmid k_\mathbf{h}$. Then $\overline{k_{\mathbf{i}\setminus\mathbf{g}}}D_{\mathbf{g}, \mathbf{h}, \mathbf{i}}^T=\overline{k_{\mathbf{g}\setminus\mathbf{i}}}D_{\mathbf{i}, \mathbf{h}, \mathbf{g}}$. In particular, if $\mathbf{g}=\mathbf{i}$, then $D_{\mathbf{g},\mathbf{h},\mathbf{g}}^T=D_{\mathbf{g},\mathbf{h},\mathbf{g}}$.
\end{lem}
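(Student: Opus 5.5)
The plan is to compute transposes directly from the definitions, Equations \eqref{Eq;4} and \eqref{Eq;6}, and then compare coefficients using the multiplicativity of valencies in Lemma \ref{L;Lemma2.5}.

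\textbf{The $B$-matrices.} Since $\mathfrak{S}$ is symmetric, $A_\mathbf{j}^T=A_\mathbf{j}$ for every $\mathbf{j}\in\mathbb{E}$. By Equation \eqref{Eq;1}, each $E_\mathbf{g}^*$ is symmetric. Hence
$$(E_\mathbf{g}^*A_\mathbf{j}E_\mathbf{i}^*)^T=E_\mathbf{i}^*A_\mathbf{j}E_\mathbf{g}^*.$$
We already know that $(\mathbf{i},\mathbf{h},\mathbf{g})\in\mathbb{P}$ and that $\mathbb{P}_{\mathbf{g},\mathbf{h},\mathbf{i}}=\mathbb{P}_{\mathbf{i},\mathbf{h},\mathbf{g}}$. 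Summing over this common index set in Equation \eqref{Eq;4} gives $B_{\mathbf{g},\mathbf{h},\mathbf{i}}^T=B_{\mathbf{i},\mathbf{h},\mathbf{g}}$.

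\textbf{The $D$-matrices: setting up.} Assume $p\nmid k_\mathbf{h}$. We have $\mathbf{g}\mathbf{i}=\mathbf{i}\mathbf{g}$, so the index data $n_{\mathbf{g}\mathbf{i},\mathbf{h}}$ and $\mathbb{U}_{\mathbf{g}\mathbf{i},\mathbf{h},m}$ are the same for $D_{\mathbf{g},\mathbf{h},\mathbf{i}}$ and $D_{\mathbf{i},\mathbf{h},\mathbf{g}}$. Transposing Equation \eqref{Eq;6} with the first part gives
$$D_{\mathbf{g},\mathbf{h},\mathbf{i}}^T=\sum_{m}\sum_{\mathbf{q}}(\overline{-1})^m\overline{k_{\mathbf{i}\cap\mathbf{q}}}^{-1}B_{\mathbf{i},\mathbf{q},\mathbf{g}}.$$
By definition,
$$D_{\mathbf{i},\mathbf{h},\mathbf{g}}=\sum_{m}\sum_{\mathbf{q}}(\overline{-1})^m\overline{k_{\mathbf{g}\cap\mathbf{q}}}^{-1}B_{\mathbf{i},\mathbf{q},\mathbf{g}}.$$
So it suffices to prove, for every $\mathbf{q}\in\mathbb{U}_{\mathbf{g}\mathbf{i},\mathbf{h},m}$, the identity
$$\overline{k_{\mathbf{i}\setminus\mathbf{g}}}\,\overline{k_{\mathbf{i}\cap\mathbf{q}}}^{-1}=\overline{k_{\mathbf{g}\setminus\mathbf{i}}}\,\overline{k_{\mathbf{g}\cap\mathbf{q}}}^{-1}.$$

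\textbf{The coefficient identity (the main step).} Since $(\mathbf{g},\mathbf{h},\mathbf{i})\in\mathbb{P}$, we have $\mathbbm{g}\triangle\mathbbm{i}\subseteq\mathbbm{h}\subseteq\mathbbm{q}$. This yields the disjoint unions
$$\mathbbm{i}\cap\mathbbm{q}=(\mathbbm{i}\setminus\mathbbm{g})\cup(\mathbbm{g}\cap\mathbbm{i}\cap\mathbbm{q}),\qquad \mathbbm{g}\cap\mathbbm{q}=(\mathbbm{g}\setminus\mathbbm{i})\cup(\mathbbm{g}\cap\mathbbm{i}\cap\mathbbm{q}).$$
Lemma \ref{L;Lemma2.5} writes each valency as a product over the indices, so
$$k_{\mathbf{i}\cap\mathbf{q}}=k_{\mathbf{i}\setminus\mathbf{g}}\,k_{\mathbf{g}\cap\mathbf{i}\cap\mathbf{q}},\qquad k_{\mathbf{g}\cap\mathbf{q}}=k_{\mathbf{g}\setminus\mathbf{i}}\,k_{\mathbf{g}\cap\mathbf{i}\cap\mathbf{q}}.$$
Because $p\nmid k_\mathbf{q}$, every factor on the right is invertible in $\F$, since each divides $k_\mathbf{q}$. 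Hence both sides of the required identity equal $\overline{k_{\mathbf{g}\cap\mathbf{i}\cap\mathbf{q}}}^{-1}$. The only care needed here is checking that these inverses exist, which is exactly what $p\nmid k_\mathbf{q}$ provides.

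\textbf{The case $p\mid k_\mathbf{h}$ and the special case.} If $p\mid k_\mathbf{h}$, both $D_{\mathbf{g},\mathbf{h},\mathbf{i}}$ and $D_{\mathbf{i},\mathbf{h},\mathbf{g}}$ are $O$ by convention, so the identity is trivial. For the final claim, take $\mathbf{g}=\mathbf{i}$. Then $\mathbf{g}\setminus\mathbf{i}=\mathbf{i}\setminus\mathbf{g}=\mathbf{0}$ and $k_\mathbf{0}=1$, which gives $D_{\mathbf{g},\mathbf{h},\mathbf{g}}^T=D_{\mathbf{g},\mathbf{h},\mathbf{g}}$.
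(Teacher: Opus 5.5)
Your proposal is correct and follows the paper's proof essentially step for step. You transpose Equation \eqref{Eq;4} using Equation \eqref{Eq;1} and the symmetry of $\mathfrak{S}$. You then compare coefficients in Equation \eqref{Eq;6} via the factorizations $k_{\mathbf{i}\cap\mathbf{q}}=k_{\mathbf{i}\setminus\mathbf{g}}k_{\mathbf{g}\cap\mathbf{i}\cap\mathbf{q}}$ and $k_{\mathbf{g}\cap\mathbf{q}}=k_{\mathbf{g}\setminus\mathbf{i}}k_{\mathbf{g}\cap\mathbf{i}\cap\mathbf{q}}$, which come from $\mathbf{g}\triangle\mathbf{i}\preceq\mathbf{h}\preceq\mathbf{q}$ and Lemma \ref{L;Lemma2.5}. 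Your explicit invertibility check (each factor divides $k_\mathbf{q}$) is a detail the paper leaves implicit.
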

\begin{proof}
The first statement follows from Equations \eqref{Eq;4} and \eqref{Eq;1}. As $\mathbf{g}\triangle\mathbf{i}\preceq\mathbf{h}$, Lemma \ref{L;Lemma2.5} implies that
$k_{\mathbf{i}\cap\mathbf{j}}=k_{\mathbf{i}\setminus\mathbf{g}}k_{\mathbf{g}\cap\mathbf{i}\cap\mathbf{j}}$ and $k_{\mathbf{g}\cap\mathbf{j}}=k_{\mathbf{g}\setminus\mathbf{i}}k_{\mathbf{g}\cap\mathbf{i}\cap\mathbf{j}}$
for any $k\in[0,n_{\mathbf{g}\mathbf{i},\mathbf{h}}]$ and $\mathbf{j}\in\mathbb{U}_{\mathbf{g}\mathbf{i}, \mathbf{h}, k}$. The second statement is from Equation \eqref{Eq;6} and the first statement. As $k_{\mathbf{g}\setminus\mathbf{g}}=1$ by Lemma \ref{L;Lemma2.5}, the desired lemma follows from the second statement.
\end{proof}
\begin{lem}\label{L;Lemma3.5}
Assume that $g\in[1, 2^{n_-}]$ and $(\mathbf{h},\mathbf{i}, \mathbf{h}), (\mathbf{h},\mathbf{j}, \mathbf{h})\in\mathbb{C}_g$. Then $\mathbf{i}=\mathbf{j}$ and $(\mathbf{h}, \mathbf{i}, \mathbf{h})$ is the unique element in $\mathbb{C}_g$ that satisfies the equation $E_\mathbf{h}^*D_g\!=\!D_gE_\mathbf{h}^*\!=\!D_{\mathbf{h}, \mathbf{i}, \mathbf{h}}$.
\end{lem}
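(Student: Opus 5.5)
The first claim is pure combinatorics. If $(\mathbf{h},\mathbf{i},\mathbf{h}),(\mathbf{h},\mathbf{j},\mathbf{h})\in\mathbb{C}_g$, then by the definition of $\approx$ we get $\mathbbm{h}^-\setminus\mathbbm{i}=\mathbbm{h}^-\setminus\mathbbm{j}$. Membership in $\mathbb{P}$ with equal outer entries forces $\mathbbm{i},\mathbbm{j}\subseteq\mathbbm{h}^\circ$. The condition $p\nmid k_\mathbf{i}k_\mathbf{j}$ together with Lemma \ref{L;Lemma2.5} gives $\mathbbm{i}\cap\mathbbm{h}^+=\mathbbm{j}\cap\mathbbm{h}^+=\varnothing$. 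Since $\mathbbm{h}^\circ$ is the disjoint union of $\mathbbm{h}^+$ and $\mathbbm{h}^-$, we obtain $\mathbbm{i}=\mathbbm{h}^-\setminus(\mathbbm{h}^-\setminus\mathbbm{i})=\mathbbm{j}$, and Lemma \ref{L;Lemma2.3} then yields $\mathbf{i}=\mathbf{j}$.

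For the equation, expand $D_g$ via Equation \eqref{Eq;7} and Equation \eqref{Eq;6}. Each $D_{\mathbf{q},\mathbf{r},\mathbf{q}}$ is an $\F$-combination of matrices $B_{\mathbf{q},\mathbf{s},\mathbf{q}}$, which by Equation \eqref{Eq;4} lie in $E_\mathbf{q}^*\mathbb{T}E_\mathbf{q}^*$. By Equation \eqref{Eq;2} this gives $E_\mathbf{h}^*D_{\mathbf{q},\mathbf{r},\mathbf{q}}=D_{\mathbf{q},\mathbf{r},\mathbf{q}}E_\mathbf{h}^*=\delta_{\mathbf{h},\mathbf{q}}D_{\mathbf{q},\mathbf{r},\mathbf{q}}$. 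Summing over $\mathbb{C}_g$, only the terms with $\mathbf{q}=\mathbf{h}$ survive. By the first claim there is exactly one such term, namely $D_{\mathbf{h},\mathbf{i},\mathbf{h}}$. Hence $E_\mathbf{h}^*D_g=D_gE_\mathbf{h}^*=D_{\mathbf{h},\mathbf{i},\mathbf{h}}$.

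For uniqueness, suppose $(\mathbf{q},\mathbf{r},\mathbf{q})\in\mathbb{C}_g$ also satisfies $E_\mathbf{h}^*D_g=D_gE_\mathbf{h}^*=D_{\mathbf{q},\mathbf{r},\mathbf{q}}$. Then $D_{\mathbf{q},\mathbf{r},\mathbf{q}}=D_{\mathbf{h},\mathbf{i},\mathbf{h}}$. Both are elements of the basis in Lemma \ref{L;Lemma2.14}, so they are nonzero and index the same basis vector. Thus $(\mathbf{q},\mathbf{r},\mathbf{q})=(\mathbf{h},\mathbf{i},\mathbf{h})$.

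An alternative is to compare supports directly. $D_{\mathbf{q},\mathbf{r},\mathbf{q}}$ equals $E_\mathbf{q}^*D_{\mathbf{q},\mathbf{r},\mathbf{q}}$ and is nonzero, so $\mathbf{q}=\mathbf{h}$, and the first claim gives $\mathbf{r}=\mathbf{i}$. The nonvanishing holds because the $B$-coefficient at $\mathbf{q}=\mathbf{h}$ with $m=0$ in Equation \eqref{Eq;6} is $\overline{k_{\mathbf{i}\cap\mathbf{h}}}^{-1}\neq 0$, and the $B$'s are linearly independent by Lemma \ref{L;Lemma2.8}.

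The main obstacle is only bookkeeping: one must check that $\mathbbm{i}\subseteq\mathbbm{h}^\circ$ and $\mathbbm{i}\cap\mathbbm{h}^+=\varnothing$, so that $\mathbbm{i}$ is recovered from $\mathbbm{h}^-\setminus\mathbbm{i}$. This follows from the definition of $\mathbb{P}$ (the middle entry lies between $\mathbf{h}\triangle\mathbf{h}=\mathbf{0}$ and $\mathbf{h}\mathbf{h}$) together with Lemma \ref{L;Lemma2.5}.
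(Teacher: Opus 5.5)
Your proof is correct and follows the same route as the paper: it too uses Lemma \ref{L;Lemma2.5} to get $\mathbbm{i}\cup\mathbbm{j}\subseteq\mathbbm{h}^-$ and $\mathbbm{h}^-\setminus\mathbbm{i}=\mathbbm{h}^-\setminus\mathbbm{j}$, then concludes from Lemma \ref{L;Lemma2.3} and Equations \eqref{Eq;7}, \eqref{Eq;6}, \eqref{Eq;4}, \eqref{Eq;2}. Your explicit check that $D_{\mathbf{h},\mathbf{i},\mathbf{h}}\neq O$ (via the $m=0$ coefficient $\overline{k_{\mathbf{i}}}^{-1}$ and Lemma \ref{L;Lemma2.8}) makes rigorous the uniqueness part that the paper leaves implicit.
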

\begin{proof}
Notice that $\mathbbm{i}\cup\mathbbm{j}\subseteq\mathbbm{h}^-$ and $\mathbbm{h}^-\setminus\mathbbm{i}=\mathbbm{h}^-\setminus\mathbbm{j}$ by Lemma \ref{L;Lemma2.5}. The desired lemma follows from the combination of Lemma \ref{L;Lemma2.3}, Equations \eqref{Eq;7}, \eqref{Eq;6}, \eqref{Eq;4}, \eqref{Eq;2}.
\end{proof}
\begin{lem}\label{L;Lemma3.6}
Assume that $g\in[1,2^{n_-}]$ and $(\mathbf{h}, \mathbf{i}, \mathbf{j})\in\mathbb{P}$. Assume that $(\mathbf{j},\mathbf{k},\mathbf{j})\in\mathbb{C}_g$ and $n_{\mathbf{i}, \mathbf{k}, \mathbf{j}}>0$. Then $B_{\mathbf{h}, \mathbf{i}, \mathbf{j}}D_g=B_{\mathbf{h}, \mathbf{i}, \mathbf{j}}D_{\mathbf{j},\mathbf{k},\mathbf{j}}=D_{\mathbf{j},\mathbf{k},\mathbf{j}}B_{\mathbf{h}, \mathbf{i}, \mathbf{j}}=D_gB_{\mathbf{h}, \mathbf{i}, \mathbf{j}}=O$.
\end{lem}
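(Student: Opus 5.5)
The reductions $B_{\mathbf{h},\mathbf{i},\mathbf{j}}D_g=B_{\mathbf{h},\mathbf{i},\mathbf{j}}D_{\mathbf{j},\mathbf{k},\mathbf{j}}$ and $D_gB_{\mathbf{h},\mathbf{i},\mathbf{j}}=D_{\mathbf{j},\mathbf{k},\mathbf{j}}B_{\mathbf{h},\mathbf{i},\mathbf{j}}$ are formal. Equation \eqref{Eq;4} and Equation \eqref{Eq;2} give $B_{\mathbf{h},\mathbf{i},\mathbf{j}}=B_{\mathbf{h},\mathbf{i},\mathbf{j}}E_\mathbf{j}^*$, and Lemma \ref{L;Lemma3.5} gives $E_\mathbf{j}^*D_g=D_{\mathbf{j},\mathbf{k},\mathbf{j}}$. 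For the product on the other side, I would use transposes rather than recompute. By Lemma \ref{L;Lemma3.4}, $D_{\mathbf{j},\mathbf{k},\mathbf{j}}^T=D_{\mathbf{j},\mathbf{k},\mathbf{j}}$ and $B_{\mathbf{h},\mathbf{i},\mathbf{j}}^T=B_{\mathbf{j},\mathbf{i},\mathbf{h}}$. Hence $D_{\mathbf{j},\mathbf{k},\mathbf{j}}B_{\mathbf{h},\mathbf{i},\mathbf{j}}=(B_{\mathbf{j},\mathbf{i},\mathbf{h}}D_{\mathbf{j},\mathbf{k},\mathbf{j}})^T$. Applying the same argument to $(\mathbf{j},\mathbf{i},\mathbf{h})\in\mathbb{P}$ and using $n_{\mathbf{i},\mathbf{k},\mathbf{j}}$, which depends only on $\mathbbm{j}$ and $\mathbbm{k}$ here, reduces everything to the single claim $B_{\mathbf{h},\mathbf{i},\mathbf{j}}D_{\mathbf{j},\mathbf{k},\mathbf{j}}=O$.

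To prove the claim, expand $D_{\mathbf{j},\mathbf{k},\mathbf{j}}$ by Equation \eqref{Eq;6} as a sum over $m\in[0,n_{\mathbf{j}\mathbf{j},\mathbf{k}}]$ and $\mathbf{q}\in\mathbb{U}_{\mathbf{j}\mathbf{j},\mathbf{k},m}$, with coefficients $(\overline{-1})^m\overline{k_{\mathbf{j}\cap\mathbf{q}}}^{-1}=(\overline{-1})^m\overline{k_\mathbf{q}}^{-1}$. Multiply each term with Lemma \ref{L;Lemma2.8}:
\[
B_{\mathbf{h},\mathbf{i},\mathbf{j}}B_{\mathbf{j},\mathbf{q},\mathbf{j}}=\overline{k_{\mathbf{i}\cap\mathbf{j}\cap\mathbf{q}}}\,B_{\mathbf{h},[\mathbf{h},\mathbf{i},\mathbf{j},\mathbf{q},\mathbf{j}],\mathbf{j}}.
\]
The index $\mathbf{q}$ ranges over $\mathbf{k}\preceq\mathbf{q}\preceq\mathbf{j}\mathbf{j}$ with $p\nmid k_\mathbf{q}$, so Lemmas \ref{L;Lemma3.1}--\ref{L;Lemma3.3} apply with $(\mathbf{g},\mathbf{h},\mathbf{i})\mapsto(\mathbf{h},\mathbf{i},\mathbf{j})$ and $\mathbf{j}\mapsto\mathbf{k}$. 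I would group the terms by the value of $[\mathbf{h},\mathbf{i},\mathbf{j},\mathbf{q},\mathbf{j}]$. By Lemma \ref{L;Lemma3.1}, each group is determined by $\mathbbm{q}\setminus(\mathbbm{i}\cup\mathbbm{k})$, and its members are the $\mathbf{l}$ with $\mathbbm{l}=\mathbbm{k}\cup(\mathbbm{q}\setminus(\mathbbm{i}\cup\mathbbm{k}))\cup\mathbb{U}$ for $\mathbb{U}\subseteq(\mathbbm{i}\cap\mathbbm{j})^-\setminus\mathbbm{k}$.

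Next I would show the scalar coefficient $\overline{k_{\mathbf{i}\cap\mathbf{j}\cap\mathbf{l}}}\,\overline{k_\mathbf{l}}^{-1}=\overline{k_{\mathbf{l}\setminus\mathbf{i}}}^{-1}$ is constant within a group. This uses $\mathbbm{l}\subseteq\mathbbm{j}$ and Lemma \ref{L;Lemma2.5}, and Lemma \ref{L;Lemma3.3} says $k_{\mathbf{l}\setminus\mathbf{i}}$ does not depend on the group member. The coefficient of each basis element is therefore a constant times $\sum_{m}(\overline{-1})^{m}\binom{n_{\mathbf{i},\mathbf{k},\mathbf{j}}}{m}$, up to the overall sign $(\overline{-1})^{|\mathbbm{q}\setminus(\mathbbm{i}\cup\mathbbm{k})|}$, with the count supplied by Lemma \ref{L;Lemma3.2}. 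Since $n_{\mathbf{i},\mathbf{k},\mathbf{j}}>0$, this alternating binomial sum is $(1-1)^{n_{\mathbf{i},\mathbf{k},\mathbf{j}}}=0$, so every coefficient vanishes and the product is $O$.

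The main obstacle is the bookkeeping in the last step. One must check that the exponent $m$ in Equation \eqref{Eq;6}, namely $|\mathbbm{q}\setminus\mathbbm{k}|$, splits as the fixed part $|\mathbbm{q}\setminus(\mathbbm{i}\cup\mathbbm{k})|$ plus $|\mathbb{U}|$. One must also check that, as $\mathbb{U}$ varies, every member of a group lies in the summation range $\mathbb{U}_{\mathbf{j}\mathbf{j},\mathbf{k},\cdot}$. Both should follow from Lemma \ref{L;Lemma3.1}, the fact that $(\mathbbm{i}\cap\mathbbm{j})^-\setminus\mathbbm{k}\subseteq\mathbbm{j}^-$, and Lemma \ref{L;Lemma2.5}.
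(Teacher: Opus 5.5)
Your reduction of $D_gB_{\mathbf{h},\mathbf{i},\mathbf{j}}$ has a genuine gap. The rest is fine and agrees with the paper: the reduction of $B_{\mathbf{h},\mathbf{i},\mathbf{j}}D_g$, the transpose argument for $D_{\mathbf{j},\mathbf{k},\mathbf{j}}B_{\mathbf{h},\mathbf{i},\mathbf{j}}$, and the core computation of $B_{\mathbf{h},\mathbf{i},\mathbf{j}}D_{\mathbf{j},\mathbf{k},\mathbf{j}}=O$. That computation expands via Equation \eqref{Eq;6}, multiplies with Lemma \ref{L;Lemma2.8}, groups by $[\mathbf{h},\mathbf{i},\mathbf{j},\mathbf{q},\mathbf{j}]$ using Lemmas \ref{L;Lemma3.1}--\ref{L;Lemma3.3}, and gets the coefficient $\overline{k_{\mathbf{q}\setminus\mathbf{i}}}^{-1}$ times $(1-1)^{n_{\mathbf{i},\mathbf{k},\mathbf{j}}}$, exactly as in the paper.

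\textbf{The gap.} The identity $D_gB_{\mathbf{h},\mathbf{i},\mathbf{j}}=D_{\mathbf{j},\mathbf{k},\mathbf{j}}B_{\mathbf{h},\mathbf{i},\mathbf{j}}$ is not formal. On the left you have $B_{\mathbf{h},\mathbf{i},\mathbf{j}}=E_\mathbf{h}^*B_{\mathbf{h},\mathbf{i},\mathbf{j}}$, not $E_\mathbf{j}^*B_{\mathbf{h},\mathbf{i},\mathbf{j}}$. So Lemma \ref{L;Lemma3.5} gives $D_gB_{\mathbf{h},\mathbf{i},\mathbf{j}}=D_gE_\mathbf{h}^*B_{\mathbf{h},\mathbf{i},\mathbf{j}}$. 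This is $O$ if $\mathbb{C}_g$ contains no triple $(\mathbf{h},\mathbf{l},\mathbf{h})$, and equals $D_{\mathbf{h},\mathbf{l},\mathbf{h}}B_{\mathbf{h},\mathbf{i},\mathbf{j}}$ otherwise.
\begin{itemize}
\item When $\mathbf{h}\neq\mathbf{j}$, your right-hand side $D_{\mathbf{j},\mathbf{k},\mathbf{j}}B_{\mathbf{h},\mathbf{i},\mathbf{j}}$ vanishes merely because $E_\mathbf{j}^*E_\mathbf{h}^*=O$.
\item By contrast, $D_{\mathbf{h},\mathbf{l},\mathbf{h}}B_{\mathbf{h},\mathbf{i},\mathbf{j}}$ need not vanish. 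Lemma \ref{L;Lemma3.14} shows it is nonzero when $n_{\mathbf{i},\mathbf{k},\mathbf{j}}=0$.
\end{itemize}
So its vanishing genuinely depends on the hypothesis $n_{\mathbf{i},\mathbf{k},\mathbf{j}}>0$. Your transpose step $D_{\mathbf{j},\mathbf{k},\mathbf{j}}B_{\mathbf{h},\mathbf{i},\mathbf{j}}=(B_{\mathbf{j},\mathbf{i},\mathbf{h}}D_{\mathbf{j},\mathbf{k},\mathbf{j}})^T$ covers $D_gB_{\mathbf{h},\mathbf{i},\mathbf{j}}$ only when $\mathbf{h}=\mathbf{j}$.

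\textbf{The repair}, which is what the paper does, treats the case $(\mathbf{h},\mathbf{l},\mathbf{h})\in\mathbb{C}_g$ directly. By Lemma \ref{L;Lemma3.4},
\[
D_gB_{\mathbf{h},\mathbf{i},\mathbf{j}}=D_{\mathbf{h},\mathbf{l},\mathbf{h}}B_{\mathbf{h},\mathbf{i},\mathbf{j}}=(B_{\mathbf{j},\mathbf{i},\mathbf{h}}D_{\mathbf{h},\mathbf{l},\mathbf{h}})^T.
\]
Then apply your key claim to $(\mathbf{j},\mathbf{i},\mathbf{h})\in\mathbb{P}$ and $(\mathbf{h},\mathbf{l},\mathbf{h})\in\mathbb{C}_g$. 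This needs $n_{\mathbf{i},\mathbf{l},\mathbf{h}}>0$, which comes from class invariance:
\begin{itemize}
\item Since $p\nmid k_\mathbf{k}k_\mathbf{l}$, Lemma \ref{L;Lemma2.5} forces $\mathbbm{k}\subseteq\mathbbm{j}^-$ and $\mathbbm{l}\subseteq\mathbbm{h}^-$.
\item Since both triples lie in $\mathbb{C}_g$, $\mathbbm{h}^-\setminus\mathbbm{l}=\mathbbm{j}^-\setminus\mathbbm{k}$.
\item Hence $n_{\mathbf{i},\mathbf{l},\mathbf{h}}=|\mathbbm{i}\cap(\mathbbm{h}^-\setminus\mathbbm{l})|=|\mathbbm{i}\cap(\mathbbm{j}^-\setminus\mathbbm{k})|=n_{\mathbf{i},\mathbf{k},\mathbf{j}}>0$.
\end{itemize}
Your remark that $n_{\mathbf{i},\mathbf{k},\mathbf{j}}$ ``depends only on $\mathbbm{j}$ and $\mathbbm{k}$'' is not correct as stated. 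It depends on $\mathbbm{i}$ and on the $\approx$-invariant $\mathbbm{j}^-\setminus\mathbbm{k}$, and that invariance is precisely the fact the missing case needs.
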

\begin{proof}
Notice that $B_{\mathbf{h}, \mathbf{i}, \mathbf{j}}D_g\!=\!B_{\mathbf{h}, \mathbf{i}, \mathbf{j}}E_\mathbf{j}^*D_g\!=\!B_{\mathbf{h}, \mathbf{i}, \mathbf{j}}D_{\mathbf{j}, \mathbf{k}, \mathbf{j}}$ by combining Equations \eqref{Eq;4}, \eqref{Eq;2}, Lemma \ref{L;Lemma3.5}. If $\mathbf{h}\neq \mathbf{j}$, then $D_{\mathbf{j},\mathbf{k},\mathbf{j}}B_{\mathbf{h}, \mathbf{i},
\mathbf{j}}\!=\!D_{\mathbf{j},\mathbf{k},\mathbf{j}}E_\mathbf{h}^*B_{\mathbf{h}, \mathbf{i}, \mathbf{j}}\!=\!O$ by combining Equations \eqref{Eq;6}, \eqref{Eq;4}, \eqref{Eq;2}. If $\mathbf{h}\!=\!\mathbf{j}$, then $D_{\mathbf{j},\mathbf{k},\mathbf{j}}B_{\mathbf{j}, \mathbf{i}, \mathbf{j}}\!=\!(B_{\mathbf{j}, \mathbf{i}, \mathbf{j}}D_{\mathbf{j},\mathbf{k},\mathbf{j}})^T$ by Lemma \ref{L;Lemma3.4}. If $D_gE_\mathbf{h}^*=O$, then $D_gB_{\mathbf{h}, \mathbf{i}, \mathbf{j}}=D_gE_\mathbf{h}^*B_{\mathbf{h}, \mathbf{i}, \mathbf{j}}=O$ by Equations \eqref{Eq;4} and \eqref{Eq;2}. If $D_gE_\mathbf{h}^*\neq O$, then $(\mathbf{h}, \mathbf{l}, \mathbf{h})\in\mathbb{C}_g$ and $n_{\mathbf{h}, \mathbf{l}, \mathbf{i}}=n_{\mathbf{i}, \mathbf{k}, \mathbf{j}}>0$ for some $\mathbf{l}\in\mathbb{E}$. If $D_gE_\mathbf{h}^*\neq O$, then $D_gB_{\mathbf{h}, \mathbf{i}, \mathbf{j}}=D_gE_\mathbf{h}^*B_{\mathbf{h}, \mathbf{i}, \mathbf{j}}=D_{\mathbf{h}, \mathbf{l}, \mathbf{h}}B_{\mathbf{h}, \mathbf{i}, \mathbf{j}}=(B_{\mathbf{j}, \mathbf{i}, \mathbf{h}}D_{\mathbf{h}, \mathbf{l}, \mathbf{h}})^T$ by combining Equations \eqref{Eq;4}, \eqref{Eq;2}, Lemmas \ref{L;Lemma3.5}, \ref{L;Lemma3.4}. In conclusion, it suffices to check that $B_{\mathbf{h}, \mathbf{i}, \mathbf{j}}D_{\mathbf{j}, \mathbf{k}, \mathbf{j}}=O$.

By Equation \eqref{Eq;6} and Lemma \ref{L;Lemma2.8}, notice that $B_{\mathbf{h}, \mathbf{i}, \mathbf{j}}D_{\mathbf{j}, \mathbf{k}, \mathbf{j}}$ is a unique $\F$-linear combination of the matrices in $\{B_{\mathbf{h}, [\mathbf{h}, \mathbf{i}, \mathbf{j}, \mathbf{a}, \mathbf{j}], \mathbf{j}}: \mathbf{a}\in\mathbb{E}, \mathbf{k}\preceq\mathbf{a}\preceq\mathbf{j}\mathbf{j}, p\nmid k_\mathbf{a}\}$. Assume that $\mathbf{m}\in\mathbb{E}$, $\mathbf{k}\preceq\mathbf{m}\preceq\mathbf{j}\mathbf{j}$, $p\nmid k_\mathbf{m}$, $q=|\mathbbm{m}\setminus(\mathbbm{i}\cup\mathbbm{k})|$. Let $c_{[\mathbf{h}, \mathbf{i}, \mathbf{j}, \mathbf{m}, \mathbf{j}]}$ be the coefficient of $B_{\mathbf{h}, [\mathbf{h}, \mathbf{i},\mathbf{j}, \mathbf{m}, \mathbf{j}],\mathbf{j}}$ in this $\F$-linear combination that represents $B_{\mathbf{h}, \mathbf{i},\mathbf{j}}D_{\mathbf{j},\mathbf{k},\mathbf{j}}$.  Lemma \ref{L;Lemma2.5} implies that $k_{\mathbf{j}\cap\mathbf{r}}\!=\!k_\mathbf{r}\!=\!k_{\mathbf{r}\setminus\mathbf{i}}k_{\mathbf{i}\cap\mathbf{r}}\!=\!k_{\mathbf{r}\setminus\mathbf{i}}k_{\mathbf{i}\cap\mathbf{j}\cap\mathbf{r}}$ for any $(\mathbf{j}, \mathbf{r}, \mathbf{j})\!\in\!\mathbb{P}$. The combination of Equation \eqref{Eq;6}, Lemmas \ref{L;Lemma2.8}, \ref{L;Lemma3.1}, \ref{L;Lemma3.2}, \ref{L;Lemma3.3}, the Binomial Theorem thus implies that
$$c_{[\mathbf{h}, \mathbf{i}, \mathbf{j}, \mathbf{m}, \mathbf{j}]}=\sum_{r=0}^{n_{\mathbf{i},\mathbf{k},\mathbf{j}}}(\overline{-1})^{q+r}\overline{k_{\mathbf{m}\setminus\mathbf{i}}}^{-1}\overline{{n_{\mathbf{i},\mathbf{k},\mathbf{j}}\choose r}}=\overline{0}.$$
As $B_{\mathbf{h}, [\mathbf{h}, \mathbf{i}, \mathbf{j}, \mathbf{m}, \mathbf{j}], \mathbf{j}}$ is arbitrarily chosen from $\{B_{\mathbf{h}, [\mathbf{h}, \mathbf{i}, \mathbf{j}, \mathbf{a}, \mathbf{j}], \mathbf{j}}: \mathbf{a}\in\mathbb{E}, \mathbf{k}\preceq\mathbf{a}\preceq\mathbf{j}\mathbf{j}, p\nmid k_\mathbf{a}\}$,
notice that $B_{\mathbf{h}, \mathbf{i}, \mathbf{j}}D_{\mathbf{j}, \mathbf{k}, \mathbf{j}}\!=\!O$ by the displayed equation. The desired lemma follows.
\end{proof}
\begin{lem}\label{L;Lemma3.7}
Assume that $g\in[1,2^{n_-}]$, $\mathbf{h}, \mathbf{i}\in\mathbb{E}$, $(\mathbf{j},\mathbf{k},\mathbf{j})\in\mathbb{C}_g$,
$n_{\mathbf{i},\mathbf{h}\cup\mathbf{k},\mathbf{j}}=0$, $n_{\mathbf{j}, \mathbf{h}\cup\mathbf{i}\cup\mathbf{k}, \mathbf{j}}=0$. Then there is a unique $(\mathbf{h}, \mathbf{l}, \mathbf{h})\in\mathbb{C}_g$ such that $\mathbbm{l}=(\mathbbm{h}\cap\mathbbm{k})\cup(\mathbbm{h}^-\setminus\mathbbm{j})$.
\end{lem}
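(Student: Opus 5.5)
The plan is to prove uniqueness first and then existence. For existence, I take $\mathbf{l}$ to be the element given by the prescribed set (via Lemma \ref{L;Lemma2.3}) and verify the three conditions for membership in $\mathbb{C}_g$: $(\mathbf{h},\mathbf{l},\mathbf{h})\in\mathbb{P}$, $p\nmid k_\mathbf{l}$, and $\mathbbm{h}^-\setminus\mathbbm{l}=\mathbbm{j}^-\setminus\mathbbm{k}$.

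\emph{Uniqueness.} The set $\mathbbm{l}$ is determined by $\mathbf{h},\mathbf{j},\mathbf{k}$. By Lemma \ref{L;Lemma2.3} there is exactly one $\mathbf{l}\in\mathbb{E}$ with $\mathbbm{l}=(\mathbbm{h}\cap\mathbbm{k})\cup(\mathbbm{h}^-\setminus\mathbbm{j})$. Hence at most one triple $(\mathbf{h},\mathbf{l},\mathbf{h})$ of the required form exists.

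\emph{The sets involved lie in $\mathbbm{h}^-$.} Since $(\mathbf{j},\mathbf{k},\mathbf{j})\in\mathbb{P}$ and $\mathbbm{j}\triangle\mathbbm{j}=\varnothing$, we have $\mathbbm{k}\subseteq\mathbbm{j}^\circ$. Since $p\nmid k_\mathbf{k}$, Lemma \ref{L;Lemma2.5} shows that $\mathbbm{k}$ contains no index $a$ with $p\mid|\mathbb{U}_a|-1$, so $\mathbbm{k}\subseteq\mathbbm{j}^-$. It follows that $\mathbbm{h}\cap\mathbbm{k}\subseteq\mathbbm{h}^-$, and trivially $\mathbbm{h}^-\setminus\mathbbm{j}\subseteq\mathbbm{h}^-$. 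Therefore $\mathbbm{l}\subseteq\mathbbm{h}^-\subseteq\mathbbm{h}^\circ$. Because $\mathbbm{h}\triangle\mathbbm{h}=\varnothing$, this gives $(\mathbf{h},\mathbf{l},\mathbf{h})\in\mathbb{P}$. Because $\mathbbm{l}$ avoids $[1,n]^+$, Lemma \ref{L;Lemma2.5} gives $p\nmid k_\mathbf{l}$.

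\emph{The class condition.} This is the only step that uses the two vanishing hypotheses. From the definition of $\mathbbm{l}$,
$$\mathbbm{h}^-\setminus\mathbbm{l}=(\mathbbm{h}^-\cap\mathbbm{j})\setminus\mathbbm{k}=(\mathbbm{h}\cap\mathbbm{j})^-\setminus\mathbbm{k}.$$
So it suffices to show $\mathbbm{j}^-\setminus\mathbbm{k}\subseteq\mathbbm{h}$. Unwinding the definition of $n_{\cdot,\cdot,\cdot}$:
\begin{itemize}
\item $n_{\mathbf{j},\mathbf{h}\cup\mathbf{i}\cup\mathbf{k},\mathbf{j}}=0$ means $\mathbbm{j}^-\setminus(\mathbbm{h}\cup\mathbbm{i}\cup\mathbbm{k})=\varnothing$, that is, $\mathbbm{j}^-\setminus\mathbbm{k}\subseteq\mathbbm{h}\cup\mathbbm{i}$;
\item $n_{\mathbf{i},\mathbf{h}\cup\mathbf{k},\mathbf{j}}=0$ means $(\mathbbm{i}\cap\mathbbm{j})^-\setminus(\mathbbm{h}\cup\mathbbm{k})=\varnothing$.
\end{itemize}
Now take $a\in\mathbbm{j}^-\setminus\mathbbm{k}$. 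By the first condition, either $a\in\mathbbm{h}$ already, or $a\in\mathbbm{i}$. In the latter case $a\in(\mathbbm{i}\cap\mathbbm{j})^-\setminus\mathbbm{k}$, and the second condition forces $a\in\mathbbm{h}$. Thus $\mathbbm{j}^-\setminus\mathbbm{k}\subseteq\mathbbm{h}$, and so $\mathbbm{h}^-\setminus\mathbbm{l}=\mathbbm{j}^-\setminus\mathbbm{k}$.

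\emph{Conclusion.} This equality says $(\mathbf{h},\mathbf{l},\mathbf{h})\approx(\mathbf{j},\mathbf{k},\mathbf{j})$, hence $(\mathbf{h},\mathbf{l},\mathbf{h})\in\mathbb{C}_g$. Together with uniqueness, this proves the lemma. The argument is routine set bookkeeping; the only care needed is in tracking the superscripts $^-$ and $^\circ$.
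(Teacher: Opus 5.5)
Your proof is correct and matches the paper's argument step for step: define $\mathbf{l}$ via Lemma \ref{L;Lemma2.3}, use $\mathbbm{k}\subseteq\mathbbm{j}^\circ$ and $p\nmid k_\mathbf{k}$ (Lemma \ref{L;Lemma2.5}) to get $(\mathbf{h},\mathbf{l},\mathbf{h})\in\mathbb{P}$ and $p\nmid k_\mathbf{l}$, then combine the two vanishing hypotheses into $\mathbbm{j}^-\setminus(\mathbbm{h}\cup\mathbbm{k})=\varnothing$ to conclude $\mathbbm{h}^-\setminus\mathbbm{l}=(\mathbbm{h}\cap\mathbbm{j})^-\setminus\mathbbm{k}=\mathbbm{j}^-\setminus\mathbbm{k}$. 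Your explicit observation that $\mathbbm{k}\subseteq\mathbbm{j}^-$, and hence $\mathbbm{l}\subseteq\mathbbm{h}^-$, simply spells out what the paper compresses into its citation of Lemma \ref{L;Lemma2.5}.
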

\begin{proof}
By Lemma \ref{L;Lemma2.3}, there is a unique $\mathbf{l}\in\mathbb{E}$ such that $\mathbbm{l}=(\mathbbm{h}\cap\mathbbm{k})\cup(\mathbbm{h}^-\setminus\mathbbm{j})$. As $\mathbbm{k}\subseteq\mathbbm{j}^\circ$, notice that $(\mathbf{h}, \mathbf{l}, \mathbf{h})\in\mathbb{P}$. As $p\nmid k_\mathbf{k}$, notice that $p\nmid k_\mathbf{l}$ by Lemma \ref{L;Lemma2.5}. Moreover,
$\mathbbm{h}^-\setminus\mathbbm{l}=(\mathbbm{h}\cap\mathbbm{j})^-\setminus\mathbbm{k}=\mathbbm{j}^-\setminus\mathbbm{k}$ as $\mathbbm{k}\subseteq\mathbbm{j}^\circ$ and $\mathbbm{j}^-\setminus(\mathbbm{h}\cup\mathbbm{k})=(\mathbbm{i}\cap\mathbbm{j})^-\setminus(\mathbbm{h}\cup\mathbbm{k})=\varnothing$. So $(\mathbf{h}, \mathbf{l}, \mathbf{h})\in\mathbb{C}_g$ by the above proved statements.
The desired lemma follows.
\end{proof}
\begin{lem}\label{L;Lemma3.8}
Assume that $g\in[1,2^{n_-}]$ and $(\mathbf{h}, \mathbf{i}, \mathbf{j})\in\mathbb{P}$. Assume that there is no $n$-tuple $\mathbf{k}$ in $\mathbb{E}$ satisfying the condition $(\mathbf{j},\mathbf{k}, \mathbf{j})\in\mathbb{C}_g$. Then $B_{\mathbf{h}, \mathbf{i},\mathbf{j}}D_g=D_gB_{\mathbf{h}, \mathbf{i},\mathbf{j}}=O$.
\end{lem}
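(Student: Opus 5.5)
The plan is to use the fact that $D_g$ is built only from terms $D_{\mathbf{q},\mathbf{r},\mathbf{q}}$ with $(\mathbf{q},\mathbf{r},\mathbf{q})\in\mathbb{C}_g$, each of which is sandwiched by $E_\mathbf{q}^*$ on both sides. We then show that the hypothesis forces both products to vanish because of the idempotents $E_\mathbf{j}^*$ and $E_\mathbf{h}^*$.

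\textbf{Right product.} By Equations \eqref{Eq;6}, \eqref{Eq;4} and \eqref{Eq;2}, every summand of $D_{\mathbf{q},\mathbf{r},\mathbf{q}}$ has the form $E_\mathbf{q}^*A_\mathbf{s}E_\mathbf{q}^*$. Hence
\[
E_\mathbf{q}^*D_{\mathbf{q},\mathbf{r},\mathbf{q}}=D_{\mathbf{q},\mathbf{r},\mathbf{q}}E_\mathbf{q}^*=D_{\mathbf{q},\mathbf{r},\mathbf{q}},
\]
and $E_\mathbf{j}^*D_{\mathbf{q},\mathbf{r},\mathbf{q}}=O$ whenever $\mathbf{j}\neq\mathbf{q}$. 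By Equation \eqref{Eq;7},
\[
E_\mathbf{j}^*D_g=\sum_{(\mathbf{q},\mathbf{r},\mathbf{q})\in\mathbb{C}_g}\delta_{\mathbf{j},\mathbf{q}}D_{\mathbf{q},\mathbf{r},\mathbf{q}}.
\]
The hypothesis says no element of $\mathbb{C}_g$ has outer entries equal to $\mathbf{j}$, so every term vanishes and $E_\mathbf{j}^*D_g=O$. Since $B_{\mathbf{h},\mathbf{i},\mathbf{j}}=B_{\mathbf{h},\mathbf{i},\mathbf{j}}E_\mathbf{j}^*$ by Equations \eqref{Eq;4} and \eqref{Eq;2}, we get
\[
B_{\mathbf{h},\mathbf{i},\mathbf{j}}D_g=B_{\mathbf{h},\mathbf{i},\mathbf{j}}E_\mathbf{j}^*D_g=O.
\]

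\textbf{Left product.} We write $D_gB_{\mathbf{h},\mathbf{i},\mathbf{j}}=D_gE_\mathbf{h}^*B_{\mathbf{h},\mathbf{i},\mathbf{j}}$. If there is no $\mathbf{l}$ with $(\mathbf{h},\mathbf{l},\mathbf{h})\in\mathbb{C}_g$, the same argument gives $D_gE_\mathbf{h}^*=O$ and we are done. Otherwise Lemma \ref{L;Lemma3.5} gives $D_gE_\mathbf{h}^*=D_{\mathbf{h},\mathbf{l},\mathbf{h}}$, and by Lemma \ref{L;Lemma3.4} (transposing) it suffices to show $B_{\mathbf{j},\mathbf{i},\mathbf{h}}D_{\mathbf{h},\mathbf{l},\mathbf{h}}=O$. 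For this we invoke Lemma \ref{L;Lemma3.6}, which requires $n_{\mathbf{i},\mathbf{l},\mathbf{h}}>0$.

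\textbf{Main obstacle: proving $n_{\mathbf{i},\mathbf{l},\mathbf{h}}>0$.} We argue by contradiction and assume $(\mathbbm{i}\cap\mathbbm{h})^-\setminus\mathbbm{l}=\varnothing$. Set $\mathbbm{k}=(\mathbbm{j}\cap\mathbbm{l})\cup(\mathbbm{j}^-\setminus\mathbbm{h})$, in the spirit of Lemma \ref{L;Lemma3.7} with the roles of $\mathbf{h}$ and $\mathbf{j}$ swapped. Then:
\begin{enumerate}
\item $(\mathbf{j},\mathbf{k},\mathbf{j})\in\mathbb{P}$, since $\mathbbm{k}\subseteq\mathbbm{j}^\circ$.
\item $p\nmid k_\mathbf{k}$ by Lemma \ref{L;Lemma2.5}, since $p\nmid k_\mathbf{l}$ and elements of $\mathbbm{j}^-$ contribute factors prime to $p$.
\item $\mathbbm{j}^-\setminus\mathbbm{k}=(\mathbbm{h}\cap\mathbbm{j})^-\setminus\mathbbm{l}$.
\end{enumerate}
Because $\mathbbm{h}\triangle\mathbbm{j}\subseteq\mathbbm{i}$, every element of $\mathbbm{h}^-\setminus\mathbbm{j}$ lies in $\mathbbm{i}$. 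Since also $\mathbbm{l}\subseteq\mathbbm{h}^-$, the assumption $(\mathbbm{i}\cap\mathbbm{h})^-\subseteq\mathbbm{l}$ yields $\mathbbm{h}^-\setminus\mathbbm{j}\subseteq\mathbbm{l}$. Consequently
\[
(\mathbbm{h}\cap\mathbbm{j})^-\setminus\mathbbm{l}=\mathbbm{h}^-\setminus\mathbbm{l}.
\]
This gives $(\mathbf{j},\mathbf{k},\mathbf{j})\approx(\mathbf{h},\mathbf{l},\mathbf{h})$, so $(\mathbf{j},\mathbf{k},\mathbf{j})\in\mathbb{C}_g$, contradicting the hypothesis. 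Hence $n_{\mathbf{i},\mathbf{l},\mathbf{h}}>0$, Lemma \ref{L;Lemma3.6} applies, and $D_gB_{\mathbf{h},\mathbf{i},\mathbf{j}}=O$. The set-theoretic bookkeeping in the third item, using $\mathbbm{h}\triangle\mathbbm{j}\subseteq\mathbbm{i}$ and $\mathbbm{l}\subseteq\mathbbm{h}^-$, is the step that needs the most care.
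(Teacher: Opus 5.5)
Your proof is correct and follows the paper's argument step for step. The right product vanishes because $E_\mathbf{j}^*D_g=O$; the left product is handled by Lemma \ref{L;Lemma3.5}, transposition via Lemma \ref{L;Lemma3.4}, and Lemma \ref{L;Lemma3.6}, with $n_{\mathbf{i},\mathbf{l},\mathbf{h}}>0$ obtained by contradiction. The only difference is that the paper produces the contradicting triple $(\mathbf{j},\cdot,\mathbf{j})\in\mathbb{C}_g$ by citing Lemma \ref{L;Lemma3.7}, whereas you carry out that same construction $\mathbbm{k}=(\mathbbm{j}\cap\mathbbm{l})\cup(\mathbbm{j}^-\setminus\mathbbm{h})$ directly, and your set-theoretic verification of it is sound.
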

\begin{proof}
Notice that $B_{\mathbf{h}, \mathbf{i}, \mathbf{j}}D_g=B_{\mathbf{h}, \mathbf{i}, \mathbf{j}}E_\mathbf{j}^*D_g=O$ by combining Equations \eqref{Eq;4}, \eqref{Eq;2}, \eqref{Eq;7}, \eqref{Eq;6}. If $(\mathbf{h}, \mathbf{k},\mathbf{h})\notin\mathbb{C}_g$ for any $\mathbf{k}\in\mathbb{E}$, then $D_gB_{\mathbf{h}, \mathbf{i},\mathbf{j}}=D_gE_\mathbf{h}^*B_{\mathbf{h}, \mathbf{i},\mathbf{j}}=O$ by combining Equations \eqref{Eq;4}, \eqref{Eq;2}, \eqref{Eq;7}, \eqref{Eq;6}. Assume that there is $(\mathbf{h},\mathbf{k}, \mathbf{h})\in\mathbb{C}_g$. If $n_{\mathbf{h}, \mathbf{k}, \mathbf{i}}>0$, then $D_gB_{\mathbf{h}, \mathbf{i},\mathbf{j}}=(B_{\mathbf{j}, \mathbf{i}, \mathbf{h}}D_g)^T=O$ by combining Equation \eqref{Eq;7}, Lemmas \ref{L;Lemma3.4}, \ref{L;Lemma3.6}. So it suffices to check that $n_{\mathbf{h},\mathbf{k},\mathbf{i}}\!\neq\! 0$. As $n_{\mathbf{h}, \mathbf{i}\cup\mathbf{j}\cup\mathbf{k}, \mathbf{h}}=0$ and $(\mathbf{h}, \mathbf{k}, \mathbf{h})\!\in\!\mathbb{C}_g$, the equation $n_{\mathbf{h}, \mathbf{k}, \mathbf{i}}=0$ and Lemma \ref{L;Lemma3.7} imply that $n_{\mathbf{h},\mathbf{j}\cup\mathbf{k}, \mathbf{i}}=0$ and there is $(\mathbf{j},\mathbf{l},\mathbf{j})\in\mathbb{C}_g$. This is absurd. So $n_{\mathbf{h}, \mathbf{k}, \mathbf{i}}>0$ by this contradiction. The desired lemma follows.
\end{proof}
\begin{lem}\label{L;Lemma3.9}
Assume that $g\in[1,2^{n_-}]$ and $(\mathbf{h}, \mathbf{i}, \mathbf{j})\in\mathbb{P}$. Assume that $(\mathbf{j},\mathbf{k},\mathbf{j})\in\mathbb{C}_g$ and $n_{\mathbf{i}, \mathbf{k}, \mathbf{j}}=0$. Then
there is a unique $(\mathbf{h}, \mathbf{l}, \mathbf{h})\in\mathbb{C}_g$ such that $\mathbbm{l}=(\mathbbm{h}\cap\mathbbm{k})\cup(\mathbbm{h}^-\setminus\mathbbm{j})$. In particular,
$B_{\mathbf{h}, \mathbf{i}, \mathbf{j}}D_g=B_{\mathbf{h}, \mathbf{i}, \mathbf{j}}D_{\mathbf{j}, \mathbf{k}, \mathbf{j}}$ and $D_gB_{\mathbf{h}, \mathbf{i}, \mathbf{j}}=D_{\mathbf{h}, \mathbf{l}, \mathbf{h}}B_{\mathbf{h}, \mathbf{i}, \mathbf{j}}$ for this unique $(\mathbf{h}, \mathbf{l}, \mathbf{h})$.
\end{lem}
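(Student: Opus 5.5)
The first claim will follow from Lemma \ref{L;Lemma3.7} once its two hypotheses are checked, with $(\mathbf{h},\mathbf{i})$ in the roles of $(\mathbf{h},\mathbf{i})$ there. The tuple $(\mathbf{j},\mathbf{k},\mathbf{j})\in\mathbb{C}_g$ is the given one.

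First we verify $n_{\mathbf{i},\mathbf{h}\cup\mathbf{k},\mathbf{j}}=0$. By definition $n_{\mathbf{i},\mathbf{h}\cup\mathbf{k},\mathbf{j}}=|(\mathbbm{i}\cap\mathbbm{j})^-\setminus(\mathbbm{h}\cup\mathbbm{k})|$. This set is contained in $(\mathbbm{i}\cap\mathbbm{j})^-\setminus\mathbbm{k}$, whose cardinality is $n_{\mathbf{i},\mathbf{k},\mathbf{j}}=0$ by hypothesis.

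Next we verify $n_{\mathbf{j},\mathbf{h}\cup\mathbf{i}\cup\mathbf{k},\mathbf{j}}=|\mathbbm{j}^-\setminus(\mathbbm{h}\cup\mathbbm{i}\cup\mathbbm{k})|=0$. Since $(\mathbf{h},\mathbf{i},\mathbf{j})\in\mathbb{P}$, we have $\mathbbm{h}\triangle\mathbbm{j}\subseteq\mathbbm{i}$. Take $a\in\mathbbm{j}^-\setminus(\mathbbm{h}\cup\mathbbm{i})$. If $a\notin\mathbbm{h}$, then $a\in\mathbbm{j}\setminus\mathbbm{h}\subseteq\mathbbm{i}$, which is a contradiction. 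Hence $\mathbbm{j}^-\setminus(\mathbbm{h}\cup\mathbbm{i})=\varnothing$, and a fortiori the required set is empty. Lemma \ref{L;Lemma3.7} now gives the unique $(\mathbf{h},\mathbf{l},\mathbf{h})\in\mathbb{C}_g$ with $\mathbbm{l}=(\mathbbm{h}\cap\mathbbm{k})\cup(\mathbbm{h}^-\setminus\mathbbm{j})$.

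For the two product identities, argue as in the opening of the proof of Lemma \ref{L;Lemma3.6}.

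By Equations \eqref{Eq;4} and \eqref{Eq;2}, $B_{\mathbf{h},\mathbf{i},\mathbf{j}}=B_{\mathbf{h},\mathbf{i},\mathbf{j}}E_\mathbf{j}^*=E_\mathbf{h}^*B_{\mathbf{h},\mathbf{i},\mathbf{j}}$. Hence
\[B_{\mathbf{h},\mathbf{i},\mathbf{j}}D_g=B_{\mathbf{h},\mathbf{i},\mathbf{j}}E_\mathbf{j}^*D_g \quad\text{and}\quad D_gB_{\mathbf{h},\mathbf{i},\mathbf{j}}=D_gE_\mathbf{h}^*B_{\mathbf{h},\mathbf{i},\mathbf{j}}.\]
Since $(\mathbf{j},\mathbf{k},\mathbf{j})\in\mathbb{C}_g$, Lemma \ref{L;Lemma3.5} gives $E_\mathbf{j}^*D_g=D_{\mathbf{j},\mathbf{k},\mathbf{j}}$. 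Since $(\mathbf{h},\mathbf{l},\mathbf{h})\in\mathbb{C}_g$, the same lemma gives $D_gE_\mathbf{h}^*=D_{\mathbf{h},\mathbf{l},\mathbf{h}}$, and it also identifies this $(\mathbf{h},\mathbf{l},\mathbf{h})$ as the unique element of $\mathbb{C}_g$ with first entry $\mathbf{h}$. Substituting these gives both claimed identities.

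The only genuinely nontrivial step is checking the second hypothesis of Lemma \ref{L;Lemma3.7}. It rests on the containment $\mathbbm{h}\triangle\mathbbm{j}\subseteq\mathbbm{i}$ coming from membership in $\mathbb{P}$; everything else is bookkeeping with earlier lemmas.
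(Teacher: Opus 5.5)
Your proposal is correct and follows the paper's proof. You check $n_{\mathbf{i},\mathbf{h}\cup\mathbf{k},\mathbf{j}}=n_{\mathbf{j},\mathbf{h}\cup\mathbf{i}\cup\mathbf{k},\mathbf{j}}=0$ to invoke Lemma \ref{L;Lemma3.7}, then insert $E_\mathbf{j}^*$ and $E_\mathbf{h}^*$ via Equations \eqref{Eq;4} and \eqref{Eq;2} and apply Lemma \ref{L;Lemma3.5}; the only difference is that you justify the two vanishing conditions (the second via $\mathbbm{j}\setminus\mathbbm{h}\subseteq\mathbbm{i}$), which the paper simply asserts.
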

\begin{proof}
As $n_{\mathbf{i},\mathbf{h}\cup\mathbf{k},\mathbf{j}}=n_{\mathbf{j},\mathbf{h}\cup\mathbf{i}\cup\mathbf{k} ,\mathbf{j}}=0$, the first statement follows from Lemma \ref{L;Lemma3.7}. So $B_{\mathbf{h}, \mathbf{i}, \mathbf{j}}D_g\!=\!B_{\mathbf{h}, \mathbf{i}, \mathbf{j}}E_\mathbf{j}^*D_g=B_{\mathbf{h}, \mathbf{i}, \mathbf{j}}D_{\mathbf{j}, \mathbf{k}, \mathbf{j}}$ and $D_gB_{\mathbf{h}, \mathbf{i}, \mathbf{j}}\!=\!D_gE_\mathbf{h}^*B_{\mathbf{h}, \mathbf{i}, \mathbf{j}}\!=\!D_{\mathbf{h}, \mathbf{l}, \mathbf{h}}B_{\mathbf{h}, \mathbf{i}, \mathbf{j}}$ by the combination of Equations \eqref{Eq;4}, \eqref{Eq;2}, Lemma \ref{L;Lemma3.5}. The desired lemma follows.
\end{proof}
\begin{lem}\label{L;Lemma3.10}
Assume that $(\mathbf{g}, \mathbf{h}, \mathbf{i})\in\mathbb{P}$ and $\mathbf{j}, \mathbf{k}, \mathbf{l}\in\mathbb{E}$. Assume that $p\nmid k_\mathbf{k}k_\mathbf{l}$, $\mathbf{j}\preceq\mathbf{k}\preceq\mathbf{i}\mathbf{i},
\mathbf{j}\preceq\mathbf{l}\preceq\mathbf{i}\mathbf{i}$, $n_{\mathbf{h}, \mathbf{j}, \mathbf{k}}\!=\!n_{\mathbf{h}, \mathbf{j}, \mathbf{l}}$. Then $\mathbf{k}\!=\!\mathbf{l}$ if and only if $[\mathbf{g}, \mathbf{h},\mathbf{i}, \mathbf{k}, \mathbf{i}]\!=\![\mathbf{g}, \mathbf{h},\mathbf{i},\mathbf{l}, \mathbf{i}]$.
\end{lem}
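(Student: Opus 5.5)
The plan is to reduce both directions to a comparison of the sets $\mathbbm{k}$ and $\mathbbm{l}$, reusing the bookkeeping from the proof of Lemma \ref{L;Lemma3.1}. The direction "$\mathbf{k}=\mathbf{l}$ implies $[\mathbf{g},\mathbf{h},\mathbf{i},\mathbf{k},\mathbf{i}]=[\mathbf{g},\mathbf{h},\mathbf{i},\mathbf{l},\mathbf{i}]$" is immediate, since the bracket is a function of its arguments. For the converse I will first determine the shape of $\mathbbm{k}$ and $\mathbbm{l}$.

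\emph{Step 1.} Since $\mathbf{k}\preceq\mathbf{i}\mathbf{i}$, we have $\mathbbm{k}\subseteq\mathbbm{i}^\circ$. Since $p\nmid k_\mathbf{k}$, Lemma \ref{L;Lemma2.5} excludes every index $a$ with $p\mid|\mathbb{U}_a|-1$, so $\mathbbm{k}\subseteq\mathbbm{i}^-$. The same holds for $\mathbbm{l}$. Consequently $(\mathbbm{h}\cap\mathbbm{k})^-=\mathbbm{h}\cap\mathbbm{k}$, and therefore
\[
n_{\mathbf{h},\mathbf{j},\mathbf{k}}=|(\mathbbm{h}\cap\mathbbm{k})\setminus\mathbbm{j}|
\quad\text{and}\quad
n_{\mathbf{h},\mathbf{j},\mathbf{l}}=|(\mathbbm{h}\cap\mathbbm{l})\setminus\mathbbm{j}|.
\]

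\emph{Step 2.} Next I will reduce the bracket equation to a set equation. Expand the definition with the arguments $(\mathbf{g},\mathbf{h},\mathbf{i},\mathbf{k},\mathbf{i})$:
\[
[\mathbf{g},\mathbf{h},\mathbf{i},\mathbf{k},\mathbf{i}]
\text{ corresponds to }
(\mathbbm{g}\triangle\mathbbm{i})\cup((\mathbbm{g}\cap\mathbbm{i})^\circ\setminus\mathbbm{h})\cup((\mathbbm{h}\cup\mathbbm{k})\cap(\mathbbm{g}\cap\mathbbm{i})^\circ).
\]
The first two pieces do not depend on $\mathbf{k}$. By the identities used at the start of the proof of Lemma \ref{L;Lemma3.1}, namely $\mathbbm{i}\setminus\mathbbm{g}\subseteq\mathbbm{h}$, $\mathbbm{k}\subseteq\mathbbm{i}^\circ$ and $\mathbbm{g}\triangle\mathbbm{i}\subseteq\mathbbm{h}$, the bracket equals the set determined by $\mathbbm{h}\cup(\mathbbm{k}\setminus\mathbbm{h})$, up to these fixed pieces. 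Hence, via Lemma \ref{L;Lemma2.3}, the bracket equation is equivalent to $\mathbbm{k}\setminus\mathbbm{h}=\mathbbm{l}\setminus\mathbbm{h}$. This is exactly the computation behind Lemma \ref{L;Lemma3.3}, and it uses $\mathbbm{j}\subseteq\mathbbm{k}\cap\mathbbm{l}$.

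\emph{Step 3.} It then remains to show $\mathbbm{h}\cap\mathbbm{k}=\mathbbm{h}\cap\mathbbm{l}$. Because $\mathbbm{j}$ lies in both sets, this is equivalent to $(\mathbbm{h}\cap\mathbbm{k})\setminus\mathbbm{j}=(\mathbbm{h}\cap\mathbbm{l})\setminus\mathbbm{j}$, both being subsets of $(\mathbbm{h}\cap\mathbbm{i})^-\setminus\mathbbm{j}$. Once this holds, Lemma \ref{L;Lemma2.3} gives $\mathbf{k}=\mathbf{l}$.

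\emph{Main obstacle.} The hypothesis $n_{\mathbf{h},\mathbf{j},\mathbf{k}}=n_{\mathbf{h},\mathbf{j},\mathbf{l}}$ gives, by Step 1, only that these two subsets have the same cardinality, not that they are equal. Read literally, the lemma appears to fail. Take $\mathbf{j}=\mathbf{0}$, choose two indices $a\neq b$ in $(\mathbbm{h}\cap\mathbbm{i})^-$, and set $\mathbbm{k}=\{a\}$, $\mathbbm{l}=\{b\}$. Then $k_\mathbf{k}=|\mathbb{U}_a|-1$ and $k_\mathbf{l}=|\mathbb{U}_b|-1$ are prime to $p$. Moreover $\mathbf{j}\preceq\mathbf{k},\mathbf{l}\preceq\mathbf{i}\mathbf{i}$, both counts equal $1$, and $\mathbbm{k}\setminus\mathbbm{h}=\varnothing=\mathbbm{l}\setminus\mathbbm{h}$, so the brackets agree, yet $\mathbf{k}\neq\mathbf{l}$. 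Such a configuration exists, for instance, whenever $\mathbf{g}=\mathbf{i}$ and $\mathbbm{h}\cap\mathbbm{i}^-$ has at least two elements; it is consistent with $(\mathbf{g},\mathbf{h},\mathbf{i})\in\mathbb{P}$.

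My first move is therefore to recheck this example against the exact definitions of $n_{\cdot,\cdot,\cdot}$, $\mathbf{i}\mathbf{i}$ and the bracket. If it survives, the "if" direction does not hold as worded, and I cannot prove it. What Steps 1--3 do prove is the following. The forward implication holds, and "$[\mathbf{g},\mathbf{h},\mathbf{i},\mathbf{k},\mathbf{i}]=[\mathbf{g},\mathbf{h},\mathbf{i},\mathbf{l},\mathbf{i}]$ implies $\mathbf{k}=\mathbf{l}$" holds exactly when the traces $(\mathbbm{h}\cap\mathbbm{k})\setminus\mathbbm{j}$ and $(\mathbbm{h}\cap\mathbbm{l})\setminus\mathbbm{j}$ coincide. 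This is presumably the situation in the subsequent application, where $\mathbf{k}$ and $\mathbf{l}$ have a common trace on $\mathbbm{h}$, as in Lemma \ref{L;Lemma3.1}. In that form the argument closes through Lemmas \ref{L;Lemma3.3}, \ref{L;Lemma2.5} and \ref{L;Lemma2.3}.
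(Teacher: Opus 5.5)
Your diagnosis is correct. Your reduction is essentially the paper's own. The paper reduces the bracket equation to $\mathbbm{k}\setminus(\mathbbm{h}\cup\mathbbm{j})=\mathbbm{l}\setminus(\mathbbm{h}\cup\mathbbm{j})$, and then asserts $(\mathbbm{h}\cap\mathbbm{k})\setminus\mathbbm{j}=(\mathbbm{h}\cap\mathbbm{l})\setminus\mathbbm{j}$ ``by Lemma~\ref{L;Lemma2.5}''. Since $\mathbbm{k}\cup\mathbbm{l}\subseteq\mathbbm{i}^-$, the hypothesis $n_{\mathbf{h},\mathbf{j},\mathbf{k}}=n_{\mathbf{h},\mathbf{j},\mathbf{l}}$ only says these two sets have the same cardinality, so that step is unjustified, exactly where you located the problem.

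Your counterexample survives the recheck. A concrete instance: take $n=2$, $|\mathbb{U}_1|=|\mathbb{U}_2|=3$, $p\neq 2$, $\mathbf{g}=\mathbf{h}=\mathbf{i}=\mathbf{1}$, $\mathbf{j}=\mathbf{0}$, $\mathbf{k}=(1,0)$, $\mathbf{l}=(0,1)$. Then:
$(\mathbf{1},\mathbf{1},\mathbf{1})\in\mathbb{P}$; $\mathbf{i}\mathbf{i}=\mathbf{1}$; $k_\mathbf{k}=k_\mathbf{l}=2$ is prime to $p$; $n_{\mathbf{h},\mathbf{j},\mathbf{k}}=n_{\mathbf{h},\mathbf{j},\mathbf{l}}=1$; and both brackets equal $\mathbf{1}$. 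Yet $\mathbf{k}\neq\mathbf{l}$.

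So the ``if'' direction is false as worded, and no proof of it can exist. The correct statement is the one you isolate: given bracket equality, $\mathbf{k}=\mathbf{l}$ holds exactly when $(\mathbbm{h}\cap\mathbbm{k})\setminus\mathbbm{j}=(\mathbbm{h}\cap\mathbbm{l})\setminus\mathbbm{j}$. In particular the lemma holds verbatim when $n_{\mathbf{h},\mathbf{j},\mathbf{k}}=n_{\mathbf{h},\mathbf{j},\mathbf{l}}=0$, since both traces are then empty. That is the only case the paper needs. The lemma is invoked only in the proof of Lemma~\ref{L;Lemma3.11}, with $n_{\mathbf{i},\mathbf{k},\mathbf{k}\cup\mathbf{q}}=n_{\mathbf{i},\mathbf{k},\mathbf{j}}=0$, so nothing downstream is affected.

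One slip to repair in Step 2. With fourth argument $\mathbf{k}$ and fifth argument $\mathbf{i}$, the middle piece of the bracket is $(\mathbbm{g}\cap\mathbbm{i})^\circ\setminus\mathbbm{i}=\varnothing$, not $(\mathbbm{g}\cap\mathbbm{i})^\circ\setminus\mathbbm{h}$. With your displayed formula, the second and third pieces together would cover all of $(\mathbbm{g}\cap\mathbbm{i})^\circ$. The bracket would then be independent of $\mathbf{k}$, contradicting your own conclusion.

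With the correct expansion the computation is short. Use $\mathbbm{g}\triangle\mathbbm{i}\subseteq\mathbbm{h}\subseteq(\mathbbm{g}\triangle\mathbbm{i})\cup(\mathbbm{g}\cap\mathbbm{i})^\circ$, $\mathbbm{k}\cap(\mathbbm{g}\cap\mathbbm{i})^\circ=\mathbbm{k}\cap\mathbbm{g}$, and $\mathbbm{k}\setminus\mathbbm{g}\subseteq\mathbbm{i}\setminus\mathbbm{g}\subseteq\mathbbm{h}$. These show that $[\mathbf{g},\mathbf{h},\mathbf{i},\mathbf{k},\mathbf{i}]$ corresponds to $\mathbbm{h}\cup\mathbbm{k}$. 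That gives your criterion $\mathbbm{k}\setminus\mathbbm{h}=\mathbbm{l}\setminus\mathbbm{h}$ directly. It also does not need $\mathbbm{j}\subseteq\mathbbm{k}\cap\mathbbm{l}$; that inclusion is only used in Step 3.
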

\begin{proof}
As $\mathbbm{i}\setminus\mathbbm{g}\subseteq\mathbbm{h}$ and $\mathbbm{k}\cup\mathbbm{l}\subseteq\mathbbm{i}^\circ$, notice that $((\mathbbm{g}\cap\mathbbm{i})^\circ\cap\mathbbm{k})\setminus(\mathbbm{h}\cup\mathbbm{j})=\mathbbm{k}\setminus(\mathbbm{h}\cup\mathbbm{j})$ and
$((\mathbbm{g}\cap\mathbbm{i})^\circ\cap\mathbbm{l})\setminus(\mathbbm{h}\cup\mathbbm{j})=\mathbbm{l}\setminus(\mathbbm{h}\cup\mathbbm{j})$. As $\mathbbm{i}\setminus\mathbbm{g}\subseteq\mathbbm{h}$ and $\mathbbm{j}\subseteq\mathbbm{k}\cap\mathbbm{l}\subseteq\mathbbm{i}^\circ$, notice that
$((\mathbbm{g}\cap\mathbbm{i})^\circ\cap\mathbbm{j}\cap\mathbbm{k})\setminus\mathbbm{h}=\mathbbm{j}\setminus\mathbbm{h}=((\mathbbm{g}\cap\mathbbm{i})^\circ\cap\mathbbm{j}\cap\mathbbm{l})\setminus\mathbbm{h}$.
As $\mathbbm{g}\triangle\mathbbm{i}\subseteq\mathbbm{h}\subseteq(\mathbbm{g}\triangle\mathbbm{i})\cup(\mathbbm{g}\cap\mathbbm{i})^\circ$, Lemma \ref{L;Lemma2.3} implies that $\mathbbm{h}\cup(\mathbbm{j}\setminus\mathbbm{h})\cup(\mathbbm{k}\setminus(\mathbbm{h}\cup\mathbbm{j}))=\mathbbm{h}\cup(\mathbbm{j}\setminus\mathbbm{h})\cup(\mathbbm{l}\setminus(\mathbbm{h}\cup\mathbbm{j}))$ if and only if $[\mathbf{g}, \mathbf{h},\mathbf{i}, \mathbf{k}, \mathbf{i}]=[\mathbf{g}, \mathbf{h},\mathbf{i},\mathbf{l}, \mathbf{i}]$. This thus implies that $[\mathbf{g}, \mathbf{h},\mathbf{i}, \mathbf{k}, \mathbf{i}]=[\mathbf{g}, \mathbf{h},\mathbf{i},\mathbf{l}, \mathbf{i}]$ if and only if $\mathbbm{k}\setminus(\mathbbm{h}\cup\mathbbm{j})=\mathbbm{l}\setminus(\mathbbm{h}\cup\mathbbm{j})$. As
$\mathbbm{k}\cup\mathbbm{l}\subseteq\mathbbm{i}^\circ$, notice that $(\mathbbm{h}\cap\mathbbm{k})\setminus\mathbbm{j}=(\mathbbm{h}\cap\mathbbm{l})\setminus\mathbbm{j}$ by Lemma \ref{L;Lemma2.5}.
Therefore $[\mathbf{g}, \mathbf{h},\mathbf{i}, \mathbf{k}, \mathbf{i}]=[\mathbf{g}, \mathbf{h},\mathbf{i},\mathbf{l}, \mathbf{i}]$ if and only if $\mathbbm{k}\setminus\mathbbm{j}=\mathbbm{l}\setminus\mathbbm{j}$. As $\mathbbm{j}\subseteq\mathbbm{k}\cap\mathbbm{l}$, the desired lemma follows from the above discussion and Lemma \ref{L;Lemma2.3}.
\end{proof}
\begin{lem}\label{L;Lemma3.11}
Assume that $g\in[1,2^{n_-}]$, $(\mathbf{h}, \mathbf{i}, \mathbf{j})\in\mathbb{P}$, $(\mathbf{j}, \mathbf{k}, \mathbf{j}), (\mathbf{h},\mathbf{l},\mathbf{h})\in\mathbb{C}_g$. Then $\{[\mathbf{h}, \mathbf{i},\mathbf{j},\mathbf{a},\mathbf{j}]\!:\! \mathbf{a}\in\mathbb{E}, \mathbf{k}\preceq\mathbf{a}\preceq\mathbf{j}\mathbf{j}, p\nmid k_\mathbf{a}\}=\{[\mathbf{h}, \mathbf{i},\mathbf{j},\mathbf{a}\cup\mathbf{k},\mathbf{j}]\!:\! \mathbf{a} \!\in\!\mathbb{E}, \mathbbm{a}\subseteq\mathbbm{j}^-\setminus\mathbbm{k}\}$ and $\{[\mathbf{h}, \mathbf{a},\mathbf{h},\mathbf{i},\mathbf{j}]\!:\! \mathbf{a}\!\in\!\mathbb{E}, \mathbf{l}\preceq\mathbf{a}\preceq\mathbf{h}\mathbf{h}, p\nmid k_\mathbf{a}\}\!=\!\{[\mathbf{h}, \mathbf{a}\cup\mathbf{l},\mathbf{h},\mathbf{i},\mathbf{j}]\!:\! \mathbf{a} \!\in\!\mathbb{E}, \mathbbm{a}\subseteq\mathbbm{j}^-\setminus\mathbbm{k}\}$. Moreover, $[\mathbf{h}, \mathbf{i},\mathbf{j},\mathbf{k}\cup\mathbf{m},\mathbf{j}]=[\mathbf{h}, \mathbf{l}\cup\mathbf{m},\mathbf{h},\mathbf{i},\mathbf{j}]$
for any $\mathbf{m}\in\{\mathbf{a}:\mathbf{a}\in\mathbb{E},\mathbbm{a}\subseteq\mathbbm{j}^-\setminus\mathbbm{k}\}$. Furthermore, assume that $n_{\mathbf{i}, \mathbf{k},\mathbf{j}}\!=\!0$. Then $|\{[\mathbf{h}, \mathbf{i},\mathbf{j},\mathbf{a}\cup\mathbf{k},\mathbf{j}]\!:\! \mathbf{a}\!\in\!\mathbb{E}, \mathbbm{a}\subseteq\mathbbm{j}^-\setminus\mathbbm{k}\}|\!=\!2^{n_{\mathbf{j},\mathbf{k},\mathbf{j}}}$.
\end{lem}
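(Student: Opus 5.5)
The plan is to unwind the definitions of $\mathbb{C}_g$ and $[\cdot,\cdot,\cdot,\cdot,\cdot]$ into set identities, and verify each assertion by Lemma \ref{L;Lemma2.3} and Lemma \ref{L;Lemma2.5}.

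\textbf{Translating the hypotheses.} Since $(\mathbf{j},\mathbf{k},\mathbf{j})\in\mathbb{P}$, we have $\mathbbm{k}\subseteq\mathbbm{j}^\circ$. Since $p\nmid k_\mathbf{k}$, Lemma \ref{L;Lemma2.5} forces $\mathbbm{k}\cap\mathbbm{j}^+=\varnothing$, hence $\mathbbm{k}\subseteq\mathbbm{j}^-$. Likewise $\mathbbm{l}\subseteq\mathbbm{h}^-$. The condition $(\mathbf{j},\mathbf{k},\mathbf{j})\approx(\mathbf{h},\mathbf{l},\mathbf{h})$ gives
\[
\mathbb{W}:=\mathbbm{j}^-\setminus\mathbbm{k}=\mathbbm{h}^-\setminus\mathbbm{l},
\]
so $\mathbb{W}\subseteq(\mathbbm{h}\cap\mathbbm{j})^-$.

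\textbf{The two set equalities.} For the first, take $\mathbf{a}$ with $\mathbf{k}\preceq\mathbf{a}\preceq\mathbf{j}\mathbf{j}$ and $p\nmid k_\mathbf{a}$. Then $\mathbbm{a}\subseteq\mathbbm{j}^\circ$, and Lemma \ref{L;Lemma2.5} gives $\mathbbm{a}\subseteq\mathbbm{j}^-$. Hence $\mathbf{a}=\mathbf{k}\cup\mathbf{a}'$ with $\mathbbm{a}'=\mathbbm{a}\setminus\mathbbm{k}\subseteq\mathbb{W}$. Conversely, every such $\mathbf{k}\cup\mathbf{a}'$ satisfies the three conditions, again by Lemma \ref{L;Lemma2.5}. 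The second equality is identical with $(\mathbf{h},\mathbf{l})$ in place of $(\mathbf{j},\mathbf{k})$, using $\mathbbm{h}^-\setminus\mathbbm{l}=\mathbb{W}$.

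\textbf{The "Moreover" identity.} Fix $\mathbbm{m}\subseteq\mathbb{W}$ and expand the definition. The middle term $(\mathbbm{h}\cap\mathbbm{j})^\circ\setminus\mathbbm{j}$, respectively $(\mathbbm{h}\cap\mathbbm{j})^\circ\setminus\mathbbm{h}$, is empty. This gives
\[
[\mathbf{h},\mathbf{i},\mathbf{j},\mathbf{k}\cup\mathbf{m},\mathbf{j}]=(\mathbbm{h}\triangle\mathbbm{j})\cup\big((\mathbbm{i}\cup\mathbbm{k}\cup\mathbbm{m})\cap(\mathbbm{h}\cap\mathbbm{j})^\circ\big)
\]
as sets, and similarly for $[\mathbf{h},\mathbf{l}\cup\mathbf{m},\mathbf{h},\mathbf{i},\mathbf{j}]$ with $\mathbbm{l}$ in place of $\mathbbm{k}$. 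It therefore suffices to show
\[
\mathbbm{k}\cap(\mathbbm{h}\cap\mathbbm{j})^\circ\subseteq\mathbbm{l}\cup\mathbbm{m}\cup\mathbbm{i},
\]
together with the symmetric inclusion. Take $x\in\mathbbm{k}\cap\mathbbm{h}$. Then $x\in\mathbbm{h}^-$ because $\mathbbm{k}\subseteq\mathbbm{j}^-$. If $x\notin\mathbbm{l}$, then $x\in\mathbbm{h}^-\setminus\mathbbm{l}=\mathbb{W}=\mathbbm{j}^-\setminus\mathbbm{k}$, which contradicts $x\in\mathbbm{k}$. So $\mathbbm{k}\cap\mathbbm{h}\subseteq\mathbbm{l}$, and symmetrically $\mathbbm{l}\cap\mathbbm{j}\subseteq\mathbbm{k}$. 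Lemma \ref{L;Lemma2.3} then yields the identity.

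\textbf{The count.} Assume $n_{\mathbf{i},\mathbf{k},\mathbf{j}}=|(\mathbbm{i}\cap\mathbbm{j})^-\setminus\mathbbm{k}|=0$. Then $\mathbb{W}\cap\mathbbm{i}=\varnothing$. Also $\mathbb{W}\subseteq(\mathbbm{h}\cap\mathbbm{j})^\circ$, and $\mathbb{W}\cap\mathbbm{k}=\varnothing$. Consequently, in the displayed formula the map $\mathbbm{a}\mapsto[\mathbf{h},\mathbf{i},\mathbf{j},\mathbf{a}\cup\mathbf{k},\mathbf{j}]$ for $\mathbbm{a}\subseteq\mathbb{W}$ is injective: $\mathbbm{a}$ is recovered as the intersection of the image with $\mathbb{W}$. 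The set of values thus has exactly $2^{|\mathbb{W}|}$ elements, and $|\mathbb{W}|=|\mathbbm{j}^-\setminus\mathbbm{k}|=n_{\mathbf{j},\mathbf{k},\mathbf{j}}$. Alternatively, one could invoke Lemma \ref{L;Lemma3.10}.

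The only delicate step is the "Moreover" identity, specifically the observation $\mathbbm{k}\cap\mathbbm{h}\subseteq\mathbbm{l}$ coming from $\approx$-equivalence. The rest is bookkeeping with Lemma \ref{L;Lemma2.5}.
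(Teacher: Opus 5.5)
Your proposal is correct and follows essentially the paper's route. Both arguments describe the index sets via Lemma \ref{L;Lemma2.5} and $\mathbbm{j}^-\setminus\mathbbm{k}=\mathbbm{h}^-\setminus\mathbbm{l}$, get the ``Moreover'' identity from the $\approx$-equivalence (your $\mathbbm{k}\cap\mathbbm{h}\subseteq\mathbbm{l}$ is a slightly stronger form of the paper's $((\mathbbm{h}\cap\mathbbm{j})^\circ\cap\mathbbm{k})\setminus\mathbbm{i}=((\mathbbm{h}\cap\mathbbm{j})^\circ\cap\mathbbm{l})\setminus\mathbbm{i}$), and get the count from injectivity; the only difference is that the paper cites Lemma \ref{L;Lemma3.10} for injectivity, while you check it directly by recovering $\mathbbm{a}$ as the intersection of the image with $\mathbbm{j}^-\setminus\mathbbm{k}$.
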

\begin{proof}
As $\mathbbm{k}\subseteq\mathbbm{j}^\circ$, $\mathbbm{l}\subseteq\mathbbm{h}^\circ$, $p\nmid k_\mathbf{k}k_\mathbf{l}$, and  $\mathbbm{j}^-\setminus\mathbbm{k}=\mathbbm{h}^-\setminus\mathbbm{l}$, the first statement follows from Lemmas \ref{L;Lemma2.3} and \ref{L;Lemma2.5}. Notice that
$(\mathbbm{h}\cap\mathbbm{j})^-\setminus(\mathbbm{i}\cup\mathbbm{k})=(\mathbbm{h}\cap\mathbbm{j})^-\setminus(\mathbbm{i}\cup\mathbbm{k}\cup\mathbbm{l})=(\mathbbm{h}\cap\mathbbm{j})^-\setminus(\mathbbm{i}\cup\mathbbm{l})$ as $\mathbbm{j}^-\setminus\mathbbm{k}=\mathbbm{h}^-\setminus\mathbbm{l}$. As $p\nmid k_\mathbf{k}k_\mathbf{l}$, notice that $((\mathbbm{h}\cap\mathbbm{j})^\circ\cap\mathbbm{k})\setminus
\mathbbm{i}=((\mathbbm{h}\cap\mathbbm{j})^\circ\cap\mathbbm{l})\setminus \mathbbm{i}$ by Lemma \ref{L;Lemma2.5}. Assume that $\mathbf{m}\in\mathbb{E}$ and $\mathbbm{m}\subseteq\mathbbm{j}^-\setminus\mathbbm{k}=\mathbbm{h}^-\setminus\mathbbm{l}$. As $\mathbbm{j}\setminus\mathbbm{h}\subseteq\mathbbm{i}$ and $\mathbbm{m}\subseteq\mathbbm{j}^\circ$, notice that
$((\mathbbm{h}\cap\mathbbm{j})^\circ\cap\mathbbm{m})\setminus\mathbbm{i}=\mathbbm{m}\setminus\mathbbm{i}$. As
$\mathbbm{h}\triangle\mathbbm{j}\subseteq\mathbbm{i}\subseteq(\mathbbm{h}\triangle\mathbbm{j})\cup(\mathbbm{h}\cap\mathbbm{j})^\circ$, Lemma \ref{L;Lemma2.3} implies that
$\mathbbm{i}\cup(((\mathbbm{h}\cap\mathbbm{j})^\circ\cap\mathbbm{k})\setminus \mathbbm{i})\cup(\mathbbm{m}\setminus\mathbbm{i})=\mathbbm{i}\cup(((\mathbbm{h}\cap\mathbbm{j})^\circ\cap\mathbbm{l})\setminus \mathbbm{i})\cup(\mathbbm{m}\setminus\mathbbm{i})$ if and only if $[\mathbf{h}, \mathbf{i},\mathbf{j},\mathbf{k}\cup\mathbf{m},\mathbf{j}]=[\mathbf{h}, \mathbf{l}\cup\mathbf{m},\mathbf{h},\mathbf{i},\mathbf{j}]$.
The second statement thus follows as $\mathbf{m}$ is arbitrarily chosen from $\{\mathbf{a}:\mathbf{a}\in\mathbb{E},\mathbbm{a}\subseteq\mathbbm{j}^-\setminus\mathbbm{k}\}$. Lemma \ref{L;Lemma2.5} implies that
$p\nmid k_{\mathbf{k}\cup\mathbf{q}}$ for any $\mathbf{q}\in\{\mathbf{a}:\mathbf{a}\in\mathbb{E},\mathbbm{a}\subseteq\mathbbm{j}^-\setminus\mathbbm{k}\}$. For any $\mathbf{q}\in\{\mathbf{a}:\mathbf{a}\in\mathbb{E},\mathbbm{a}\subseteq\mathbbm{j}^-\setminus\mathbbm{k}\}$, $\mathbf{k}\preceq\mathbf{k}\cup\mathbf{q}\preceq \mathbf{j}\mathbf{j}$ and $n_{\mathbf{i},\mathbf{k},\mathbf{k}\cup\mathbf{q}}=n_{\mathbf{i},\mathbf{k},\mathbf{j}}=0$. The desired lemma follows from Lemmas \ref{L;Lemma3.10} and \ref{L;Lemma2.3}.
\end{proof}
\begin{lem}\label{L;Lemma3.12}
Assume that $g\in[1,2^{n_-}]$, $(\mathbf{h}, \mathbf{i}, \mathbf{j})\in\mathbb{P}$, $(\mathbf{j}, \mathbf{k}, \mathbf{j}), (\mathbf{h},\mathbf{l},\mathbf{h})\in\mathbb{C}_g$. Then $\{B_{\mathbf{h}, [\mathbf{h}, \mathbf{i},\mathbf{j},\mathbf{a}\cup\mathbf{k},\mathbf{j}], \mathbf{j}}: \mathbf{a}\in\mathbb{E}, \mathbbm{a}\subseteq\mathbbm{j}^-\setminus\mathbbm{k}\}=\{B_{\mathbf{h}, [\mathbf{h}, \mathbf{a}\cup\mathbf{l},\mathbf{h},\mathbf{i},\mathbf{j}], \mathbf{j}}: \mathbf{a}\in\mathbb{E}, \mathbbm{a}\subseteq\mathbbm{j}^-\setminus\mathbbm{k}\}$. Moreover, both  $B_{\mathbf{h},\mathbf{i},\mathbf{j}}D_{\mathbf{j},\mathbf{k},\mathbf{j}}$ and  $D_{\mathbf{h},\mathbf{l},\mathbf{h}}B_{\mathbf{h},\mathbf{i},\mathbf{j}}$ are unique $\F$-linear combinations of the matrices in
$$\{B_{\mathbf{h}, [\mathbf{h}, \mathbf{i},\mathbf{j},\mathbf{a}\cup\mathbf{k},\mathbf{j}], \mathbf{j}}: \mathbf{a} \in\mathbb{E}, \mathbbm{a}\subseteq\mathbbm{j}^-\setminus\mathbbm{k}\}.$$
\end{lem}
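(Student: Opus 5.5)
The first statement is a direct consequence of the second statement of Lemma \ref{L;Lemma3.11}. For every $\mathbf{m}\in\mathbb{E}$ with $\mathbbm{m}\subseteq\mathbbm{j}^-\setminus\mathbbm{k}$, that lemma gives $[\mathbf{h}, \mathbf{i},\mathbf{j},\mathbf{k}\cup\mathbf{m},\mathbf{j}]=[\mathbf{h}, \mathbf{l}\cup\mathbf{m},\mathbf{h},\mathbf{i},\mathbf{j}]$. So the two indexed families of matrices agree term by term, and in particular the two sets are equal. Both middle indices are admissible, because $(\mathbf{h},[\mathbf{h},\cdot,\cdot,\cdot,\mathbf{j}],\mathbf{j})\in\mathbb{P}$ holds by the remark after the definition of $[\cdot,\cdot,\cdot,\cdot,\cdot]$.

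For the second statement, I expand $D_{\mathbf{j},\mathbf{k},\mathbf{j}}$ using Equation \eqref{Eq;6}. This writes it as an $\F$-linear combination of the $B_{\mathbf{j},\mathbf{q},\mathbf{j}}$ with $\mathbf{k}\preceq\mathbf{q}\preceq\mathbf{j}\mathbf{j}$ and $p\nmid k_\mathbf{q}$. I then multiply on the left by $B_{\mathbf{h},\mathbf{i},\mathbf{j}}$ and apply Lemma \ref{L;Lemma2.8}. Each product $B_{\mathbf{h},\mathbf{i},\mathbf{j}}B_{\mathbf{j},\mathbf{q},\mathbf{j}}$ is a scalar multiple of $B_{\mathbf{h},[\mathbf{h},\mathbf{i},\mathbf{j},\mathbf{q},\mathbf{j}],\mathbf{j}}$. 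Hence $B_{\mathbf{h},\mathbf{i},\mathbf{j}}D_{\mathbf{j},\mathbf{k},\mathbf{j}}$ lies in the span of $\{B_{\mathbf{h},[\mathbf{h},\mathbf{i},\mathbf{j},\mathbf{a},\mathbf{j}],\mathbf{j}}:\mathbf{k}\preceq\mathbf{a}\preceq\mathbf{j}\mathbf{j}, p\nmid k_\mathbf{a}\}$, exactly as in the proof of Lemma \ref{L;Lemma3.6}. By the first equality of Lemma \ref{L;Lemma3.11}, this set coincides with the displayed set.

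Similarly, I expand $D_{\mathbf{h},\mathbf{l},\mathbf{h}}$ over the $B_{\mathbf{h},\mathbf{q},\mathbf{h}}$ with $\mathbf{l}\preceq\mathbf{q}\preceq\mathbf{h}\mathbf{h}$ and $p\nmid k_\mathbf{q}$. Lemma \ref{L;Lemma2.8} then gives $B_{\mathbf{h},\mathbf{q},\mathbf{h}}B_{\mathbf{h},\mathbf{i},\mathbf{j}}\in\F B_{\mathbf{h},[\mathbf{h},\mathbf{q},\mathbf{h},\mathbf{i},\mathbf{j}],\mathbf{j}}$. The second equality of Lemma \ref{L;Lemma3.11} rewrites the index set as $\{[\mathbf{h},\mathbf{a}\cup\mathbf{l},\mathbf{h},\mathbf{i},\mathbf{j}]:\mathbbm{a}\subseteq\mathbbm{j}^-\setminus\mathbbm{k}\}$. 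By the first statement just proved, the corresponding matrices are again exactly the displayed set. Strictly speaking, Lemma \ref{L;Lemma3.11} writes the condition in the second equality as $\mathbbm{a}\subseteq\mathbbm{j}^-\setminus\mathbbm{k}$ rather than $\mathbbm{h}^-\setminus\mathbbm{l}$, but these sets are equal because $(\mathbf{h},\mathbf{l},\mathbf{h})\approx(\mathbf{j},\mathbf{k},\mathbf{j})$.

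Uniqueness of both linear combinations follows because the displayed set is a subset of the $\F$-basis $\{B_{\mathbf{a},\mathbf{b},\mathbf{c}}\}$ of Lemma \ref{L;Lemma2.8}. Distinct elements of the set are distinct basis vectors, since they have distinct middle indices, so the set is linearly independent.

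The only real point to watch is the index bookkeeping: the ranges $\mathbf{k}\preceq\mathbf{a}\preceq\mathbf{j}\mathbf{j}$ in the expansion of $D_{\mathbf{j},\mathbf{k},\mathbf{j}}$ and $\mathbf{l}\preceq\mathbf{a}\preceq\mathbf{h}\mathbf{h}$ in the expansion of $D_{\mathbf{h},\mathbf{l},\mathbf{h}}$ must be exactly those used in Lemma \ref{L;Lemma3.11}. Both follow from $\mathbb{U}_{\mathbf{j}\mathbf{j},\mathbf{k},m}$ and $\mathbb{U}_{\mathbf{h}\mathbf{h},\mathbf{l},m}$ in Equation \eqref{Eq;6}, so no real obstacle is expected.
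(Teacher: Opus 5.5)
Your proposal is correct and follows essentially the same route as the paper. You get the set equality from the term-by-term identity in Lemma \ref{L;Lemma3.11}. You get the second statement by expanding $D_{\mathbf{j},\mathbf{k},\mathbf{j}}$ and $D_{\mathbf{h},\mathbf{l},\mathbf{h}}$ via Equation \eqref{Eq;6}, multiplying with Lemma \ref{L;Lemma2.8}, rewriting the index sets with Lemma \ref{L;Lemma3.11}, and deducing uniqueness from the basis in Lemma \ref{L;Lemma2.8}; this is exactly what the paper compresses into one sentence.
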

\begin{proof}
As $\{[\mathbf{h}, \mathbf{i},\mathbf{j},\mathbf{a}\cup\mathbf{k},\mathbf{j}]: \mathbf{a}\in\mathbb{E}, \mathbbm{a}\subseteq\mathbbm{j}^-\setminus\mathbbm{k}\}=\{[\mathbf{h}, \mathbf{a}\cup\mathbf{l},\mathbf{h},\mathbf{i},\mathbf{j}]: \mathbf{a}\in\mathbb{E}, \mathbbm{a}\subseteq\mathbbm{j}^-\setminus\mathbbm{k}\}$ by Lemma \ref{L;Lemma3.11}, the first statement follows. The desired lemma follows from the combination of Equation \eqref{Eq;6}, Lemmas \ref{L;Lemma2.8}, \ref{L;Lemma3.11}, and the first statement.
\end{proof}
\begin{lem}\label{L;Lemma3.13}
Assume that $g\in[1,2^{n_-}]$, $(\mathbf{h},\mathbf{i},\mathbf{h}), (\mathbf{j}, \mathbf{k}, \mathbf{j})\in\mathbb{C}_g$.
Assume that $\mathbf{l}\in\mathbb{E}$ and $\mathbf{k}\setminus(\mathbf{h}\cup\mathbf{l})\!=\!\mathbf{i}\setminus(\mathbf{j}\cup\mathbf{l})\!=\!\mathbf{0}$.
Then $k_{(\mathbf{i}\cup\mathbf{m})\setminus\mathbf{l}}\!=\!k_{(\mathbf{k}\cup\mathbf{m})\setminus\mathbf{l}}$ for any $\mathbf{m}\!\in\!\{\mathbf{a}: \mathbf{a}\!\in\!\mathbb{E},\mathbbm{a}\!\subseteq\!\mathbbm{j}^-\setminus\mathbbm{k}\}$.
\end{lem}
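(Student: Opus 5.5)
The plan is to reduce the claim to a set-theoretic identity and then apply Lemma \ref{L;Lemma2.5}. By Lemma \ref{L;Lemma2.5}, $k_\mathbf{a}=\prod_{c\in\mathbbm{a}}(|\mathbb{U}_c|-1)$, so it suffices to compare the sets $(\mathbbm{i}\cup\mathbbm{m})\setminus\mathbbm{l}$ and $(\mathbbm{k}\cup\mathbbm{m})\setminus\mathbbm{l}$. They need not be equal. The idea is to show that they differ only in coordinates $c$ with $|\mathbb{U}_c|-1\equiv 1$, which is impossible in general. Hence we shall aim instead for actual set equality where possible, and use the structure of $\mathbb{C}_g$ for the rest.

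First, we record what membership in $\mathbb{C}_g$ gives. Since $(\mathbf{h},\mathbf{i},\mathbf{h}),(\mathbf{j},\mathbf{k},\mathbf{j})\in\mathbb{C}_g$, we have $\mathbbm{i}\subseteq\mathbbm{h}^\circ$, $\mathbbm{k}\subseteq\mathbbm{j}^\circ$, and $p\nmid k_\mathbf{i}k_\mathbf{k}$. By Lemma \ref{L;Lemma2.5}, the latter forces $\mathbbm{i}^+=\mathbbm{k}^+=\varnothing$, so $\mathbbm{i}\subseteq\mathbbm{h}^-$ and $\mathbbm{k}\subseteq\mathbbm{j}^-$. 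Moreover $\mathbbm{h}^-\setminus\mathbbm{i}=\mathbbm{j}^-\setminus\mathbbm{k}$.

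Next, the hypotheses $\mathbbm{k}\subseteq\mathbbm{h}\cup\mathbbm{l}$ and $\mathbbm{i}\subseteq\mathbbm{j}\cup\mathbbm{l}$ are combined with these facts. Take $c\in\mathbbm{k}\setminus\mathbbm{l}$. Then $c\in\mathbbm{h}$ and $c\in\mathbbm{j}^-$, so $c\in\mathbbm{h}^-$. If $c\notin\mathbbm{i}$, then $c\in\mathbbm{h}^-\setminus\mathbbm{i}=\mathbbm{j}^-\setminus\mathbbm{k}$, contradicting $c\in\mathbbm{k}$. Hence $\mathbbm{k}\setminus\mathbbm{l}\subseteq\mathbbm{i}$, and symmetrically $\mathbbm{i}\setminus\mathbbm{l}\subseteq\mathbbm{k}$. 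It follows that $\mathbbm{i}\setminus\mathbbm{l}=\mathbbm{k}\setminus\mathbbm{l}$, and therefore $(\mathbbm{i}\cup\mathbbm{m})\setminus\mathbbm{l}=(\mathbbm{k}\cup\mathbbm{m})\setminus\mathbbm{l}$ for every $\mathbf{m}$, regardless of the constraint on $\mathbbm{m}$. Lemma \ref{L;Lemma2.3} then identifies the two elements of $\mathbb{E}$, and Lemma \ref{L;Lemma2.5} gives the equality of valencies.

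The only delicate point is the symmetric step, which uses $\mathbbm{i}\subseteq\mathbbm{h}^-$ together with the equality $\mathbbm{h}^-\setminus\mathbbm{i}=\mathbbm{j}^-\setminus\mathbbm{k}$ read in the other direction. Concretely, for $c\in\mathbbm{i}\setminus\mathbbm{l}$ we have $c\in\mathbbm{j}$ and $c\in\mathbbm{h}^-$. Since $^-$ depends only on the coordinate $c$, this gives $c\in\mathbbm{j}^-$. If $c\notin\mathbbm{k}$, then $c\in\mathbbm{j}^-\setminus\mathbbm{k}=\mathbbm{h}^-\setminus\mathbbm{i}$, which contradicts $c\in\mathbbm{i}$. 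This completes the argument.
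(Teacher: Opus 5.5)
Your argument is correct and is essentially the paper's proof. Both derive $\mathbbm{i}\setminus\mathbbm{l}=\mathbbm{k}\setminus\mathbbm{l}$ from the following ingredients:
\begin{itemize}
\item $\mathbbm{i}\subseteq\mathbbm{h}^-$ and $\mathbbm{k}\subseteq\mathbbm{j}^-$;
\item $\mathbbm{h}^-\setminus\mathbbm{i}=\mathbbm{j}^-\setminus\mathbbm{k}$;
\item the two containment hypotheses.
\end{itemize}
The paper does this through a chain of set identities, and you do it elementwise. The only difference is at the end: you take the union with $\mathbbm{m}\setminus\mathbbm{l}$ directly, which shows the restriction $\mathbbm{m}\subseteq\mathbbm{j}^-\setminus\mathbbm{k}$ is unnecessary. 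The paper instead uses that restriction to factor $k_{(\mathbf{i}\cup\mathbf{m})\setminus\mathbf{l}}=k_{\mathbf{i}\setminus\mathbf{l}}k_{\mathbf{m}\setminus\mathbf{l}}$.

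You should delete the opening remarks claiming the two sets ``need not be equal''. Your own argument proves that they always coincide under the hypotheses.
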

\begin{proof}
The equation $\mathbbm{h}^-\setminus\mathbbm{i}=\mathbbm{j}^-\setminus\mathbbm{k}$ and Lemma \ref{L;Lemma2.5} imply that $k_{(\mathbf{i}\cup\mathbf{m})\setminus\mathbf{l}}=k_{\mathbf{i}\setminus\mathbf{l}}k_{\mathbf{m}\setminus\mathbf{l}}$ and
$k_{(\mathbf{k}\cup\mathbf{m})\setminus\mathbf{l}}=k_{\mathbf{k}\setminus\mathbf{l}}k_{\mathbf{m}\setminus\mathbf{l}}$ for any $\mathbf{m}\in\{\mathbf{a}: \mathbf{a}\in\mathbb{E},\mathbbm{a}\subseteq \mathbbm{j}^-\setminus\mathbbm{k}\}$. So it suffices to check that $k_{\mathbf{i}\setminus\mathbf{l}}=k_{\mathbf{k}\setminus\mathbf{l}}$. Notice that $(\mathbbm{h}\cap\mathbbm{j})^-\setminus(\mathbbm{k}\cup\mathbbm{l})=
(\mathbbm{h}\cap\mathbbm{j})^-\setminus(\mathbbm{i}\cup\mathbbm{k}\cup\mathbbm{l})=(\mathbbm{h}\cap\mathbbm{j})^-\setminus(\mathbbm{i}\cup\mathbbm{l})$
as $\mathbbm{h}^-\setminus\mathbbm{i}=\mathbbm{j}^-\setminus\mathbbm{k}$. As $\mathbbm{i}\subseteq\mathbbm{h}^\circ$, $\mathbbm{k}\subseteq\mathbbm{j}^\circ$, $p\nmid k_\mathbf{i}k_\mathbf{k}$, Lemmas \ref{L;Lemma2.5} and \ref{L;Lemma2.3} imply that $\mathbbm{i}\setminus\mathbbm{l}=(\mathbbm{i}\cap\mathbbm{j})\setminus\mathbbm{l}=((\mathbbm{h}\cap\mathbbm{j})^\circ\cap\mathbbm{i})\setminus\mathbbm{l}=
((\mathbbm{h}\cap\mathbbm{j})^\circ\cap\mathbbm{k})\setminus\mathbbm{l}=(\mathbbm{h}\cap\mathbbm{k})\setminus\mathbbm{l}=\mathbbm{k}\setminus\mathbbm{l}$.
So $k_{\mathbf{i}\setminus\mathbf{l}}=k_{\mathbf{k}\setminus\mathbf{l}}$ by Lemma \ref{L;Lemma2.5}. The desired lemma follows from the above discussion.
\end{proof}
\begin{lem}\label{L;Lemma3.14}
Assume that $g\in[1,2^{n_-}]$ and $(\mathbf{h}, \mathbf{i}, \mathbf{j})\in\mathbb{P}$. Assume that $(\mathbf{j}, \mathbf{k}, \mathbf{j})\in\mathbb{C}_g$ and $n_{\mathbf{i},\mathbf{k},\mathbf{j}}=0$. Then there is a unique $(\mathbf{h}, \mathbf{l}, \mathbf{h})\in\mathbb{C}_g$ such that $\mathbbm{l}\!=\!(\mathbbm{h}\cap\mathbbm{k})\cup(\mathbbm{h}^-\setminus\mathbbm{j})$. Moreover, $B_{\mathbf{h},\mathbf{i},\mathbf{j}}D_g\!=\!B_{\mathbf{h},\mathbf{i},\mathbf{j}}D_{\mathbf{j}, \mathbf{k},\mathbf{j}}=D_{\mathbf{h}, \mathbf{l},\mathbf{h}}B_{\mathbf{h},\mathbf{i},\mathbf{j}}=D_gB_{\mathbf{h},\mathbf{i},\mathbf{j}}\!\neq\! O$ for this unique $(\mathbf{h}, \mathbf{l}, \mathbf{h})$.
\end{lem}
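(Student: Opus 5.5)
The first statement and the equalities $B_{\mathbf{h},\mathbf{i},\mathbf{j}}D_g=B_{\mathbf{h},\mathbf{i},\mathbf{j}}D_{\mathbf{j},\mathbf{k},\mathbf{j}}$ and $D_gB_{\mathbf{h},\mathbf{i},\mathbf{j}}=D_{\mathbf{h},\mathbf{l},\mathbf{h}}B_{\mathbf{h},\mathbf{i},\mathbf{j}}$ are exactly Lemma \ref{L;Lemma3.9}. So the work reduces to two claims:
\[
B_{\mathbf{h},\mathbf{i},\mathbf{j}}D_{\mathbf{j},\mathbf{k},\mathbf{j}}=D_{\mathbf{h},\mathbf{l},\mathbf{h}}B_{\mathbf{h},\mathbf{i},\mathbf{j}}
\quad\text{and}\quad
B_{\mathbf{h},\mathbf{i},\mathbf{j}}D_{\mathbf{j},\mathbf{k},\mathbf{j}}\neq O.
\]

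By Lemma \ref{L;Lemma3.12}, with $(\mathbf{h},\mathbf{l},\mathbf{h})\in\mathbb{C}_g$ from Lemma \ref{L;Lemma3.9}, both products are $\F$-linear combinations of the matrices $B_{\mathbf{h},[\mathbf{h},\mathbf{i},\mathbf{j},\mathbf{m}\cup\mathbf{k},\mathbf{j}],\mathbf{j}}$ with $\mathbbm{m}\subseteq\mathbbm{j}^-\setminus\mathbbm{k}$. By Lemma \ref{L;Lemma3.11}, since $n_{\mathbf{i},\mathbf{k},\mathbf{j}}=0$, these $2^{n_{\mathbf{j},\mathbf{k},\mathbf{j}}}$ matrices are pairwise distinct basis elements. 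Also $[\mathbf{h},\mathbf{i},\mathbf{j},\mathbf{k}\cup\mathbf{m},\mathbf{j}]=[\mathbf{h},\mathbf{l}\cup\mathbf{m},\mathbf{h},\mathbf{i},\mathbf{j}]$. So it suffices to compare, for each fixed $\mathbf{m}$, the coefficient of $B_{\mathbf{h},[\mathbf{h},\mathbf{i},\mathbf{j},\mathbf{k}\cup\mathbf{m},\mathbf{j}],\mathbf{j}}$ on both sides. It is also enough to show that one of these coefficients is nonzero.

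To compute the coefficients, expand $D_{\mathbf{j},\mathbf{k},\mathbf{j}}$ by Equation \eqref{Eq;6} and multiply using Lemma \ref{L;Lemma2.8}. The term indexed by $\mathbf{q}=\mathbf{k}\cup\mathbf{m}'$ (where $\mathbf{m}'$ ranges over subsets of $\mathbbm{j}^-\setminus\mathbbm{k}$) contributes
\[
(\overline{-1})^{|\mathbbm{m}'|}\,\overline{k_{\mathbf{q}}}^{-1}\,\overline{k_{\mathbf{i}\cap\mathbf{q}}}\;B_{\mathbf{h},[\mathbf{h},\mathbf{i},\mathbf{j},\mathbf{q},\mathbf{j}],\mathbf{j}}.
\]
Using $k_{\mathbf{q}}=k_{\mathbf{q}\setminus\mathbf{i}}\,k_{\mathbf{i}\cap\mathbf{q}}$ (Lemma \ref{L;Lemma2.5}), this scalar simplifies to $(\overline{-1})^{|\mathbbm{m}'|}\overline{k_{\mathbf{q}\setminus\mathbf{i}}}^{-1}$. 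By Lemma \ref{L;Lemma3.10}, with $n_{\mathbf{i},\mathbf{k},\mathbf{j}}=0$, distinct $\mathbf{m}'$ give distinct basis elements. Hence the coefficient at $\mathbf{m}$ is the single term $(\overline{-1})^{|\mathbbm{m}|}\overline{k_{(\mathbf{k}\cup\mathbf{m})\setminus\mathbf{i}}}^{-1}$. This is nonzero, which already gives $B_{\mathbf{h},\mathbf{i},\mathbf{j}}D_{\mathbf{j},\mathbf{k},\mathbf{j}}\neq O$.

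Symmetrically, expand $D_{\mathbf{h},\mathbf{l},\mathbf{h}}B_{\mathbf{h},\mathbf{i},\mathbf{j}}$ using the terms $\mathbf{l}\cup\mathbf{m}$. The coefficient of $B_{\mathbf{h},[\mathbf{h},\mathbf{l}\cup\mathbf{m},\mathbf{h},\mathbf{i},\mathbf{j}],\mathbf{j}}$ is
\[
(\overline{-1})^{|\mathbbm{m}|}\,\overline{k_{\mathbf{l}\cup\mathbf{m}}}^{-1}\,\overline{k_{(\mathbf{l}\cup\mathbf{m})\cap\mathbf{h}\cap\mathbf{i}}}=(\overline{-1})^{|\mathbbm{m}|}\,\overline{k_{(\mathbf{l}\cup\mathbf{m})\setminus\mathbf{i}}}^{-1}.
\]
Here the index set for $\mathbf{m}$ is $\mathbbm{h}^-\setminus\mathbbm{l}=\mathbbm{j}^-\setminus\mathbbm{k}$, and the injectivity needed is again supplied by Lemmas \ref{L;Lemma3.11} and \ref{L;Lemma3.10}, applied after transposing via Lemma \ref{L;Lemma3.4}.

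It remains to show that the two coefficients agree, i.e. $k_{(\mathbf{k}\cup\mathbf{m})\setminus\mathbf{i}}=k_{(\mathbf{l}\cup\mathbf{m})\setminus\mathbf{i}}$. This is Lemma \ref{L;Lemma3.13} with its $\mathbf{l}$ taken to be $\mathbf{i}$, once its hypotheses $\mathbf{k}\setminus(\mathbf{h}\cup\mathbf{i})=\mathbf{l}\setminus(\mathbf{j}\cup\mathbf{i})=\mathbf{0}$ are checked. These follow from $\mathbbm{k}\subseteq\mathbbm{j}$, $\mathbbm{j}\setminus\mathbbm{h}\subseteq\mathbbm{i}$ and $\mathbbm{l}\subseteq\mathbbm{h}$, $\mathbbm{h}\setminus\mathbbm{j}\subseteq\mathbbm{i}$, both consequences of $(\mathbf{h},\mathbf{i},\mathbf{j})\in\mathbb{P}$.

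The main obstacle is the bookkeeping in the coefficient computation: making sure no two summands collapse onto the same basis element, and that the scalar $\overline{k_{\mathbf{h}\cap\mathbf{i}\cap\mathbf{q}}}$ from Lemma \ref{L;Lemma2.8} cancels correctly against $\overline{k_{\mathbf{j}\cap\mathbf{q}}}^{-1}$. I would handle this via the factorization of Lemma \ref{L;Lemma2.5}, exactly as in the proof of Lemma \ref{L;Lemma3.6}.
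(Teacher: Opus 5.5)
Your proposal is correct and follows the paper's proof essentially step for step. You reduce via Lemma \ref{L;Lemma3.9}, then use Lemmas \ref{L;Lemma3.11} and \ref{L;Lemma3.12} to write both products in the same family of distinct basis matrices $B_{\mathbf{h},[\mathbf{h},\mathbf{i},\mathbf{j},\mathbf{k}\cup\mathbf{m},\mathbf{j}],\mathbf{j}}$, read off the coefficients $(\overline{-1})^{|\mathbbm{m}|}\overline{k_{(\mathbf{k}\cup\mathbf{m})\setminus\mathbf{i}}}^{-1}$ and $(\overline{-1})^{|\mathbbm{m}|}\overline{k_{(\mathbf{l}\cup\mathbf{m})\setminus\mathbf{i}}}^{-1}$, and match them by Lemma \ref{L;Lemma3.13} with its $\mathbf{l}$ specialized to $\mathbf{i}$. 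The transposition via Lemma \ref{L;Lemma3.4} is unnecessary, since the identity $[\mathbf{h},\mathbf{i},\mathbf{j},\mathbf{k}\cup\mathbf{m},\mathbf{j}]=[\mathbf{h},\mathbf{l}\cup\mathbf{m},\mathbf{h},\mathbf{i},\mathbf{j}]$ from Lemma \ref{L;Lemma3.11} already transfers injectivity to the second product.
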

\begin{proof}
By Lemma \ref{L;Lemma3.9}, there is a unique $(\mathbf{h}, \mathbf{l}, \mathbf{h})\in\mathbb{C}_g$ such that $\mathbbm{l}\!=\!(\mathbbm{h}\cap\mathbbm{k})\cup(\mathbbm{h}^-\setminus\mathbbm{j})$. Lemma \ref{L;Lemma3.9} implies that
$B_{\mathbf{h},\mathbf{i},\mathbf{j}}D_g=B_{\mathbf{h},\mathbf{i},\mathbf{j}}D_{\mathbf{j}, \mathbf{k},\mathbf{j}}$ and
$D_gB_{\mathbf{h},\mathbf{i},\mathbf{j}}=D_{\mathbf{h}, \mathbf{l},\mathbf{h}}B_{\mathbf{h},\mathbf{i},\mathbf{j}}$.
So it suffices to check that $B_{\mathbf{h},\mathbf{i},\mathbf{j}}D_{\mathbf{j}, \mathbf{k},\mathbf{j}}=D_{\mathbf{h}, \mathbf{l},\mathbf{h}}B_{\mathbf{h},\mathbf{i},\mathbf{j}}\neq O$. According to Lemma \ref{L;Lemma3.12}, notice that both $B_{\mathbf{h},\mathbf{i},\mathbf{j}}D_{\mathbf{j}, \mathbf{k},\mathbf{j}}$ and $D_{\mathbf{h}, \mathbf{l},\mathbf{h}}B_{\mathbf{h},\mathbf{i},\mathbf{j}}$ are unique $\F$-linear combinations of the matrices in $\{B_{\mathbf{h}, [\mathbf{h}, \mathbf{i},\mathbf{j},\mathbf{a}\cup\mathbf{k},\mathbf{j}], \mathbf{j}}: \mathbf{a}\in\mathbb{E}, \mathbbm{a}\subseteq\mathbbm{j}^-\setminus\mathbbm{k}\}$. Assume that $\mathbf{m}\in\mathbb{E}$ and $\mathbbm{m}\subseteq\mathbbm{j}^-\setminus\mathbbm{k}$. Let $c_{[\mathbf{h},\mathbf{i},\mathbf{j}, \mathbf{k}\cup\mathbf{m}, \mathbf{j}],1}, c_{[\mathbf{h},\mathbf{i},\mathbf{j}, \mathbf{k}\cup\mathbf{m}, \mathbf{j}],2}$ be the coefficients of $B_{\mathbf{h},[\mathbf{h}, \mathbf{i}, \mathbf{j}, \mathbf{k}\cup\mathbf{m}, \mathbf{j}], \mathbf{j}}$ in these $\F$-linear combinations that represent $B_{\mathbf{h},\mathbf{i},\mathbf{j}}D_{\mathbf{j},\mathbf{k},\mathbf{j}}, D_{\mathbf{h},\mathbf{l},\mathbf{h}}B_{\mathbf{h},\mathbf{i},\mathbf{j}}$, respectively. Lemma \ref{L;Lemma2.5} implies that $k_{\mathbf{j}\cap(\mathbf{k}\cup\mathbf{m})}=k_{\mathbf{k}\cup\mathbf{m}}=
k_{(\mathbf{k}\cup\mathbf{m})\setminus\mathbf{i}}k_{\mathbf{i}\cap(\mathbf{k}\cup\mathbf{m})}=
k_{(\mathbf{k}\cup\mathbf{m})\setminus\mathbf{i}}k_{\mathbf{i}\cap\mathbf{j}\cap(\mathbf{k}\cup\mathbf{m})}$. As $\mathbbm{j}^-\setminus\mathbbm{k}=\mathbbm{h}^-\setminus\mathbbm{l}$, Lemma \ref{L;Lemma2.5} implies that $k_{\mathbf{h}\cap(\mathbf{l}\cup\mathbf{m})}=k_{\mathbf{l}\cup\mathbf{m}}=
k_{(\mathbf{l}\cup\mathbf{m})\setminus\mathbf{i}}k_{\mathbf{i}\cap(\mathbf{l}\cup\mathbf{m})}=
k_{(\mathbf{l}\cup\mathbf{m})\setminus\mathbf{i}}k_{\mathbf{h}\cap\mathbf{i}\cap(\mathbf{l}\cup\mathbf{m})}$. As $p\nmid k_\mathbf{k}k_\mathbf{l}$, notice that $p\nmid
k_{(\mathbf{k}\cup\mathbf{m})\setminus\mathbf{i}}k_{(\mathbf{l}\cup\mathbf{m})\setminus\mathbf{i}}$ by Lemma \ref{L;Lemma2.5}.
As $\mathbf{h}\triangle\mathbf{j}\!\preceq\!\mathbf{i}$, $\mathbbm{k}\!\subseteq\!\mathbbm{j}^\circ, \mathbbm{l}\!\subseteq\!\mathbbm{h}^\circ$, $\mathbf{k}\setminus(\mathbf{h}\cup\mathbf{i})=\mathbf{l}\setminus(\mathbf{i}\cup\mathbf{j})=\mathbf{0}$.
By combining Equation \eqref{Eq;6}, Lemmas \ref{L;Lemma2.8}, \ref{L;Lemma3.11}, \ref{L;Lemma3.13},
$$c_{[\mathbf{h},\mathbf{i}, \mathbf{j},\mathbf{k}\cup\mathbf{m},\mathbf{j}],1}=
(\overline{-1})^{|\mathbbm{m}|}\overline{k_{(\mathbf{k}\cup\mathbf{m})\setminus\mathbf{i}}}^{-1}=(\overline{-1})^{|\mathbbm{m}|}\overline{k_{(\mathbf{l}\cup\mathbf{m})\setminus\mathbf{i}}}^{-1}
=c_{[\mathbf{h},\mathbf{i}, \mathbf{j},\mathbf{k}\cup\mathbf{m},\mathbf{j}],2}\neq\overline{0}.$$
As $B_{\mathbf{h}, [\mathbf{h}, \mathbf{i}, \mathbf{j}, \mathbf{k}\cup\mathbf{m}, \mathbf{j}], \mathbf{j}}$ is arbitrarily chosen from $\{B_{\mathbf{h}, [\mathbf{h}, \mathbf{i}, \mathbf{j}, \mathbf{a}\cup\mathbf{k}, \mathbf{j}], \mathbf{j}}\!:\! \mathbf{a}\in\mathbb{E},\mathbbm{a}\subseteq\mathbbm{j}^-\setminus\mathbbm{k}\}$, notice that $B_{\mathbf{h},\mathbf{i},\mathbf{j}}D_{\mathbf{j}, \mathbf{k},\mathbf{j}}=D_{\mathbf{h}, \mathbf{l},\mathbf{h}}B_{\mathbf{h},\mathbf{i},\mathbf{j}}\neq O$ by the combination of the displayed inequality,
Lemmas \ref{L;Lemma3.11}, and \ref{L;Lemma2.8}. The desired lemma follows from the above discussion.
\end{proof}
\begin{lem}\label{L;Lemma3.15}
Assume that $g\in[1, 2^{n_-}]$ and $(\mathbf{h}, \mathbf{i}, \mathbf{j})\in\mathbb{P}$. Then $B_{\mathbf{h}, \mathbf{i},\mathbf{j}}D_g=D_gB_{\mathbf{h}, \mathbf{i},\mathbf{j}}$. Moreover, $MD_g=D_gM$ for any $M\in\mathbb{T}$. In particular, $\{D_1, D_2,\ldots, D_{2^{n_-}}\}\subseteq\mathrm{Z}(\mathbb{T})$.
\end{lem}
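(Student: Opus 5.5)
The plan is to prove the first statement by a case split on the right index $\mathbf{j}$ of the basis element $B_{\mathbf{h},\mathbf{i},\mathbf{j}}$, using the lemmas just established. Fix $g\in[1,2^{n_-}]$ and $(\mathbf{h},\mathbf{i},\mathbf{j})\in\mathbb{P}$. There are three cases.

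\emph{Case 1: no $\mathbf{k}\in\mathbb{E}$ satisfies $(\mathbf{j},\mathbf{k},\mathbf{j})\in\mathbb{C}_g$.} Lemma \ref{L;Lemma3.8} gives $B_{\mathbf{h},\mathbf{i},\mathbf{j}}D_g=D_gB_{\mathbf{h},\mathbf{i},\mathbf{j}}=O$.

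\emph{Case 2: there is $(\mathbf{j},\mathbf{k},\mathbf{j})\in\mathbb{C}_g$.} By Lemma \ref{L;Lemma3.5} this $\mathbf{k}$ is unique, so $n_{\mathbf{i},\mathbf{k},\mathbf{j}}$ is well defined.
\begin{itemize}
\item If $n_{\mathbf{i},\mathbf{k},\mathbf{j}}>0$, Lemma \ref{L;Lemma3.6} gives both products equal to $O$.
\item If $n_{\mathbf{i},\mathbf{k},\mathbf{j}}=0$, Lemma \ref{L;Lemma3.14} gives $B_{\mathbf{h},\mathbf{i},\mathbf{j}}D_g=D_gB_{\mathbf{h},\mathbf{i},\mathbf{j}}$ directly.
\end{itemize}
These cases are exhaustive, which proves the first statement.

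For the second statement, Lemma \ref{L;Lemma2.8} says $\{B_{\mathbf{a},\mathbf{b},\mathbf{c}}:(\mathbf{a},\mathbf{b},\mathbf{c})\in\mathbb{P}\}$ is an $\F$-basis of $\mathbb{T}$. Write an arbitrary $M\in\mathbb{T}$ as an $\F$-linear combination of this basis. Bilinearity of matrix multiplication together with the first statement then gives $MD_g=D_gM$.

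For the final claim, each $D_g$ lies in $\mathbb{T}$ by its definition in Equation \eqref{Eq;7}. Since $D_g$ commutes with every element of $\mathbb{T}$, it lies in $\mathrm{Z}(\mathbb{T})$. Hence $\{D_1,D_2,\ldots,D_{2^{n_-}}\}\subseteq\mathrm{Z}(\mathbb{T})$.

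The only real content is in the cited Lemmas \ref{L;Lemma3.6}, \ref{L;Lemma3.8} and \ref{L;Lemma3.14}. The one point to check carefully is that the hypotheses of Lemma \ref{L;Lemma3.8} (no $(\mathbf{j},\mathbf{k},\mathbf{j})\in\mathbb{C}_g$) and of Lemmas \ref{L;Lemma3.6} and \ref{L;Lemma3.14} together cover every situation. This holds because uniqueness of $\mathbf{k}$ from Lemma \ref{L;Lemma3.5} makes the dichotomy on $n_{\mathbf{i},\mathbf{k},\mathbf{j}}$ unambiguous.
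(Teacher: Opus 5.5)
Your proposal is correct and is essentially the paper's proof: the paper obtains the first statement by combining Lemmas \ref{L;Lemma3.6}, \ref{L;Lemma3.8}, \ref{L;Lemma3.14}, and your three-case split just makes that combination explicit, then extends to all of $\mathbb{T}$ via the basis of Lemma \ref{L;Lemma2.8}. Citing Lemma \ref{L;Lemma3.5} for uniqueness of $\mathbf{k}$ is harmless but unnecessary, because the dichotomy on $n_{\mathbf{i},\mathbf{k},\mathbf{j}}$ is already exhaustive for any fixed choice of $\mathbf{k}$.
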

\begin{proof}
As $g$ and $(\mathbf{h},\mathbf{i},\mathbf{j})$ are arbitrary elements in $[1, 2^{n_-}]$ and $\mathbb{P}$, respectively, the desired lemma follows from the combination of Lemmas \ref{L;Lemma3.6}, \ref{L;Lemma3.8}, \ref{L;Lemma3.14}, and \ref{L;Lemma2.8}.
\end{proof}
We next display four lemmas to check the equation $\mathrm{Bl}(\mathbb{T})=\{D_1, D_2,\ldots, D_{2^{n_-}}\}$.
\begin{lem}\label{L;Lemma3.16}
Assume that $g\in[1, 2^{n_-}]$ and $\mathbb{U}$ is a nonempty proper subset of $\mathbb{C}_g$. Assume that $M$ is the sum of all matrices in $\{D_{\mathbf{a},\mathbf{b}, \mathbf{a}}\!:\! (\mathbf{a},\mathbf{b}, \mathbf{a})\!\in\!\mathbb{U}\}$. Then $M\!\notin\!\mathrm{Z}(\mathbb{T})$.
\end{lem}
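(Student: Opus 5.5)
We argue by exhibiting a basis element that fails to commute with $M$. Since $\mathbb{U}$ is a nonempty proper subset of $\mathbb{C}_g$, choose $(\mathbf{h},\mathbf{i},\mathbf{h})\in\mathbb{U}$ and $(\mathbf{j},\mathbf{k},\mathbf{j})\in\mathbb{C}_g\setminus\mathbb{U}$. By Lemma \ref{L;Lemma3.5}, the first entries of elements of $\mathbb{C}_g$ determine the elements, so $\mathbf{h}\neq\mathbf{j}$. Moreover $E_\mathbf{h}^*M=ME_\mathbf{h}^*=D_{\mathbf{h},\mathbf{i},\mathbf{h}}$ and $E_\mathbf{j}^*M=ME_\mathbf{j}^*=O$, by Equations \eqref{Eq;6}, \eqref{Eq;4}, \eqref{Eq;2}. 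The plan is to find $(\mathbf{h},\mathbf{l},\mathbf{j})\in\mathbb{P}$ with $n_{\mathbf{l},\mathbf{k},\mathbf{j}}=0$. For such a triple,
$$MB_{\mathbf{h},\mathbf{l},\mathbf{j}}=ME_\mathbf{h}^*B_{\mathbf{h},\mathbf{l},\mathbf{j}}=D_{\mathbf{h},\mathbf{i},\mathbf{h}}B_{\mathbf{h},\mathbf{l},\mathbf{j}},\qquad B_{\mathbf{h},\mathbf{l},\mathbf{j}}M=B_{\mathbf{h},\mathbf{l},\mathbf{j}}E_\mathbf{j}^*M=O.$$

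To conclude, we show that $D_{\mathbf{h},\mathbf{i},\mathbf{h}}B_{\mathbf{h},\mathbf{l},\mathbf{j}}\neq O$. Apply Lemma \ref{L;Lemma3.14} to $(\mathbf{h},\mathbf{l},\mathbf{j})$ and $(\mathbf{j},\mathbf{k},\mathbf{j})\in\mathbb{C}_g$. It yields a unique $(\mathbf{h},\mathbf{l}',\mathbf{h})\in\mathbb{C}_g$ with $D_{\mathbf{h},\mathbf{l}',\mathbf{h}}B_{\mathbf{h},\mathbf{l},\mathbf{j}}=B_{\mathbf{h},\mathbf{l},\mathbf{j}}D_{\mathbf{j},\mathbf{k},\mathbf{j}}\neq O$. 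By Lemma \ref{L;Lemma3.5}, $(\mathbf{h},\mathbf{l}',\mathbf{h})=(\mathbf{h},\mathbf{i},\mathbf{h})$. Hence $MB_{\mathbf{h},\mathbf{l},\mathbf{j}}\neq O=B_{\mathbf{h},\mathbf{l},\mathbf{j}}M$, so $M\notin\mathrm{Z}(\mathbb{T})$.

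The main step is constructing $\mathbf{l}$. We need $\mathbf{h}\triangle\mathbf{j}\preceq\mathbf{l}\preceq\mathbf{h}\mathbf{j}$, which says $(\mathbf{h},\mathbf{l},\mathbf{j})\in\mathbb{P}$. We also need $(\mathbbm{l}\cap\mathbbm{j})^-\setminus\mathbbm{k}=\varnothing$, which says $n_{\mathbf{l},\mathbf{k},\mathbf{j}}=0$.

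The natural choice is $\mathbbm{l}=(\mathbbm{h}\triangle\mathbbm{j})\cup(\mathbbm{h}\cap\mathbbm{j})^\circ$, i.e. $\mathbf{l}=\mathbf{h}\mathbf{j}$. Then $(\mathbbm{l}\cap\mathbbm{j})^-$ equals $(\mathbbm{j}\setminus\mathbbm{h})^-\cup(\mathbbm{h}\cap\mathbbm{j})^-=\mathbbm{j}^-$. So $n_{\mathbf{l},\mathbf{k},\mathbf{j}}=|\mathbbm{j}^-\setminus\mathbbm{k}|$, which need not vanish, and this choice is too large.

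Instead we shrink $\mathbf{l}$. Take
$$\mathbbm{l}=(\mathbbm{h}\triangle\mathbbm{j})\cup\bigl((\mathbbm{h}\cap\mathbbm{j})^\circ\setminus(\mathbbm{j}^-\setminus\mathbbm{k})\bigr).$$
Membership in $\mathbb{P}$ is immediate. Then $(\mathbbm{l}\cap\mathbbm{j})^-\setminus\mathbbm{k}=(\mathbbm{j}\setminus\mathbbm{h})^-\setminus\mathbbm{k}$, and we must show this is empty.

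Here we use $\approx$: $\mathbbm{h}^-\setminus\mathbbm{i}=\mathbbm{j}^-\setminus\mathbbm{k}$. Together with $\mathbbm{i}\subseteq\mathbbm{h}$, this gives $\mathbbm{j}^-\setminus\mathbbm{k}\subseteq\mathbbm{h}$, hence $(\mathbbm{j}\setminus\mathbbm{h})^-\setminus\mathbbm{k}=\varnothing$. Verifying this set identity cleanly, using Lemma \ref{L;Lemma2.3}, is the only real obstacle. The remaining work consists of routine applications of Lemmas \ref{L;Lemma3.5} and \ref{L;Lemma3.14}.
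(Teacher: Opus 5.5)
Your proof is correct and is essentially the paper's argument in mirror image. The paper takes the element of $\mathbb{U}$ as the right endpoint $\mathbf{j}$ and the element outside $\mathbb{U}$ as the left endpoint $\mathbf{h}$. Then $B_{\mathbf{h},\mathbf{h}\triangle\mathbf{j},\mathbf{j}}M=B_{\mathbf{h},\mathbf{h}\triangle\mathbf{j},\mathbf{j}}D_g\neq O=MB_{\mathbf{h},\mathbf{h}\triangle\mathbf{j},\mathbf{j}}$ follows directly from Lemma \ref{L;Lemma3.14}, without your extra identification via Lemma \ref{L;Lemma3.5}.

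Your shrinking step is also unnecessary: the minimal choice $\mathbf{l}=\mathbf{h}\triangle\mathbf{j}$, which the paper uses, already gives $(\mathbbm{l}\cap\mathbbm{j})^-\setminus\mathbbm{k}=(\mathbbm{j}\setminus\mathbbm{h})^-\setminus\mathbbm{k}=\varnothing$ by the same $\approx$ argument.
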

\begin{proof}
Assume that $(\mathbf{h}, \mathbf{i},\mathbf{h})\in\mathbb{C}_g\setminus\mathbb{U}$ and $(\mathbf{j}, \mathbf{k}, \mathbf{j})\in\mathbb{U}$. As $(\mathbf{h}, \mathbf{h}\triangle\mathbf{j},\mathbf{j})\in\mathbb{P}$, notice that $B_{\mathbf{h},\mathbf{h}\triangle\mathbf{j}, \mathbf{j}}$ is defined. As $\mathbbm{h}^-\setminus\mathbbm{i}=\mathbbm{j}^-\setminus\mathbbm{k}$, notice that $((\mathbbm{h}\triangle\mathbbm{j})\cap\mathbbm{j})^-\setminus\mathbbm{k}=\mathbbm{j}^-\setminus(\mathbbm{h}\cup\mathbbm{k})=\varnothing$
and $n_{\mathbf{h}\triangle\mathbf{j},\mathbf{k}, \mathbf{j}}=0$. So $B_{\mathbf{h},\mathbf{h}\triangle\mathbf{j}, \mathbf{j}}M=B_{\mathbf{h},\mathbf{h}\triangle\mathbf{j}, \mathbf{j}}E_\mathbf{j}^*M=B_{\mathbf{h},\mathbf{h}\triangle\mathbf{j}, \mathbf{j}}D_{\mathbf{j}, \mathbf{k},\mathbf{j}}=B_{\mathbf{h},\mathbf{h}\triangle\mathbf{j}, \mathbf{j}}D_g\neq O$ by combining Equations \eqref{Eq;4}, \eqref{Eq;2}, \eqref{Eq;6}, Lemmas \ref{L;Lemma3.5}, \ref{L;Lemma3.14}. By combining Equations \eqref{Eq;4}, \eqref{Eq;2}, \eqref{Eq;6}, Lemma \ref{L;Lemma3.5}, notice that $MB_{\mathbf{h},\mathbf{h}\triangle\mathbf{j}, \mathbf{j}}=ME_\mathbf{h}^*B_{\mathbf{h},\mathbf{h}\triangle\mathbf{j}, \mathbf{j}}=O$. Therefore $B_{\mathbf{h},\mathbf{h}\triangle\mathbf{j}, \mathbf{j}}M\neq MB_{\mathbf{h},\mathbf{h}\triangle\mathbf{j}, \mathbf{j}}$. The desired lemma follows from the above discussion.
\end{proof}
\begin{lem}\label{L;Lemma3.17}
Assume that $M\in\mathrm{Z}(\mathbb{T})$ and $M$ is an idempotent of $\mathbb{T}$. Then $M$ is an $\F$-linear combination of the matrices in  $\{D_{\mathbf{a},\mathbf{b},\mathbf{a}}:(\mathbf{a},\mathbf{b},\mathbf{a})\!\in\!\mathbb{P}, p\nmid k_\mathbf{b}\}$, where  all matrices in $\{D_{\mathbf{a},\mathbf{b},\mathbf{a}}: (\mathbf{a},\mathbf{b},\mathbf{a})\in\mathbb{P}, p\nmid k_\mathbf{b}\}$ are pairwise orthogonal idempotents of $\mathbb{T}$.
\end{lem}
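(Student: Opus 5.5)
We first establish the second assertion, that the matrices $D_{\mathbf{a},\mathbf{b},\mathbf{a}}$ with $(\mathbf{a},\mathbf{b},\mathbf{a})\in\mathbb{P}$ and $p\nmid k_\mathbf{b}$ are pairwise orthogonal idempotents. This follows from the multiplication rule in Lemma \ref{L;Lemma2.14}. For $(\mathbf{g},\mathbf{h},\mathbf{g}),(\mathbf{i},\mathbf{j},\mathbf{i})$ in this set, the product $D_{\mathbf{g},\mathbf{h},\mathbf{g}}D_{\mathbf{i},\mathbf{j},\mathbf{i}}$ vanishes unless $\mathbf{g}=\mathbf{i}$ and $\mathbbm{g}^\circ\setminus\mathbbm{h}=\mathbbm{g}^\circ\setminus\mathbbm{j}$. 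Since $\mathbbm{h},\mathbbm{j}\subseteq\mathbbm{g}^\circ$ (from $(\mathbf{g},\mathbf{h},\mathbf{g})\in\mathbb{P}$), this forces $\mathbf{h}=\mathbf{j}$. When $\mathbf{h}=\mathbf{j}$, we get $[\mathbf{g},\mathbf{h},\mathbf{g},\mathbf{h},\mathbf{g}]=\mathbf{h}\cup\mathbf{h}=\mathbf{h}$ by the remark preceding Lemma \ref{L;Lemma2.8}. Hence $D_{\mathbf{g},\mathbf{h},\mathbf{g}}^2=D_{\mathbf{g},\mathbf{h},\mathbf{g}}$, and distinct ones are orthogonal.

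For the main assertion, we write $M$ in the basis of Lemma \ref{L;Lemma2.14}:
\[M=N+\sum c_{\mathbf{a},\mathbf{b},\mathbf{c}}D_{\mathbf{a},\mathbf{b},\mathbf{c}},\]
where $N$ lies in the span of the $B_{\mathbf{a},\mathbf{b},\mathbf{c}}$ with $p\mid k_\mathbf{b}$, so $N\in\mathrm{Rad}(\mathbb{T})$ by Lemma \ref{L;Lemma2.9}.

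Step one removes the off-diagonal $D$-terms using centrality. Since $M$ commutes with every $E_\mathbf{g}^*=B_{\mathbf{g},\mathbf{0},\mathbf{g}}$, we have $M=\sum_\mathbf{g}E_\mathbf{g}^*ME_\mathbf{g}^*$. Each $D_{\mathbf{a},\mathbf{b},\mathbf{c}}$ and each $B_{\mathbf{a},\mathbf{b},\mathbf{c}}$ equals $E_\mathbf{a}^*(\cdot)E_\mathbf{c}^*$ by Equations \eqref{Eq;6}, \eqref{Eq;4}, \eqref{Eq;2}. By the linear independence of the basis in Lemma \ref{L;Lemma2.14}, every coefficient with $\mathbf{a}\neq\mathbf{c}$ vanishes. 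So $M=N'+\sum c_{\mathbf{a},\mathbf{b},\mathbf{a}}D_{\mathbf{a},\mathbf{b},\mathbf{a}}$, where $N'\in\mathrm{Rad}(\mathbb{T})$ is still a combination of $B$'s with $p\mid k_\mathbf{b}$.

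Step two eliminates the radical part. If $p=0$, or more generally if $n_+=0$, then $\mathrm{Rad}(\mathbb{T})=\{O\}$ by Lemma \ref{L;Lemma2.9}, so $N'=O$ and we are done. If $p>0$, we apply Lemma \ref{L;Lemma2.7} with $e=M$, $f=N'$, and the $h_i$ taken to be the pairwise orthogonal idempotents $D_{\mathbf{a},\mathbf{b},\mathbf{a}}$ (with $p\nmid k_\mathbf{b}$) from the first paragraph. The hypothesis $eh_i=h_ie$ holds because $M\in\mathrm{Z}(\mathbb{T})$, and $e\in\langle\{f,h_1,\ldots\}\rangle_\mathbb{T}$ holds by Step one. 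Lemma \ref{L;Lemma2.7} then gives $M\in\langle\{D_{\mathbf{a},\mathbf{b},\mathbf{a}}\}\rangle_\mathbb{T}$, as required.

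The main obstacle is the bookkeeping in Step one. We must confirm that the products with $E_\mathbf{g}^*$ act diagonally on the mixed basis of Lemma \ref{L;Lemma2.14}, which includes $D_{\mathbf{a},\mathbf{b},\mathbf{c}}$'s and not only $B$'s, so that the projection argument is legitimate. We must also check that the $h_i$ genuinely satisfy the pairwise-orthogonality hypothesis of Lemma \ref{L;Lemma2.7}.
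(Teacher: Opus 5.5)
Your proof is correct, and its overall skeleton matches the paper's. Both show orthogonality of the $D_{\mathbf{a},\mathbf{b},\mathbf{a}}$ from Lemma~\ref{L;Lemma2.14}, reduce $M$ to a combination of the $B_{\mathbf{a},\mathbf{b},\mathbf{a}}$ with $p\mid k_\mathbf{b}$ and the $D_{\mathbf{a},\mathbf{b},\mathbf{a}}$ with $p\nmid k_\mathbf{b}$, and then apply Lemmas~\ref{L;Lemma2.9} and \ref{L;Lemma2.7}.

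The one step where you diverge is the removal of the off-diagonal terms. The paper gets this from Lemma~\ref{L;Lemma2.10}: $M$ is a combination of the $C_\mathbf{a}$, hence by Equation~\eqref{Eq;5} a combination of matrices $B_{\mathbf{j},\mathbf{g}\cap\mathbf{j},\mathbf{j}}$. It then uses $E_\mathbf{g}^*B_{\mathbf{h},\mathbf{i},\mathbf{h}}=\delta_{\mathbf{g},\mathbf{h}}B_{\mathbf{h},\mathbf{i},\mathbf{h}}=B_{\mathbf{h},\mathbf{i},\mathbf{h}}E_\mathbf{g}^*$, together with Lemma~\ref{L;Lemma2.14} and Equation~\eqref{Eq;6}, to rewrite these in the mixed basis.

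You use only that $M$ commutes with every $E_\mathbf{g}^*\in\mathbb{T}$, so that $M=\sum_{\mathbf{g}}E_\mathbf{g}^*ME_\mathbf{g}^*$. You then compare coefficients in the basis of Lemma~\ref{L;Lemma2.14}. Each basis element $X$ indexed by $(\mathbf{a},\mathbf{b},\mathbf{c})$ satisfies $E_\mathbf{g}^*XE_\mathbf{h}^*=\delta_{\mathbf{g},\mathbf{a}}\delta_{\mathbf{h},\mathbf{c}}X$ by Equations~\eqref{Eq;6}, \eqref{Eq;4}, \eqref{Eq;2}. This is exactly the bookkeeping you flagged as the obstacle, and it goes through without difficulty.

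Your route dispenses with the imported description of $\mathrm{Z}(\mathbb{T})$. It also proves slightly more: any idempotent commuting with all $E_\mathbf{g}^*$ and all $D_{\mathbf{a},\mathbf{b},\mathbf{a}}$ lies in the span of the $D_{\mathbf{a},\mathbf{b},\mathbf{a}}$. The paper's route is shorter because Lemma~\ref{L;Lemma2.10} is already available.

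You also treat $p=0$ explicitly. There Lemma~\ref{L;Lemma2.7} does not apply, but no $k_\mathbf{b}$ is divisible by $p$, so the radical part is empty. The paper leaves this case implicit.
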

\begin{proof}
By Lemmas \ref{L;Lemma2.14} and \ref{L;Lemma2.3}, all matrices in $\{D_{\mathbf{a},\mathbf{b},\mathbf{a}}: (\mathbf{a},\mathbf{b},\mathbf{a})\in\mathbb{P}, p\nmid k_\mathbf{b}\}$ are pairwise orthogonal idempotents of $\mathbb{T}$. For any $\mathbf{g}\in\mathbb{E}$ and $(\mathbf{h}, \mathbf{i}, \mathbf{h})\in\mathbb{P}$, Equations \eqref{Eq;4} and \eqref{Eq;2} imply that
$E_\mathbf{g}^*B_{\mathbf{h}, \mathbf{i}, \mathbf{h}}=\delta_{\mathbf{g}, \mathbf{h}}B_{\mathbf{h}, \mathbf{i}, \mathbf{h}}=B_{\mathbf{h}, \mathbf{i}, \mathbf{h}}E_\mathbf{g}^*$. So the combination of Lemmas \ref{L;Lemma2.10}, \ref{L;Lemma2.14}, Equations \eqref{Eq;5}, \eqref{Eq;6} implies that $M$ is an $\F$-linear combination of the matrices in $\{B_{\mathbf{a},\mathbf{b},\mathbf{a}}: (\mathbf{a},\mathbf{b},\mathbf{a})\in\mathbb{P}, p\mid k_\mathbf{b}\}\cup\{D_{\mathbf{a},\mathbf{b},\mathbf{a}}: (\mathbf{a},\mathbf{b},\mathbf{a})\in\mathbb{P}, p\nmid k_\mathbf{b}\}$.
The desired lemma follows from combining the above discussion, Lemmas \ref{L;Lemma2.9}, \ref{L;Lemma2.7}.
\end{proof}
\begin{lem}\label{L;Lemma3.18}
Assume that $g, h\in[1, 2^{n_-}]$. Then $D_gD_h=D_hD_g=\delta_{g,h}D_g$ and $D_g$ is a nonzero idempotent of $\mathbb{T}$. Moreover, $I$ is the sum of all matrices $D_1, D_2, \ldots, D_{2^{n_-}}$.
\end{lem}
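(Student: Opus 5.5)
The plan is to reduce everything to the orthogonality statement of Lemma \ref{L;Lemma3.17} (equivalently Lemma \ref{L;Lemma2.14}), which says that the matrices in $\{D_{\mathbf{a},\mathbf{b},\mathbf{a}}:(\mathbf{a},\mathbf{b},\mathbf{a})\in\mathbb{P}, p\nmid k_\mathbf{b}\}$ are pairwise orthogonal idempotents. Since $\mathbb{C}_1,\ldots,\mathbb{C}_{2^{n_-}}$ partition this index set and $D_g$ is the sum over $\mathbb{C}_g$ by Equation \eqref{Eq;7}, expanding $D_gD_h$ gives $\sum_{(\mathbf{q},\mathbf{r},\mathbf{q})\in\mathbb{C}_g}\sum_{(\mathbf{s},\mathbf{t},\mathbf{s})\in\mathbb{C}_h}D_{\mathbf{q},\mathbf{r},\mathbf{q}}D_{\mathbf{s},\mathbf{t},\mathbf{s}}$. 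For $g\neq h$ every term vanishes because the classes are disjoint. For $g=h$ only the diagonal terms survive, and they return $D_g$. Commutativity, $D_gD_h=D_hD_g$, follows the same way, or directly from Lemma \ref{L;Lemma3.15}.

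Next I check that $D_g\neq O$. The class $\mathbb{C}_g$ is nonempty by definition. By Lemma \ref{L;Lemma2.14}, each $D_{\mathbf{q},\mathbf{r},\mathbf{q}}$ with $p\nmid k_\mathbf{r}$ is a basis element, hence nonzero. The $D_{\mathbf{q},\mathbf{r},\mathbf{q}}$ with $(\mathbf{q},\mathbf{r},\mathbf{q})\in\mathbb{C}_g$ are distinct members of that basis, so their sum is nonzero. Alternatively, pick $(\mathbf{h},\mathbf{i},\mathbf{h})\in\mathbb{C}_g$; then Lemma \ref{L;Lemma3.5} gives $E_\mathbf{h}^*D_g=D_{\mathbf{h},\mathbf{i},\mathbf{h}}\neq O$.

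For $I=\sum_{g}D_g$, I would show $\sum_{g}D_g=\sum_{\mathbf{h}\in\mathbb{E}}\sum_{(\mathbf{h},\mathbf{b},\mathbf{h})\in\mathbb{P},\,p\nmid k_\mathbf{b}}D_{\mathbf{h},\mathbf{b},\mathbf{h}}$ and compare with $I=\sum_{\mathbf{h}}E_\mathbf{h}^*$ from Equation \eqref{Eq;3}. It then suffices to prove, for each $\mathbf{h}$, that $\sum_{\mathbf{b}}D_{\mathbf{h},\mathbf{b},\mathbf{h}}=E_\mathbf{h}^*=B_{\mathbf{h},\mathbf{0},\mathbf{h}}$, where $\mathbf{b}$ ranges over $\mathbf{b}\preceq\mathbf{h}\mathbf{h}$ with $p\nmid k_\mathbf{b}$. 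The condition $p\nmid k_\mathbf{b}$ with $\mathbf{b}\preceq\mathbf{h}\mathbf{h}$ means $\mathbbm{b}\subseteq\mathbbm{h}^-$. Substituting Equation \eqref{Eq;6}, the coefficient of $B_{\mathbf{h},\mathbf{q},\mathbf{h}}$ for a fixed $\mathbf{q}$ with $\mathbbm{q}\subseteq\mathbbm{h}^-$ is
$$\overline{k_\mathbf{q}}^{-1}\sum_{\mathbf{b}\preceq\mathbf{q}}(\overline{-1})^{|\mathbbm{q}\setminus\mathbbm{b}|},$$
using $k_{\mathbf{h}\cap\mathbf{q}}=k_\mathbf{q}$. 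By the Binomial Theorem this equals $\overline{1}$ when $\mathbf{q}=\mathbf{0}$ and $\overline{0}$ otherwise. This gives $E_\mathbf{h}^*$, and summing over $\mathbf{h}$ gives $I$.

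The main obstacle is this last computation: I need to confirm that $\mathbb{U}_{\mathbf{h}\mathbf{h},\mathbf{b},m}$ consists exactly of the $\mathbf{q}$ with $\mathbbm{b}\subseteq\mathbbm{q}\subseteq\mathbbm{h}^-$ and $|\mathbbm{q}\setminus\mathbbm{b}|=m$. This rests on Lemma \ref{L;Lemma2.5}: for $\mathbf{q}\preceq\mathbf{h}\mathbf{h}$, the entries of $\mathbbm{q}$ lie in $\mathbbm{h}^\circ$, and $p\nmid k_\mathbf{q}$ excludes the indices in $\mathbbm{h}^+$.
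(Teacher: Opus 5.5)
Your proposal is correct. For $D_gD_h=D_hD_g=\delta_{g,h}D_g$ and for $D_g\neq O$ you follow the paper's argument. You expand via Equation \eqref{Eq;7} and use that the matrices $D_{\mathbf{a},\mathbf{b},\mathbf{a}}$ with $p\nmid k_\mathbf{b}$ are pairwise orthogonal nonzero idempotents (Lemmas \ref{L;Lemma3.17}, \ref{L;Lemma2.14}).

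You take a genuinely different route for $I=\sum_g D_g$. The paper argues structurally. It applies Lemma \ref{L;Lemma3.17} to the central idempotent $I$, which places $I$ in the span of the orthogonal idempotents $D_{\mathbf{a},\mathbf{b},\mathbf{a}}$. Multiplying by each $D_{\mathbf{a},\mathbf{b},\mathbf{a}}$ then forces every coefficient to equal $\overline{1}$.

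You compute directly instead, and the computation checks out. Since $\mathbf{q}\preceq\mathbf{h}\mathbf{h}$ means $\mathbbm{q}\subseteq\mathbbm{h}^\circ$, and $p\nmid k_\mathbf{q}$ then cuts this down to $\mathbbm{q}\subseteq\mathbbm{h}^-$, the sets $\mathbb{U}_{\mathbf{h}\mathbf{h},\mathbf{b},m}$ are exactly as you describe. Moreover $n_{\mathbf{h}\mathbf{h},\mathbf{b}}=|\mathbbm{h}^-\setminus\mathbbm{b}|$ and $k_{\mathbf{h}\cap\mathbf{q}}=k_\mathbf{q}$. The alternating sum therefore collapses to $B_{\mathbf{h},\mathbf{0},\mathbf{h}}=E_\mathbf{h}^*$.

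Your route is more elementary and self-contained. It needs only Equation \eqref{Eq;6}, Lemma \ref{L;Lemma2.5} and the Binomial Theorem. It bypasses the center and radical bases (Lemmas \ref{L;Lemma2.10}, \ref{L;Lemma2.9}) and Lemma \ref{L;Lemma2.7}, all of which feed into Lemma \ref{L;Lemma3.17}. It also yields the sharper local identity $\sum_{\mathbf{b}}D_{\mathbf{h},\mathbf{b},\mathbf{h}}=E_\mathbf{h}^*$ for each $\mathbf{h}\in\mathbb{E}$. The paper's route is shorter, given that Lemma \ref{L;Lemma3.17} is needed anyway for Lemma \ref{L;Lemma3.19}.
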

\begin{proof}
The first statement follows from combining Equation \eqref{Eq;7}, Lemmas \ref{L;Lemma3.17}, \ref{L;Lemma2.14}. By Lemmas \ref{L;Lemma3.17} and \ref{L;Lemma2.14}, it is obvious to see that $I$ is the sum of all matrices in $\{D_{\mathbf{a},\mathbf{b},\mathbf{a}}: (\mathbf{a},\mathbf{b},\mathbf{a})\!\in\!\mathbb{P}, p\nmid k_\mathbf{b}\}$. The desired lemma follows from Equation \eqref{Eq;7}.
\end{proof}
\begin{lem}\label{L;Lemma3.19}
Assume that $g\in[1, 2^{n_-}]$. Then $D_g\in\mathrm{Bl}(\mathbb{T})$. Moreover, assume that $M\!\in\!\mathrm{Bl}(\mathbb{T})$. Then $M\!\!\in\!\!\{D_1, D_2, \ldots, D_{2^{n_-}}\}$. In
particular, $\mathrm{Bl}(\mathbb{T})\!\!=\!\!\{D_1, D_2, \ldots, D_{2^{n_-}}\}$.
\end{lem}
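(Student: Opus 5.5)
We argue in three steps: every central idempotent of $\mathbb{T}$ is a sum of some of the $D_h$; $D_g$ is then primitive in $\mathrm{Z}(\mathbb{T})$; and every block idempotent is one of the $D_h$.

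\textbf{Step 1.} Let $M$ be an idempotent of $\mathbb{T}$ lying in $\mathrm{Z}(\mathbb{T})$. By Lemma \ref{L;Lemma3.17},
$$M=\sum c_{\mathbf{a},\mathbf{b}}D_{\mathbf{a},\mathbf{b},\mathbf{a}},$$
where the sum runs over $(\mathbf{a},\mathbf{b},\mathbf{a})\in\mathbb{P}$ with $p\nmid k_\mathbf{b}$. These $D_{\mathbf{a},\mathbf{b},\mathbf{a}}$ are pairwise orthogonal nonzero idempotents, and they are linearly independent by Lemma \ref{L;Lemma2.14}. Comparing $M^2=M$ coefficientwise gives $c_{\mathbf{a},\mathbf{b}}^2=c_{\mathbf{a},\mathbf{b}}$, so each $c_{\mathbf{a},\mathbf{b}}\in\{\overline{0},\overline{1}\}$. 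Hence $M=\sum_{(\mathbf{a},\mathbf{b},\mathbf{a})\in\mathbb{U}}D_{\mathbf{a},\mathbf{b},\mathbf{a}}$ for some subset $\mathbb{U}$ of $\{(\mathbf{a},\mathbf{b},\mathbf{a})\in\mathbb{P}: p\nmid k_\mathbf{b}\}$.

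For each $h\in[1,2^{n_-}]$, the idempotent $D_h$ is central by Lemma \ref{L;Lemma3.15}. Equation \eqref{Eq;7} and orthogonality give $MD_h=\sum_{(\mathbf{a},\mathbf{b},\mathbf{a})\in\mathbb{U}\cap\mathbb{C}_h}D_{\mathbf{a},\mathbf{b},\mathbf{a}}$, and this product is again central. By Lemma \ref{L;Lemma3.16}, $\mathbb{U}\cap\mathbb{C}_h$ is therefore either empty or all of $\mathbb{C}_h$. Since $I=\sum_h D_h$ by Lemma \ref{L;Lemma3.18}, we get $M=\sum_h MD_h=\sum_{h\in\mathbb{V}}D_h$ for some subset $\mathbb{V}\subseteq[1,2^{n_-}]$.

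\textbf{Step 2.} $D_g$ is a nonzero idempotent of $\mathrm{Z}(\mathbb{T})$ by Lemmas \ref{L;Lemma3.15} and \ref{L;Lemma3.18}. Suppose $D_g=e+f$ with $e,f$ nonzero orthogonal idempotents of $\mathrm{Z}(\mathbb{T})$. By Step 1, $e=\sum_{h\in\mathbb{V}}D_h$ and $f=\sum_{h\in\mathbb{W}}D_h$ with $\mathbb{V},\mathbb{W}$ nonempty. Multiplying $D_g=e+f$ by $D_h$ and using $D_gD_h=\delta_{g,h}D_g$ forces $\mathbb{V}\cup\mathbb{W}\subseteq\{g\}$. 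Since $e,f\neq O$, this gives $\mathbb{V}=\mathbb{W}=\{g\}$, so $e=f=D_g$. Then $ef=D_g\neq O$, contradicting orthogonality. So $D_g$ is primitive in $\mathrm{Z}(\mathbb{T})$, that is, $D_g\in\mathrm{Bl}(\mathbb{T})$.

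\textbf{Step 3.} Let $M\in\mathrm{Bl}(\mathbb{T})$. By Step 1, $M=\sum_{h\in\mathbb{V}}D_h$ with $\mathbb{V}$ nonempty because $M\neq O$. If $|\mathbb{V}|\geq2$, this writes $M$ as a sum of at least two pairwise orthogonal nonzero idempotents of $\mathrm{Z}(\mathbb{T})$, contradicting primitivity. Hence $M=D_h$ for a single $h$. Steps 2 and 3 together give $\mathrm{Bl}(\mathbb{T})=\{D_1,\ldots,D_{2^{n_-}}\}$.

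The only point needing care is in Step 1: Lemma \ref{L;Lemma3.16} must be applied to $MD_h$, which requires knowing that $MD_h$ is central. This holds because it is a product of two central elements.
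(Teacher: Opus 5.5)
Your proposal is correct and uses the same ingredients as the paper: Lemma \ref{L;Lemma3.17} writes a central idempotent as a $\{\overline{0},\overline{1}\}$-combination of the $D_{\mathbf{a},\mathbf{b},\mathbf{a}}$, Lemma \ref{L;Lemma3.16} forbids a proper nonempty part of a class $\mathbb{C}_h$, and Lemma \ref{L;Lemma3.18} gives $I=\sum_h D_h$. You package these first as a classification of all central idempotents as sums of $D_h$'s, whereas the paper argues by contradiction with block idempotents $N_1,\ldots,N_h$ summing to $D_g$ and uses orthogonality of distinct block idempotents for the converse.

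One small repair in Step 2 is needed. When $p=2$ and $h\in\mathbb{V}\cap\mathbb{W}$, multiplying $D_g=e+f$ by $D_h$ only gives $\overline{2}D_h=O$, which is no contradiction. So first use $ef=\sum_{h\in\mathbb{V}\cap\mathbb{W}}D_h=O$ to get $\mathbb{V}\cap\mathbb{W}=\varnothing$; the rest of the argument then goes through.
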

\begin{proof}
Assume that $D_g\notin\mathrm{Bl}(\mathbb{T})$. Lemmas \ref{L;Lemma3.15} and \ref{L;Lemma3.18} imply that $D_g$ is a sum of at least two pairwise distinct matrices in $\mathrm{Bl}(\mathbb{T})$. Assume that $h\in[2,\infty)$ and $D_g$ is the sum of some pairwise distinct matrices $N_1, N_2, \ldots, N_h$ in $\mathrm{Bl}(\mathbb{T})$. As $N_i\in\mathrm{Z}(\mathbb{T})\setminus\{O\}$ and $N_iN_j=\delta_{i, j}N_i$ for any $i, j\!\in\![1, h]$, the combination of Equation \eqref{Eq;7}, Lemmas \ref{L;Lemma3.17}, \ref{L;Lemma2.14} implies that each matrix in $\{N_1, N_2,\dots, N_h\}$ is the sum of all matrices in a proper subset of $\{D_{\mathbf{a},\mathbf{b},\mathbf{a}}: (\mathbf{a},\mathbf{b},\mathbf{a})\in\mathbb{C}_g\}$. As $N_i\in\mathrm{Z}(\mathbb{T})$ for any $i\in[1, h]$, this contradicts Lemma \ref{L;Lemma3.16}. This contradiction thus implies that $D_g\in\mathrm{Bl}(\mathbb{T})$. The first statement follows. As $g$ is arbitrarily chosen from $[1, 2^{n_-}]$, the first statement implies that $\{D_1, D_2, \ldots, D_{2^{n_-}}\}\subseteq\mathrm{Bl}(\mathbb{T})$. Lemma \ref{L;Lemma3.18} implies that $M$ is the sum of all matrices $MD_1, MD_2,\ldots, MD_{2^{n_-}}$. This implies that $M\in\{D_1, D_2, \ldots, D_{2^{n_-}}\}$ as $M\neq O$. The second statement follows. As $M$ is arbitrarily chosen from $\mathrm{Bl}(\mathbb{T})$, it is obvious that $\mathrm{Bl}(\mathbb{T})\subseteq\{D_1, D_2, \ldots, D_{2^{n_-}}\}\subseteq\mathrm{Bl}(\mathbb{T})$. The desired lemma follows.
\end{proof}
We are now ready to display the main result of this section and another corollary.
\begin{thm}\label{T;Idempotent}
All pairwise distinct block idempotents of $\mathbb{T}$ are exactly the matrices $D_1, D_2, \ldots, D_{2^{n_-}}$ and $|\mathrm{Bl}(\mathbb{T})|=2^{n_-}$. In particular, the decomposition of $\mathbb{T}$ into the direct sum of all pairwise distinct block algebras of $\mathbb{T}$ can be displayed by the equation
$$\mathbb{T}=\bigoplus_{g=1}^{2^{n_-}}\mathbb{T}D_g.$$
\end{thm}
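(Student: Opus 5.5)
The theorem follows almost directly from Lemma \ref{L;Lemma3.19}, together with the general facts about block idempotents recalled in Section 2. The plan is as follows.

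First, I invoke Lemma \ref{L;Lemma3.19}, which gives the set equality $\mathrm{Bl}(\mathbb{T})=\{D_1, D_2, \ldots, D_{2^{n_-}}\}$.

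Second, to obtain $|\mathrm{Bl}(\mathbb{T})|=2^{n_-}$, I must check that the matrices $D_1, D_2,\ldots, D_{2^{n_-}}$ are pairwise distinct. By Lemma \ref{L;Lemma3.18}, each $D_g$ is a nonzero idempotent and $D_gD_h=\delta_{g,h}D_g$. Suppose $D_g=D_h$ with $g\neq h$. Then $D_g=D_g^2=D_gD_h=O$, contradicting $D_g\neq O$. So the $2^{n_-}$ matrices are distinct, and the cardinality claim follows.

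Third, for the decomposition, I use the facts recalled in the subsection on algebras. The distinct block idempotents are pairwise orthogonal central idempotents summing to $1_\mathbb{A}$, and $\mathbb{A}$ is the direct sum of its distinct block algebras. Concretely, Lemma \ref{L;Lemma3.18} gives $I=\sum_{g}D_g$, so every $M\in\mathbb{T}$ satisfies $M=\sum_g MD_g$. Directness comes from orthogonality: if $\sum_g M_gD_g=O$, multiplying by $D_h$ and using centrality (Lemma \ref{L;Lemma3.15}) yields $M_hD_h=O$.

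Hence $\mathbb{T}=\bigoplus_{g=1}^{2^{n_-}}\mathbb{T}D_g$. Each summand is a block algebra by definition, since $D_g\in\mathrm{Bl}(\mathbb{T})$.

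There is no real obstacle here: all the substantive work (centrality, orthogonality, and primitivity in $\mathrm{Z}(\mathbb{T})$) was done in Lemmas \ref{L;Lemma3.15}--\ref{L;Lemma3.19}. The only point needing care is distinctness, which follows from orthogonality together with $D_g\neq O$.
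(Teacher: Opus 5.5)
Your proposal is correct and follows essentially the same route as the paper, which obtains the first statement from Lemmas \ref{L;Lemma3.18} and \ref{L;Lemma3.19} and then reads off the decomposition into block algebras. Your explicit distinctness argument ($D_g=D_g^2=D_gD_h=O$ for $g\neq h$) and your direct-sum check spell out what the paper leaves implicit. For the vector-space directness you do not actually need centrality, since right multiplication by $D_h$ together with orthogonality already gives $M_hD_h=O$.
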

\begin{proof}
The first statement follows from an application of Lemmas \ref{L;Lemma3.18} and \ref{L;Lemma3.19}. The first statement implies that $\mathbb{T}D_1, \mathbb{T}D_2, \ldots, \mathbb{T}D_{2^{n_-}}$ are exactly all pairwise distinct block algebras of $\mathbb{T}$. The desired theorem follows from the above discussion.
\end{proof}
\begin{cor}\label{C;Corollary3.21}
The number of all pairwise distinct block idempotents of $\mathbb{T}$ and the number of all pairwise distinct block algebras of $\mathbb{T}$ are independent of the choice of $\mathbf{x}$. Moreover, the following are equivalent: $|\mathrm{Bl}(\mathbb{T})|=1$; $n_-=0$; and $[1,n]^\circ\!=\![1,n]^+$.
\end{cor}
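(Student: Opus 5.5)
By Theorem \ref{T;Idempotent}, $|\mathrm{Bl}(\mathbb{T})|=2^{n_-}$, and the number of pairwise distinct block algebras of $\mathbb{T}$ is also $2^{n_-}$. The first statement therefore reduces to checking that $n_-$ does not depend on $\mathbf{x}$. This is immediate from the definition: $n_-=|[1,n]^-|$, where $[1,n]^-=\{a\in[1,n]: |\mathbb{U}_a|\neq 2,\ p\nmid |\mathbb{U}_a|-1\}$. This set is determined only by the cardinalities $|\mathbb{U}_1|,\ldots,|\mathbb{U}_n|$ and the characteristic $p$ of $\F$, and makes no reference to $\mathbf{x}$. So both counts are independent of the choice of $\mathbf{x}$.

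For the equivalences, I would argue as follows.

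First, Theorem \ref{T;Idempotent} gives $|\mathrm{Bl}(\mathbb{T})|=1$ if and only if $2^{n_-}=1$, that is, if and only if $n_-=0$.

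Second, I would compare $n_-$ with the sets $[1,n]^\circ$ and $[1,n]^+$. By definition $[1,n]^-\subseteq[1,n]^\circ$, and $[1,n]^-=[1,n]^\circ\setminus[1,n]^+$. Note that $[1,n]^+$ may contain indices $a$ with $|\mathbb{U}_a|=2$, which happens exactly when $p\mid 1$, i.e.\ when $p=1$; this is impossible. Hence $[1,n]^+\subseteq[1,n]^\circ$. It follows that $n_-=0$ if and only if $[1,n]^\circ\subseteq[1,n]^+$, which together with $[1,n]^+\subseteq[1,n]^\circ$ holds if and only if $[1,n]^\circ=[1,n]^+$.

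There is no real obstacle. The only point needing care is the inclusion $[1,n]^+\subseteq[1,n]^\circ$. For $|\mathbb{U}_a|=2$ we have $|\mathbb{U}_a|-1=1$, and $p\mid 1$ fails for $p$ prime. For $p=0$, $p\mid m$ means $m=0$, and $|\mathbb{U}_a|-1\geq 1$, so this also fails. Alternatively, one can simply invoke the identity $n_++n_-=|[1,n]^\circ|$ stated in Section 2, which yields the equivalence directly.
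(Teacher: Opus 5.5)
Your proposal is correct and follows the paper's proof. Both read off $|\mathrm{Bl}(\mathbb{T})|=2^{n_-}$ from Theorem~\ref{T;Idempotent} and note that $n_-$ depends only on $p$ and the cardinalities $|\mathbb{U}_a|$, not on $\mathbf{x}$. Both then reduce to the equivalence $n_-=0\iff[1,n]^\circ=[1,n]^+$, which the paper calls obvious and which you justify explicitly via $[1,n]^+\subseteq[1,n]^\circ$ and $[1,n]^-=[1,n]^\circ\setminus[1,n]^+$.
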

\begin{proof}
The first statement follows from Theorem \ref{T;Idempotent}. It is obvious that $n_-=0$ if and only if $[1,n]^\circ=[1,n]^+$.
The desired corollary follows from Theorem \ref{T;Idempotent}.
\end{proof}
We are now ready to end this section by displaying an example of Theorem \ref{T;Idempotent}.
\begin{eg}\label{E;Example3.22}
Assume that $n=|\mathbb{U}_1|=2$, $|\mathbb{U}_2|=3$, $\mathbf{g}=(0,1)$, $\mathbf{h}=(1,0)$. Then $\mathbb{E}=\{\mathbf{0}, \mathbf{g}, \mathbf{h}, \mathbf{1}\}$. If $p\neq2$, Theorem \ref{T;Idempotent} implies that $|\mathrm{Bl}(\mathbb{T})|=2$ and all pairwise distinct block idempotents of $\mathbb{T}$ are $D_{\mathbf{0}, \mathbf{0}, \mathbf{0}}+D_{\mathbf{g},\mathbf{g}, \mathbf{g}}+D_{\mathbf{h},\mathbf{0}, \mathbf{h}}+D_{\mathbf{1},\mathbf{g}, \mathbf{1}}$ and $D_{\mathbf{g}, \mathbf{0}, \mathbf{g}}+D_{\mathbf{1},\mathbf{0},\mathbf{1}}$. For the remaining case $p\!=\!2$, Theorem \ref{T;Idempotent} implies that $|\mathrm{Bl}(\mathbb{T})|\!=\!1$ and $\mathrm{Bl}(\mathbb{T})\!=\!\{I\}$.
\end{eg}
\section{$\F$-Dimension of block algebra of $\mathbb{T}$}
In this section, for any block algebra of $\mathbb{T}$, we get the $\F$-dimension of this block algebra of $\mathbb{T}$. Our strategy is to present an $\F$-basis for any block algebra of $\mathbb{T}$. We first display a sequence of lemmas to offer an $\F$-basis for any block algebra of $\mathbb{T}$. By Theorem \ref{T;Idempotent}, recall that $\mathbb{T}D_1, \mathbb{T}D_2,\ldots,\mathbb{T}D_{2^{n_-}}$ are exactly all block algebras of $\mathbb{T}$.
\begin{lem}\label{L;Lemma4.1}
Assume that $\mathbf{g}, \mathbf{h}, \mathbf{i}, \mathbf{j}, \mathbf{k}\in\mathbb{E}$. Assume that $n_{\mathbf{i},\mathbf{g}\cup\mathbf{j},\mathbf{i}}=n_{\mathbf{g}\cap\mathbf{i},\mathbf{j}, \mathbf{k}}=0$. Then $n_{(\mathbf{g}\triangle\mathbf{i})\cup\mathbf{k}, \mathbf{j}, \mathbf{i}}=0$. In particular, $n_{(\mathbf{g}\triangle\mathbf{i})\cup\mathbf{l}^+, \mathbf{j}, \mathbf{i}}=0$ for any $\mathbf{l}\in\mathbb{E}$.
\end{lem}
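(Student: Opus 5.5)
Unwinding the definition, $n_{\mathbf{a},\mathbf{b},\mathbf{c}}=|(\mathbbm{a}\cap\mathbbm{c})^-\setminus\mathbbm{b}|$, so the statement is a pure set-theoretic claim about subsets of $[1,n]^-$. The plan is to translate both hypotheses and the conclusion into emptiness statements and then check the conclusion elementwise. The hypothesis $n_{\mathbf{i},\mathbf{g}\cup\mathbf{j},\mathbf{i}}=0$ says $\mathbbm{i}^-\setminus(\mathbbm{g}\cup\mathbbm{j})=\varnothing$, that is, $\mathbbm{i}^-\subseteq\mathbbm{g}\cup\mathbbm{j}$. Using Lemma \ref{L;Lemma2.3} to identify $\mathbf{g}\cap\mathbf{i}$ with $\mathbbm{g}\cap\mathbbm{i}$, the hypothesis $n_{\mathbf{g}\cap\mathbf{i},\mathbf{j},\mathbf{k}}=0$ says $(\mathbbm{g}\cap\mathbbm{i}\cap\mathbbm{k})^-\subseteq\mathbbm{j}$. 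The conclusion to prove is $(((\mathbbm{g}\triangle\mathbbm{i})\cup\mathbbm{k})\cap\mathbbm{i})^-\subseteq\mathbbm{j}$.

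To prove the conclusion, take $a\in[1,n]^-$ with $a\in\mathbbm{i}$ and $a\in(\mathbbm{g}\triangle\mathbbm{i})\cup\mathbbm{k}$; we must show $a\in\mathbbm{j}$. Split into two cases.
\begin{itemize}
\item[(i)] If $a\in\mathbbm{g}\triangle\mathbbm{i}$, then since $a\in\mathbbm{i}$ we have $a\notin\mathbbm{g}$. Because $a\in\mathbbm{i}^-\subseteq\mathbbm{g}\cup\mathbbm{j}$, it follows that $a\in\mathbbm{j}$.
\item[(ii)] If $a\notin\mathbbm{g}\triangle\mathbbm{i}$, then $a\in\mathbbm{k}$. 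If also $a\notin\mathbbm{g}$, the argument of case (i) gives $a\in\mathbbm{j}$. If $a\in\mathbbm{g}$, then $a\in(\mathbbm{g}\cap\mathbbm{i}\cap\mathbbm{k})^-\subseteq\mathbbm{j}$.
\end{itemize}
In every case $a\in\mathbbm{j}$, so $n_{(\mathbf{g}\triangle\mathbf{i})\cup\mathbf{k},\mathbf{j},\mathbf{i}}=0$. This proves the first statement.

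For the ``in particular'' part, we specialize $\mathbf{k}=\mathbf{l}^+$. It suffices to observe that $(\mathbbm{g}\cap\mathbbm{i}\cap\mathbbm{l}^+)^-=\varnothing$, because $\mathbb{V}^+\cap\mathbb{W}^-=\varnothing$ for any $\mathbb{V},\mathbb{W}\subseteq[1,n]$: an index in $\mathbb{V}^+$ satisfies $p\mid|\mathbb{U}_a|-1$, while an index in $\mathbb{W}^-$ satisfies $p\nmid|\mathbb{U}_a|-1$. Hence $n_{\mathbf{g}\cap\mathbf{i},\mathbf{j},\mathbf{l}^+}=0$ automatically, and the first statement applies.

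There is no real obstacle here; the only care needed is in identifying the set $\mathbbm{a}$ attached to the tuples $(\mathbf{g}\triangle\mathbf{i})\cup\mathbf{k}$, $\mathbf{g}\cap\mathbf{i}$ and $\mathbf{l}^+$ via Lemma \ref{L;Lemma2.3}. For $\mathbf{l}^+$ this gives $\mathbbm{l}^+$ by the definition of $\mathbf{l}^+$.
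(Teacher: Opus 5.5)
Your proof is correct and is essentially the paper's argument. The paper rewrites $((\mathbbm{g}\triangle\mathbbm{i})\cup\mathbbm{k})\cap\mathbbm{i}$ as $(\mathbbm{g}\cap\mathbbm{i}\cap\mathbbm{k})\cup(\mathbbm{i}\setminus\mathbbm{g})$ and kills the two pieces with the second and first hypotheses respectively, which is exactly your elementwise case split; your sub-case $a\notin\mathbbm{g}$ in (ii) is actually vacuous, and the ``in particular'' part is handled identically via $n_{\mathbf{g}\cap\mathbf{i},\mathbf{j},\mathbf{l}^+}=0$ with $n_{\mathbf{i},\mathbf{g}\cup\mathbf{j},\mathbf{i}}=0$ kept as a standing hypothesis.
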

\begin{proof}
As $(((\mathbbm{g}\triangle\mathbbm{i})\cup\mathbbm{k})\cap\mathbbm{i})^-\setminus\mathbbm{j}=((\mathbbm{g}\cap\mathbbm{i}\cap\mathbbm{k})\cup(\mathbbm{i}\setminus\mathbbm{g}))^-\setminus\mathbbm{j}$ and $(\mathbbm{g}\cap\mathbbm{i}\cap\mathbbm{k})^-\setminus\mathbbm{j}=\varnothing$, notice that
$(((\mathbbm{g}\triangle\mathbbm{i})\cup\mathbbm{k})\cap\mathbbm{i})^-\setminus\mathbbm{j}=\mathbbm{i}^-\setminus(\mathbbm{g}\cup\mathbbm{j})$. The equation $n_{\mathbf{i},\mathbf{g}\cup\mathbf{j},\mathbf{i}}=0$ thus implies that $n_{(\mathbf{g}\triangle\mathbf{i})\cup\mathbf{k}, \mathbf{j}, \mathbf{i}}=0$. The first statement thus follows. Notice that $n_{\mathbf{g}\cap\mathbf{i}, \mathbf{j},\mathbf{l}^+}\!=\!0$ for any $\mathbf{l}\in\mathbb{E}$. The desired lemma follows from an application of the first statement.
\end{proof}
\begin{lem}\label{L;Lemma4.2}
Assume that $\mathbf{g},\mathbf{h},\mathbf{i}, \mathbf{j}, \mathbf{k},\mathbf{l}\in\mathbb{E}$ and $p\nmid k_{(\mathbf{g}\cap\mathbf{i}\cap\mathbf{k})\setminus\mathbf{h}}k_{(\mathbf{g}\cap\mathbf{i}\cap\mathbf{l})\setminus\mathbf{j}}$. Assume that $[\mathbf{g},\mathbf{h},\mathbf{i}, \mathbf{k}, \mathbf{i}]=[\mathbf{g},\mathbf{j},\mathbf{i}, \mathbf{l}, \mathbf{i}]$. Then $(\mathbf{g}\cap\mathbf{h}\cap\mathbf{i})^+=(\mathbf{g}\cap\mathbf{i}\cap\mathbf{j})^+$.
\end{lem}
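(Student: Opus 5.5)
Write $\mathbb{W}=(\mathbbm{g}\cap\mathbbm{i})^\circ$. The plan is to read off the $(\cdot)^+$-part of $[\mathbf{g},\mathbf{h},\mathbf{i},\mathbf{k},\mathbf{i}]$ directly from the defining formula and compare. Since $\mathbbm{g}\triangle\mathbbm{i}$ is disjoint from $\mathbbm{g}\cap\mathbbm{i}$, intersecting with $\mathbb{W}$ gives
\[\mathbbm{l}'\cap\mathbb{W}=(\mathbb{W}\setminus\mathbbm{i})\cup((\mathbbm{h}\cup\mathbbm{k})\cap\mathbb{W})=(\mathbbm{h}\cup\mathbbm{k})\cap\mathbb{W},\]
where $\mathbbm{l}'$ is the set attached to $[\mathbf{g},\mathbf{h},\mathbf{i},\mathbf{k},\mathbf{i}]$ by Lemma \ref{L;Lemma2.3}. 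Here the first piece is empty because $\mathbb{W}\subseteq\mathbbm{i}$, and $(\mathbbm{g}\cap\mathbbm{i}\cap\mathbbm{i})^\circ=\mathbb{W}$. Likewise the set of $[\mathbf{g},\mathbf{j},\mathbf{i},\mathbf{l},\mathbf{i}]$ meets $\mathbb{W}$ in $(\mathbbm{j}\cup\mathbbm{l})\cap\mathbb{W}$. So the hypothesis, via Lemma \ref{L;Lemma2.3}, gives
\[(\mathbbm{h}\cup\mathbbm{k})\cap\mathbb{W}=(\mathbbm{j}\cup\mathbbm{l})\cap\mathbb{W}.\]
Now apply $(\cdot)^+$ to both sides, noting that $\mathbb{V}^+\cap\mathbb{W}=(\mathbb{V}\cap\mathbbm{g}\cap\mathbbm{i})^+$ because $[1,n]^+\subseteq[1,n]^\circ$: indeed $p\mid 0=2-2$ fails unless $p=2$, and when $p=2$ and $|\mathbb{U}_a|=2$ we get $p\nmid 1$.

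The hypothesis $p\nmid k_{(\mathbf{g}\cap\mathbf{i}\cap\mathbf{k})\setminus\mathbf{h}}$ together with Lemma \ref{L;Lemma2.5} says $((\mathbbm{g}\cap\mathbbm{i}\cap\mathbbm{k})\setminus\mathbbm{h})^+=\varnothing$. In other words, $(\mathbbm{g}\cap\mathbbm{i}\cap\mathbbm{k})^+\subseteq\mathbbm{h}$, so
\[((\mathbbm{h}\cup\mathbbm{k})\cap\mathbbm{g}\cap\mathbbm{i})^+=(\mathbbm{g}\cap\mathbbm{h}\cap\mathbbm{i})^+.\]
Symmetrically, the second divisibility hypothesis gives $((\mathbbm{j}\cup\mathbbm{l})\cap\mathbbm{g}\cap\mathbbm{i})^+=(\mathbbm{g}\cap\mathbbm{i}\cap\mathbbm{j})^+$. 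Combining these with the displayed equality yields $(\mathbbm{g}\cap\mathbbm{h}\cap\mathbbm{i})^+=(\mathbbm{g}\cap\mathbbm{i}\cap\mathbbm{j})^+$, and Lemma \ref{L;Lemma2.3} converts this to the claimed equality of $n$-tuples.

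The only delicate step is the first one: checking that intersecting the bracket with $(\mathbbm{g}\cap\mathbbm{i})^\circ$ strips off exactly the $\triangle$ and $\setminus\mathbbm{i}$ pieces. I expect this to be a routine set-theoretic verification once $(\mathbbm{g}\cap\mathbbm{i}\cap\mathbbm{i})^\circ=\mathbb{W}$ is noted. Apart from that, the argument only needs the containment $[1,n]^+\subseteq[1,n]^\circ$ and the description of $k$ in Lemma \ref{L;Lemma2.5}.
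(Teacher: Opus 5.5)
Your proof is correct and follows essentially the paper's route. Both arguments use Lemma \ref{L;Lemma2.3} to reduce the hypothesis to an equality of the sets $(\mathbbm{g}\triangle\mathbbm{i})\cup((\mathbbm{h}\cup\mathbbm{k})\cap(\mathbbm{g}\cap\mathbbm{i})^\circ)$ and $(\mathbbm{g}\triangle\mathbbm{i})\cup((\mathbbm{j}\cup\mathbbm{l})\cap(\mathbbm{g}\cap\mathbbm{i})^\circ)$. Both then use Lemma \ref{L;Lemma2.5} and the divisibility hypothesis to see that the parts of $\mathbbm{k}$ outside $\mathbbm{h}$ and of $\mathbbm{l}$ outside $\mathbbm{j}$ contain no index of $[1,n]^+$, so comparing $(\cdot)^+$-parts gives the claim.

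One small fix: your justification of $[1,n]^+\subseteq[1,n]^\circ$ is garbled, since every $p$ divides $0$. The correct reason is simply that $|\mathbb{U}_a|=2$ gives $|\mathbb{U}_a|-1=1$, and $p\nmid 1$.
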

\begin{proof}
According to Lemma \ref{L;Lemma2.5}, notice that $((\mathbbm{g}\cap\mathbbm{i})^\circ\cap\mathbbm{k})\setminus\mathbbm{h}=(\mathbbm{g}\cap\mathbbm{i}\cap\mathbbm{k})^-\setminus\mathbbm{h}$ and $((\mathbbm{g}\cap\mathbbm{i})^\circ\cap\mathbbm{l})\setminus\mathbbm{j}=(\mathbbm{g}\cap\mathbbm{i}\cap\mathbbm{l})^-\setminus\mathbbm{j}$. By Lemma \ref{L;Lemma2.3}, $[\mathbf{g},\mathbf{h},\mathbf{i}, \mathbf{k}, \mathbf{i}]=[\mathbf{g},\mathbf{j},\mathbf{i}, \mathbf{l}, \mathbf{i}]$ if and only if $(\mathbbm{g}\triangle\mathbbm{i})\cup((\mathbbm{g}\cap\mathbbm{i})^\circ\cap\mathbbm{h})
\cup(((\mathbbm{g}\cap\mathbbm{i})^\circ\cap\mathbbm{k})\setminus\mathbbm{h})=(\mathbbm{g}\triangle\mathbbm{i})\cup((\mathbbm{g}\cap\mathbbm{i})^\circ\cap\mathbbm{j})
\cup(((\mathbbm{g}\cap\mathbbm{i})^\circ\cap\mathbbm{l})\setminus\mathbbm{j})$. Therefore
$(\mathbbm{g}\cap\mathbbm{h}\cap\mathbbm{i})^+=(\mathbbm{g}\cap\mathbbm{i}\cap\mathbbm{j})^+$. The desired lemma follows from Lemma \ref{L;Lemma2.3}.
\end{proof}
\begin{lem}\label{L;Lemma4.3}
Assume that $g\!\in\![1, 2^{n_-}]$ and $\mathbf{h}, \mathbf{i}, \mathbf{j}\!\in\!\mathbb{E}$. Assume that $\mathbf{i}\preceq(\mathbf{h}\cap\mathbf{j})^+$, $(\mathbf{j},\mathbf{k},\mathbf{j})\!\in\!\mathbb{C}_g$, $n_{\mathbf{j}, \mathbf{h}\cup\mathbf{k},\mathbf{j}}\!=\!0$. Then $(\mathbf{h}, (\mathbf{h}\triangle\mathbf{j})\cup\mathbf{i}, \mathbf{j})\!\in\!\mathbb{P}$ and $B_{\mathbf{h}, (\mathbf{h}\triangle\mathbf{j})\cup\mathbf{i}, \mathbf{j}}D_g\neq O$. Moreover, assume that $\mathbf{l},\mathbf{m}, \mathbf{q}\in\mathbb{E}$, $\mathbf{m}\preceq(\mathbf{l}\cap\mathbf{q})^+$, $(\mathbf{q},\mathbf{r},\mathbf{q})\in\mathbb{C}_g$, $n_{\mathbf{q},\mathbf{l}\cup\mathbf{r},\mathbf{q}}=0$. Then the equations $\mathbf{h}\!=\!\mathbf{l}$, $\mathbf{i}\!=\!\mathbf{m}$, $\mathbf{j}\!=\!\mathbf{q}$ hold together if and only if $B_{\mathbf{h}, (\mathbf{h}\triangle\mathbf{j})\cup\mathbf{i}, \mathbf{j}}D_g\!\!=\!\!B_{\mathbf{l}, (\mathbf{l}\triangle\mathbf{q})\cup\mathbf{m}, \mathbf{q}}D_g$.
\end{lem}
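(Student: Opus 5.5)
First, membership in $\mathbb{P}$. Put $\mathbf{s}=(\mathbf{h}\triangle\mathbf{j})\cup\mathbf{i}$. Since $\mathbf{i}\preceq(\mathbf{h}\cap\mathbf{j})^+$, we have $\mathbbm{i}\subseteq(\mathbbm{h}\cap\mathbbm{j})^+\subseteq(\mathbbm{h}\cap\mathbbm{j})^\circ$, which gives
$$\mathbbm{h}\triangle\mathbbm{j}\subseteq\mathbbm{s}\subseteq(\mathbbm{h}\triangle\mathbbm{j})\cup(\mathbbm{h}\cap\mathbbm{j})^\circ.$$
Thus $(\mathbf{h},\mathbf{s},\mathbf{j})\in\mathbb{P}$ by Lemma \ref{L;Lemma2.3}.

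For the nonvanishing of $B_{\mathbf{h},\mathbf{s},\mathbf{j}}D_g$, I aim to apply Lemma \ref{L;Lemma3.14} with the triple $(\mathbf{h},\mathbf{s},\mathbf{j})$ and $(\mathbf{j},\mathbf{k},\mathbf{j})\in\mathbb{C}_g$. This needs $n_{\mathbf{s},\mathbf{k},\mathbf{j}}=0$. I get it from Lemma \ref{L;Lemma4.1}, taking $\mathbf{g}=\mathbf{h}$, $\mathbf{i}=\mathbf{j}$, $\mathbf{j}=\mathbf{k}$ and $\mathbf{l}=\mathbf{i}$; note $\mathbf{i}^+=\mathbf{i}$ because $\mathbbm{i}\subseteq[1,n]^+$. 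The hypothesis $n_{\mathbf{j},\mathbf{h}\cup\mathbf{k},\mathbf{j}}=0$ is exactly what Lemma \ref{L;Lemma4.1} requires. Its "in particular" clause then gives $n_{(\mathbf{h}\triangle\mathbf{j})\cup\mathbf{i},\mathbf{k},\mathbf{j}}=0$. If one prefers to check this directly, it is immediate: $(\mathbbm{s}\cap\mathbbm{j})^-=(\mathbbm{j}\setminus\mathbbm{h})^-$ since $\mathbbm{i}^-=\varnothing$. Lemma \ref{L;Lemma3.14} then yields $B_{\mathbf{h},\mathbf{s},\mathbf{j}}D_g\neq O$.

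For the "moreover" part, the forward direction is trivial. For the converse, suppose $B_{\mathbf{h},\mathbf{s},\mathbf{j}}D_g=B_{\mathbf{l},\mathbf{t},\mathbf{q}}D_g$, where $\mathbf{t}=(\mathbf{l}\triangle\mathbf{q})\cup\mathbf{m}$.

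By Lemma \ref{L;Lemma3.12} (via Lemma \ref{L;Lemma3.14}), the left side is a linear combination, with all coefficients nonzero, of matrices $B_{\mathbf{h},[\mathbf{h},\mathbf{s},\mathbf{j},\mathbf{a}\cup\mathbf{k},\mathbf{j}],\mathbf{j}}$. The right side is similarly a combination of $B_{\mathbf{l},\ast,\mathbf{q}}$. Since the $B$'s form a basis (Lemma \ref{L;Lemma2.8}), the outer indices must coincide, so $\mathbf{h}=\mathbf{l}$ and $\mathbf{j}=\mathbf{q}$. Lemma \ref{L;Lemma3.5} then forces $\mathbf{k}=\mathbf{r}$.

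It remains to recover $\mathbf{i}$ from the set of middle indices. I will compare the term with $\mathbf{a}=\mathbf{0}$ on each side, or better, compare the whole sets of middle indices. Lemma \ref{L;Lemma4.2} is designed for this: if
$$[\mathbf{h},\mathbf{s},\mathbf{j},\mathbf{a}\cup\mathbf{k},\mathbf{j}]=[\mathbf{h},\mathbf{t},\mathbf{j},\mathbf{b}\cup\mathbf{k},\mathbf{j}],$$
then $(\mathbf{h}\cap\mathbf{s}\cap\mathbf{j})^+=(\mathbf{h}\cap\mathbf{t}\cap\mathbf{j})^+$. The valuation hypothesis of Lemma \ref{L;Lemma4.2} holds because $p\nmid k_{\mathbf{a}\cup\mathbf{k}}$ (Lemma \ref{L;Lemma2.5}). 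Since $(\mathbbm{h}\cap\mathbbm{s}\cap\mathbbm{j})^+=\mathbbm{i}$ and likewise $(\mathbbm{h}\cap\mathbbm{t}\cap\mathbbm{j})^+=\mathbbm{m}$, we get $\mathbf{i}=\mathbf{m}$ by Lemma \ref{L;Lemma2.3}.

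The main obstacle is this last step: making sure some basis element actually appears on both sides, so that Lemma \ref{L;Lemma4.2} applies. This is guaranteed because both expansions have nonzero coefficients on the full index sets of Lemma \ref{L;Lemma3.11}, and their equality forces those index sets to coincide.
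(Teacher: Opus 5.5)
Your proof is correct and follows essentially the same route as the paper. You get membership in $\mathbb{P}$ from the inclusions, nonvanishing from Lemma \ref{L;Lemma4.1} and Lemma \ref{L;Lemma3.14}, and $\mathbf{i}=\mathbf{m}$ from Lemma \ref{L;Lemma4.2} applied to a common basis term. The paper obtains $\mathbf{h}=\mathbf{l}$ and $\mathbf{j}=\mathbf{q}$ slightly differently: it multiplies by $E_\mathbf{l}^*$ on the left and $E_\mathbf{q}^*$ on the right, using that $D_g$ is central, rather than reading off outer indices from the basis expansion. Your explicit argument that a common basis term exists spells out a step the paper leaves implicit.
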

\begin{proof}
As $\mathbf{h}\triangle\mathbf{j}\preceq(\mathbf{h}\triangle\mathbf{j})\cup\mathbf{i}\preceq\mathbf{h}\mathbf{j}$, notice that $(\mathbf{h}, (\mathbf{h}\triangle\mathbf{j})\cup\mathbf{i}, \mathbf{j})\in\mathbb{P}$ and $B_{\mathbf{h}, (\mathbf{h}\triangle\mathbf{j})\cup\mathbf{i}, \mathbf{j}}$ is defined. Notice that $n_{(\mathbf{h}\triangle\mathbf{j})\cup\mathbf{i}, \mathbf{k}, \mathbf{j}}=0$ by Lemma \ref{L;Lemma4.1}. Lemma \ref{L;Lemma3.14} thus implies that $B_{\mathbf{h}, (\mathbf{h}\triangle\mathbf{j})\cup\mathbf{i}, \mathbf{j}}D_g\neq O$. The first statement follows. The combination of Theorem \ref{T;Idempotent}, Equations \eqref{Eq;4}, \eqref{Eq;2} implies that $E_\mathbf{l}^*B_{\mathbf{h}, (\mathbf{h}\triangle\mathbf{j})\cup\mathbf{i},\mathbf{j}}D_gE_\mathbf{q}^*=\delta_{\mathbf{h},\mathbf{l}}\delta_{\mathbf{j},\mathbf{q}}B_{\mathbf{h}, (\mathbf{h}\triangle\mathbf{j})\cup\mathbf{i}, \mathbf{j}}D_g$. The first statement thus implies that $B_{\mathbf{h}, (\mathbf{h}\triangle\mathbf{j})\cup\mathbf{i}, \mathbf{j}}D_g=B_{\mathbf{l}, (\mathbf{l}\triangle\mathbf{q})\cup\mathbf{m}, \mathbf{q}}D_g$ only if $\mathbf{h}=\mathbf{l}$ and $\mathbf{j}=\mathbf{q}$. Assume that $\mathbf{h}=\mathbf{l}$ and $\mathbf{j}=\mathbf{q}$. Notice that $(\mathbf{h}\cap\mathbf{j}\cap((\mathbf{h}\triangle\mathbf{j})\cup\mathbf{i}))^+=\mathbf{i}$ and $(\mathbf{h}\cap\mathbf{j}\cap((\mathbf{h}\triangle\mathbf{j})\cup\mathbf{m}))^+=\mathbf{m}$ by Lemma \ref{L;Lemma2.3}.
For any $\mathbf{r}, \mathbf{s}\in\mathbb{E}$, Lemma \ref{L;Lemma2.5} implies that $p\nmid k_\mathbf{r}$ only if $p\nmid k_{\mathbf{r}\cap\mathbf{s}}$. The combination of the first statement, Equations \eqref{Eq;7}, \eqref{Eq;6}, Lemmas \ref{L;Lemma2.8}, \ref{L;Lemma4.2} thus implies that $B_{\mathbf{h}, (\mathbf{h}\triangle\mathbf{j})\cup\mathbf{i}, \mathbf{j}}D_g=B_{\mathbf{h}, (\mathbf{h}\triangle\mathbf{j})\cup\mathbf{m}, \mathbf{j}}D_g$ only if $\mathbf{i}=\mathbf{m}$. The desired lemma follows as the proof of the remaining direction is trivial.
\end{proof}
\begin{lem}\label{L;Lemma4.4}
Assume that $\mathbf{g},\mathbf{h},\mathbf{i}, \mathbf{j}, \mathbf{k}\in\mathbb{E}$, $p\nmid k_\mathbf{k}$,
$n_{\mathbf{i},\mathbf{g}\cup\mathbf{h}\cup\mathbf{k},\mathbf{i}}=n_{\mathbf{i},\mathbf{g}\cup\mathbf{j}\cup\mathbf{k},\mathbf{i}}=0$. Assume that
$n_{\mathbf{h},\mathbf{k},\mathbf{i}}\!=\!n_{\mathbf{i},\mathbf{k},\mathbf{j}}$. Then $[\mathbf{g},\mathbf{h},\mathbf{i}, \mathbf{k}, \mathbf{i}]\!=\![\mathbf{g},\mathbf{j},\mathbf{i}, \mathbf{k}, \mathbf{i}]$ if and only if $(\mathbf{g}\cap\mathbf{h}\cap\mathbf{i})^+\!=\!(\mathbf{g}\cap\mathbf{i}\cap\mathbf{j})^+$.
\end{lem}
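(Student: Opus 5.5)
The plan is to unwind the definition of $[\,\cdot\,]$ and compare the two sets coordinatewise on the three pieces $\mathbbm{g}\triangle\mathbbm{i}$, $(\mathbbm{g}\cap\mathbbm{i})^+$ and $(\mathbbm{g}\cap\mathbbm{i})^-$. By definition, with the third and fifth entries both equal to $\mathbf{i}$, the set attached to $[\mathbf{g},\mathbf{h},\mathbf{i},\mathbf{k},\mathbf{i}]$ is $(\mathbbm{g}\triangle\mathbbm{i})\cup((\mathbbm{g}\cap\mathbbm{i})^\circ\setminus\mathbbm{i})\cup((\mathbbm{h}\cup\mathbbm{k})\cap(\mathbbm{g}\cap\mathbbm{i})^\circ)$. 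The middle term is empty, so the set equals $(\mathbbm{g}\triangle\mathbbm{i})\cup((\mathbbm{g}\cap\mathbbm{i})^\circ\cap(\mathbbm{h}\cup\mathbbm{k}))$. The same holds with $\mathbbm{j}$ in place of $\mathbbm{h}$.

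Since $\mathbbm{g}\triangle\mathbbm{i}$ is disjoint from $(\mathbbm{g}\cap\mathbbm{i})^\circ$, Lemma \ref{L;Lemma2.3} gives that $[\mathbf{g},\mathbf{h},\mathbf{i},\mathbf{k},\mathbf{i}]=[\mathbf{g},\mathbf{j},\mathbf{i},\mathbf{k},\mathbf{i}]$ holds if and only if $(\mathbbm{g}\cap\mathbbm{i})^\circ\cap(\mathbbm{h}\cup\mathbbm{k})=(\mathbbm{g}\cap\mathbbm{i})^\circ\cap(\mathbbm{j}\cup\mathbbm{k})$. Next I split $(\mathbbm{g}\cap\mathbbm{i})^\circ$ as the disjoint union of $(\mathbbm{g}\cap\mathbbm{i})^+$ and $(\mathbbm{g}\cap\mathbbm{i})^-$. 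Since $p\nmid k_\mathbf{k}$, Lemma \ref{L;Lemma2.5} gives $\mathbbm{k}\cap[1,n]^+=\varnothing$. Hence the $+$-part of the condition reads exactly $(\mathbbm{g}\cap\mathbbm{h}\cap\mathbbm{i})^+=(\mathbbm{g}\cap\mathbbm{i}\cap\mathbbm{j})^+$, which is the right-hand side of the lemma after another use of Lemma \ref{L;Lemma2.3}. So the lemma reduces to showing that under the hypotheses the $-$-parts always agree:
$$(\mathbbm{g}\cap\mathbbm{i})^-\cap(\mathbbm{h}\cup\mathbbm{k})=(\mathbbm{g}\cap\mathbbm{i})^-\cap(\mathbbm{j}\cup\mathbbm{k}).$$

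This $-$-part identity is the step I expect to be the main obstacle, and it is where the hypotheses $n_{\mathbf{i},\mathbf{g}\cup\mathbf{h}\cup\mathbf{k},\mathbf{i}}=n_{\mathbf{i},\mathbf{g}\cup\mathbf{j}\cup\mathbf{k},\mathbf{i}}=0$ and $n_{\mathbf{h},\mathbf{k},\mathbf{i}}=n_{\mathbf{i},\mathbf{k},\mathbf{j}}$ must be spent. The first two say $\mathbbm{i}^-\subseteq\mathbbm{g}\cup\mathbbm{h}\cup\mathbbm{k}$ and $\mathbbm{i}^-\subseteq\mathbbm{g}\cup\mathbbm{j}\cup\mathbbm{k}$. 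The last says $|(\mathbbm{h}\cap\mathbbm{i})^-\setminus\mathbbm{k}|=|(\mathbbm{i}\cap\mathbbm{j})^-\setminus\mathbbm{k}|$. I would try to show that both sides of the displayed identity equal $(\mathbbm{g}\cap\mathbbm{i})^-$, i.e. that $(\mathbbm{g}\cap\mathbbm{i})^-\setminus\mathbbm{k}$ lies in both $\mathbbm{h}$ and $\mathbbm{j}$. To do this I would decompose $\mathbbm{i}^-\setminus\mathbbm{k}$ into its parts inside and outside $\mathbbm{g}$, and use the two covering conditions to control the part outside $\mathbbm{g}$. The cardinality equality would then force the remaining part inside $\mathbbm{g}$ to be contained in both $\mathbbm{h}$ and $\mathbbm{j}$.

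I am not confident that these hypotheses alone suffice for this step. If they do not, I would look for the extra structural information available where the lemma is applied, namely Lemma \ref{L;Lemma4.3}, where the middle entries have the form $(\mathbf{h}\triangle\mathbf{j})\cup\mathbf{i}$ with $\mathbf{i}$ a $+$-set. There the $-$-parts inside $\mathbbm{g}\cap\mathbbm{i}$ visibly coincide. With the $-$-part identity in hand, combining it with the $+$-part equivalence above and Lemma \ref{L;Lemma2.3} gives both directions of the lemma at once.
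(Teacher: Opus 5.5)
Your unwinding of $[\mathbf{g},\mathbf{h},\mathbf{i},\mathbf{k},\mathbf{i}]$ matches the paper's argument. So does your use of $p\nmid k_\mathbf{k}$, which gives $\mathbbm{k}\cap[1,n]^+=\varnothing$, and your reduction of the $+$-part to $(\mathbf{g}\cap\mathbf{h}\cap\mathbf{i})^+=(\mathbf{g}\cap\mathbf{i}\cap\mathbf{j})^+$. The gap is the remaining $-$-part identity, which is equivalent to
\[(\mathbbm{g}\cap\mathbbm{h}\cap\mathbbm{i})^-\setminus\mathbbm{k}=(\mathbbm{g}\cap\mathbbm{i}\cap\mathbbm{j})^-\setminus\mathbbm{k}.\]
Your plan for this step fails in two ways.

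First, the target you set, that both sides equal $(\mathbbm{g}\cap\mathbbm{i})^-$, is false even in cases where the lemma holds trivially. Take $a\in[1,n]^-$, $\mathbbm{g}=\mathbbm{i}=\{a\}$ and $\mathbbm{h}=\mathbbm{j}=\mathbbm{k}=\varnothing$.

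Second, and more seriously, the covering conditions give $\mathbbm{i}^-\setminus(\mathbbm{g}\cup\mathbbm{k})\subseteq\mathbbm{h}\cap\mathbbm{j}$. Combined with the numerical hypothesis $n_{\mathbf{h},\mathbf{k},\mathbf{i}}=n_{\mathbf{i},\mathbf{k},\mathbf{j}}$, this yields only $|(\mathbbm{g}\cap\mathbbm{h}\cap\mathbbm{i})^-\setminus\mathbbm{k}|=|(\mathbbm{g}\cap\mathbbm{i}\cap\mathbbm{j})^-\setminus\mathbbm{k}|$. Equal cardinalities do not give equal sets.

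Your doubt is justified: read literally, the statement is false. Let $n=2$, $|\mathbb{U}_1|=|\mathbb{U}_2|=3$ and $p\neq 2$, so that $[1,2]^-=[1,2]$ and $[1,2]^+=\varnothing$. Put $\mathbf{g}=\mathbf{i}=\mathbf{1}$, $\mathbf{h}=(1,0)$, $\mathbf{j}=(0,1)$, $\mathbf{k}=\mathbf{0}$.
\begin{itemize}
\item Both covering numbers are $0$.
\item $n_{\mathbf{h},\mathbf{k},\mathbf{i}}=n_{\mathbf{i},\mathbf{k},\mathbf{j}}=1$.
\item $(\mathbf{g}\cap\mathbf{h}\cap\mathbf{i})^+=(\mathbf{g}\cap\mathbf{i}\cap\mathbf{j})^+=\mathbf{0}$.
\end{itemize}
Yet $[\mathbf{g},\mathbf{h},\mathbf{i},\mathbf{k},\mathbf{i}]=(1,0)\neq(0,1)=[\mathbf{g},\mathbf{j},\mathbf{i},\mathbf{k},\mathbf{i}]$.

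The paper's proof gets past this step only by using the set equality $(\mathbbm{h}\cap\mathbbm{i})^-\setminus\mathbbm{k}=(\mathbbm{i}\cap\mathbbm{j})^-\setminus\mathbbm{k}$ in place of the equality of cardinalities. It splits each side as the disjoint union of $\mathbbm{i}^-\setminus(\mathbbm{g}\cup\mathbbm{k})$ and the part inside $\mathbbm{g}$, then cancels the common piece. Under that set-level hypothesis your $-$-part identity is immediate: intersect both sides with $\mathbbm{g}$. Your reduction then finishes the proof.

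That stronger hypothesis is exactly what holds where the lemma is actually used, namely Lemma \ref{L;Lemma4.6}. (Lemma \ref{L;Lemma4.3} relies on Lemma \ref{L;Lemma4.2}, not on this lemma.) In Lemma \ref{L;Lemma4.6} both numbers are $0$, so both sets are empty. The missing ingredient is therefore not a cleverer counting argument, since none can exist. It is replacing $n_{\mathbf{h},\mathbf{k},\mathbf{i}}=n_{\mathbf{i},\mathbf{k},\mathbf{j}}$ by the corresponding set equality, or by $n_{\mathbf{h},\mathbf{k},\mathbf{i}}=n_{\mathbf{i},\mathbf{k},\mathbf{j}}=0$.
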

\begin{proof}
As $\mathbbm{i}^-\setminus(\mathbbm{g}\cup\mathbbm{k})\subseteq\mathbbm{h}\cap\mathbbm{j}$, notice that $(\mathbbm{h}\cap\mathbbm{i})^-\setminus\mathbbm{k}=(\mathbbm{i}^-\setminus(\mathbbm{g}\cup\mathbbm{k}))\cup
((\mathbbm{g}\cap\mathbbm{h}\cap\mathbbm{i})^-\setminus\mathbbm{k})$ and $(\mathbbm{i}\cap\mathbbm{j})^-\setminus\mathbbm{k}=(\mathbbm{i}^-\setminus(\mathbbm{g}\cup\mathbbm{k}))\cup
((\mathbbm{g}\cap\mathbbm{i}\cap\mathbbm{j})^-\setminus\mathbbm{k})$. As $(\mathbbm{h}\cap\mathbbm{i})^-\setminus\mathbbm{k}=(\mathbbm{i}\cap\mathbbm{j})^-\setminus\mathbbm{k}$, notice that $(\mathbbm{g}\cap\mathbbm{h}\cap\mathbbm{i})^-\setminus\mathbbm{k}=(\mathbbm{g}\cap\mathbbm{i}\cap\mathbbm{j})^-\setminus\mathbbm{k}$. As $(\mathbbm{g}\cap\mathbbm{h}\cap\mathbbm{i})^+\setminus\mathbbm{k}=(\mathbbm{g}\cap\mathbbm{h}\cap\mathbbm{i})^+$ and $(\mathbbm{g}\cap\mathbbm{i}\cap\mathbbm{j})^+\setminus\mathbbm{k}=(\mathbbm{g}\cap\mathbbm{i}\cap\mathbbm{j})^+$ by Lemma \ref{L;Lemma2.5}, notice that $(\mathbbm{g}\cap\mathbbm{h}\cap\mathbbm{i})^+=(\mathbbm{g}\cap\mathbbm{i}\cap\mathbbm{j})^+$ if and only if $((\mathbbm{g}\cap\mathbbm{i})^\circ\cap\mathbbm{h})\setminus\mathbbm{k}=((\mathbbm{g}\cap\mathbbm{i})^\circ\cap\mathbbm{j})\setminus\mathbbm{k}$.
By Lemma \ref{L;Lemma2.3}, notice that
$(\mathbbm{g}\triangle\mathbbm{i})\cup(((\mathbbm{g}\cap\mathbbm{i})^\circ\cap\mathbbm{h})\setminus\mathbbm{k})
\cup((\mathbbm{g}\cap\mathbbm{i})^\circ\cap\mathbbm{k})=(\mathbbm{g}\triangle\mathbbm{i})\cup(((\mathbbm{g}\cap\mathbbm{i})^\circ\cap\mathbbm{j})\setminus\mathbbm{k})
\cup((\mathbbm{g}\cap\mathbbm{i})^\circ\cap\mathbbm{k})$ if and only if $[\mathbf{g},\mathbf{h},\mathbf{i}, \mathbf{k}, \mathbf{i}]=[\mathbf{g},\mathbf{j},\mathbf{i}, \mathbf{k}, \mathbf{i}]$. The desired lemma follows as Lemma \ref{L;Lemma2.3} implies that $[\mathbf{g},\mathbf{h},\mathbf{i}, \mathbf{k}, \mathbf{i}]=[\mathbf{g},\mathbf{j},\mathbf{i}, \mathbf{k}, \mathbf{i}]$ if and only if $(\mathbf{g}\cap\mathbf{h}\cap\mathbf{i})^+=(\mathbf{g}\cap\mathbf{i}\cap\mathbf{j})^+$.
\end{proof}
\begin{lem}\label{L;Lemma4.5}
Assume that $(\mathbf{g},\mathbf{h},\mathbf{i})\in\mathbb{P}$ and $\mathbf{j}, \mathbf{k}\in\mathbb{E}$. Assume that $p\nmid k_{(\mathbf{g}\cap\mathbf{h})\cap\mathbf{i}\cap\mathbf{j}}$ and
$(\mathbbm{g}\cap\mathbbm{h}\cap\mathbbm{i}\cap\mathbbm{k})^-\subseteq(\mathbbm{g}\cap\mathbbm{h}\cap\mathbbm{i}\cap\mathbbm{j})^-$. Then $k_{(\mathbf{j}\cup\mathbf{k})\setminus((\mathbf{g}\triangle\mathbf{i})\cup(\mathbf{g}\cap\mathbf{h}\cap\mathbf{i})^+)}=k_{(\mathbf{j}\cup\mathbf{k})\setminus\mathbf{h}}k_{(\mathbf{g}\cap\mathbf{h})\cap\mathbf{i}\cap\mathbf{j}}$.
\end{lem}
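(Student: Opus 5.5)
We reduce the statement to an identity between index sets and then apply Lemma \ref{L;Lemma2.5}. That lemma says $k_\mathbf{a}=\prod_{b\in\mathbbm{a}}(|\mathbb{U}_b|-1)$. So it suffices to show that the set $\mathbb{W}=(\mathbbm{j}\cup\mathbbm{k})\setminus((\mathbbm{g}\triangle\mathbbm{i})\cup(\mathbbm{g}\cap\mathbbm{h}\cap\mathbbm{i})^+)$ is the disjoint union of $(\mathbbm{j}\cup\mathbbm{k})\setminus\mathbbm{h}$ and $\mathbbm{g}\cap\mathbbm{h}\cap\mathbbm{i}\cap\mathbbm{j}$. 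Since $\mathbbm{h}$ contains the second set, the two pieces are disjoint, and the product of the valencies then splits as claimed.

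We verify the set identity by splitting $\mathbb{W}$ according to membership in $\mathbbm{h}$, using $(\mathbf{g},\mathbf{h},\mathbf{i})\in\mathbb{P}$, i.e. $\mathbbm{g}\triangle\mathbbm{i}\subseteq\mathbbm{h}\subseteq(\mathbbm{g}\triangle\mathbbm{i})\cup(\mathbbm{g}\cap\mathbbm{i})^\circ$.

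For indices outside $\mathbbm{h}$: an element of $(\mathbbm{j}\cup\mathbbm{k})\setminus\mathbbm{h}$ lies outside $\mathbbm{g}\triangle\mathbbm{i}$ because $\mathbbm{g}\triangle\mathbbm{i}\subseteq\mathbbm{h}$. It also lies outside $(\mathbbm{g}\cap\mathbbm{h}\cap\mathbbm{i})^+\subseteq\mathbbm{h}$. Hence $\mathbb{W}\setminus\mathbbm{h}=(\mathbbm{j}\cup\mathbbm{k})\setminus\mathbbm{h}$.

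For indices inside $\mathbbm{h}$: an element of $\mathbbm{h}\setminus(\mathbbm{g}\triangle\mathbbm{i})$ lies in $(\mathbbm{g}\cap\mathbbm{i})^\circ$, so it belongs to $(\mathbbm{g}\cap\mathbbm{h}\cap\mathbbm{i})^\circ$. Removing the $+$-part leaves
\[\mathbb{W}\cap\mathbbm{h}=(\mathbbm{g}\cap\mathbbm{h}\cap\mathbbm{i})^-\cap(\mathbbm{j}\cup\mathbbm{k}).\]
The hypothesis $(\mathbbm{g}\cap\mathbbm{h}\cap\mathbbm{i}\cap\mathbbm{k})^-\subseteq(\mathbbm{g}\cap\mathbbm{h}\cap\mathbbm{i}\cap\mathbbm{j})^-$ lets us drop $\mathbbm{k}$, so $\mathbb{W}\cap\mathbbm{h}=(\mathbbm{g}\cap\mathbbm{h}\cap\mathbbm{i}\cap\mathbbm{j})^-$.

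The key step, and the only place the $p$-hypothesis enters, is to show that $\mathbbm{g}\cap\mathbbm{h}\cap\mathbbm{i}\cap\mathbbm{j}$ equals its own $-$-part. Since $p\nmid k_{(\mathbf{g}\cap\mathbf{h})\cap\mathbf{i}\cap\mathbf{j}}$, Lemma \ref{L;Lemma2.5} gives $p\nmid|\mathbb{U}_a|-1$ for every $a$ in this set, so it contains no $+$-indices. It is contained in $\mathbbm{h}\setminus(\mathbbm{g}\triangle\mathbbm{i})\subseteq(\mathbbm{g}\cap\mathbbm{i})^\circ$, so every index has $|\mathbb{U}_a|\neq 2$. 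Together these say every index is a $-$-index, i.e. $(\mathbbm{g}\cap\mathbbm{h}\cap\mathbbm{i}\cap\mathbbm{j})^-=\mathbbm{g}\cap\mathbbm{h}\cap\mathbbm{i}\cap\mathbbm{j}$.

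Combining the two parts gives the disjoint decomposition of $\mathbb{W}$. Lemmas \ref{L;Lemma2.3} and \ref{L;Lemma2.5} then yield the stated equality of valencies.
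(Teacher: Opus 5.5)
Your proof is correct and follows the paper's argument. The paper likewise uses $(\mathbf{g},\mathbf{h},\mathbf{i})\in\mathbb{P}$ and the containment hypothesis to write the index set as the disjoint union of $(\mathbbm{j}\cup\mathbbm{k})\setminus\mathbbm{h}$ and $(\mathbbm{g}\cap\mathbbm{h}\cap\mathbbm{i}\cap\mathbbm{j})^-$, then applies Lemma \ref{L;Lemma2.5}; your explicit check that $(\mathbbm{g}\cap\mathbbm{h}\cap\mathbbm{i}\cap\mathbbm{j})^-=\mathbbm{g}\cap\mathbbm{h}\cap\mathbbm{i}\cap\mathbbm{j}$ via the $p$-hypothesis spells out a step the paper leaves implicit in that final appeal.
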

\begin{proof}
As $((\mathbbm{j}\cup\mathbbm{k})\cap(\mathbbm{g}\cap\mathbbm{h}\cap\mathbbm{i})^-)\setminus((\mathbbm{g}\triangle\mathbbm{i})\cup(\mathbbm{g}\cap\mathbbm{h}\cap\mathbbm{i})^+)
=(\mathbbm{g}\cap\mathbbm{h}\cap\mathbbm{i}\cap\mathbbm{j})^-$ and $\mathbbm{g}\triangle\mathbbm{i}\subseteq\mathbbm{h}\subseteq(\mathbbm{g}\triangle\mathbbm{i})\cup(\mathbbm{g}\cap\mathbbm{i})^\circ$, notice that $(\mathbbm{j}\cup\mathbbm{k})\setminus\mathbbm{h}=(\mathbbm{j}\cup\mathbbm{k})\setminus((\mathbbm{g}\triangle\mathbbm{i})\cup(\mathbbm{g}\cap\mathbbm{h}\cap\mathbbm{i})^\circ)$ and $(\mathbbm{j}\cup\mathbbm{k})\setminus((\mathbbm{g}\triangle\mathbbm{i})\cup(\mathbbm{g}\cap\mathbbm{h}\cap\mathbbm{i})^+)=
((\mathbbm{j}\cup\mathbbm{k})\setminus\mathbbm{h})\cup(\mathbbm{g}\cap\mathbbm{h}\cap\mathbbm{i}\cap\mathbbm{j})^-$. Lemma \ref{L;Lemma2.5} thus implies that $k_{(\mathbf{j}\cup\mathbf{k})\setminus((\mathbf{g}\triangle\mathbf{i})\cup(\mathbf{g}\cap\mathbf{h}\cap\mathbf{i})^+)}\!\!=\!\!k_{(\mathbf{j}\cup\mathbf{k})\setminus\mathbf{h}}k_{(\mathbf{g}\cap\mathbf{h})\cap\mathbf{i}\cap\mathbf{j}}$.
The desired lemma follows.
\end{proof}
\begin{lem}\label{L;Lemma4.6}
Assume that $g\!\in\![1, 2^{n_-}]$ and $(\mathbf{h}, \mathbf{i}, \mathbf{j})\!\in\!\mathbb{P}$. Assume that $(\mathbf{j}, \mathbf{k}, \mathbf{j})\in\mathbb{C}_g$ and $n_{\mathbf{i},\mathbf{k},\mathbf{j}}\!\!=\!\!0$. Then $(\mathbf{h}\!, \! (\mathbf{h}\triangle\mathbf{j})\!\cup\!(\mathbf{h}\cap\mathbf{i}\cap\mathbf{j})^+\!,\!\mathbf{j})\!\in\!\mathbb{P}$ and $B_{\mathbf{h},\mathbf{i},\mathbf{j}}D_g\!\!=\!\!\overline{k_{\mathbf{h}\cap\mathbf{i}\cap\mathbf{k}}}B_{\mathbf{h},(\mathbf{h}\triangle\mathbf{j})\!\cup\!(\mathbf{h}\cap\mathbf{i}\cap\mathbf{j})^+,\mathbf{j}}D_g\!\neq\! O $.
\end{lem}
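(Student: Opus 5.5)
The membership claim is a direct check, so I would do it first. Put $\mathbf{i}'=(\mathbf{h}\triangle\mathbf{j})\cup(\mathbf{h}\cap\mathbf{i}\cap\mathbf{j})^+$. Then $\mathbf{h}\triangle\mathbf{j}\preceq\mathbf{i}'$ holds trivially. Since $(\mathbbm{h}\cap\mathbbm{i}\cap\mathbbm{j})^+\subseteq(\mathbbm{h}\cap\mathbbm{j})^\circ$, we also get $\mathbf{i}'\preceq\mathbf{h}\mathbf{j}$. Hence $(\mathbf{h},\mathbf{i}',\mathbf{j})\in\mathbb{P}$.

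For the main equation, Lemma \ref{L;Lemma3.9} (or Lemma \ref{L;Lemma3.14}) first reduces $B_{\mathbf{h},\mathbf{i},\mathbf{j}}D_g$ to $B_{\mathbf{h},\mathbf{i},\mathbf{j}}D_{\mathbf{j},\mathbf{k},\mathbf{j}}$. I then need $n_{\mathbf{i}',\mathbf{k},\mathbf{j}}=0$, so that the same reduction applies to $B_{\mathbf{h},\mathbf{i}',\mathbf{j}}D_g$.
\begin{itemize}
\item Since $\mathbbm{j}\setminus\mathbbm{h}\subseteq\mathbbm{i}$, we have $\mathbbm{j}^-\setminus(\mathbbm{h}\cup\mathbbm{k})\subseteq(\mathbbm{i}\cap\mathbbm{j})^-\setminus\mathbbm{k}=\varnothing$. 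This gives $n_{\mathbf{j},\mathbf{h}\cup\mathbf{k},\mathbf{j}}=0$.
\item With this, Lemma \ref{L;Lemma4.1} (its ``in particular'' part, taking $\mathbf{l}=\mathbf{h}\cap\mathbf{i}\cap\mathbf{j}$) gives $n_{\mathbf{i}',\mathbf{k},\mathbf{j}}=0$.
\end{itemize}

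Next I expand both products using Equation \eqref{Eq;6} and Lemma \ref{L;Lemma2.8}, exactly as in the proof of Lemma \ref{L;Lemma3.14}. By Lemma \ref{L;Lemma3.11}, each product is a combination of matrices $B_{\mathbf{h},[\mathbf{h},\cdot,\mathbf{j},\mathbf{k}\cup\mathbf{m},\mathbf{j}],\mathbf{j}}$ with $\mathbbm{m}\subseteq\mathbbm{j}^-\setminus\mathbbm{k}$. The coefficients are $(\overline{-1})^{|\mathbbm{m}|}\overline{k_{(\mathbf{k}\cup\mathbf{m})\setminus\mathbf{i}}}^{-1}$ in the first product and $(\overline{-1})^{|\mathbbm{m}|}\overline{k_{(\mathbf{k}\cup\mathbf{m})\setminus\mathbf{i}'}}^{-1}$ in the second. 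I then compare the two expansions term by term.

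First, the index sets match. I apply Lemma \ref{L;Lemma4.4} with $(\mathbf{g},\mathbf{h},\mathbf{i},\mathbf{j},\mathbf{k})$ replaced by $(\mathbf{h},\mathbf{i},\mathbf{j},\mathbf{i}',\mathbf{k}\cup\mathbf{m})$. Its hypotheses hold because both $n$-values vanish, as just shown. The key identity $(\mathbf{h}\cap\mathbf{i}'\cap\mathbf{j})^+=(\mathbf{h}\cap\mathbf{i}\cap\mathbf{j})^+$ is immediate, so the conclusion is $[\mathbf{h},\mathbf{i},\mathbf{j},\mathbf{k}\cup\mathbf{m},\mathbf{j}]=[\mathbf{h},\mathbf{i}',\mathbf{j},\mathbf{k}\cup\mathbf{m},\mathbf{j}]$.

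Second, the coefficients differ by the constant factor $\overline{k_{\mathbf{h}\cap\mathbf{i}\cap\mathbf{k}}}$. Here I use Lemma \ref{L;Lemma4.5} with $(\mathbf{g},\mathbf{h},\mathbf{i},\mathbf{j},\mathbf{k})$ replaced by $(\mathbf{h},\mathbf{i},\mathbf{j},\mathbf{k},\mathbf{m})$. Its inclusion hypothesis $(\mathbbm{h}\cap\mathbbm{i}\cap\mathbbm{j}\cap\mathbbm{m})^-\subseteq(\cdots\cap\mathbbm{k})^-$ holds trivially: $\mathbbm{m}\subseteq\mathbbm{j}^-\setminus\mathbbm{k}$ and $(\mathbbm{i}\cap\mathbbm{j})^-\setminus\mathbbm{k}=\varnothing$ together force $\mathbbm{m}\cap\mathbbm{i}=\varnothing$. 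Its divisibility hypothesis $p\nmid k_{\mathbf{h}\cap\mathbf{i}\cap\mathbf{j}\cap\mathbf{k}}$ holds because $p\nmid k_\mathbf{k}$. The lemma then gives $k_{(\mathbf{k}\cup\mathbf{m})\setminus\mathbf{i}'}=k_{(\mathbf{k}\cup\mathbf{m})\setminus\mathbf{i}}\,k_{\mathbf{h}\cap\mathbf{i}\cap\mathbf{k}}$, using $\mathbf{k}\preceq\mathbf{j}$.

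Combining the matching index sets with the constant coefficient ratio yields $B_{\mathbf{h},\mathbf{i},\mathbf{j}}D_g=\overline{k_{\mathbf{h}\cap\mathbf{i}\cap\mathbf{k}}}B_{\mathbf{h},\mathbf{i}',\mathbf{j}}D_g$. For nonvanishing, $\overline{k_{\mathbf{h}\cap\mathbf{i}\cap\mathbf{k}}}\neq\overline{0}$ because $p\nmid k_\mathbf{k}$, using Lemma \ref{L;Lemma2.5}. Also $B_{\mathbf{h},\mathbf{i}',\mathbf{j}}D_g\neq O$ by Lemma \ref{L;Lemma4.3}, applied with $\mathbf{i}$ replaced by $(\mathbf{h}\cap\mathbf{i}\cap\mathbf{j})^+\preceq(\mathbf{h}\cap\mathbf{j})^+$ and with $n_{\mathbf{j},\mathbf{h}\cup\mathbf{k},\mathbf{j}}=0$ as above.

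The main obstacle is the bookkeeping in the coefficient comparison: checking that Lemmas \ref{L;Lemma4.4} and \ref{L;Lemma4.5} apply with these substitutions and produce exactly the constant factor $\overline{k_{\mathbf{h}\cap\mathbf{i}\cap\mathbf{k}}}$.
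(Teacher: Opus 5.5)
Your proposal is correct and follows the paper's proof essentially step for step. You secure $n_{\mathbf{j},\mathbf{h}\cup\mathbf{k},\mathbf{j}}=0$ from $\mathbbm{j}\setminus\mathbbm{h}\subseteq\mathbbm{i}$, obtain $n_{(\mathbf{h}\triangle\mathbf{j})\cup(\mathbf{h}\cap\mathbf{i}\cap\mathbf{j})^+,\mathbf{k},\mathbf{j}}=0$ from Lemma \ref{L;Lemma4.1}, reduce both products to $D_{\mathbf{j},\mathbf{k},\mathbf{j}}$ via Lemma \ref{L;Lemma3.9}, match the index sets with Lemma \ref{L;Lemma4.4}, get the constant coefficient ratio $\overline{k_{\mathbf{h}\cap\mathbf{i}\cap\mathbf{k}}}$ from Lemma \ref{L;Lemma4.5}, and get nonvanishing from Lemma \ref{L;Lemma4.3}, exactly as the paper does. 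Your check that all the relevant $n$-values vanish is what makes Lemma \ref{L;Lemma4.4} applicable here: the two sets whose cardinalities it compares are then both empty, hence equal, and set equality is what its proof actually uses.
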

\begin{proof}
As $\mathbbm{j}\setminus\mathbbm{h}\subseteq\mathbbm{i}$, notice that $n_{\mathbf{i}, \mathbf{k},\mathbf{j}}=n_{\mathbf{j},\mathbf{h}\cup\mathbf{k},\mathbf{j}}=0$. By Lemmas \ref{L;Lemma3.14} and \ref{L;Lemma4.3},
both $B_{\mathbf{h},\mathbf{i},\mathbf{j}}D_g$ and $B_{\mathbf{h},(\mathbf{h}\triangle\mathbf{j})\cup(\mathbf{h}\cap\mathbf{i}\cap\mathbf{j})^+,\mathbf{j}}D_g$ are defined nonzero matrices. By Lemma \ref{L;Lemma4.1}, notice that $n_{\mathbf{i},\mathbf{l},\mathbf{j}}=n_{\mathbf{j},\mathbf{h}\cup\mathbf{l},\mathbf{j}}=n_{(\mathbf{h}\triangle\mathbf{j})\cup(\mathbf{h}\cap\mathbf{i}\cap\mathbf{j})^+, \mathbf{l}, \mathbf{j}}=0$ and $(\mathbbm{h}\cap\mathbbm{i}\cap\mathbbm{j}\cap\mathbbm{m})^-\subseteq(\mathbbm{h}\cap\mathbbm{i}\cap\mathbbm{j}\cap\mathbbm{k})^-$
for any $\mathbf{l},\mathbf{m}\in\mathbb{E}$ and $\mathbf{k}\preceq\mathbf{l}$. Lemma \ref{L;Lemma3.9} thus implies that $B_{\mathbf{h},\mathbf{i},\mathbf{j}}D_g=B_{\mathbf{h},\mathbf{i},\mathbf{j}}D_{\mathbf{j}, \mathbf{k}, \mathbf{j}}$ and
$B_{\mathbf{h},(\mathbf{h}\triangle\mathbf{j})\cup(\mathbf{h}\cap\mathbf{i}\cap\mathbf{j})^+,\mathbf{j}}D_g\!\!=\!\!
B_{\mathbf{h},(\mathbf{h}\triangle\mathbf{j})\cup(\mathbf{h}\cap\mathbf{i}\cap\mathbf{j})^+,\mathbf{j}}D_{\mathbf{j},\mathbf{k},\mathbf{j}}$.
By combining Lemmas \ref{L;Lemma3.12}, \ref{L;Lemma2.5}, \ref{L;Lemma4.4}, both $B_{\mathbf{h},\mathbf{i},\mathbf{j}}D_{\mathbf{j}, \mathbf{k}, \mathbf{j}}$ and $B_{\mathbf{h},(\mathbf{h}\triangle\mathbf{j})\cup(\mathbf{h}\cap\mathbf{i}\cap\mathbf{j})^+,\mathbf{j}}D_{\mathbf{j},\mathbf{k},\mathbf{j}}$
are unique $\F$-linear combinations of the matrices in $\{B_{\mathbf{h}, [\mathbf{h}, \mathbf{i},\mathbf{j},\mathbf{a}\cup\mathbf{k},\mathbf{j}], \mathbf{i}}\!:\! \mathbf{a}\!\in\!\mathbb{E}, \mathbbm{a}\subseteq\mathbbm{j}^-\setminus\mathbbm{k}\}$. Assume that $\mathbf{l}\!\in\!\mathbb{E}$ and $\mathbbm{l}\!\subseteq\!\mathbbm{j}^-\!\setminus\!\mathbbm{k}$. Let $c_{[\mathbf{h}, \mathbf{i}, \mathbf{j}, \mathbf{k}\cup\mathbf{l},\mathbf{j}],1}$, $c_{[\mathbf{h}, \mathbf{i}, \mathbf{j}, \mathbf{k}\cup\mathbf{l},\mathbf{j}],2}$ be the coefficients of $B_{\mathbf{h},[\mathbf{h}, \mathbf{i}, \mathbf{j}, \mathbf{k}\cup\mathbf{l},\mathbf{j}],\mathbf{j}}$ in the two
$\F$-linear combinations that represent $B_{\mathbf{h},\mathbf{i},\mathbf{j}}D_{\mathbf{j}, \mathbf{k}, \mathbf{j}}$, $B_{\mathbf{h},(\mathbf{h}\triangle\mathbf{j})\cup(\mathbf{h}\cap\mathbf{i}\cap\mathbf{j})^+,\mathbf{j}}D_{\mathbf{j},\mathbf{k},\mathbf{j}}$, respectively. Assume that $\mathbf{m}=\mathbf{i}$ or $\mathbf{m}=(\mathbf{h}\triangle\mathbf{j})\cup(\mathbf{h}\cap\mathbf{i}\cap\mathbf{j})^+$.
As $p\nmid k_\mathbf{k}$, notice that $p\nmid k_{(\mathbf{k}\cup\mathbf{l})\setminus\mathbf{m}}k_{\mathbf{h}\cap\mathbf{i}\cap\mathbf{k}}$,
$k_{\mathbf{j}\cap(\mathbf{k}\cup\mathbf{l})}=k_{\mathbf{k}\cup\mathbf{l}}=k_{(\mathbf{k}\cup\mathbf{l})\setminus\mathbf{m}}k_{(\mathbf{k}\cup\mathbf{l})\cap\mathbf{m}}=
k_{(\mathbf{k}\cup\mathbf{l})\setminus\mathbf{m}}k_{\mathbf{j}\cap(\mathbf{k}\cup\mathbf{l})\cap\mathbf{m}}$, $k_{(\mathbf{h}\cap\mathbf{i})\cap\mathbf{j}\cap\mathbf{k}}\!=\!k_{\mathbf{h}\cap\mathbf{i}\cap\mathbf{k}}$ by Lemma \ref{L;Lemma2.5}.
The combination of Equation \eqref{Eq;6}, Lemmas \ref{L;Lemma2.8}, \ref{L;Lemma3.11}, \ref{L;Lemma4.4}, \ref{L;Lemma4.5} thus implies that
\begin{align*}
c_{[\mathbf{h},\mathbf{i}, \mathbf{j},\mathbf{k}\cup\mathbf{l},\mathbf{j}],1}=
(\overline{-1})^{|\mathbbm{l}|}\overline{k_{(\mathbf{k}\cup\mathbf{l})\setminus\mathbf{i}}}^{-1}&=(\overline{-1})^{|\mathbbm{l}|}\overline{k_{(\mathbf{k}\cup\mathbf{l})\setminus
(\mathbf{(\mathbf{h}\triangle\mathbf{j})\cup(\mathbf{h}\cap\mathbf{i}\cap\mathbf{j})^+})}}^{-1}
\overline{k_{\mathbf{h}\cap\mathbf{i}\cap\mathbf{k}}}\\
&=\overline{k_{\mathbf{h}\cap\mathbf{i}\cap\mathbf{k}}}c_{[\mathbf{h},\mathbf{i}, \mathbf{j},\mathbf{k}\cup\mathbf{l},\mathbf{j}],2}.
\end{align*}
Therefore $B_{\mathbf{h},\mathbf{i},\mathbf{j}}D_{\mathbf{j}, \mathbf{k},\mathbf{j}}=\overline{k_{\mathbf{h}\cap\mathbf{i}\cap\mathbf{k}}}B_{\mathbf{h},(\mathbf{h}\triangle\mathbf{j})\cup(\mathbf{h}\cap\mathbf{i}\cap\mathbf{j})^+,\mathbf{j}}D_{\mathbf{j}, \mathbf{k},\mathbf{j}}$ as $B_{\mathbf{h}, [\mathbf{h}, \mathbf{i}, \mathbf{j}, \mathbf{k}\cup\mathbf{l}, \mathbf{j}], \mathbf{j}}$ is arbitrarily chosen from $\{B_{\mathbf{h}, [\mathbf{h}, \mathbf{i}, \mathbf{j}, \mathbf{a}\cup\mathbf{k}, \mathbf{j}], \mathbf{j}}: \mathbf{a}\in\mathbb{E},\ \mathbbm{a}\subseteq\mathbbm{j}^-\setminus\mathbbm{k}\}$. The desired lemma follows.
\end{proof}
\begin{lem}\label{L;Lemma4.7}
Assume that $g\in[1, 2^{n_-}]$ and $\mathbf{h}, \mathbf{i}, \mathbf{j}\in\mathbb{E}$. Assume that $\mathbf{i}\preceq(\mathbf{h}\cap\mathbf{j})^+$. Then $(\mathbf{h}, (\mathbf{h}\triangle\mathbf{j})\cup\mathbf{i}, \mathbf{j})\in\mathbb{P}$ and the block algebra $\mathbb{T}D_g$ of $\mathbb{T}$ has a nonempty $\F$-linearly independent subset
$\{\!B_{\mathbf{a}, (\mathbf{a}\triangle\mathbf{c})\cup\mathbf{b}, \mathbf{c}}\!D_g\!\!:\! \mathbf{a}, \mathbf{b},\mathbf{c}\!\in\!\mathbb{E}, \mathbf{b}\!\preceq\!(\mathbf{a}\!\cap\!\mathbf{c})^+, \exists\ (\mathbf{c}, \mathbf{d}, \mathbf{c})\!\in\!\mathbb{C}_g, n_{\mathbf{c}, \mathbf{a}\cup\mathbf{d}, \mathbf{c}}\!\!=\!\!0\}$.
\end{lem}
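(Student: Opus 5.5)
The first claim is immediate. Since $\mathbbm{i}\subseteq(\mathbbm{h}\cap\mathbbm{j})^+\subseteq(\mathbbm{h}\cap\mathbbm{j})^\circ$, we have $\mathbf{h}\triangle\mathbf{j}\preceq(\mathbf{h}\triangle\mathbf{j})\cup\mathbf{i}\preceq\mathbf{h}\mathbf{j}$. The equivalence recorded after Equation \eqref{Eq;5} (via Lemmas \ref{L;Lemma2.3} and \ref{L;Lemma2.4}) then gives $(\mathbf{h},(\mathbf{h}\triangle\mathbf{j})\cup\mathbf{i},\mathbf{j})\in\mathbb{P}$, exactly as at the start of the proof of Lemma \ref{L;Lemma4.3}. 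Consequently every matrix $B_{\mathbf{a},(\mathbf{a}\triangle\mathbf{c})\cup\mathbf{b},\mathbf{c}}D_g$ in the displayed set is defined and lies in $\mathbb{T}D_g$.

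For nonemptiness, $\mathbb{C}_g$ is a nonempty equivalence class, so we may fix $(\mathbf{c},\mathbf{d},\mathbf{c})\in\mathbb{C}_g$. Take $\mathbf{a}=\mathbf{c}$ and $\mathbf{b}=\mathbf{0}$. Then $\mathbf{b}\preceq(\mathbf{a}\cap\mathbf{c})^+$ trivially. Moreover $n_{\mathbf{c},\mathbf{c}\cup\mathbf{d},\mathbf{c}}=|\mathbbm{c}^-\setminus\mathbbm{c}|=0$, since $\mathbbm{d}\subseteq\mathbbm{c}$. Hence the triple qualifies, and Lemma \ref{L;Lemma4.3} shows the corresponding element $B_{\mathbf{c},\mathbf{0},\mathbf{c}}D_g=E_\mathbf{c}^*D_g$ is nonzero.

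For linear independence, I would split the set according to the pair $(\mathbf{a},\mathbf{c})$. By Equations \eqref{Eq;4}, \eqref{Eq;2} and the centrality of $D_g$ (Lemma \ref{L;Lemma3.15}), $E_{\mathbf{a}'}^*\,(B_{\mathbf{a},\ast,\mathbf{c}}D_g)\,E_{\mathbf{c}'}^*=\delta_{\mathbf{a},\mathbf{a}'}\delta_{\mathbf{c},\mathbf{c}'}B_{\mathbf{a},\ast,\mathbf{c}}D_g$. So a vanishing linear combination splits into vanishing combinations inside each block $E_\mathbf{a}^*\mathbb{T}E_\mathbf{c}^*$, and it suffices to treat fixed $\mathbf{a},\mathbf{c}$ with $\mathbf{b}$ ranging over subsets of $(\mathbbm{a}\cap\mathbbm{c})^+$. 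For fixed $\mathbf{a},\mathbf{c}$, the proof of Lemma \ref{L;Lemma4.6} (through Lemmas \ref{L;Lemma3.9} and \ref{L;Lemma3.12}) expresses $B_{\mathbf{a},(\mathbf{a}\triangle\mathbf{c})\cup\mathbf{b},\mathbf{c}}D_g=B_{\mathbf{a},(\mathbf{a}\triangle\mathbf{c})\cup\mathbf{b},\mathbf{c}}D_{\mathbf{c},\mathbf{d},\mathbf{c}}$ as a combination of basis elements $B_{\mathbf{a},[\mathbf{a},(\mathbf{a}\triangle\mathbf{c})\cup\mathbf{b},\mathbf{c},\mathbf{e}\cup\mathbf{d},\mathbf{c}],\mathbf{c}}$ with $\mathbbm{e}\subseteq\mathbbm{c}^-\setminus\mathbbm{d}$, all with nonzero coefficients.

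The key step, and the main obstacle, is showing that the supports for different $\mathbf{b}$ are disjoint. If two middle indices coincide, then by Lemma \ref{L;Lemma4.2} (the relevant valencies are prime to $p$ by Lemma \ref{L;Lemma2.5}) their $(\cdot\cap\mathbbm{a}\cap\mathbbm{c})^+$ parts agree. For middle index $(\mathbbm{a}\triangle\mathbbm{c})\cup\mathbbm{b}$ that $+$ part is exactly $\mathbbm{b}$, as noted in Lemma \ref{L;Lemma4.3}, so the two $\mathbf{b}$'s are equal. Disjoint nonempty supports in the basis of Lemma \ref{L;Lemma2.8} then force every coefficient of the vanishing combination to be zero. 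This reproduces the injectivity argument of Lemma \ref{L;Lemma4.3} while also tracking supports. The delicate point is checking that Lemma \ref{L;Lemma4.2} applies with the $\mathbf{k}$-arguments $\mathbf{e}\cup\mathbf{d}$; this holds because $p\nmid k_{\mathbf{d}\cup\mathbf{e}}$.
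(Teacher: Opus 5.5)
Your proposal is correct and follows essentially the same route as the paper. It obtains nonemptiness from $E_\mathbf{c}^*D_g=B_{\mathbf{c},\mathbf{0},\mathbf{c}}D_g$, reduces to a fixed $(\mathbf{a},\mathbf{c})$-block using the dual idempotents and the centrality of $D_g$, and then separates supports in the basis of Lemma \ref{L;Lemma2.8} via Lemma \ref{L;Lemma4.2} and the identity $(\mathbf{a}\cap\mathbf{c}\cap((\mathbf{a}\triangle\mathbf{c})\cup\mathbf{b}))^+=\mathbf{b}$. The paper phrases independence as ``no element is a combination of the others'' where you argue directly from a vanishing combination, which is only cosmetic; you should also cite Lemma \ref{L;Lemma4.1} for the condition $n_{(\mathbf{a}\triangle\mathbf{c})\cup\mathbf{b},\mathbf{d},\mathbf{c}}=0$ that Lemmas \ref{L;Lemma3.9} and \ref{L;Lemma3.12} require.
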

\begin{proof}
Let $\mathbb{U}\!=\!\{B_{\mathbf{a}, (\mathbf{a}\triangle\mathbf{c})\cup\mathbf{b}, \mathbf{c}}D_g: \mathbf{a}, \mathbf{b},\mathbf{c}\!\in\!\mathbb{E}, \mathbf{b}\preceq(\mathbf{a}\cap\mathbf{c})^+, \exists\ (\mathbf{c}, \mathbf{d}, \mathbf{c})\!\in\!\mathbb{C}_g, n_{\mathbf{c}, \mathbf{a}\cup\mathbf{d}, \mathbf{c}}\!=\!0\}$. There is no loss to assume that $(\mathbf{j}, \mathbf{k}, \mathbf{j})\in\mathbb{C}_g$. So $\mathbb{U}\neq\varnothing$ as $E_\mathbf{j}^*D_g=B_{\mathbf{j},\mathbf{0},\mathbf{j}}D_g\in\mathbb{U}$. By Lemma \ref{L;Lemma4.3}, each matrix in $\mathbb{U}$ is a defined nonzero matrix.
Assume that $n_{\mathbf{j}, \mathbf{h}\cup\mathbf{k},\mathbf{j}}=0$. Let $\mathbb{V}=\mathbb{U}\setminus\{B_{\mathbf{h},(\mathbf{h}\triangle\mathbf{j})\cup\mathbf{i}, \mathbf{j}}D_g\}$ and $\mathbb{W}=\{B_{\mathbf{h},(\mathbf{h}\triangle\mathbf{j})\cup\mathbf{a}, \mathbf{j}}D_g:\mathbf{a}\in\mathbb{E}\setminus\{\mathbf{i}\}, \mathbf{a}\preceq(\mathbf{h}\cap\mathbf{j})^+\}$.
As $B_{\mathbf{h},(\mathbf{h}\triangle\mathbf{j})\cup\mathbf{i}, \mathbf{j}}D_g$ is arbitrarily chosen from $\mathbb{U}$, it suffices to check that $B_{\mathbf{h},(\mathbf{h}\triangle\mathbf{j})\cup\mathbf{i}, \mathbf{j}}D_g$ is not an $\F$-linear combination of the matrices in $\mathbb{V}$. Assume that $B_{\mathbf{h},(\mathbf{h}\triangle\mathbf{j})\cup\mathbf{i}, \mathbf{j}}D_g$ is an $\F$-linear combination of the matrices in $\mathbb{V}$. The combination of Theorem \ref{T;Idempotent}, Equations \eqref{Eq;4}, \eqref{Eq;2} implies that $B_{\mathbf{h},(\mathbf{h}\triangle\mathbf{j})\cup\mathbf{i}, \mathbf{j}}D_g$ is an $\F$-linear combination of the matrices in $\mathbb{W}$. Lemma \ref{L;Lemma2.3} implies that $(\mathbf{h}\cap\mathbf{j}\cap((\mathbf{h}\triangle\mathbf{j})\cup\mathbf{l}))^+=\mathbf{l}$ for any $\mathbf{l}\in\mathbb{E}$ and $\mathbf{l}\preceq\!(\mathbf{h}\cap\mathbf{j})^+$. For any $\mathbf{l},\mathbf{m}\in\mathbb{E}$, Lemma \ref{L;Lemma2.5} implies that $p\nmid k_\mathbf{l}$ only if $p\nmid k_{\mathbf{l}\cap\mathbf{m}}$. Therefore the combination of Equations \eqref{Eq;7}, \eqref{Eq;6}, Lemmas \ref{L;Lemma2.8}, \ref{L;Lemma4.2} implies that $B_{\mathbf{h},(\mathbf{h}\triangle\mathbf{j})\cup\mathbf{i}, \mathbf{j}}D_g$ is not an $\F$-linear combination of the matrices in $\mathbb{W}$. This thus yields an obvious contradiction. The desired lemma follows from this contradiction.
\end{proof}
\begin{lem}\label{L;Lemma4.8}
Assume that $g\in[1, 2^{n_-}]$ and $\mathbf{h}, \mathbf{i}, \mathbf{j}\in\mathbb{E}$. Assume that $\mathbf{i}\preceq(\mathbf{h}\cap\mathbf{j})^+$. Then
$(\mathbf{h}, (\mathbf{h}\triangle\mathbf{j})\cup\mathbf{i},\mathbf{j})\in\mathbb{P}$ and the block algebra $\mathbb{T}D_g$ of $\mathbb{T}$ has the following $\F$-basis
$$\{B_{\mathbf{a}, (\mathbf{a}\triangle\mathbf{c})\cup\mathbf{b}, \mathbf{c}}D_g: \mathbf{a}, \mathbf{b},\mathbf{c}\in\mathbb{E}, \mathbf{b}\preceq(\mathbf{a}\cap\mathbf{c})^+, \exists\ (\mathbf{c}, \mathbf{d}, \mathbf{c})\in\mathbb{C}_g, n_{\mathbf{c}, \mathbf{a}\cup\mathbf{d}, \mathbf{c}}=0\}.$$
\end{lem}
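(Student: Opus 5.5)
Lemma \ref{L;Lemma4.7} already shows that the displayed set $\mathbb{U}$ is a nonempty $\F$-linearly independent subset of $\mathbb{T}D_g$, and that $(\mathbf{h},(\mathbf{h}\triangle\mathbf{j})\cup\mathbf{i},\mathbf{j})\in\mathbb{P}$. So it remains to show that $\mathbb{U}$ spans $\mathbb{T}D_g$. By Lemma \ref{L;Lemma2.8}, $\mathbb{T}D_g$ is spanned by $\{B_{\mathbf{h},\mathbf{i},\mathbf{j}}D_g: (\mathbf{h},\mathbf{i},\mathbf{j})\in\mathbb{P}\}$. It therefore suffices to show that each such product is either $O$ or a scalar multiple of an element of $\mathbb{U}$.

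Fix $(\mathbf{h},\mathbf{i},\mathbf{j})\in\mathbb{P}$ and split into three cases.

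\emph{First case.} There is no $\mathbf{k}$ with $(\mathbf{j},\mathbf{k},\mathbf{j})\in\mathbb{C}_g$. Then Lemma \ref{L;Lemma3.8} gives $B_{\mathbf{h},\mathbf{i},\mathbf{j}}D_g=O$.

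\emph{Second case.} There is such a $\mathbf{k}$, which is unique by Lemma \ref{L;Lemma3.5}. If $n_{\mathbf{i},\mathbf{k},\mathbf{j}}>0$, then Lemma \ref{L;Lemma3.6} gives $B_{\mathbf{h},\mathbf{i},\mathbf{j}}D_g=O$.

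\emph{Third case.} $n_{\mathbf{i},\mathbf{k},\mathbf{j}}=0$. Lemma \ref{L;Lemma4.6} gives
$$B_{\mathbf{h},\mathbf{i},\mathbf{j}}D_g=\overline{k_{\mathbf{h}\cap\mathbf{i}\cap\mathbf{k}}}\,B_{\mathbf{h},(\mathbf{h}\triangle\mathbf{j})\cup(\mathbf{h}\cap\mathbf{i}\cap\mathbf{j})^+,\mathbf{j}}D_g.$$
We must check that the right-hand matrix lies in $\mathbb{U}$. Take $\mathbf{a}=\mathbf{h}$, $\mathbf{b}=(\mathbf{h}\cap\mathbf{i}\cap\mathbf{j})^+$, $\mathbf{c}=\mathbf{j}$ and $\mathbf{d}=\mathbf{k}$. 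Clearly $\mathbf{b}\preceq(\mathbf{h}\cap\mathbf{j})^+$. The remaining condition is $n_{\mathbf{j},\mathbf{h}\cup\mathbf{k},\mathbf{j}}=0$.

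This condition is the only step needing care, and I expect it to be the main obstacle. Since $(\mathbf{h},\mathbf{i},\mathbf{j})\in\mathbb{P}$, we have $\mathbbm{j}\setminus\mathbbm{h}\subseteq\mathbbm{i}$. Hence
$$\mathbbm{j}^-\setminus(\mathbbm{h}\cup\mathbbm{k})\subseteq(\mathbbm{i}\cap\mathbbm{j})^-\setminus\mathbbm{k},$$
and the right-hand set is empty because $n_{\mathbf{i},\mathbf{k},\mathbf{j}}=0$. This is the same observation made at the start of the proof of Lemma \ref{L;Lemma4.6}. So $B_{\mathbf{h},\mathbf{i},\mathbf{j}}D_g\in\langle\mathbb{U}\rangle_{\mathbb{T}}$.

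Combining the three cases, $\mathbb{U}$ spans $\mathbb{T}D_g$. Since $\mathbb{U}$ is linearly independent by Lemma \ref{L;Lemma4.7}, it is an $\F$-basis of $\mathbb{T}D_g$, which proves the lemma.
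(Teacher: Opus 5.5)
Your proposal is correct and follows the paper's proof essentially step for step. The paper likewise uses Lemmas \ref{L;Lemma2.8}, \ref{L;Lemma3.6}, \ref{L;Lemma3.8} to reduce to products $B_{\mathbf{a},\mathbf{b},\mathbf{c}}D_g$ with $n_{\mathbf{b},\mathbf{d},\mathbf{c}}=0$; it then applies your observation that $\mathbbm{j}\setminus\mathbbm{h}\subseteq\mathbbm{i}$ forces $n_{\mathbf{j},\mathbf{h}\cup\mathbf{k},\mathbf{j}}=0$, together with Lemma \ref{L;Lemma4.6}, to land in the displayed set, and concludes with Lemma \ref{L;Lemma4.7} for linear independence.
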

\begin{proof}
By combining Lemmas \ref{L;Lemma2.8}, \ref{L;Lemma3.6}, \ref{L;Lemma3.8}, each matrix in $\mathbb{T}D_g$ is also an $\F$-linear combination of the matrices in $\{B_{\mathbf{a},\mathbf{b},\mathbf{c}}D_g: (\mathbf{a},\mathbf{b},\mathbf{c})\in\mathbb{P},\exists\  (\mathbf{c},\mathbf{d},\mathbf{c})\in\mathbb{C}_g, n_{\mathbf{b},\mathbf{d},\mathbf{c}}=0\}$. For any $\mathbf{k}, \mathbf{l}, \mathbf{m}, \mathbf{q}\in\mathbb{E}$ and $\mathbbm{m}\setminus\mathbbm{k}\subseteq\mathbbm{l}$, notice that $n_{\mathbf{l}, \mathbf{q}, \mathbf{m}}=0$ only if $n_{\mathbf{m}, \mathbf{k}\cup\mathbf{q}, \mathbf{m}}=0$. So Lemma \ref{L;Lemma4.6} implies that each matrix in $\mathbb{T}D_g$ is an $\F$-linear combination of the matrices in $\{B_{\mathbf{a}, (\mathbf{a}\triangle\mathbf{c})\cup\mathbf{b}, \mathbf{c}}D_g: \mathbf{a}, \mathbf{b},\mathbf{c}\in\mathbb{E}, \mathbf{b}\preceq(\mathbf{a}\cap\mathbf{c})^+, \exists\ (\mathbf{c}, \mathbf{d}, \mathbf{c})\in\mathbb{C}_g, n_{\mathbf{c}, \mathbf{a}\cup\mathbf{d}, \mathbf{c}}=0\}$. By Lemma \ref{L;Lemma4.7}, $\{B_{\mathbf{a}, (\mathbf{a}\triangle\mathbf{c})\cup\mathbf{b}, \mathbf{c}}D_g\!: \mathbf{a}, \mathbf{b},\mathbf{c}\!\in\!\mathbb{E}, \mathbf{b}\preceq(\mathbf{a}\cap\mathbf{c})^+, \exists\ (\mathbf{c}, \mathbf{d}, \mathbf{c})\!\in\!\mathbb{C}_g, n_{\mathbf{c}, \mathbf{a}\cup\mathbf{d}, \mathbf{c}}\!=\!0\}$ is indeed an $\F$-basis of the block algebra $\mathbb{T}D_g$ of $\mathbb{T}$. The desired lemma follows.
\end{proof}
We are now ready to display the main result of this section and another corollary.
\begin{thm}\label{T;Dimension}
Assume that $g\!\in\![1, 2^{n_-}]$ and $(\mathbf{h},\mathbf{i},\mathbf{h})\!\in\!\mathbb{C}_g$. Then $n_{\mathbf{h}, \mathbf{i}, \mathbf{h}}$ is independent of the choice of $(\mathbf{h},\mathbf{i},\mathbf{h})$ and the block algebra $\mathbb{T}D_g$ of $\mathbb{T}$ has the following $\F$-dimension
$$4^{n-n_+-n_{\mathbf{h},\mathbf{i}, \mathbf{h}}}5^{n_+}.$$
\end{thm}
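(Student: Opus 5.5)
The plan is to count the $\F$-basis of $\mathbb{T}D_g$ supplied by Lemma \ref{L;Lemma4.8} coordinate by coordinate. Before counting, I settle the independence claim directly from the definition of $\approx$.

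\textbf{Independence of $n_{\mathbf{h},\mathbf{i},\mathbf{h}}$.} Let $(\mathbf{h},\mathbf{i},\mathbf{h})\in\mathbb{C}_g$. By definition $n_{\mathbf{h},\mathbf{i},\mathbf{h}}=|\mathbbm{h}^-\setminus\mathbbm{i}|$. Two triples lie in the same $\approx$-class exactly when these sets $\mathbbm{h}^-\setminus\mathbbm{i}$ coincide. So $\mathbb{C}_g$ determines a set $\mathbb{S}_g\subseteq[1,n]^-$ with $\mathbbm{h}^-\setminus\mathbbm{i}=\mathbb{S}_g$ for every $(\mathbf{h},\mathbf{i},\mathbf{h})\in\mathbb{C}_g$. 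Hence $n_{\mathbf{h},\mathbf{i},\mathbf{h}}=|\mathbb{S}_g|$, which does not depend on the chosen triple. Write $s=|\mathbb{S}_g|$.

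\textbf{Which $\mathbf{c}$ admit a triple in $\mathbb{C}_g$.} I first characterize the $\mathbf{c}$ for which some $(\mathbf{c},\mathbf{d},\mathbf{c})\in\mathbb{C}_g$ exists. The requirements are $(\mathbf{c},\mathbf{d},\mathbf{c})\in\mathbb{P}$ and $p\nmid k_\mathbf{d}$. The first means $\mathbbm{d}\subseteq\mathbbm{c}^\circ$. Given that, Lemma \ref{L;Lemma2.5} turns the second into $\mathbbm{d}\cap[1,n]^+=\varnothing$, so $\mathbbm{d}\subseteq\mathbbm{c}^-$. Membership in $\mathbb{C}_g$ then means $\mathbbm{c}^-\setminus\mathbbm{d}=\mathbb{S}_g$. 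Such a $\mathbf{d}$ exists if and only if $\mathbb{S}_g\subseteq\mathbbm{c}$, and then it is unique, namely $\mathbbm{d}=\mathbbm{c}^-\setminus\mathbb{S}_g$ (Lemma \ref{L;Lemma2.3}).

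\textbf{The condition on $\mathbf{a}$.} For this $\mathbf{d}$ we have $n_{\mathbf{c},\mathbf{a}\cup\mathbf{d},\mathbf{c}}=|\mathbbm{c}^-\setminus(\mathbbm{a}\cup\mathbbm{d})|=|\mathbb{S}_g\setminus\mathbbm{a}|$. So that condition is just $\mathbb{S}_g\subseteq\mathbbm{a}$. Consequently the index set in Lemma \ref{L;Lemma4.8} is exactly
\[\{(\mathbf{a},\mathbf{b},\mathbf{c})\in\mathbb{E}^3:\ \mathbb{S}_g\subseteq\mathbbm{a}\cap\mathbbm{c},\ \mathbbm{b}\subseteq(\mathbbm{a}\cap\mathbbm{c})^+\}.\]
Lemma \ref{L;Lemma4.3} shows that distinct index triples give distinct basis matrices, so the dimension equals the size of this set.

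\textbf{Counting.} The conditions are coordinatewise, so I count the admissible values of $(\mathbf{a}_t,\mathbf{b}_t,\mathbf{c}_t)$ for each $t\in[1,n]$ and multiply.
\begin{itemize}
\item[(i)] If $t\in\mathbb{S}_g$, then $\mathbf{a}_t=\mathbf{c}_t=1$. Since $t\in[1,n]^-$ is not in $[1,n]^+$, also $\mathbf{b}_t=0$. This gives $1$ choice.
\item[(ii)] If $t\in[1,n]^+$, then $t\notin\mathbb{S}_g$ and $(\mathbf{a}_t,\mathbf{c}_t)$ is free. Here $\mathbf{b}_t$ may be $1$ only when $\mathbf{a}_t=\mathbf{c}_t=1$. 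This gives $3+2=5$ choices.
\item[(iii)] For every remaining $t$, $(\mathbf{a}_t,\mathbf{c}_t)$ is free and $\mathbf{b}_t=0$. This gives $4$ choices.
\end{itemize}
Case (iii) covers $n-n_+-s$ coordinates, so the product is $4^{n-n_+-s}5^{n_+}$.

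The main obstacle is the second step: correctly unpacking membership in $\mathbb{C}_g$ and the quantity $n_{\mathbf{c},\mathbf{a}\cup\mathbf{d},\mathbf{c}}$. In particular one must use Lemma \ref{L;Lemma2.5} to see that $p\nmid k_\mathbf{d}$ forces $\mathbbm{d}\subseteq\mathbbm{c}^-$. Once that translation is in hand, the count is elementary.
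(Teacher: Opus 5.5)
Your proposal is correct and follows the paper's own proof: use Lemmas \ref{L;Lemma4.8} and \ref{L;Lemma4.3} to reduce to counting the index triples, then evaluate that count. The paper writes the count as a six-fold binomial sum and collapses it with the Binomial Theorem, while you first make the index set explicit ($\mathbb{S}_g\subseteq\mathbbm{a}\cap\mathbbm{c}$, $\mathbbm{b}\subseteq(\mathbbm{a}\cap\mathbbm{c})^+$) and then count coordinatewise. That is the same computation in factored form.
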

\begin{proof}
As Lemmas \ref{L;Lemma4.8} and \ref{L;Lemma4.3} imply that the $\F$-dimension of $\mathbb{T}D_g$ is equal to
$|\{(\mathbf{a}, \mathbf{b},\mathbf{c}):\mathbf{a}, \mathbf{b},\mathbf{c}\in\mathbb{E}, \mathbf{b}\preceq(\mathbf{a}\cap\mathbf{c})^+, \exists\ (\mathbf{c}, \mathbf{d}, \mathbf{c})\in\mathbb{C}_g, n_{\mathbf{c}, \mathbf{a}\cup\mathbf{d}, \mathbf{c}}=0\}|$, it suffices to check that $|\{(\mathbf{a}, \mathbf{b},\mathbf{c}):\mathbf{a}, \mathbf{b},\mathbf{c}\in\mathbb{E}, \mathbf{b}\preceq(\mathbf{a}\cap\mathbf{c})^+, \exists\ (\mathbf{c}, \mathbf{d}, \mathbf{c})\in\mathbb{C}_g, n_{\mathbf{c}, \mathbf{a}\cup\mathbf{d}, \mathbf{c}}=0\}|$ can be computed by the displayed formula. Notice that $n_{\mathbf{h},\mathbf{i}, \mathbf{h}}$ is independent of the choice of $(\mathbf{h},\mathbf{i},\mathbf{h})$ and $n_{\mathbf{h},\mathbf{i},\mathbf{h}}\leq n_-$. Let $j=|[1,n]\setminus[1,n]^\circ|$. So $n=j+n_++n_-$. Assume that $(\mathbf{k},\mathbf{l}, \mathbf{m})\in\{(\mathbf{a}, \mathbf{b},\mathbf{c}):\mathbf{a}, \mathbf{b},\mathbf{c}\in\mathbb{E}, \mathbf{b}\preceq(\mathbf{a}\cap\mathbf{c})^+, \exists\ (\mathbf{c}, \mathbf{d}, \mathbf{c})\in\mathbb{C}_g, n_{\mathbf{c}, \mathbf{a}\cup\mathbf{d}, \mathbf{c}}=0\}$ and $(\mathbf{m}, \mathbf{q}, \mathbf{m})\in\mathbb{C}_g$. As $\mathbbm{q}\subseteq\mathbbm{m}^\circ$ and $p\nmid k_\mathbf{q}$, notice that $\mathbbm{q}\subseteq[1,n]^-$ by Lemma \ref{L;Lemma2.5}.
Moreover, $\mathbbm{h}^-\setminus\mathbbm{i}=\mathbbm{m}^-\setminus\mathbbm{q}\subseteq\mathbbm{k}$ and $\mathbbm{l}\subseteq(\mathbbm{k}\cap\mathbbm{m})^+$ by Lemma \ref{L;Lemma3.5}. Lemma \ref{L;Lemma2.3} shows that  $|\{(\mathbf{a}, \mathbf{b},\mathbf{c}):\mathbf{a}, \mathbf{b},\mathbf{c}\in\mathbb{E},\ \mathbf{b}\preceq(\mathbf{a}\cap\mathbf{c})^+, \exists\ (\mathbf{c}, \mathbf{d}, \mathbf{c})\in\mathbb{C}_g, n_{\mathbf{c}, \mathbf{a}\cup\mathbf{d}, \mathbf{c}}=0\}|$ is equal to $$\sum_{r=0}^{n_+}\sum_{s=0}^{n_--n_{\mathbf{h}, \mathbf{i},\mathbf{h}}}\sum_{t=0}^j\sum_{u=0}^r\sum_{v=0}^s\sum_{w=0}^t{n_+\choose r}{n_--n_{\mathbf{h}, \mathbf{i},\mathbf{h}}\choose s}{j\choose t}{r\choose u}{s\choose v}{t\choose w}2^{n-n_{\mathbf{h}, \mathbf{i},\mathbf{h}}-r-s-t+u}.$$ So $|\{(\mathbf{a}, \mathbf{b},\mathbf{c})\!:\!\mathbf{a}, \mathbf{b},\mathbf{c}\in\mathbb{E},\mathbf{b}\preceq(\mathbf{a}\cap\mathbf{c})^+, \exists\ (\mathbf{c}, \mathbf{d}, \mathbf{c})\in\mathbb{C}_g, n_{\mathbf{c}, \mathbf{a}\cup\mathbf{d}, \mathbf{c}}\!=\!0\}|\!=\!4^{n_--n_{\mathbf{h}, \mathbf{i},\mathbf{h}}+j}5^{n_+}$ by the Binomial Theorem. The desired theorem follows since $n=j+n_++n_-$.
\end{proof}
\begin{cor}\label{C;Corokkary4.10}
Assume that $g\!\in\![1,2^{n_-}]$. Then the $\F$-dimension of the block algebra $\mathbb{T}D_g$ of $\mathbb{T}$ is independent of the choice of $\mathbf{x}$ and is also divisible by the $5$-power $5^{n_+}$.
\end{cor}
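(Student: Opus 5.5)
The plan is to read both claims directly off the dimension formula of Theorem \ref{T;Dimension}. Fix $g\in[1,2^{n_-}]$ and pick any $(\mathbf{h},\mathbf{i},\mathbf{h})\in\mathbb{C}_g$. Theorem \ref{T;Dimension} gives
$$\dim_\F\mathbb{T}D_g=4^{n-n_+-n_{\mathbf{h},\mathbf{i},\mathbf{h}}}5^{n_+}.$$
The exponent $n-n_+-n_{\mathbf{h},\mathbf{i},\mathbf{h}}$ is nonnegative: $n_{\mathbf{h},\mathbf{i},\mathbf{h}}=|\mathbbm{h}^-\setminus\mathbbm{i}|\leq n_-$, and $n_++n_-\leq n$. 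So the power of $4$ is an integer, and $5^{n_+}$ divides the dimension. This settles the divisibility statement.

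For independence of $\mathbf{x}$, I will check that every ingredient of the formula is defined without reference to $\mathbf{x}$. The integers $n$ and $n_+$ depend only on the cardinalities $|\mathbb{U}_a|$ and on $p$.

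The remaining ingredient, $n_{\mathbf{h},\mathbf{i},\mathbf{h}}$, is the one delicate point, because the labelling of the blocks by $g$ must be seen to be independent of $\mathbf{x}$. The index set $\{(\mathbf{a},\mathbf{b},\mathbf{a})\in\mathbb{P}: p\nmid k_\mathbf{b}\}$ is determined by $\mathbb{P}$ and the valencies $k_\mathbf{b}$, which are $\mathbf{x}$-free by Lemmas \ref{L;Lemma2.4} and \ref{L;Lemma2.5}. The equivalence relation $\approx$ is defined purely combinatorially, so the classes $\mathbb{C}_1,\ldots,\mathbb{C}_{2^{n_-}}$ are the same whatever $\mathbf{x}$ is. Each class $\mathbb{C}_g$ is characterized by the common set $\mathbbm{h}^-\setminus\mathbbm{i}$, and Theorem \ref{T;Dimension} already shows that $n_{\mathbf{h},\mathbf{i},\mathbf{h}}$ is constant on $\mathbb{C}_g$. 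Hence $n_{\mathbf{h},\mathbf{i},\mathbf{h}}$ is determined by $g$ alone.

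Finally, Theorem \ref{T;Idempotent} identifies the block algebras with $\mathbb{T}D_g$ for this same indexing, under any choice of base point. Therefore the dimension of $\mathbb{T}D_g$ is independent of $\mathbf{x}$. A cleaner way to state this is that the multiset of block dimensions is independent of $\mathbf{x}$, with $\mathbb{T}(\mathbf{x})D_g$ and $\mathbb{T}(\mathbf{y})D_g$ matched through the common index $g$.
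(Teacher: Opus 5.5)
Your proposal is correct and follows the paper's route: the paper's proof also reads both claims directly off the formula $4^{n-n_+-n_{\mathbf{h},\mathbf{i},\mathbf{h}}}5^{n_+}$ of Theorem \ref{T;Dimension}. Your extra checks make explicit what the paper leaves implicit. These are that the exponent of $4$ is nonnegative, because $n_{\mathbf{h},\mathbf{i},\mathbf{h}}=|\mathbbm{h}^-\setminus\mathbbm{i}|\leq n_-$ and $n_++n_-\leq n$, and that $\mathbb{P}$, $\approx$ and the classes $\mathbb{C}_g$ do not depend on $\mathbf{x}$.
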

\begin{proof}
The desired corollary follows from the displayed formula in Theorem \ref{T;Dimension}.
\end{proof}
We are now ready to conclude this section by listing an example of Theorem \ref{T;Dimension}.
\begin{eg}\label{E;Example4.11}
Assume that $n=|\mathbb{U}_1|=2$ and $|\mathbb{U}_2|=3$. If $p\neq2$, Example \ref{E;Example3.22} implies that $\mathbb{T}$ has exactly two block algebras. If $p\neq2$, Theorem \ref{T;Dimension} implies that the $\F$-dimensions of the only two distinct block algebras of $\mathbb{T}$ are equal to sixteen and four, respectively. For the remaining case $p=2$, Example \ref{E;Example3.22} and Theorem \ref{T;Dimension} also imply that the $\F$-dimension of the unique block algebra of $\mathbb{T}$ is equal to twenty.
\end{eg}
\section{Center of block algebra of $\mathbb{T}$}
In this section, for any block algebra of $\mathbb{T}$, we get an $\F$-basis and the $\F$-dimension of the center of this block algebra of $\mathbb{T}$. For any block algebra of $\mathbb{T}$, we also get an $\F$-basis, the nilpotent index, the $\F$-dimension of the Jacobson radical of the center of this block algebra of $\mathbb{T}$. As a preparation, we first display a sequence of lemmas to offer an $\F$-basis for the center of an arbitrary chosen block algebra of $\mathbb{T}$. By Theorem \ref{T;Idempotent}, recall that $\mathbb{T}D_1, \mathbb{T}D_2, \ldots, \mathbb{T}D_{2^{n_-}}$ are exactly all block algebras of $\mathbb{T}$.
\begin{lem}\label{L;Lemma5.1}
Assume that $g\in[1,2^{n_-}]$, $\mathbf{h}\in\mathbb{E}$, $(\mathbf{i}, \mathbf{j},\mathbf{i})\in\mathbb{C}_g$, $n_{\mathbf{h},\mathbf{j},\mathbf{i}}>0$. Then $n_{\mathbf{h},\mathbf{l},\mathbf{k}}\!>\!0$ for any
$(\mathbf{k},\mathbf{l},\mathbf{k})\!\in\!\mathbb{C}_g$. Moreover, assume that $(\mathbf{1}, \mathbf{h}, \mathbf{1})\!\in\!\mathbb{P}$. Then $C_\mathbf{h}D_g\!=\!O$.
\end{lem}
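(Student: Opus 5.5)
The first statement is pure set combinatorics. For $(\mathbf{k},\mathbf{l},\mathbf{k})\in\mathbb{C}_g$ we have $\mathbbm{k}^-\setminus\mathbbm{l}=\mathbbm{i}^-\setminus\mathbbm{j}$ by the definition of $\approx$. Every such triple has $p\nmid k_\mathbf{l}$ and $\mathbbm{l}\subseteq\mathbbm{k}^\circ$. By Lemma \ref{L;Lemma2.5} this forces $\mathbbm{l}\subseteq\mathbbm{k}^-$, and likewise $\mathbbm{j}\subseteq\mathbbm{i}^-$.

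Now $n_{\mathbf{h},\mathbf{j},\mathbf{i}}>0$ gives some $a\in(\mathbbm{h}\cap\mathbbm{i})^-\setminus\mathbbm{j}$. Then $a\in\mathbbm{i}^-\setminus\mathbbm{j}=\mathbbm{k}^-\setminus\mathbbm{l}$, and $a\in\mathbbm{h}$. So $a\in(\mathbbm{h}\cap\mathbbm{k})^-\setminus\mathbbm{l}$, which gives $n_{\mathbf{h},\mathbf{l},\mathbf{k}}>0$.

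For the second statement, expand $C_\mathbf{h}$ using Equation \eqref{Eq;5}:
$$C_\mathbf{h}=\sum_{\mathbf{j}'\in\mathbb{E}}\overline{k_{\mathbf{h}\setminus\mathbf{j}'}}B_{\mathbf{j}',\mathbf{h}\cap\mathbf{j}',\mathbf{j}'}.$$
Here $(\mathbf{j}',\mathbf{h}\cap\mathbf{j}',\mathbf{j}')\in\mathbb{P}$, as noted in Section 2. We show that each summand is annihilated by $D_g$, using the case split already used for Lemma \ref{L;Lemma3.15}.

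\emph{Case 1: there is no $(\mathbf{j}',\mathbf{k},\mathbf{j}')\in\mathbb{C}_g$.} Then Lemma \ref{L;Lemma3.8} gives $B_{\mathbf{j}',\mathbf{h}\cap\mathbf{j}',\mathbf{j}'}D_g=O$.

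\emph{Case 2: some $(\mathbf{j}',\mathbf{k},\mathbf{j}')\in\mathbb{C}_g$ exists.} We want $n_{\mathbf{h}\cap\mathbf{j}',\mathbf{k},\mathbf{j}'}>0$, after which Lemma \ref{L;Lemma3.6} gives the vanishing. By the first statement, $n_{\mathbf{h},\mathbf{k},\mathbf{j}'}>0$. Pick $a\in(\mathbbm{h}\cap\mathbbm{j}')^-\setminus\mathbbm{k}$. Then $a\in\mathbbm{h}\cap\mathbbm{j}'$, so $a\in((\mathbbm{h}\cap\mathbbm{j}')\cap\mathbbm{j}')^-\setminus\mathbbm{k}$, which is the required inequality.

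Summing over $\mathbf{j}'$ gives $C_\mathbf{h}D_g=O$. The only point needing care is the inclusion $\mathbbm{l}\subseteq\mathbbm{k}^-$ used in the first statement. It holds because elements of $\mathbbm{l}$ lie in $\mathbbm{k}^\circ$ and have $p\nmid|\mathbb{U}_a|-1$, by the $p\nmid k_\mathbf{l}$ condition together with Lemma \ref{L;Lemma2.5}.
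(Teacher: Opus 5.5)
Your proof is correct and follows the paper's argument. The first statement comes directly from the defining equation $\mathbbm{k}^-\setminus\mathbbm{l}=\mathbbm{i}^-\setminus\mathbbm{j}$ of $\approx$; the paper in fact records the equality $n_{\mathbf{h},\mathbf{l},\mathbf{k}}=n_{\mathbf{h},\mathbf{j},\mathbf{i}}$. The second statement expands $C_\mathbf{h}$ by Equation \eqref{Eq;5}, kills the summands with no diagonal triple of $\mathbb{C}_g$ at $\mathbf{j}'$ by Lemma \ref{L;Lemma3.8}, and kills the rest by Lemma \ref{L;Lemma3.6}, as you do (you also spell out $n_{\mathbf{h}\cap\mathbf{j}',\mathbf{k},\mathbf{j}'}=n_{\mathbf{h},\mathbf{k},\mathbf{j}'}$, which the paper leaves implicit). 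The inclusion $\mathbbm{l}\subseteq\mathbbm{k}^-$ that you single out is true but never used, since $(\mathbbm{h}\cap\mathbbm{k})^-\setminus\mathbbm{l}=\mathbbm{h}\cap(\mathbbm{k}^-\setminus\mathbbm{l})$ holds identically.
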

\begin{proof}
As $\mathbbm{i}^-\setminus\mathbbm{j}=\mathbbm{k}^-\setminus\mathbbm{l}$ for any $(\mathbf{k},\mathbf{l},\mathbf{k})\in\mathbb{C}_g$, notice that $n_{\mathbf{h},\mathbf{l},\mathbf{k}}=n_{\mathbf{h},\mathbf{j},\mathbf{i}}>0$ for any $(\mathbf{k},\mathbf{l},\mathbf{k})\in\mathbb{C}_g$. The first statement follows. By Equation \eqref{Eq;5} and Lemma \ref{L;Lemma3.8}, $C_\mathbf{h}D_g$ is an $\F$-linear combination of the matrices in $\{B_{\mathbf{a},\mathbf{a}\cap\mathbf{h},\mathbf{a}}D_g:\exists\ (\mathbf{a},\mathbf{b},\mathbf{a})\in\mathbb{C}_g\}$. As $n_{\mathbf{h}, \mathbf{l}, \mathbf{k}}>0$ for any $(\mathbf{k}, \mathbf{l}, \mathbf{k})\in\mathbb{C}_g$, the desired lemma follows from Lemma \ref{L;Lemma3.6}.
\end{proof}
\begin{lem}\label{L;Lemma5.2}
Assume that $g\in[1,2^{n_-}]$, $\mathbf{h}\in\mathbb{E}$, $(\mathbf{i}, \mathbf{j}, \mathbf{i})\in\mathbb{C}_g$, $n_{\mathbf{h}, \mathbf{j}, \mathbf{i}}=0$. Then there is $\mathbf{k}\in\mathbb{E}$ such that $\mathbf{h}^+=\mathbf{k}^+$, $(\mathbf{k}, \mathbf{j}, \mathbf{k})\!\in\!\mathbb{C}_g$, and $n_{\mathbf{h}, \mathbf{j}, \mathbf{k}}=0$. In particular, if $\mathbf{l}\in\mathbb{E}$ and $\mathbf{l}\preceq\mathbf{1}^+$, then there is $\mathbf{m}\in\mathbb{E}$ such that $\mathbf{l}=\mathbf{l}^+=\mathbf{m}^+$, $(\mathbf{m}, \mathbf{j}, \mathbf{m})\in\mathbb{C}_g$, and $n_{\mathbf{l}, \mathbf{j},\mathbf{m}}=0$.
\end{lem}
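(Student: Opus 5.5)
The plan is to write down $\mathbf{k}$ explicitly, using Lemma \ref{L;Lemma2.3} to pass between $n$-tuples and subsets of $[1,n]$, and then check each required property by elementary set manipulations.

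First I would fix the relevant sets. Since $(\mathbf{i},\mathbf{j},\mathbf{i})\in\mathbb{P}$, we have $\mathbbm{j}\subseteq\mathbbm{i}^\circ$. Since also $p\nmid k_\mathbf{j}$, Lemma \ref{L;Lemma2.5} forces $\mathbbm{j}\subseteq[1,n]^-$; the same argument appears in the proof of Theorem \ref{T;Dimension}. Put $\mathbb{S}=\mathbbm{i}^-\setminus\mathbbm{j}$. This is the common value $\mathbbm{a}^-\setminus\mathbbm{b}$ over all $(\mathbf{a},\mathbf{b},\mathbf{a})\in\mathbb{C}_g$, and it satisfies $\mathbb{S}\subseteq[1,n]^-$ and $\mathbb{S}\cap\mathbbm{j}=\varnothing$. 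The hypothesis $n_{\mathbf{h},\mathbf{j},\mathbf{i}}=0$ reads $(\mathbbm{h}\cap\mathbbm{i})^-\subseteq\mathbbm{j}$, which is exactly $\mathbb{S}\cap\mathbbm{h}=\varnothing$. This disjointness is the key fact.

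Next I would define $\mathbf{k}$ by Lemma \ref{L;Lemma2.3} through $\mathbbm{k}=\mathbbm{h}^+\cup\mathbbm{j}\cup\mathbb{S}$, and verify the three conditions in turn.
\begin{enumerate}[(a)]
\item Because $\mathbbm{j}\cup\mathbb{S}\subseteq[1,n]^-$ is disjoint from $[1,n]^+$, we get $\mathbbm{k}^+=\mathbbm{h}^+$, so $\mathbf{k}^+=\mathbf{h}^+$.
\item Since $\mathbbm{j}\subseteq\mathbbm{k}^\circ$, we have $\mathbbm{k}\triangle\mathbbm{k}=\varnothing\subseteq\mathbbm{j}\subseteq(\mathbbm{k}\cap\mathbbm{k})^\circ$, hence $(\mathbf{k},\mathbf{j},\mathbf{k})\in\mathbb{P}$. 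Together with $p\nmid k_\mathbf{j}$ and $\mathbbm{k}^-\setminus\mathbbm{j}=(\mathbbm{j}\cup\mathbb{S})\setminus\mathbbm{j}=\mathbb{S}=\mathbbm{i}^-\setminus\mathbbm{j}$, this gives $(\mathbf{k},\mathbf{j},\mathbf{k})\approx(\mathbf{i},\mathbf{j},\mathbf{i})$, that is, $(\mathbf{k},\mathbf{j},\mathbf{k})\in\mathbb{C}_g$.
\item We have $(\mathbbm{h}\cap\mathbbm{k})^-\setminus\mathbbm{j}=\mathbbm{h}\cap\mathbb{S}=\varnothing$, so $n_{\mathbf{h},\mathbf{j},\mathbf{k}}=0$.
\end{enumerate}

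For the particular case, take $\mathbf{l}\preceq\mathbf{1}^+$, so $\mathbbm{l}^-=\varnothing$ and $\mathbf{l}=\mathbf{l}^+$. Then $n_{\mathbf{l},\mathbf{j},\mathbf{i}}=|(\mathbbm{l}\cap\mathbbm{i})^-\setminus\mathbbm{j}|=0$ automatically, and applying the first statement with $\mathbf{h}=\mathbf{l}$ produces the required $\mathbf{m}$.

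The only real obstacle is reading the hypothesis $n_{\mathbf{h},\mathbf{j},\mathbf{i}}=0$ correctly: it says that $\mathbb{S}$ avoids $\mathbbm{h}$, not that $\mathbb{S}$ is contained in $\mathbbm{h}$. Once that is clear, every check above is routine.
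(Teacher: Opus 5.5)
Your proof is correct and follows the paper's argument: since $\mathbbm{j}\subseteq\mathbbm{i}^-$, your set $\mathbbm{h}^+\cup\mathbbm{j}\cup\mathbb{S}$ equals the paper's choice $\mathbbm{k}=\mathbbm{h}^+\cup\mathbbm{i}^-$. Your checks (a)--(c), and your treatment of the particular case via $\mathbbm{l}^-=\varnothing$, spell out what the paper leaves to Lemmas \ref{L;Lemma2.3} and \ref{L;Lemma2.5}.
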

\begin{proof}
By Lemma \ref{L;Lemma2.3}, there is a unique $\mathbf{k}\in\mathbb{E}$ such that $\mathbbm{k}=\mathbbm{h}^+\cup\mathbbm{i}^-$. Notice that $\mathbf{h}^+=\mathbf{k}^+$, $\mathbbm{j}\subseteq\mathbbm{i}^-=\mathbbm{k}^-$, $\mathbbm{i}^-\setminus\mathbbm{j}=\mathbbm{k}^-\setminus\mathbbm{j}$, and $n_{\mathbf{h}, \mathbf{j}, \mathbf{k}}=n_{\mathbf{h}, \mathbf{j}, \mathbf{i}}=0$ by Lemmas \ref{L;Lemma2.3} and \ref{L;Lemma2.5}. The first statement thus follows from Lemma \ref{L;Lemma2.5}. As $n_{\mathbf{i}, \mathbf{j}, \mathbf{l}}=0$ if $\mathbf{l}\in\mathbb{E}$ and $\mathbbm{l}\subseteq[1,n]^+$, the desired lemma follows from the first statement and Lemma \ref{L;Lemma2.3}.
\end{proof}
\begin{lem}\label{L;Lemma5.3}
Assume that $g\in[1,2^{n_-}]$, $\mathbf{h}\in\mathbb{E}$, $\mathbf{h}\preceq\mathbf{1}^+$. Then $(\mathbf{1}, \mathbf{h}, \mathbf{1})\in\mathbb{P}$ and $C_\mathbf{h}D_g\neq O$. Moreover, assume that $\mathbf{i}\in\mathbb{E}$ and $\mathbf{i}\preceq\mathbf{1}^+$. Then $\mathbf{h}=\mathbf{i}$ if and only if $C_\mathbf{h}D_g\!=\!C_\mathbf{i}D_g$. In particular, $|\{C_\mathbf{a}D_g:\mathbf{a}\in\mathbb{E}, \mathbf{a}\preceq\mathbf{1}^+\}|\!=\!|\{\mathbf{a}: \mathbf{a}\in\mathbb{E}, \mathbf{a}\preceq\mathbf{1}^+\}|\!=\!2^{n_+}$.
\end{lem}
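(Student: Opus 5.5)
First I check that $(\mathbf{1},\mathbf{h},\mathbf{1})\in\mathbb{P}$. Since $\mathbbm{h}\subseteq[1,n]^+\subseteq[1,n]^\circ=(\mathbbm{1}\cap\mathbbm{1})^\circ$ and $\mathbbm{1}\triangle\mathbbm{1}=\varnothing$, Lemma \ref{L;Lemma2.4} gives this directly, so $C_\mathbf{h}$ is defined.

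To see that $C_\mathbf{h}D_g\neq O$, I fix $(\mathbf{i},\mathbf{j},\mathbf{i})\in\mathbb{C}_g$. Because $\mathbbm{h}\subseteq[1,n]^+$, we have $n_{\mathbf{h},\mathbf{j},\mathbf{i}}=0$. The second statement of Lemma \ref{L;Lemma5.2} then supplies $\mathbf{m}$ with $\mathbf{m}^+=\mathbf{h}$, $(\mathbf{m},\mathbf{j},\mathbf{m})\in\mathbb{C}_g$ and $n_{\mathbf{h},\mathbf{j},\mathbf{m}}=0$. Next I multiply by $E_\mathbf{m}^*$: by Equation \eqref{Eq;5} and Equations \eqref{Eq;4}, \eqref{Eq;2},
$$E_\mathbf{m}^*C_\mathbf{h}D_g=\overline{k_{\mathbf{h}\setminus\mathbf{m}}}\,B_{\mathbf{m},\mathbf{h}\cap\mathbf{m},\mathbf{m}}D_g.$$
Since $\mathbf{h}\preceq\mathbf{m}$, the scalar is $\overline{1}$, and $\mathbf{h}\cap\mathbf{m}=\mathbf{h}$. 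We also have $n_{\mathbf{h},\mathbf{j},\mathbf{m}}=0$, so Lemma \ref{L;Lemma3.14}, applied to $(\mathbf{m},\mathbf{h},\mathbf{m})$ with $(\mathbf{m},\mathbf{j},\mathbf{m})\in\mathbb{C}_g$, gives $B_{\mathbf{m},\mathbf{h},\mathbf{m}}D_g\neq O$. Hence $C_\mathbf{h}D_g\neq O$.

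For injectivity, I use the same $\mathbf{m}$ for both $\mathbf{h}$ and $\mathbf{i}$ by choosing $\mathbf{m}$ with $\mathbbm{m}=[1,n]^+\cup\mathbbm{i}_0^-$, where $(\mathbf{i}_0,\mathbf{j},\mathbf{i}_0)\in\mathbb{C}_g$. This is the construction in the proof of Lemma \ref{L;Lemma5.2} with $\mathbf{h}$ replaced by $\mathbf{1}^+$, and it is valid for every $\mathbf{h}\preceq\mathbf{1}^+$. Since now $\mathbf{h},\mathbf{i}\preceq\mathbf{m}$, we get
$$E_\mathbf{m}^*C_\mathbf{h}D_g=B_{\mathbf{m},\mathbf{h},\mathbf{m}}D_g \quad\text{and}\quad E_\mathbf{m}^*C_\mathbf{i}D_g=B_{\mathbf{m},\mathbf{i},\mathbf{m}}D_g.$$
I rewrite these with Lemma \ref{L;Lemma4.6}, using $\mathbf{h}\cap\mathbf{m}\cap\mathbf{j}=\mathbf{0}$ because $\mathbbm{j}\subseteq[1,n]^-$:
$$B_{\mathbf{m},\mathbf{h},\mathbf{m}}D_g=B_{\mathbf{m},(\mathbf{m}\triangle\mathbf{m})\cup\mathbf{h}^+,\mathbf{m}}D_g=B_{\mathbf{m},\mathbf{h},\mathbf{m}}D_g.$$
The hypotheses of Lemma \ref{L;Lemma4.3} hold: $\mathbf{h},\mathbf{i}\preceq(\mathbf{m}\cap\mathbf{m})^+$, and $n_{\mathbf{m},\mathbf{m}\cup\mathbf{j},\mathbf{m}}=0$ trivially. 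Its uniqueness clause then says that $B_{\mathbf{m},\mathbf{h},\mathbf{m}}D_g=B_{\mathbf{m},\mathbf{i},\mathbf{m}}D_g$ forces $\mathbf{h}=\mathbf{i}$. So $C_\mathbf{h}D_g=C_\mathbf{i}D_g$ implies $\mathbf{h}=\mathbf{i}$; the converse is trivial.

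The count then follows: by Lemma \ref{L;Lemma2.3}, $|\{\mathbf{a}:\mathbf{a}\preceq\mathbf{1}^+\}|=2^{|[1,n]^+|}=2^{n_+}$. The main point requiring care is the choice of a common $\mathbf{m}$ with $\mathbf{h},\mathbf{i}\preceq\mathbf{m}$ and $(\mathbf{m},\mathbf{j},\mathbf{m})\in\mathbb{C}_g$. I verify this using $\mathbbm{j}\subseteq\mathbbm{i}_0^-=\mathbbm{m}^-$ and $p\nmid k_\mathbf{j}$, together with Lemma \ref{L;Lemma2.5}, which also ensures $(\mathbf{m},\mathbf{j},\mathbf{m})\in\mathbb{P}$.
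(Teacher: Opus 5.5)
Your proof is correct. The first half (membership in $\mathbb{P}$, and $C_\mathbf{h}D_g\neq O$ via Lemma \ref{L;Lemma5.2}, the projection by $E^*$, and Lemma \ref{L;Lemma3.14}) is exactly the paper's argument. The injectivity step is where you genuinely diverge.

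The paper takes two \emph{different} indices, $\mathbf{l}$ with $\mathbf{l}^+=\mathbf{h}$ and $\mathbf{m}$ with $\mathbf{m}^+=\mathbf{i}$, and lets the scalars in Equation \eqref{Eq;5} do the work. From $C_\mathbf{h}D_g=C_\mathbf{i}D_g$ it gets $O\neq E_\mathbf{l}^*C_\mathbf{h}D_g=E_\mathbf{l}^*C_\mathbf{i}D_g=\overline{k_{\mathbf{i}\setminus\mathbf{l}}}B_{\mathbf{l},\mathbf{i}\cap\mathbf{l},\mathbf{l}}D_g$. Hence $p\nmid k_{\mathbf{i}\setminus\mathbf{l}}$, which by Lemma \ref{L;Lemma2.5} forces $\mathbf{i}\preceq\mathbf{l}^+=\mathbf{h}$; symmetrically $\mathbf{h}\preceq\mathbf{i}$. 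This uses nothing beyond the nonvanishing in Lemma \ref{L;Lemma3.14}.

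You instead take one common $\mathbf{m}$ with $\mathbf{m}^+=\mathbf{1}^+$ (this is just Lemma \ref{L;Lemma5.2} with $\mathbf{l}=\mathbf{1}^+$). Both scalars are then $\overline{1}$ and carry no information. The distinction must come from $B_{\mathbf{m},\mathbf{h},\mathbf{m}}D_g\neq B_{\mathbf{m},\mathbf{i},\mathbf{m}}D_g$, i.e.\ from the uniqueness clause of Lemma \ref{L;Lemma4.3}, whose proof compares coefficients in the basis of Lemma \ref{L;Lemma2.8} using Lemma \ref{L;Lemma4.2}.

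So your route uses heavier machinery, but it buys more. We have $E_\mathbf{m}^*C_\mathbf{a}D_g=B_{\mathbf{m},\mathbf{a},\mathbf{m}}D_g$ for every $\mathbf{a}\preceq\mathbf{1}^+$, and these matrices lie in the linearly independent set of Lemma \ref{L;Lemma4.7}, witnessed by $(\mathbf{m},\mathbf{j},\mathbf{m})$. The same single projection therefore proves linear independence of $\{C_\mathbf{a}D_g:\mathbf{a}\preceq\mathbf{1}^+\}$, i.e.\ Lemma \ref{L;Lemma5.6}, without the minimal-element argument the paper uses there.

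Three small remarks:
\begin{itemize}
\item Your ``rewrite via Lemma \ref{L;Lemma4.6}'' is an identity and can be dropped. Already $(\mathbf{m}\triangle\mathbf{m})\cup\mathbf{h}=\mathbf{h}$, so $B_{\mathbf{m},\mathbf{h},\mathbf{m}}$ is in the form Lemma \ref{L;Lemma4.3} requires.
\item With the common $\mathbf{m}$, the first statement of Lemma \ref{L;Lemma4.3} already gives $C_\mathbf{h}D_g\neq O$. The separate $\mathbf{m}$ with $\mathbf{m}^+=\mathbf{h}$ is therefore unnecessary.
\item You reuse the letter $\mathbf{i}$ for an element of $\mathbb{C}_g$, which clashes with the $\mathbf{i}$ of the statement.
\end{itemize}
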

\begin{proof}
By Lemma \ref{L;Lemma2.3}, $(\mathbf{1}, \mathbf{h}, \mathbf{1})\in\mathbb{P}$ and $C_\mathbf{h}D_g$ is a defined matrix. Assume that $(\mathbf{j},\mathbf{k},\mathbf{j})\in\mathbb{C}_g$. By Lemma \ref{L;Lemma5.2}, there is $\mathbf{l}\in\mathbb{E}$ such that $\mathbf{h}=\mathbf{l}^+$, $(\mathbf{l}, \mathbf{k}, \mathbf{l})\in\mathbb{C}_g$, and $n_{\mathbf{h},\mathbf{k},\mathbf{l}}=0$. As $k_{\mathbf{h}\setminus\mathbf{l}}=1$ by Lemma \ref{L;Lemma2.5}, notice that
$E_\mathbf{l}^*C_\mathbf{h}D_g=B_{\mathbf{l},\mathbf{h},\mathbf{l}}D_g\neq O$ by combining Equations \eqref{Eq;5}, \eqref{Eq;4}, \eqref{Eq;2}, Lemmas \ref{L;Lemma2.3}, \ref{L;Lemma3.14}. The first statement follows. In particular, notice that $C_\mathbf{i}D_g\neq O$ by the first statement. By Lemma \ref{L;Lemma5.2}, there is $\mathbf{m}\in\mathbb{E}$ such that $\mathbf{i}=\mathbf{m}^+$, $(\mathbf{m}, \mathbf{k}, \mathbf{m})\in\mathbb{C}_g$, and $n_{\mathbf{i}, \mathbf{k}, \mathbf{m}}=0$. As $k_{\mathbf{i}\setminus\mathbf{m}}=1$ by Lemma \ref{L;Lemma2.5}, notice that
$E_\mathbf{m}^*C_\mathbf{i}D_g=B_{\mathbf{m},\mathbf{i},\mathbf{m}}D_g\neq O$ by combining Equations \eqref{Eq;5}, \eqref{Eq;4}, \eqref{Eq;2}, Lemmas \ref{L;Lemma2.3}, \ref{L;Lemma3.14}. Assume that
$C_\mathbf{h}D_g=C_\mathbf{i}D_g$. This thus implies that $E_\mathbf{l}^*C_\mathbf{h}D_g\!=\!E_\mathbf{l}^*C_\mathbf{i}D_g\!=\!\overline{k_{\mathbf{i}\setminus\mathbf{l}}}B_{\mathbf{l}, \mathbf{i}\cap\mathbf{l}, \mathbf{l}}D_g$ and $E_\mathbf{m}^*C_\mathbf{i}D_g=E_\mathbf{m}^*C_\mathbf{h}D_g=\overline{k_{\mathbf{h}\setminus\mathbf{m}}}B_{\mathbf{m}, \mathbf{h}\cap\mathbf{m}, \mathbf{m}}D_g$ by combining Equations \eqref{Eq;5}, \eqref{Eq;4}, \eqref{Eq;2}. This thus implies that $p\nmid k_{\mathbf{h}\setminus\mathbf{m}}k_{\mathbf{i}\setminus\mathbf{l}}$. As $\mathbf{h}=\mathbf{l}^+$ and $\mathbf{i}=\mathbf{m}^+$, Lemmas \ref{L;Lemma2.5} and \ref{L;Lemma2.3} imply that $\mathbf{h}=\mathbf{i}$. The second statement thus follows as the proof of the remaining direction is trivial. The desired lemma follows as the final equation follows from the second statement and Lemma \ref{L;Lemma2.3}.
\end{proof}
\begin{lem}\label{L;Lemma5.4}
Assume that $\mathbf{g}, \mathbf{h}, \mathbf{i}\!\in\!\mathbb{E}$, $p\!\nmid\! k_{\mathbf{g}\!\setminus\!\mathbf{h}}k_{\mathbf{g}\cap\mathbf{h}\cap\mathbf{i}}$, $n_{\mathbf{g}, \mathbf{i}, \mathbf{h}}\!\!=\!\!0$. Then
$k_{\mathbf{g}\!\setminus\!\mathbf{h}}k_{\mathbf{g}\cap\mathbf{h}\cap\mathbf{i}}\!\!=\!\!k_{\mathbf{g}\setminus\mathbf{g}^+}$.
\end{lem}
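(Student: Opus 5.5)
By Lemma \ref{L;Lemma2.5}, every valency $k_{\mathbf{a}}$ is the product $\prod_{c\in\mathbbm{a}}(|\mathbb{U}_c|-1)$. So the plan is to show an equality of index sets,
$$(\mathbbm{g}\setminus\mathbbm{h})\ \dot\cup\ (\mathbbm{g}\cap\mathbbm{h}\cap\mathbbm{i})=\mathbbm{g}\setminus\mathbbm{g}^+,$$
with the union on the left disjoint. Both valencies then factor over the same set of coordinates, and Lemma \ref{L;Lemma2.3} identifies the corresponding $n$-tuples. The left-hand side is obviously a disjoint union, so only two inclusions remain.

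\emph{Left side contained in $\mathbbm{g}\setminus\mathbbm{g}^+$.} Take $c\in\mathbbm{g}\setminus\mathbbm{h}$ or $c\in\mathbbm{g}\cap\mathbbm{h}\cap\mathbbm{i}$. In both cases $|\mathbb{U}_c|-1$ is a factor of $k_{\mathbf{g}\setminus\mathbf{h}}k_{\mathbf{g}\cap\mathbf{h}\cap\mathbf{i}}$, and the hypothesis says $p$ does not divide this product. Hence $p\nmid|\mathbb{U}_c|-1$, so $c\notin[1,n]^+$, and therefore $c\in\mathbbm{g}\setminus\mathbbm{g}^+$.

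\emph{$\mathbbm{g}\setminus\mathbbm{g}^+$ contained in the left side.} Let $c\in\mathbbm{g}\setminus\mathbbm{g}^+$ with $c\in\mathbbm{h}$; we must show $c\in\mathbbm{i}$. Since $c\notin\mathbbm{g}^+$, either $|\mathbb{U}_c|=2$ or $c\in\mathbbm{g}^-$. In the second case $c\in(\mathbbm{g}\cap\mathbbm{h})^-$, and $n_{\mathbf{g},\mathbf{i},\mathbf{h}}=|(\mathbbm{g}\cap\mathbbm{h})^-\setminus\mathbbm{i}|=0$ forces $c\in\mathbbm{i}$.

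The case $|\mathbb{U}_c|=2$ is the main obstacle. There $|\mathbb{U}_c|-1=1$ is invisible to the valencies, and $c\notin[1,n]^-$, so the hypothesis $n_{\mathbf{g},\mathbf{i},\mathbf{h}}=0$ says nothing about $c$. Such a $c$ might lie in $\mathbbm{h}\setminus\mathbbm{i}$, in which case the set equality above fails. I will therefore not insist on the set equality. Instead, for the product I will use only coordinates in $[1,n]^\circ$, since coordinates with $|\mathbb{U}_c|=2$ contribute the factor $1$ on both sides. What is actually needed is
$$\bigl((\mathbbm{g}\setminus\mathbbm{h})\cup(\mathbbm{g}\cap\mathbbm{h}\cap\mathbbm{i})\bigr)^\circ=(\mathbbm{g}\setminus\mathbbm{g}^+)^\circ=\mathbbm{g}^-,$$
and this follows from the two inclusions restricted to $\circ$-coordinates: the first inclusion together with the $p'$-condition, and the second via $n_{\mathbf{g},\mathbf{i},\mathbf{h}}=0$ as shown. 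Combining this with Lemma \ref{L;Lemma2.5} and disjointness gives $k_{\mathbf{g}\setminus\mathbf{h}}k_{\mathbf{g}\cap\mathbf{h}\cap\mathbf{i}}=\prod_{c\in\mathbbm{g}^-}(|\mathbb{U}_c|-1)=k_{\mathbf{g}\setminus\mathbf{g}^+}$, because $\mathbbm{g}\setminus\mathbbm{g}^+$ differs from $\mathbbm{g}^-$ only by coordinates of size two.
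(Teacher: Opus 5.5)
Your final argument is correct and is essentially the paper's proof. The paper also reduces everything to the identity $\mathbbm{g}^-=(\mathbbm{g}^-\setminus\mathbbm{h})\cup((\mathbbm{g}\cap\mathbbm{h})^-\cap\mathbbm{i})$, which comes from $n_{\mathbf{g},\mathbf{i},\mathbf{h}}=0$, and then applies Lemma~\ref{L;Lemma2.5}: the $p'$-hypothesis (which you state explicitly and the paper uses tacitly) removes the coordinates in $[1,n]^+$, and the coordinates with $|\mathbb{U}_c|=2$ contribute the factor one. You were also right to drop your initial set equality, since it fails whenever some $c\in(\mathbbm{g}\cap\mathbbm{h})\setminus\mathbbm{i}$ has $|\mathbb{U}_c|=2$.
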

\begin{proof}
As $(\mathbbm{g}\cap\mathbbm{h})^-\setminus\mathbbm{i}=\varnothing$, notice that $\mathbbm{g}^\circ\setminus\mathbbm{g}^+=\mathbbm{g}^-=(\mathbbm{g}^-\setminus\mathbbm{h})\cup((\mathbbm{g}\cap\mathbbm{h})^-\cap\mathbbm{i})$. Lemmas \ref{L;Lemma2.3} and \ref{L;Lemma2.5} imply that $k_{\mathbf{g}\setminus\mathbf{h}}k_{\mathbf{g}\cap\mathbf{h}\cap\mathbf{i}}=k_{\mathbf{g}\setminus\mathbf{g}^+}$. The desired lemma follows.
\end{proof}
\begin{lem}\label{L;Lemma5.5}
Assume that $g\in[1,2^{n_-}]$, $(\mathbf{1}, \mathbf{h}, \mathbf{1})\in\mathbb{P}$, and there is $(\mathbf{i}, \mathbf{j}, \mathbf{i})\in\mathbb{C}_g$ satisfying the equation $n_{\mathbf{h}, \mathbf{j}, \mathbf{i}}=0$. Then $(\mathbf{1},\mathbf{h}^+,  \mathbf{1})\in\mathbb{P}$ and $C_\mathbf{h}D_g\!=\!\overline{k_{\mathbf{h}\setminus\mathbf{h}^+}}C_{\mathbf{h}^+}D_g\neq O$.
\end{lem}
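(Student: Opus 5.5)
The plan is to compare both sides block by block, multiplying on the left by $E_\mathbf{l}^*$ for every $\mathbf{l}\in\mathbb{E}$. Since $I=\sum_{\mathbf{l}}E_\mathbf{l}^*$ by Equation \eqref{Eq;3}, it suffices to prove $E_\mathbf{l}^*C_\mathbf{h}D_g=\overline{k_{\mathbf{h}\setminus\mathbf{h}^+}}E_\mathbf{l}^*C_{\mathbf{h}^+}D_g$ for each $\mathbf{l}$. The membership $(\mathbf{1},\mathbf{h}^+,\mathbf{1})\in\mathbb{P}$ is immediate from Lemma \ref{L;Lemma2.3}, because $\mathbbm{h}^+\subseteq[1,n]^\circ$. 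By Equations \eqref{Eq;5}, \eqref{Eq;4}, \eqref{Eq;2} we have
\[
E_\mathbf{l}^*C_\mathbf{h}D_g=\overline{k_{\mathbf{h}\setminus\mathbf{l}}}B_{\mathbf{l},\mathbf{h}\cap\mathbf{l},\mathbf{l}}D_g,
\qquad
E_\mathbf{l}^*C_{\mathbf{h}^+}D_g=\overline{k_{\mathbf{h}^+\setminus\mathbf{l}}}B_{\mathbf{l},\mathbf{h}^+\cap\mathbf{l},\mathbf{l}}D_g .
\]
If no $(\mathbf{l},\mathbf{b},\mathbf{l})$ lies in $\mathbb{C}_g$, both sides vanish by Lemma \ref{L;Lemma3.8}. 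So I fix $(\mathbf{l},\mathbf{k},\mathbf{l})\in\mathbb{C}_g$ and split into cases according to $n_{\mathbf{h}\cap\mathbf{l},\mathbf{k},\mathbf{l}}$.

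\textbf{Case $n_{\mathbf{h}\cap\mathbf{l},\mathbf{k},\mathbf{l}}>0$.} Lemma \ref{L;Lemma3.6} kills the left side. On the right, $(\mathbbm{h}^+\cap\mathbbm{l})^-=\varnothing$, so $n_{\mathbf{h}^+\cap\mathbf{l},\mathbf{k},\mathbf{l}}=n_{\mathbf{l}\cap\mathbf{h}^+,\dots}=0$ and Lemma \ref{L;Lemma3.6} does not apply. This is where I expect the main obstacle. I would resolve it using the hypothesis. Since $n_{\mathbf{h},\mathbf{j},\mathbf{i}}=0$ and $\mathbbm{i}^-\setminus\mathbbm{j}=\mathbbm{l}^-\setminus\mathbbm{k}$, we get $n_{\mathbf{h},\mathbf{k},\mathbf{l}}=|(\mathbbm{h}\cap\mathbbm{l})^-\setminus\mathbbm{k}|=0$, which is the first statement of Lemma \ref{L;Lemma5.1} read contrapositively. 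This contradicts the case assumption, so this case never occurs.

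\textbf{Case $n_{\mathbf{h}\cap\mathbf{l},\mathbf{k},\mathbf{l}}=0$.} Here $\mathbbm{l}^-\setminus\mathbbm{k}\subseteq\mathbbm{h}$. Apply Lemma \ref{L;Lemma4.6} to the triple $(\mathbf{l},\mathbf{h}\cap\mathbf{l},\mathbf{l})$. It gives
\[
B_{\mathbf{l},\mathbf{h}\cap\mathbf{l},\mathbf{l}}D_g=\overline{k_{\mathbf{h}\cap\mathbf{l}\cap\mathbf{k}}}\,B_{\mathbf{l},(\mathbf{h}\cap\mathbf{l})^+,\mathbf{l}}D_g .
\]
Since $(\mathbf{h}\cap\mathbf{l})^+=\mathbf{h}^+\cap\mathbf{l}$, the same lemma applied to $(\mathbf{l},\mathbf{h}^+\cap\mathbf{l},\mathbf{l})$ gives the factor $\overline{k_{\mathbf{h}^+\cap\mathbf{l}\cap\mathbf{k}}}=\overline{1}$, because $\mathbbm{k}\subseteq[1,n]^-$. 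The claimed equality therefore reduces to the scalar identity
\[
k_{\mathbf{h}\setminus\mathbf{l}}\,k_{\mathbf{h}\cap\mathbf{l}\cap\mathbf{k}}=k_{\mathbf{h}\setminus\mathbf{h}^+}\,k_{\mathbf{h}^+\setminus\mathbf{l}} .
\]
To check it, recall from Lemma \ref{L;Lemma2.5} that both sides are products of $|\mathbb{U}_a|-1$ over index sets. Indices $a$ with $|\mathbb{U}_a|=2$ contribute $1$. The remaining indices of $\mathbbm{h}$ split into $\mathbbm{h}^+$ and $\mathbbm{h}^-$. On $\mathbbm{h}^+$ the left side picks up $\mathbbm{h}^+\setminus\mathbbm{l}$, matching the right side; note $\mathbbm{k}\cap[1,n]^+=\varnothing$. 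On $\mathbbm{h}^-$ the left side picks up $(\mathbbm{h}^-\setminus\mathbbm{l})\cup(\mathbbm{h}^-\cap\mathbbm{l}\cap\mathbbm{k})$. Because $(\mathbbm{h}\cap\mathbbm{l})^-\setminus\mathbbm{k}=\varnothing$ in this case, this set is all of $\mathbbm{h}^-$, matching $k_{\mathbf{h}\setminus\mathbf{h}^+}$. This is essentially Lemma \ref{L;Lemma5.4} with the roles $(\mathbf{h},\mathbf{l},\mathbf{k})$. Since $p\nmid k_\mathbf{k}$, there are no divisibility issues.

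Finally, nonvanishing. Lemma \ref{L;Lemma5.3} gives $C_{\mathbf{h}^+}D_g\neq O$. Moreover $\overline{k_{\mathbf{h}\setminus\mathbf{h}^+}}\neq\overline{0}$, because $\mathbbm{h}\setminus\mathbbm{h}^+$ consists of indices from $[1,n]^-$ or with $|\mathbb{U}_a|=2$, and Lemma \ref{L;Lemma2.5} applies. Hence $C_\mathbf{h}D_g\neq O$.
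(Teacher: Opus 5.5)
Your proof is correct and follows essentially the same route as the paper: split $C_\mathbf{h}D_g$ along the dual idempotents via Equation \eqref{Eq;5}, discard components outside $\mathbb{C}_g$ with Lemma \ref{L;Lemma3.8}, transfer $n_{\mathbf{h},\mathbf{j},\mathbf{i}}=0$ to every element of $\mathbb{C}_g$, collapse each surviving term with Lemma \ref{L;Lemma4.6}, match scalars as in Lemma \ref{L;Lemma5.4}, and get nonvanishing from Lemmas \ref{L;Lemma5.3} and \ref{L;Lemma2.5}. The only difference is minor: the paper first restricts to components with $p\nmid k_{\mathbf{h}\setminus\mathbf{k}}$, where $k_{\mathbf{h}^+\setminus\mathbf{k}}=1$, so that Lemma \ref{L;Lemma5.4} applies as stated, whereas your integer identity $k_{\mathbf{h}\setminus\mathbf{l}}k_{\mathbf{h}\cap\mathbf{l}\cap\mathbf{k}}=k_{\mathbf{h}\setminus\mathbf{h}^+}k_{\mathbf{h}^+\setminus\mathbf{l}}$ holds without a divisibility hypothesis and so handles all components at once.
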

\begin{proof}
According to Lemmas \ref{L;Lemma2.5} and \ref{L;Lemma5.3}, notice that $p\nmid k_{\mathbf{h}\setminus\mathbf{h}^+}$ and $C_{\mathbf{h}^+}D_g$ is a defined nonzero matrix. Lemma \ref{L;Lemma2.5} implies that $p\mid k_{\mathbf{h}^+\setminus\mathbf{k}}$ or $k_{\mathbf{h}^+\setminus\mathbf{k}}=1$ for any $\mathbf{k}\in\mathbb{E}$. For any $\mathbf{k}\in\mathbb{E}$, Lemma \ref{L;Lemma2.5} implies that $k_{\mathbf{h}^+\setminus\mathbf{k}}=1$ if and only if $p\nmid k_{\mathbf{h}\setminus\mathbf{k}}$. Let $\mathbb{U}=\{(\mathbf{a}, \mathbf{b}, \mathbf{a}):(\mathbf{a}, \mathbf{b}, \mathbf{a})\in\mathbb{C}_g, p\nmid k_{\mathbf{h}\setminus\mathbf{a}}\}$. As $n_{\mathbf{h},\mathbf{l},\mathbf{k}}=n_{\mathbf{h},\mathbf{j},\mathbf{i}}=0$ for any $(\mathbf{k}, \mathbf{l}, \mathbf{k})\in\mathbb{C}_g$, the combination of Equation \eqref{Eq;5}, Lemmas \ref{L;Lemma3.8}, \ref{L;Lemma4.6}, \ref{L;Lemma2.3}, \ref{L;Lemma2.5}, \ref{L;Lemma5.4} gives the equation
\begin{align*}
C_\mathbf{h}D_g&=\sum_{\mathbf{k}\in\mathbb{E}}\overline{k_{\mathbf{h}\setminus\mathbf{k}}}B_{\mathbf{k}, \mathbf{h}\cap\mathbf{k},\mathbf{k}}D_g\\
&=\sum_{(\mathbf{k}, \mathbf{l},\mathbf{k})\in\mathbb{U}}\overline{k_{\mathbf{h}\setminus\mathbf{k}}}B_{\mathbf{k}, \mathbf{h}\cap\mathbf{k},\mathbf{k}}D_g\\
&=\sum_{(\mathbf{k}, \mathbf{l},\mathbf{k})\in\mathbb{U}}\overline{k_{\mathbf{h}\setminus\mathbf{k}}}\overline{k_{\mathbf{h}\cap\mathbf{k}\cap\mathbf{l}}}
B_{\mathbf{k}, (\mathbf{h}\cap\mathbf{k})^+,\mathbf{k}}D_g\\
&=\sum_{(\mathbf{k}, \mathbf{l},\mathbf{k})\in\mathbb{U}}\overline{k_{\mathbf{h}\setminus\mathbf{h}^+}}B_{\mathbf{k}, \mathbf{h}^+\cap\mathbf{k},\mathbf{k}}D_g=\overline{k_{\mathbf{h}\setminus\mathbf{h}^+}}C_{\mathbf{h}^+}D_g.
\end{align*}
The desired lemma follows from the displayed equation and the above discussion.
\end{proof}
\begin{lem}\label{L;Lemma5.6}
Assume that $g\in[1,2^{n_-}]$, $\mathbf{h}\in\mathbb{E}$, $\mathbf{h}\preceq\mathbf{1}^+$. Then $(\mathbf{1}, \mathbf{h}, \mathbf{1})\in\mathbb{P}$ and the center $\mathrm{Z}(\mathbb{T}D_g)$ of the block algebra $\mathbb{T}D_g$ of $\mathbb{T}$ has a nonempty $\F$-linearly independent subset $\{C_\mathbf{a}D_g:\mathbf{a}\in\mathbb{E}, \mathbf{a}\preceq\mathbf{1}^+\}$.
\end{lem}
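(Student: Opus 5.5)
The plan has three parts: show each $C_\mathbf{a}D_g$ is central in $\mathbb{T}D_g$, note that the set is nonempty, and prove linear independence by projecting onto suitable diagonal pieces $E_\mathbf{l}^*(\cdot)$.

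\emph{Well-definedness and centrality.} Membership $(\mathbf{1},\mathbf{h},\mathbf{1})\in\mathbb{P}$ for $\mathbf{h}\preceq\mathbf{1}^+$ is already recorded in Lemma \ref{L;Lemma5.3}. By Lemma \ref{L;Lemma2.10}, $C_\mathbf{a}\in\mathrm{Z}(\mathbb{T})$. By Theorem \ref{T;Idempotent}, $D_g$ is a central idempotent, and $\mathrm{Z}(\mathbb{T}D_g)=\mathrm{Z}(\mathbb{T})D_g$. Hence every $C_\mathbf{a}D_g$ lies in $\mathrm{Z}(\mathbb{T}D_g)$. The set is nonempty because $\mathbf{0}\preceq\mathbf{1}^+$ and $C_\mathbf{0}D_g=D_g\neq O$ by Lemma \ref{L;Lemma3.18}.

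\emph{Linear independence: setup.} By Lemma \ref{L;Lemma5.3}, the $2^{n_+}$ elements $C_\mathbf{a}D_g$ with $\mathbf{a}\preceq\mathbf{1}^+$ are pairwise distinct and nonzero. Suppose $\sum_{\mathbf{a}\preceq\mathbf{1}^+}c_\mathbf{a}C_\mathbf{a}D_g=O$ with not all $c_\mathbf{a}=\overline{0}$. Choose $\mathbf{h}$ with $c_\mathbf{h}\neq\overline{0}$ that is minimal under $\preceq$ among such indices. Fix $(\mathbf{j},\mathbf{k},\mathbf{j})\in\mathbb{C}_g$ and use Lemma \ref{L;Lemma5.2} to obtain $\mathbf{l}$ with $\mathbf{l}^+=\mathbf{h}$, $(\mathbf{l},\mathbf{k},\mathbf{l})\in\mathbb{C}_g$ and $n_{\mathbf{h},\mathbf{k},\mathbf{l}}=0$. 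Multiply the relation on the left by $E_\mathbf{l}^*$. Equations \eqref{Eq;5}, \eqref{Eq;4}, \eqref{Eq;2} give
\[
E_\mathbf{l}^*C_\mathbf{a}D_g=\overline{k_{\mathbf{a}\setminus\mathbf{l}}}\,B_{\mathbf{l},\mathbf{a}\cap\mathbf{l},\mathbf{l}}D_g .
\]

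\emph{Linear independence: the main obstacle.} For $\mathbf{a}\preceq\mathbf{1}^+$, Lemma \ref{L;Lemma2.5} shows that $k_{\mathbf{a}\setminus\mathbf{l}}$ is a product of factors $|\mathbb{U}_b|-1$ with $b\in[1,n]^+$. So $\overline{k_{\mathbf{a}\setminus\mathbf{l}}}=\overline{0}$ unless $\mathbf{a}\preceq\mathbf{l}$, and since $\mathbf{a}\preceq\mathbf{1}^+$ this means $\mathbf{a}\preceq\mathbf{l}^+=\mathbf{h}$. By minimality of $\mathbf{h}$, every surviving term has $\mathbf{a}=\mathbf{h}$, whose coefficient is $\overline{1}$. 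The relation therefore collapses to $c_\mathbf{h}B_{\mathbf{l},\mathbf{h},\mathbf{l}}D_g=O$. But $B_{\mathbf{l},\mathbf{h},\mathbf{l}}D_g\neq O$ by Lemma \ref{L;Lemma3.14}, exactly as in the proof of Lemma \ref{L;Lemma5.3}, so $c_\mathbf{h}=\overline{0}$, a contradiction. The point to verify carefully is the vanishing $\overline{k_{\mathbf{a}\setminus\mathbf{l}}}=\overline{0}$ for $\mathbf{a}\not\preceq\mathbf{l}$, which relies on $\mathbbm{a}\subseteq[1,n]^+$. Minimality (rather than an arbitrary choice of $\mathbf{h}$) is what removes the terms with $\mathbf{a}\prec\mathbf{h}$. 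As a cross-check for the independence step, Lemma \ref{L;Lemma4.3} (distinct $\mathbf{b}$ give distinct nonzero $B_{\mathbf{l},(\mathbf{l}\triangle\mathbf{l})\cup\mathbf{b},\mathbf{l}}D_g$) could also be used directly.
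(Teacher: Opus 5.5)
Your proposal is correct and follows essentially the same argument as the paper. Both take a $\preceq$-minimal index with nonzero coefficient, use Lemma \ref{L;Lemma5.2} to get $\mathbf{l}$ with $\mathbf{l}^+$ equal to that index, multiply by $E_\mathbf{l}^*$, use Lemma \ref{L;Lemma2.5} to kill every term with $\mathbf{a}\not\preceq\mathbf{l}^+$, and invoke Lemma \ref{L;Lemma3.14} for $B_{\mathbf{l},\mathbf{l}^+,\mathbf{l}}D_g\neq O$; the paper merely phrases the vanishing step as a contradiction with minimality rather than collapsing the relation directly.
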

\begin{proof}
Let $\mathbb{U}\!=\!\{C_\mathbf{a}D_g: \mathbf{a}\in\mathbb{E}, \mathbf{a}\preceq\mathbf{1}^+\}$. So $\varnothing\neq\mathbb{U}\subseteq\mathrm{Z}(\mathbb{T}D_g)\setminus\{O\}$ by combining Theorem \ref{T;Idempotent}, Lemmas \ref{L;Lemma2.10}, \ref{L;Lemma5.3}. Assume that $O$ is a nonzero $\F$-linear combination of the matrices in $\mathbb{U}$. For any $\mathbf{i}\in\mathbb{E}$ and $\mathbf{i}\preceq\mathbf{1}^+$, let $c_\mathbf{i}$ be the coefficient of $C_\mathbf{i}D_g$ in this $\F$-linear combination that represents $O$. Let $\mathbb{V}=\{\mathbf{a}:\mathbf{a}\in\mathbb{E}, \mathbf{a}\preceq\mathbf{1}^+, c_\mathbf{a}\neq\overline{0}\}$. By Lemma \ref{L;Lemma2.3}, let $\mathbf{i}$ be a minimal element in $\mathbb{V}$ with respect to the partial order $\preceq$. Let $\mathbb{W}\!=\!\{C_\mathbf{a}D_g:\mathbf{a}\!\in\!\mathbb{V}\}\setminus\{C_\mathbf{i}D_g\}$. So $C_\mathbf{i}D_g$ is a nonzero $\F$-linear combination of the matrices in $\mathbb{W}$. Assume that $(\mathbf{j},\mathbf{k},\mathbf{j})\in\mathbb{C}_g$. By Lemma \ref{L;Lemma5.2}, there is $\mathbf{l}\in\mathbb{E}$ such that $\mathbf{i}=\mathbf{l}^+, (\mathbf{l}, \mathbf{k}, \mathbf{l})\in\mathbb{C}_g, \text{and}\ n_{\mathbf{i}, \mathbf{k}, \mathbf{l}}=0$. Notice that  $E_\mathbf{l}^*C_\mathbf{i}D_g$ is a nonzero $\F$-linear combination of the matrices in $\{E_\mathbf{l}^*C_\mathbf{a}D_g: C_\mathbf{a}D_g\in\mathbb{W}\}$. Notice that $k_{\mathbf{i}\setminus\mathbf{l}}=1$ by Lemma \ref{L;Lemma2.5}. The combination of Equations \eqref{Eq;5}, \eqref{Eq;4}, \eqref{Eq;2}, Lemmas \ref{L;Lemma2.3}, \ref{L;Lemma3.14} thus implies that the nonzero matrix $B_{\mathbf{l}, \mathbf{i}, \mathbf{l}}D_g$ is a nonzero $\F$-linear combination of the matrices in
$\{\overline{k_{\mathbf{a}\setminus\mathbf{l}}}B_{\mathbf{l},\mathbf{a}\cap\mathbf{l},\mathbf{l}}D_g:C_\mathbf{a}D_g\in\mathbb{W}\}$. So there is $\mathbf{m}\in\mathbb{V}\setminus\{\mathbf{i}\}$ such that $p\nmid k_{\mathbf{m}\setminus\mathbf{l}}$. Lemma \ref{L;Lemma2.5} thus implies that $\mathbf{m}\preceq\mathbf{l}^+$. This thus implies that $\mathbf{m}\preceq\mathbf{i}$. This contradicts the choice of $\mathbf{i}$. The desired lemma follows from this contradiction.
\end{proof}
\begin{lem}\label{L;Lemma5.7}
Assume that $g\in[1,2^{n_-}], \mathbf{h}\in\mathbb{E}, \mathbf{h}\preceq\mathbf{1}^+$. Then $(\mathbf{1}, \mathbf{h}, \mathbf{1})\!\in\!\mathbb{P}$ and the center $\mathrm{Z}(\mathbb{T}D_g)$ of the block algebra $\mathbb{T}D_g$ of $\mathbb{T}$ has an $\F$-basis $\{C_\mathbf{a}D_g:\mathbf{a}\in\mathbb{E}, \mathbf{a}\preceq\!\mathbf{1}^+\}$.
\end{lem}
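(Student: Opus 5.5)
By Lemma \ref{L;Lemma5.6}, the set $\{C_\mathbf{a}D_g:\mathbf{a}\in\mathbb{E}, \mathbf{a}\preceq\mathbf{1}^+\}$ is already an $\F$-linearly independent subset of $\mathrm{Z}(\mathbb{T}D_g)$. So it remains to show that it spans $\mathrm{Z}(\mathbb{T}D_g)$. The first statement $(\mathbf{1},\mathbf{h},\mathbf{1})\in\mathbb{P}$ is part of Lemma \ref{L;Lemma5.3}.

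For spanning, I use the fact recalled in Section 2 that $\mathrm{Z}(\mathbb{T}D_g)=\mathrm{Z}(\mathbb{T})D_g$, since $D_g$ is a central idempotent by Theorem \ref{T;Idempotent}. By Lemma \ref{L;Lemma2.10}, $\mathrm{Z}(\mathbb{T})$ is spanned by $\{C_\mathbf{a}:(\mathbf{1},\mathbf{a},\mathbf{1})\in\mathbb{P}\}$. Hence $\mathrm{Z}(\mathbb{T}D_g)$ is spanned by $\{C_\mathbf{a}D_g:(\mathbf{1},\mathbf{a},\mathbf{1})\in\mathbb{P}\}$, and it suffices to show that each $C_\mathbf{h}D_g$ with $(\mathbf{1},\mathbf{h},\mathbf{1})\in\mathbb{P}$ lies in the span of the proposed set.

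Fix such an $\mathbf{h}$ and some $(\mathbf{i},\mathbf{j},\mathbf{i})\in\mathbb{C}_g$; this class is nonempty since $D_g\neq O$. I split into two cases according to $n_{\mathbf{h},\mathbf{j},\mathbf{i}}$.
\begin{enumerate}
\item If $n_{\mathbf{h},\mathbf{j},\mathbf{i}}>0$, then Lemma \ref{L;Lemma5.1} gives $C_\mathbf{h}D_g=O$, which trivially lies in the span.
\item If $n_{\mathbf{h},\mathbf{j},\mathbf{i}}=0$, then Lemma \ref{L;Lemma5.5} gives $C_\mathbf{h}D_g=\overline{k_{\mathbf{h}\setminus\mathbf{h}^+}}C_{\mathbf{h}^+}D_g$. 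Since $\mathbf{h}^+\preceq\mathbf{1}^+$ by Lemma \ref{L;Lemma2.3} and the definition of $^+$, this is a scalar multiple of a member of the proposed set.
\end{enumerate}

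Thus the set spans $\mathrm{Z}(\mathbb{T}D_g)$, and together with Lemma \ref{L;Lemma5.6} it is an $\F$-basis.

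There is no real obstacle, since the heavy lifting was done in Lemmas \ref{L;Lemma5.1}, \ref{L;Lemma5.5} and \ref{L;Lemma5.6}. The only point needing care is the case split. The first statement of Lemma \ref{L;Lemma5.1} shows that whether $n_{\mathbf{h},\mathbf{j},\mathbf{i}}$ vanishes does not depend on the choice of $(\mathbf{i},\mathbf{j},\mathbf{i})\in\mathbb{C}_g$, so the two cases are exhaustive and consistent.
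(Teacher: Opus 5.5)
Your proposal is correct and follows essentially the same route as the paper: it uses $\mathrm{Z}(\mathbb{T}D_g)=\mathrm{Z}(\mathbb{T})D_g$ together with Lemma \ref{L;Lemma2.10} for a spanning set, then Lemmas \ref{L;Lemma5.1} and \ref{L;Lemma5.5} to reduce each $C_\mathbf{h}D_g$ to zero or a scalar multiple of $C_{\mathbf{h}^+}D_g$, and finally Lemma \ref{L;Lemma5.6} for linear independence. Two small remarks: $\mathbb{C}_g$ is nonempty simply because it is an equivalence class, and the consistency observation is not needed, since fixing any one element of $\mathbb{C}_g$ already makes the case split exhaustive.
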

\begin{proof}
Notice that $\mathrm{Z}(\mathbb{T}D_g)=\mathrm{Z}(\mathbb{T})D_g$ by Theorem \ref{T;Idempotent}. So the combination of Lemmas \ref{L;Lemma2.10}, \ref{L;Lemma5.1}, \ref{L;Lemma5.5} implies that each matrix in $\mathrm{Z}(\mathbb{T}D_g)$ is indeed an $\F$-linear combination of the matrices in $\{C_\mathbf{a}D_g:\mathbf{a}\in\mathbb{E},\mathbf{a}\preceq\mathbf{1}^+\}$. Lemma \ref{L;Lemma5.6} thus implies that $\mathrm{Z}(\mathbb{T}D_g)$ has an $\F$-basis $\{C_\mathbf{a}D_g:\mathbf{a}\in\mathbb{E},\mathbf{a}\preceq\mathbf{1}^+\}$. The desired lemma follows.
\end{proof}
We next introduce a necessary lemma to complete the preparation of this section.
\begin{lem}\label{L;Lemma5.8}
Assume that $g\in[1,2^{n_-}]$, $\mathbf{h}\in\mathbb{E}$, $\mathbf{h}\preceq\mathbf{1}^+$. Then $(\mathbf{1}, \mathbf{h}, \mathbf{1})\in\mathbb{P}$ and the Jacobson radical $\mathrm{Rad}(\mathrm{Z}(\mathbb{T}D_g))$ of the center of the block algebra $\mathbb{T}D_g$ of $\mathbb{T}$ has an \ \ $\F$-basis $\{C_\mathbf{a}D_g: \mathbf{a}\in\mathbb{E}\setminus\{\mathbf{0}\}, \mathbf{a}\preceq\mathbf{1}^+\}$. Moreover, the nilpotent index of $\mathrm{Rad}(\mathrm{Z}(\mathbb{T}D_g))$ is equal to $n_++1$. Furthermore, $\mathrm{Z}(\mathbb{T}D_g)/\mathrm{Rad}(\mathrm{Z}(\mathbb{T}D_g))\cong\mathrm{M}_1(\F)$ as $\F$-algebras.
\end{lem}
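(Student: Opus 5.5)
The plan is to transport the description of $\mathrm{Rad}(\mathrm{Z}(\mathbb{T}))$ in Lemma \ref{L;Lemma2.11} to the block. By Theorem \ref{T;Idempotent}, $D_g$ is an idempotent of $\mathrm{Z}(\mathbb{T})$ and $\mathrm{Z}(\mathbb{T}D_g)=\mathrm{Z}(\mathbb{T})D_g$. Lemma \ref{L;Lemma2.6} then gives
\[
\mathrm{Rad}(\mathrm{Z}(\mathbb{T}D_g))=\mathrm{Rad}(\mathrm{Z}(\mathbb{T}))D_g .
\]
By Lemma \ref{L;Lemma2.11}, this space is spanned by $\{C_\mathbf{a}D_g:(\mathbf{1},\mathbf{a},\mathbf{1})\in\mathbb{P},\ p\mid k_\mathbf{a}\}$.

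Fix such an $\mathbf{a}$ and reduce each spanning element as follows.
\begin{itemize}
\item If $n_{\mathbf{a},\mathbf{j},\mathbf{i}}>0$ for some $(\mathbf{i},\mathbf{j},\mathbf{i})\in\mathbb{C}_g$, then $C_\mathbf{a}D_g=O$ by Lemma \ref{L;Lemma5.1}.
\item Otherwise Lemma \ref{L;Lemma5.5} gives $C_\mathbf{a}D_g=\overline{k_{\mathbf{a}\setminus\mathbf{a}^+}}C_{\mathbf{a}^+}D_g$.
\end{itemize}
In the second case, $p\mid k_\mathbf{a}$ together with Lemma \ref{L;Lemma2.5} forces $\mathbbm{a}^+\neq\varnothing$. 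This holds because $k_\mathbf{a}$ is a product of factors $|\mathbb{U}_h|-1$, and only the indices in $\mathbbm{a}^+$ contribute factors divisible by $p$. Hence $\mathbf{a}^+\neq\mathbf{0}$ and $\mathbf{a}^+\preceq\mathbf{1}^+$. Conversely, every $\mathbf{b}\neq\mathbf{0}$ with $\mathbf{b}\preceq\mathbf{1}^+$ satisfies $(\mathbf{1},\mathbf{b},\mathbf{1})\in\mathbb{P}$ and $p\mid k_\mathbf{b}$, so $C_\mathbf{b}D_g\in\mathrm{Rad}(\mathrm{Z}(\mathbb{T}))D_g$.

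Thus $\mathrm{Rad}(\mathrm{Z}(\mathbb{T}D_g))$ is spanned by $\{C_\mathbf{a}D_g:\mathbf{a}\in\mathbb{E}\setminus\{\mathbf{0}\},\ \mathbf{a}\preceq\mathbf{1}^+\}$. These elements are linearly independent by Lemma \ref{L;Lemma5.6}, so they form a basis. The remaining basis vector of $\mathrm{Z}(\mathbb{T}D_g)$ from Lemma \ref{L;Lemma5.7} is $C_\mathbf{0}D_g=D_g$. Therefore $\mathrm{Z}(\mathbb{T}D_g)=\F D_g\oplus\mathrm{Rad}(\mathrm{Z}(\mathbb{T}D_g))$, and the quotient is one-dimensional and spanned by the image of the identity. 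This gives $\mathrm{Z}(\mathbb{T}D_g)/\mathrm{Rad}(\mathrm{Z}(\mathbb{T}D_g))\cong\mathrm{M}_1(\F)$.

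For the nilpotent index, Lemma \ref{L;Lemma2.10} and the centrality of $D_g$ give
\[
C_\mathbf{a}D_g\,C_\mathbf{b}D_g=\overline{k_{\mathbf{a}\cap\mathbf{b}}}\,C_{\mathbf{a}\cup\mathbf{b}}D_g .
\]
For $\mathbf{a},\mathbf{b}\preceq\mathbf{1}^+$, the coefficient $\overline{k_{\mathbf{a}\cap\mathbf{b}}}$ vanishes unless $\mathbf{a}\cap\mathbf{b}=\mathbf{0}$. So a product of basis elements is nonzero only if their index sets are pairwise disjoint nonempty subsets of $[1,n]^+$.
\begin{itemize}
\item \textbf{Upper bound.} Any $n_++1$ pairwise disjoint nonempty subsets of $[1,n]^+$ cannot exist, since $|[1,n]^+|=n_+$. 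Hence every product of $n_++1$ radical elements vanishes by multilinearity.
\item \textbf{Lower bound.} Take the singletons $\mathbf{e}_1,\ldots,\mathbf{e}_{n_+}$ of $[1,n]^+$. Their product is $C_{\mathbf{1}^+}D_g$, which is nonzero by Lemma \ref{L;Lemma5.3}.
\end{itemize}
This gives nilpotent index exactly $n_++1$. When $n_+=0$ the radical is $\{O\}$ and the index is $1$, consistent with the formula.

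The main obstacle is the reduction step: checking that every $C_\mathbf{a}D_g$ with $p\mid k_\mathbf{a}$ lands in the span of $C_\mathbf{b}D_g$ with $\mathbf{b}\neq\mathbf{0}$. This rests on tracking $\mathbf{a}^+\neq\mathbf{0}$ via Lemma \ref{L;Lemma2.5}.
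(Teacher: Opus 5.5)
Your proposal is correct and follows the paper's proof essentially step for step. Both arguments transport the radical via Lemma \ref{L;Lemma2.6} and Theorem \ref{T;Idempotent}, reduce the spanning set of Lemma \ref{L;Lemma2.11} with Lemmas \ref{L;Lemma5.1} and \ref{L;Lemma5.5}, get independence from Lemma \ref{L;Lemma5.6}/\ref{L;Lemma5.7}, and obtain the lower bound from the singleton product $C_{\mathbf{1}^+}D_g\neq O$. The only small deviation is the upper bound on the nilpotent index: the paper inherits it from Lemma \ref{L;Lemma2.11} via $\mathrm{Rad}(\mathrm{Z}(\mathbb{T}D_g))\subseteq\mathrm{Rad}(\mathrm{Z}(\mathbb{T}))$, whereas you derive it directly from the multiplication rule of Lemma \ref{L;Lemma2.10}, and both routes are valid.
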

\begin{proof}
Notice that $\mathrm{Rad}(\mathrm{Z}(\mathbb{T}D_g))=\mathrm{Rad}(\mathrm{Z}(\mathbb{T}))D_g$ by Theorem \ref{T;Idempotent} and Lemma \ref{L;Lemma2.6}. So $\{C_\mathbf{a}D_g: \mathbf{a}\in\mathbb{E}\setminus\{\mathbf{0}\}, \mathbf{a}\preceq\mathbf{1}^+\}\subseteq\mathrm{Rad}(\mathrm{Z}(\mathbb{T}D_g))\setminus\{O\}$ by combining Lemmas \ref{L;Lemma2.5}, \ref{L;Lemma2.11}, \ref{L;Lemma5.3}. By combining Lemmas \ref{L;Lemma2.11}, \ref{L;Lemma5.1}, \ref{L;Lemma5.5}, each matrix in $\mathrm{Rad}(\mathrm{Z}(\mathbb{T}D_g))$ is an $\F$-linear combination of the matrices in $\{C_\mathbf{a}D_g: \mathbf{a}\in\mathbb{E}\setminus\{\mathbf{0}\}, \mathbf{a}\preceq\mathbf{1}^+\}$. The first statement follows from Lemma \ref{L;Lemma5.7}. The first statement and Lemma \ref{L;Lemma2.3} thus imply that $\mathrm{Rad}(\mathrm{Z}(\mathbb{T}D_g))=\{O\}$ if and only if $n_+=0$. Assume that $n_+>0$ and $[1,n]^+=\{i_1, i_2, \ldots, i_{n_+}\}$. For any $j\in[1, n_+]$, let $\mathbf{k}(j)$ be the $n$-tuple in $\mathbb{E}$ whose unique nonzero entry is the $i_j$th-entry. For any $j\in[1, n_+]$, the first statement thus implies that $C_{\mathbf{k}(j)}D_g\in\mathrm{Rad}(\mathrm{Z}(\mathbb{T}D_g))\setminus\{O\}$. By combining Theorem \ref{T;Idempotent}, Lemmas \ref{L;Lemma2.10}, \ref{L;Lemma2.5}, \ref{L;Lemma5.3}, $C_{\mathbf{k}(1)}D_gC_{\mathbf{k}(2)}D_g\cdots C_{\mathbf{k}(n_+)}D_g\in\mathrm{Rad}(\mathrm{Z}(\mathbb{T}D_g))\setminus\{O\}$. As Theorem \ref{T;Idempotent} implies that
$\mathrm{Rad}(\mathrm{Z}(\mathbb{T}D_g))\subseteq\mathrm{Rad}(\mathrm{Z}(\mathbb{T}))$, the second statement follows from Lemma \ref{L;Lemma2.11}. The desired lemma follows from Lemma \ref{L;Lemma5.7} and the first statement.
\end{proof}
We are now ready to display the main result of this section and another corollary.
\begin{thm}\label{T;Center}
Assume that $g\in[1,2^{n_-}], \mathbf{h}\in\mathbb{E}, \mathbf{h}\preceq\mathbf{1}^+$. Then $(\mathbf{1}, \mathbf{h}, \mathbf{1})\!\in\!\mathbb{P}$ and the center $\mathrm{Z}(\mathbb{T}D_g)$ of the block algebra $\mathbb{T}D_g$ of $\mathbb{T}$ has an $\F$-basis $\{C_\mathbf{a}D_g:\mathbf{a}\in\mathbb{E}, \mathbf{a}\preceq\mathbf{1}^+\}$. In particular, the $\F$-dimension of $\mathrm{Z}(\mathbb{T}D_g)$ is equal to $2^{n_+}$. Moreover, $\mathrm{Rad}(\mathrm{Z}(\mathbb{T}D_g))$ has an $\F$-basis $\{C_\mathbf{a}D_g\!:\!\mathbf{a}\in\mathbb{E}\setminus\{\mathbf{0}\}, \mathbf{a}\preceq\mathbf{1}^+\}$ and the nilpotent index of $\mathrm{Rad}(\mathrm{Z}(\mathbb{T}D_g))$ is equal to $n_++1$. In particular, the $\F$-dimension of $\mathrm{Rad}(\mathrm{Z}(\mathbb{T}D_g))$ is equal to $2^{n_+}-1$.
\end{thm}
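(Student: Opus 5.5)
The plan is to read Theorem \ref{T;Center} off Lemmas \ref{L;Lemma5.7} and \ref{L;Lemma5.8}; all the substantive work has already been done there.

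\emph{Basis of the center and its dimension.} By Lemma \ref{L;Lemma2.3}, $\mathbf{h}\preceq\mathbf{1}^+$ gives $\mathbbm{h}\subseteq[1,n]^+\subseteq[1,n]^\circ$. Hence $\mathbbm{1}\triangle\mathbbm{1}=\varnothing\subseteq\mathbbm{h}\subseteq(\mathbbm{1}\cap\mathbbm{1})^\circ$, which means $(\mathbf{1},\mathbf{h},\mathbf{1})\in\mathbb{P}$. Lemma \ref{L;Lemma5.7} then states exactly that $\{C_\mathbf{a}D_g:\mathbf{a}\in\mathbb{E},\mathbf{a}\preceq\mathbf{1}^+\}$ is an $\F$-basis of $\mathrm{Z}(\mathbb{T}D_g)$. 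For the dimension, Lemma \ref{L;Lemma5.3} shows that the map $\mathbf{a}\mapsto C_\mathbf{a}D_g$ is injective on $\{\mathbf{a}:\mathbf{a}\preceq\mathbf{1}^+\}$, and this set has $2^{n_+}$ elements by Lemma \ref{L;Lemma2.3}. So $\dim_\F\mathrm{Z}(\mathbb{T}D_g)=2^{n_+}$.

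\emph{The radical, its nilpotent index and dimension.} Lemma \ref{L;Lemma5.8} directly gives the basis $\{C_\mathbf{a}D_g:\mathbf{a}\in\mathbb{E}\setminus\{\mathbf{0}\},\mathbf{a}\preceq\mathbf{1}^+\}$ of $\mathrm{Rad}(\mathrm{Z}(\mathbb{T}D_g))$ and the nilpotent index $n_++1$. Removing $\mathbf{0}$ from the index set of size $2^{n_+}$, and again using the injectivity from Lemma \ref{L;Lemma5.3}, gives dimension $2^{n_+}-1$.

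No step is a genuine obstacle. The one point to handle is the case $n_+=0$. There the radical basis is empty, the radical is $\{O\}$, and its nilpotent index should be $1=n_++1$. This agrees with the convention in Lemma \ref{L;Lemma2.11} and is already covered inside Lemma \ref{L;Lemma5.8}.
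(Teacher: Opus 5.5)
Your proposal is correct and matches the paper's proof: the paper also reduces the theorem to Lemmas \ref{L;Lemma5.7} and \ref{L;Lemma5.8}, and gets both dimension counts from the injectivity and the count $2^{n_+}$ in Lemma \ref{L;Lemma5.3}. Your direct check that $(\mathbf{1},\mathbf{h},\mathbf{1})\in\mathbb{P}$ via $[1,n]^+\subseteq[1,n]^\circ$, and your remark on the degenerate case $n_+=0$, are harmless additions.
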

\begin{proof}
By Lemmas \ref{L;Lemma5.7} and \ref{L;Lemma5.8}, it suffices to determine the $\F$-dimensions of  $\mathrm{Z}(\mathbb{T}D_g)$ and $\mathrm{Rad}(\mathrm{Z}(\mathbb{T}D_g))$. The formulae of the $\F$-dimensions of $\mathrm{Z}(\mathbb{T}D_g)$ and $\mathrm{Rad}(\mathrm{Z}(\mathbb{T}D_g))$ can be deduced by combining Lemmas \ref{L;Lemma5.7}, \ref{L;Lemma5.3}, \ref{L;Lemma5.8}.
The desired theorem follows.
\end{proof}
\begin{cor}\label{C;Corollary5.10}
Assume that $g\in[1,2^{n_-}]$. Then the $\F$-dimension of the center $\mathrm{Z}(\mathbb{T}D_g)$ of the block algebra $\mathbb{T}D_g$ of $\mathbb{T}$ is independent of the choice of $\mathbf{x}$. Moreover, the nilpotent index and the $\F$-dimension of $\mathrm{Rad}(\mathrm{Z}(\mathbb{T}D_g))$ are independent of the choice of $\mathbf{x}$. Furthermore, assume that $h\in[1,2^{n_-}]$. Then $\mathrm{Z}(\mathbb{T}D_g)\cong\mathrm{Z}(\mathbb{T}D_h)\ \text{as $\F$-algebras.}$
\end{cor}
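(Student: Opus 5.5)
The first two claims are immediate from Theorem \ref{T;Center}. It gives $\dim_\F\mathrm{Z}(\mathbb{T}D_g)=2^{n_+}$, $\dim_\F\mathrm{Rad}(\mathrm{Z}(\mathbb{T}D_g))=2^{n_+}-1$, and the nilpotent index $n_++1$. Since $n_+=|[1,n]^+|$ depends only on $p$ and the cardinalities $|\mathbb{U}_1|,\ldots,|\mathbb{U}_n|$, none of these numbers involves $\mathbf{x}$.

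For the isomorphism $\mathrm{Z}(\mathbb{T}D_g)\cong\mathrm{Z}(\mathbb{T}D_h)$, I will use the bases of Theorem \ref{T;Center}. Define the $\F$-linear map $\phi\colon\mathrm{Z}(\mathbb{T}D_g)\to\mathrm{Z}(\mathbb{T}D_h)$ by $\phi(C_\mathbf{a}D_g)=C_\mathbf{a}D_h$ for all $\mathbf{a}\in\mathbb{E}$ with $\mathbf{a}\preceq\mathbf{1}^+$. Both families are bases indexed by the same set, so $\phi$ is an $\F$-linear bijection. Unitality is clear: $C_\mathbf{0}=I$, so $\phi(D_g)=D_h$.

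It remains to check multiplicativity on basis elements. By Lemma \ref{L;Lemma2.10} and the facts that $D_g$ is a central idempotent (Lemma \ref{L;Lemma3.15} and Theorem \ref{T;Idempotent}), we get
$$(C_\mathbf{a}D_g)(C_\mathbf{b}D_g)=C_\mathbf{a}C_\mathbf{b}D_g=\overline{k_{\mathbf{a}\cap\mathbf{b}}}\,C_{\mathbf{a}\cup\mathbf{b}}D_g.$$
Since $\mathbf{a},\mathbf{b}\preceq\mathbf{1}^+$, also $\mathbf{a}\cup\mathbf{b}\preceq\mathbf{1}^+$, so $C_{\mathbf{a}\cup\mathbf{b}}D_g$ is again a basis element. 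The structure constant $\overline{k_{\mathbf{a}\cap\mathbf{b}}}$ does not depend on $g$; indeed by Lemma \ref{L;Lemma2.5} it equals $\overline{1}$ if $\mathbf{a}\cap\mathbf{b}=\mathbf{0}$ and $\overline{0}$ otherwise. The same identity holds with $h$ in place of $g$, so $\phi$ preserves products of basis elements, and by bilinearity it is an $\F$-algebra isomorphism.

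No step is a real obstacle. The only point needing care is that the product formula stays inside the index set $\{\mathbf{a}\preceq\mathbf{1}^+\}$, which is exactly the closure under $\cup$ noted above.
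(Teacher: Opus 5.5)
Your proof is correct and follows the paper's argument. The dimensions and nilpotent index are read off from Theorem \ref{T;Center}, where they depend only on $n_+$. The isomorphism is the linear map $C_\mathbf{a}D_g\mapsto C_\mathbf{a}D_h$ on the bases of Theorem \ref{T;Center}, which is multiplicative by the product rule of Lemma \ref{L;Lemma2.10} together with centrality of $D_g$ and $D_h$. The only difference is that you write out the checks the paper leaves implicit: closure of $\{\mathbf{a}\preceq\mathbf{1}^+\}$ under $\cup$, unitality, and independence of the structure constants from $g$.
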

\begin{proof}
By combining Theorems \ref{T;Center}, \ref{T;Idempotent}, Lemma \ref{L;Lemma2.10}, notice that the $\F$-linear map from $\mathrm{Z}(\mathbb{T}D_g)$ to $\mathrm{Z}(\mathbb{T}D_h)$ sending $C_\mathbf{i}D_g$ to $C_\mathbf{i}D_h$ for any $\mathbf{i}\in\{\mathbf{a}: \mathbf{a}\in\mathbb{E}, \mathbf{a}\preceq\mathbf{1}^+\}$ is also an $\F$-algebra isomorphism. The desired corollary follows from Theorem \ref{T;Center}.
\end{proof}
We are now ready to close this section by presenting an example of Theorem \ref{T;Center}.
\begin{eg}\label{E;Example5.11}
Assume that $n=|\mathbb{U}_1|=2$ and $|\mathbb{U}_2|=3$. If $p\neq2$, Example \ref{E;Example3.22} implies that $\mathbb{T}$ has exactly
two block algebras. If $p\neq2$, Theorem \ref{T;Center} implies that the $\F$-dimensions of the centers of the only two distinct block algebras of $\mathbb{T}$ are equal to one. For the remaining case $p=2$, Example \ref{E;Example3.22} and Theorem \ref{T;Center} also imply that the $\F$-dimension of the center of the unique block algebra of $\mathbb{T}$ is equal to two. If $p=2$, Example \ref{E;Example3.22} and Theorem \ref{T;Center} imply that the nilpotent index of the Jacobson radical of the center of the unique block algebra of $\mathbb{T}$ is equal to two.
\end{eg}
\section{Jacobson radical of block algebra of $\mathbb{T}$}
In this section, for any block algebra of $\mathbb{T}$, we get an $\F$-basis and the nilpotent index of the Jacobson radical of this block algebra of $\mathbb{T}$. For any block algebra of $\mathbb{T}$, we also get the algebraic structure of the quotient $\F$-algebra of this block algebra of $\mathbb{T}$ with respect to its Jacobson radical. In particular, we get the $\F$-dimension of the Jacobson radical of an arbitrary chosen block algebra of $\mathbb{T}$. For our purpose, we display a sequence of lemmas to successively study these themes of this section. By Theorem \ref{T;Idempotent}, recall that $\mathbb{T}D_1, \mathbb{T}D_2, \ldots, \mathbb{T}D_{2^{n_-}}$ are exactly all block algebras of $\mathbb{T}$.
\begin{lem}\label{L;Lemma6.1}
Assume that $g\in[1, 2^{n_-}]$. Then the Jacobson radical $\mathrm{Rad}(\mathbb{T}D_g)$ of the block algebra $\mathbb{T}D_g$ of $\mathbb{T}$ satisfies the equation $\mathrm{Rad}(\mathbb{T}D_g)=\mathrm{Rad}(\mathbb{T})D_g$. Moreover, assume that $\mathbf{h}, \mathbf{i}, \mathbf{j}\!\in\!\mathbb{E}$ and $\mathbf{i}\!\!\preceq\!(\mathbf{h}\cap\mathbf{j})^+$. Then $(\mathbf{h}, (\mathbf{h}\triangle\mathbf{j})\cup\mathbf{i}, \mathbf{j})\!\in\!\mathbb{P}$ and $\mathrm{Rad}(\mathbb{T}D_g)$ has an $\F$-basis $\{\!B_{\mathbf{a}, (\mathbf{a}\triangle\mathbf{c})\cup\mathbf{b}, \mathbf{c}}\!D_g\!\!:\! \mathbf{a}, \mathbf{b},\mathbf{c}\!\in\!\mathbb{E}, \mathbf{b}\!\preceq\!(\mathbf{a}\cap\mathbf{c})^+, \exists\ (\mathbf{c}, \mathbf{d}, \mathbf{c})\!\!\in\!\!\mathbb{C}_g, n_{\mathbf{c}, \mathbf{a}\cup\mathbf{d}, \mathbf{c}}\!\!=\!\!0, p\!\mid\! k_{(\mathbf{a}\triangle\mathbf{c})\cup\mathbf{b}}\}$.
\end{lem}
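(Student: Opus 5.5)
The first equation $\mathrm{Rad}(\mathbb{T}D_g)=\mathrm{Rad}(\mathbb{T})D_g$ comes directly from Lemma \ref{L;Lemma2.6}, since $D_g$ is an idempotent of $\mathrm{Z}(\mathbb{T})$ by Theorem \ref{T;Idempotent}. That $(\mathbf{h}, (\mathbf{h}\triangle\mathbf{j})\cup\mathbf{i}, \mathbf{j})\in\mathbb{P}$ is the first statement of Lemma \ref{L;Lemma4.3}. Write $\mathbb{U}$ for the displayed set and $\mathbb{B}$ for the basis of $\mathbb{T}D_g$ in Lemma \ref{L;Lemma4.8}. Then $\mathbb{U}\subseteq\mathbb{B}$, so $\mathbb{U}$ is $\F$-linearly independent. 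It remains to show two things: $\mathbb{U}\subseteq\mathrm{Rad}(\mathbb{T})D_g$, and $\mathbb{U}$ spans $\mathrm{Rad}(\mathbb{T})D_g$.

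The inclusion is immediate from Lemma \ref{L;Lemma2.9}. If $p\mid k_{(\mathbf{a}\triangle\mathbf{c})\cup\mathbf{b}}$, then $B_{\mathbf{a},(\mathbf{a}\triangle\mathbf{c})\cup\mathbf{b},\mathbf{c}}\in\mathrm{Rad}(\mathbb{T})$, so its product with $D_g$ lies in $\mathrm{Rad}(\mathbb{T})D_g$.

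For spanning, Lemma \ref{L;Lemma2.9} shows that $\mathrm{Rad}(\mathbb{T})D_g$ is spanned by the products $B_{\mathbf{h},\mathbf{i},\mathbf{j}}D_g$ with $(\mathbf{h},\mathbf{i},\mathbf{j})\in\mathbb{P}$ and $p\mid k_\mathbf{i}$. Lemmas \ref{L;Lemma3.6} and \ref{L;Lemma3.8} show such a product is zero unless some $(\mathbf{j},\mathbf{k},\mathbf{j})\in\mathbb{C}_g$ satisfies $n_{\mathbf{i},\mathbf{k},\mathbf{j}}=0$. In that case Lemma \ref{L;Lemma4.6} gives $B_{\mathbf{h},\mathbf{i},\mathbf{j}}D_g=\overline{k_{\mathbf{h}\cap\mathbf{i}\cap\mathbf{k}}}B_{\mathbf{h},(\mathbf{h}\triangle\mathbf{j})\cup(\mathbf{h}\cap\mathbf{i}\cap\mathbf{j})^+,\mathbf{j}}D_g$. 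As in the proof of Lemma \ref{L;Lemma4.8}, the index triple on the right satisfies $n_{\mathbf{j},\mathbf{h}\cup\mathbf{k},\mathbf{j}}=0$. Two cases remain.
\begin{itemize}
\item If $p\mid k_{\mathbf{h}\cap\mathbf{i}\cap\mathbf{k}}$, the coefficient vanishes and the product is zero.
\item If $p\nmid k_{\mathbf{h}\cap\mathbf{i}\cap\mathbf{k}}$, we must show $p\mid k_{(\mathbf{h}\triangle\mathbf{j})\cup(\mathbf{h}\cap\mathbf{i}\cap\mathbf{j})^+}$. Then the right-hand matrix is a scalar multiple of an element of $\mathbb{U}$.
\end{itemize}

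The obstacle is the combinatorial divisibility in the second case. By Lemma \ref{L;Lemma2.5}, $p\mid k_\mathbf{i}$ means some coordinate $a\in\mathbbm{i}$ has $p\mid |\mathbb{U}_a|-1$, that is, $a\in\mathbbm{i}^+$. Since $\mathbbm{i}\subseteq(\mathbbm{h}\triangle\mathbbm{j})\cup(\mathbbm{h}\cap\mathbbm{j})^\circ$, either $a\in\mathbbm{h}\triangle\mathbbm{j}$ or $a\in(\mathbbm{h}\cap\mathbbm{i}\cap\mathbbm{j})^+$. In both cases $a$ lies in $(\mathbbm{h}\triangle\mathbbm{j})\cup(\mathbbm{h}\cap\mathbbm{i}\cap\mathbbm{j})^+$, so Lemma \ref{L;Lemma2.5} gives $p\mid k_{(\mathbf{h}\triangle\mathbf{j})\cup(\mathbf{h}\cap\mathbf{i}\cap\mathbf{j})^+}$. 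The assumption $p\nmid k_{\mathbf{h}\cap\mathbf{i}\cap\mathbf{k}}$ is not actually needed for this, so the case split only matters for whether the product is zero. This proves spanning, and together with independence shows $\mathbb{U}$ is an $\F$-basis of $\mathrm{Rad}(\mathbb{T}D_g)$.
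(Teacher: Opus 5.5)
Your proposal is correct and follows the paper's proof essentially step for step. The first equation comes from Theorem \ref{T;Idempotent} and Lemma \ref{L;Lemma2.6}, independence and containment come from Lemmas \ref{L;Lemma4.8} and \ref{L;Lemma2.9}, and spanning uses Lemmas \ref{L;Lemma2.9}, \ref{L;Lemma3.6}, \ref{L;Lemma3.8}, \ref{L;Lemma4.6}, \ref{L;Lemma2.5} together with the observation that $n_{\mathbf{i},\mathbf{k},\mathbf{j}}=0$ forces $n_{\mathbf{j},\mathbf{h}\cup\mathbf{k},\mathbf{j}}=0$. Your coordinate argument for $p\mid k_{(\mathbf{h}\triangle\mathbf{j})\cup(\mathbf{h}\cap\mathbf{i}\cap\mathbf{j})^+}$ spells out what the paper leaves to Lemma \ref{L;Lemma2.5}. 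Your first case, $p\mid k_{\mathbf{h}\cap\mathbf{i}\cap\mathbf{k}}$, never occurs because $p\nmid k_{\mathbf{k}}$, but including it does no harm.
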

\begin{proof}
The first statement follows from Theorem \ref{T;Idempotent} and Lemma \ref{L;Lemma2.6}. By combining the first statement, Lemmas \ref{L;Lemma4.8}, \ref{L;Lemma2.9}, notice that $\mathrm{Rad}(\mathbb{T}D_g)$ contains all matrices in $\{\!B_{\mathbf{a}, (\mathbf{a}\triangle\mathbf{c})\cup\mathbf{b}, \mathbf{c}}D_g\!:\! \mathbf{a}, \mathbf{b},\mathbf{c}\!\in\!\mathbb{E}, \mathbf{b}\!\preceq\!(\mathbf{a}\cap\mathbf{c})^+, \exists\ (\mathbf{c}, \mathbf{d}, \mathbf{c})\!\in\!\mathbb{C}_g, n_{\mathbf{c}, \mathbf{a}\cup\mathbf{d}, \mathbf{c}}\!\!=\!\!0, p\!\mid\! k_{(\mathbf{a}\triangle\mathbf{c})\cup\mathbf{b}}\}$. For any $\mathbf{k}, \mathbf{l}, \mathbf{m}, \mathbf{q}\in\mathbb{E}$ and $\mathbbm{m}\setminus\mathbbm{k}\subseteq\mathbbm{l}$, notice that $n_{\mathbf{l}, \mathbf{q}, \mathbf{m}}=0$ only if $n_{\mathbf{m}, \mathbf{k}\cup\mathbf{q}, \mathbf{m}}=0$. Therefore the combination of the first statement, Lemmas \ref{L;Lemma2.9}, \ref{L;Lemma3.6}, \ref{L;Lemma3.8}, \ref{L;Lemma4.6}, \ref{L;Lemma2.5} implies that each matrix in $\mathrm{Rad}(\mathbb{T}D_g)$ is an $\F$-linear combination of the matrices in $\{B_{\mathbf{a}, (\mathbf{a}\triangle\mathbf{c})\cup\mathbf{b}, \mathbf{c}}D_g\!:\! \mathbf{a}, \mathbf{b},\mathbf{c}\!\in\!\mathbb{E}, \mathbf{b}\!\preceq\!(\mathbf{a}\cap\mathbf{c})^+, \exists\ (\mathbf{c}, \mathbf{d}, \mathbf{c})\!\in\!\mathbb{C}_g, n_{\mathbf{c}, \mathbf{a}\cup\mathbf{d}, \mathbf{c}}\!=\!0, p\!\mid\! k_{(\mathbf{a}\triangle\mathbf{c})\cup\mathbf{b}}\}$. The desired lemma follows from the above proved statements and Lemma \ref{L;Lemma4.8}.
\end{proof}
\begin{lem}\label{L;Lemma6.2}
Assume that $g\in[1,2^{n_-}]$, $\mathbf{h}, \mathbf{i}, \mathbf{j}\in\mathbb{E}$, $\mathbf{i}\preceq(\mathbf{h}\cap\mathbf{j})^+$, $[1,n]^-\subseteq\mathbbm{j}$. Then $(\mathbf{h},(\mathbf{h}\triangle\mathbf{j})\cup\mathbf{i},\mathbf{j})\in\mathbb{P}$ and there is a unique $\mathbf{k}\in\mathbb{E}$ such that $(\mathbf{j},\mathbf{k},\mathbf{j})\in\mathbb{C}_g$. Moreover, assume that $n_{\mathbf{j},\mathbf{h}\cup\mathbf{k},\mathbf{j}}=0$ and $p\mid k_{(\mathbf{h}\triangle\mathbf{j})\cup\mathbf{i}}$. Then $B_{\mathbf{h},(\mathbf{h}\triangle\mathbf{j})\cup\mathbf{i}, \mathbf{j}}D_g$ is a nonzero matrix in the Jacobson radical $\mathrm{Rad}(\mathbb{T}D_g)$ of the block algebra $\mathbb{T}D_g$ of $\mathbb{T}$. Furthermore, the following are equivalent: $\mathrm{Rad}(\mathbb{T}D_g)=\{O\}$; $n_+=0$; and $\mathfrak{S}$ is a $p'$-valenced scheme.
\end{lem}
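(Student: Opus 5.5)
We prove the three parts of Lemma \ref{L;Lemma6.2} in the order stated.

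\emph{First statement.} Since $\mathbf{i}\preceq(\mathbf{h}\cap\mathbf{j})^+$, we get $\mathbf{h}\triangle\mathbf{j}\preceq(\mathbf{h}\triangle\mathbf{j})\cup\mathbf{i}\preceq\mathbf{h}\mathbf{j}$. Hence $(\mathbf{h},(\mathbf{h}\triangle\mathbf{j})\cup\mathbf{i},\mathbf{j})\in\mathbb{P}$, exactly as in Lemma \ref{L;Lemma4.3}.

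\emph{Uniqueness of $\mathbf{k}$.} Fix any $(\mathbf{r},\mathbf{s},\mathbf{r})\in\mathbb{C}_g$. A triple $(\mathbf{j},\mathbf{k},\mathbf{j})$ lies in $\mathbb{C}_g$ exactly when $\mathbbm{k}\subseteq\mathbbm{j}^\circ$, $p\nmid k_\mathbf{k}$ and $\mathbbm{j}^-\setminus\mathbbm{k}=\mathbbm{r}^-\setminus\mathbbm{s}$. By Lemma \ref{L;Lemma2.5}, the condition $p\nmid k_\mathbf{k}$ forces $\mathbbm{k}\subseteq[1,n]^-\subseteq\mathbbm{j}$, so $\mathbbm{k}\subseteq\mathbbm{j}^-=[1,n]^-$. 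Therefore $\mathbbm{k}=[1,n]^-\setminus(\mathbbm{r}^-\setminus\mathbbm{s})$ is forced. Conversely, this choice of $\mathbf{k}$ satisfies all three conditions, because $\mathbbm{r}^-\setminus\mathbbm{s}\subseteq[1,n]^-$. Lemma \ref{L;Lemma2.3} then gives existence and uniqueness.

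\emph{Second statement.} Assume $n_{\mathbf{j},\mathbf{h}\cup\mathbf{k},\mathbf{j}}=0$ and $p\mid k_{(\mathbf{h}\triangle\mathbf{j})\cup\mathbf{i}}$.
\begin{itemize}
\item The matrix $B_{\mathbf{h},(\mathbf{h}\triangle\mathbf{j})\cup\mathbf{i},\mathbf{j}}D_g$ is nonzero by the first statement of Lemma \ref{L;Lemma4.3}.
\item It lies in the spanning set of Lemma \ref{L;Lemma6.1}, so it lies in $\mathrm{Rad}(\mathbb{T}D_g)$. Alternatively, Lemma \ref{L;Lemma2.9} gives $B_{\mathbf{h},(\mathbf{h}\triangle\mathbf{j})\cup\mathbf{i},\mathbf{j}}\in\mathrm{Rad}(\mathbb{T})$, and then Lemma \ref{L;Lemma6.1} gives the product in $\mathrm{Rad}(\mathbb{T})D_g=\mathrm{Rad}(\mathbb{T}D_g)$.
\end{itemize}

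\emph{Third statement.} The equivalence of $n_+=0$ with $\mathfrak{S}$ being $p'$-valenced is already in Lemma \ref{L;Lemma2.9}.
\begin{itemize}
\item If $n_+=0$, then $\mathrm{Rad}(\mathbb{T})=\{O\}$ by Lemma \ref{L;Lemma2.9}, so $\mathrm{Rad}(\mathbb{T}D_g)=\mathrm{Rad}(\mathbb{T})D_g=\{O\}$.
\item If $n_+>0$, we exhibit a nonzero element via the second statement. Choose $\mathbf{j}=\mathbf{1}$, which satisfies $[1,n]^-\subseteq\mathbbm{j}$, and let $\mathbf{k}$ be the unique element with $(\mathbf{1},\mathbf{k},\mathbf{1})\in\mathbb{C}_g$. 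Choose $\mathbf{h}=\mathbf{1}$, so that $n_{\mathbf{1},\mathbf{1}\cup\mathbf{k},\mathbf{1}}=|[1,n]^-\setminus\mathbbm{1}|=0$. Let $\mathbf{i}$ be the tuple with a single $1$ at some $a\in[1,n]^+$; then $\mathbf{i}\preceq\mathbf{1}^+=(\mathbf{1}\cap\mathbf{1})^+$. Now $(\mathbf{h}\triangle\mathbf{j})\cup\mathbf{i}=\mathbf{i}$, and $k_\mathbf{i}=|\mathbb{U}_a|-1$ is divisible by $p$. The second statement then gives $B_{\mathbf{1},\mathbf{i},\mathbf{1}}D_g\neq O$ in $\mathrm{Rad}(\mathbb{T}D_g)$.
\end{itemize}

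The main obstacle is the uniqueness of $\mathbf{k}$. It needs the observation that $p\nmid k_\mathbf{k}$ together with $\mathbbm{k}\subseteq\mathbbm{j}^\circ$ forces $\mathbbm{k}\subseteq[1,n]^-$: an index in $[1,n]^+$ would make $p\mid k_\mathbf{k}$, and indices outside $[1,n]^\circ$ are excluded by $\mathbbm{k}\subseteq\mathbbm{j}^\circ$. The rest is assembling Lemmas \ref{L;Lemma4.3}, \ref{L;Lemma2.9} and \ref{L;Lemma6.1}.
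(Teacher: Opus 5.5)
Your proof is correct and follows the paper's argument. The paper proves the first statement via Lemmas \ref{L;Lemma2.3}, \ref{L;Lemma2.5}, \ref{L;Lemma3.5}; your direct derivation of $\mathbbm{k}=[1,n]^-\setminus(\mathbbm{r}^-\setminus\mathbbm{s})$ is that same reasoning written out. It proves the second statement from the first together with Lemma \ref{L;Lemma6.1}, and the third by exhibiting the nonzero element $B_{\mathbf{1},\mathbf{1}^+,\mathbf{1}}D_g$ of $\mathrm{Rad}(\mathbb{T}D_g)$. Your single-coordinate choice of $\mathbf{i}$ in place of $\mathbf{1}^+$ makes no difference.
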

\begin{proof}
The first statement follows from combining Lemmas \ref{L;Lemma2.3}, \ref{L;Lemma2.5}, \ref{L;Lemma3.5}. The second statement follows from the first statement and Lemma \ref{L;Lemma6.1}. By the first statement, there is a unique $\mathbf{l}\in\mathbb{E}$ such that $(\mathbf{1}, \mathbf{l},\mathbf{1})\in\mathbb{C}_g$. If $n_+>0$, Lemma \ref{L;Lemma2.5} and the second statement imply that  $n_{\mathbf{1}, \mathbf{1}, \mathbf{1}}=0$, $p\mid k_{\mathbf{1}^+}$, and $B_{\mathbf{1},\mathbf{1}^+,\mathbf{1}}D_g\in\mathrm{Rad}(\mathbb{T}D_g)\setminus\{O\}$. So $\mathrm{Rad}(\mathbb{T}D_g)=\{O\}$ only if $n_+=0$. As Lemma \ref{L;Lemma2.9} implies that $n_+=0$ if and only if $\mathfrak{S}$ is a $p'$-valenced scheme, the desired lemma follows from Lemma \ref{L;Lemma6.1}.
\end{proof}
\begin{lem}\label{L;Lemma6.3}
Assume that $g\in[1,2^{n_-}]$. Then the nilpotent index of the Jacobson radical $\mathrm{Rad}(\mathbb{T}D_g)$ of the block algebra $\mathbb{T}D_g$ of $\mathbb{T}$ is equal to $2n_++1$.
\end{lem}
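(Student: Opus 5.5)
We prove this by establishing an upper bound and a lower bound. For the upper bound, Lemma \ref{L;Lemma6.1} gives $\mathrm{Rad}(\mathbb{T}D_g)=\mathrm{Rad}(\mathbb{T})D_g\subseteq\mathrm{Rad}(\mathbb{T})$, because $D_g$ is a central idempotent by Theorem \ref{T;Idempotent}. Lemma \ref{L;Lemma2.9} says that any product of $2n_++1$ elements of $\mathrm{Rad}(\mathbb{T})$ vanishes. Hence any product of $2n_++1$ elements of $\mathrm{Rad}(\mathbb{T}D_g)$ vanishes, and the nilpotent index is at most $2n_++1$. If $n_+=0$, Lemma \ref{L;Lemma6.2} gives $\mathrm{Rad}(\mathbb{T}D_g)=\{O\}$, whose nilpotent index is $1=2n_++1$. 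So from now on we assume $n_+>0$.

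For the lower bound we must exhibit $2n_+$ elements of $\mathrm{Rad}(\mathbb{T}D_g)$ whose product is nonzero. We follow Lemma \ref{L;Lemma6.2} and work at the vertex $\mathbf{j}=\mathbf{1}$, using the unique $(\mathbf{1},\mathbf{l},\mathbf{1})\in\mathbb{C}_g$. Write $[1,n]^+=\{i_1,\ldots,i_{n_+}\}$, and let $\mathbf{k}(t)$ be the tuple whose only nonzero entry is in position $i_t$. Set $\mathbf{a}_t=\mathbf{1}\setminus\mathbf{k}(t)$. For each $t$ we use the two radical elements
$$X_t=B_{\mathbf{1},\mathbf{k}(t),\mathbf{a}_t}D_g,\qquad Y_t=B_{\mathbf{a}_t,\mathbf{k}(t),\mathbf{1}}D_g .$$
These lie in $\mathbb{P}$ since $\mathbf{1}\triangle\mathbf{a}_t=\mathbf{k}(t)$. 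They lie in the radical by Lemma \ref{L;Lemma6.2}, because $p\mid k_{\mathbf{k}(t)}$, and the required condition $n_{\cdot,\cdot\cup\mathbf{l},\cdot}=0$ holds because $\mathbf{k}(t)$ lies inside $[1,n]^+$, away from the $^-$ part. The product we examine is $X_1Y_1X_2Y_2\cdots X_{n_+}Y_{n_+}$. Since $D_g$ is central and idempotent, this equals $\bigl(\prod_t B_{\mathbf{1},\mathbf{k}(t),\mathbf{a}_t}B_{\mathbf{a}_t,\mathbf{k}(t),\mathbf{1}}\bigr)D_g$.

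We evaluate the product with Lemma \ref{L;Lemma2.8}. The product $B_{\mathbf{1},\mathbf{k}(t),\mathbf{a}_t}B_{\mathbf{a}_t,\mathbf{k}(t),\mathbf{1}}$ carries the coefficient $\overline{k_{\mathbf{k}(t)\cap\mathbf{a}_t\cap\mathbf{k}(t)}}=\overline{k_{\mathbf{0}}}=\overline{1}$. Its middle index is $[\mathbf{1},\mathbf{k}(t),\mathbf{a}_t,\mathbf{k}(t),\mathbf{1}]$, which by the definition of the bracket contains $\mathbf{k}(t)$ as the symmetric-difference part together with $(\mathbf{a}_t)^\circ$. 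We then multiply successively. A cleaner route is to replace each $B$ by the corresponding $D$-type basis element, using Lemma \ref{L;Lemma2.14} and the structure of $D_g$ in Lemma \ref{L;Lemma3.14}, and to track that the middle index only grows and absorbs $\mathbf{k}(t)$ at step $t$. The final product should then be a nonzero scalar times $B_{\mathbf{1},\mathbf{m},\mathbf{1}}D_g$ for some $\mathbf{m}$ with $\mathbf{1}^+\preceq\mathbf{m}$. Lemma \ref{L;Lemma2.8} shows that no scalar $k$ divisible by $p$ ever appears, because each $\mathbf{k}(t)$ meets the current middle index trivially at the moment it is inserted. Nonvanishing of $B_{\mathbf{1},\mathbf{m},\mathbf{1}}D_g$ then follows from Lemma \ref{L;Lemma3.14}, since the condition $n_{\mathbf{m},\mathbf{l},\mathbf{1}}=0$ holds.

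The main obstacle is this bookkeeping of the middle index under iterated brackets. Intersection terms such as $\mathbf{h}\cap\mathbf{i}\cap\mathbf{k}$ must avoid $[1,n]^+$ at every step, otherwise a factor $\overline{k}=\overline{0}$ appears. If the plain $X_t,Y_t$ choice turns out to pick up such a factor, we will fall back on mimicking the $2n_++1$ construction from \cite[Theorem 6.9]{HJ} inside the corner $E_\mathbf{1}^*\mathbb{T}E_\mathbf{1}^*$. Because that construction stays at vertices containing $[1,n]^-$, Lemma \ref{L;Lemma3.14} shows that multiplying by $D_g$ preserves nonvanishing.
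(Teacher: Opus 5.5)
Your strategy is the paper's: the upper bound from $\mathrm{Rad}(\mathbb{T}D_g)=\mathrm{Rad}(\mathbb{T})D_g\subseteq\mathrm{Rad}(\mathbb{T})$ and Lemma \ref{L;Lemma2.9}, the reduction to $n_+>0$ by Lemma \ref{L;Lemma6.2}, and the same $2n_+$ radical elements $X_t=B_{\mathbf{1},\mathbf{k}(t),\mathbf{a}_t}D_g$ and $Y_t=B_{\mathbf{a}_t,\mathbf{k}(t),\mathbf{1}}D_g$. Your check via Lemma \ref{L;Lemma6.2} that these are nonzero radical elements is fine.

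However, the step that gives the lower bound, namely $X_1Y_1\cdots X_{n_+}Y_{n_+}\neq O$, is not established: you evaluate the bracket incorrectly and then hedge. In $[\mathbf{1},\mathbf{k}(t),\mathbf{a}_t,\mathbf{k}(t),\mathbf{1}]$ the three parts are:
\begin{itemize}
\item the symmetric-difference part $\mathbbm{1}\triangle\mathbbm{1}=\varnothing$;
\item the part $(\mathbbm{1}\cap\mathbbm{1})^\circ\setminus\mathbbm{a}_t=\{i_t\}$, which is where $i_t$ comes from;
\item the last part $\{i_t\}\cap(\mathbbm{a}_t)^\circ=\varnothing$.
\end{itemize}
You dropped the intersection with the union of the second and fourth arguments, so you got $(\mathbbm{a}_t)^\circ$ in the last part.

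If your value were right, the middle index after $X_1Y_1$ would be the tuple supported on all of $[1,n]^\circ$. Multiplying by $X_2$ would then give the coefficient $\overline{k_{\mathbf{k}(2)}}=\overline{0}$ as soon as $n_+\geq 2$. Even for $n_+=1$, an element $B_{\mathbf{1},\mathbf{m},\mathbf{1}}D_g$ with $[1,n]^-\subseteq\mathbbm{m}$ is killed by Lemma \ref{L;Lemma3.6} in any block with $n_{\mathbf{m},\mathbf{l},\mathbf{1}}=|[1,n]^-\setminus\mathbbm{l}|>0$. So your later claims, that each $\mathbf{k}(t)$ meets the middle index trivially and that $n_{\mathbf{m},\mathbf{l},\mathbf{1}}=0$, contradict your own computation. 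The suggested detour through $D$-type elements is also unavailable, since $D_{\mathbf{g},\mathbf{h},\mathbf{i}}=O$ whenever $p\mid k_{\mathbf{h}}$, and that holds for every factor here.

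The repair is short and is exactly the paper's proof.
\begin{itemize}
\item Lemma \ref{L;Lemma2.8} has coefficient $\overline{k_{\mathbf{k}(t)\cap\mathbf{a}_t\cap\mathbf{k}(t)}}=\overline{k_{\mathbf{0}}}=\overline{1}$. With the corrected bracket it gives $X_tY_t=B_{\mathbf{1},\mathbf{k}(t),\mathbf{1}}D_g$.
\item For $(\mathbf{1},\mathbf{h},\mathbf{1}),(\mathbf{1},\mathbf{i},\mathbf{1})\in\mathbb{P}$ one has $[\mathbf{1},\mathbf{h},\mathbf{1},\mathbf{i},\mathbf{1}]=\mathbf{h}\cup\mathbf{i}$ with coefficient $\overline{k_{\mathbf{h}\cap\mathbf{i}}}$. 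Take $\mathbf{h}=\mathbf{k}(1)\cup\cdots\cup\mathbf{k}(t-1)$ and $\mathbf{i}=\mathbf{k}(t)$, so $\mathbf{h}\cap\mathbf{i}=\mathbf{0}$ and every coefficient is $\overline{1}$.
\item Hence the whole product equals exactly $B_{\mathbf{1},\mathbf{1}^+,\mathbf{1}}D_g$. That is, $\mathbf{m}=\mathbf{1}^+$, not merely some $\mathbf{m}$ with $\mathbf{1}^+\preceq\mathbf{m}$.
\item This is nonzero by Lemma \ref{L;Lemma4.3} with $\mathbf{h}=\mathbf{j}=\mathbf{1}$ and $\mathbf{i}=\mathbf{1}^+$. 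Equivalently it follows from Lemma \ref{L;Lemma3.14}, since $([1,n]^+)^-=\varnothing$ forces $n_{\mathbf{1}^+,\mathbf{l},\mathbf{1}}=0$.
\end{itemize}
With this computation written out, no fallback construction is needed.
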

\begin{proof}
By Lemma \ref{L;Lemma6.2}, there is no loss to assume further that $n_+>0$. Assume that $[1,n]^+=\{h_1, h_2, \ldots, h_{n_+}\}$. For any $i\in[1, n_+]$, let $\mathbf{j}(i)$ be the $n$-tuple in $\mathbb{E}$ whose unique nonzero entry is the $h_i$th-entry. For any $i\in[1, n_+]$, Lemmas \ref{L;Lemma6.2} and \ref{L;Lemma2.5} imply that $(\mathbf{1}, \mathbf{k}, \mathbf{1}), (\mathbf{1}\setminus\mathbf{j}(i), \mathbf{l}(i),\mathbf{1}\setminus\mathbf{j}(i))\in\mathbb{C}_g$, $n_{\mathbf{1}\setminus\mathbf{j}(i),\mathbf{1},\mathbf{1}\setminus\mathbf{j}(i)}=n_{\mathbf{1}, \mathbf{1}\setminus\mathbf{j}(i),\mathbf{1}}=0$, $p\mid k_{\mathbf{j}(i)}$, and $B_{\mathbf{1}, \mathbf{j}(i), \mathbf{1}\setminus\mathbf{j}(i)}D_g, B_{\mathbf{1}\setminus\mathbf{j}(i), \mathbf{j}(i), \mathbf{1}}D_g\in\mathrm{Rad}(\mathbb{T}D_g)\setminus\{O\}$. The combination of Theorem \ref{T;Idempotent}, Lemmas \ref{L;Lemma2.8}, \ref{L;Lemma2.5}, \ref{L;Lemma2.3}, \ref{L;Lemma4.3} implies that $B_{\mathbf{1}, \mathbf{j}(i), \mathbf{1}\setminus\mathbf{j}(i)}D_gB_{\mathbf{1}\setminus\mathbf{j}(i), \mathbf{j}(i), \mathbf{1}}D_g=B_{\mathbf{1},\mathbf{j}(i),\mathbf{1}}D_g$ and
$B_{\mathbf{1},\mathbf{j}(1),\mathbf{1}}D_gB_{\mathbf{1},\mathbf{j}(2),\mathbf{1}}D_g\cdots B_{\mathbf{1},\mathbf{j}(n_+),\mathbf{1}}D_g\in\mathrm{Rad}(\mathbb{T}D_g)\setminus\{O\}$ for any $i\in[1,n_+]$. As $\mathrm{Rad}(\mathbb{T}D_g)\!\subseteq\!\mathrm{Rad}(\mathbb{T})$ by Lemma \ref{L;Lemma6.1}, the desired lemma follows from Lemma \ref{L;Lemma2.9}.
\end{proof}
\begin{lem}\label{L;Lemma6.4}
Assume that $g\in[1, 2^{n_-}]$, $h\in[1, 2^{n_+}]$, $(\mathbf{i}, \mathbf{j}, \mathbf{k}), (\mathbf{i}, \mathbf{l}, \mathbf{k})\in\mathbb{C}_{g, h}$. Then $\mathbf{j}=\mathbf{l}$ and there is a unique $\mathbf{m}\in\mathbb{E}$ such that $(\mathbf{k}, \mathbf{m}, \mathbf{k})\in\mathbb{C}_{g, h}$. Moreover, assume that $(\mathbf{q}, \mathbf{r}, \mathbf{s})\in\mathbb{P}$ and $p\nmid k_\mathbf{r}$. Then $(\mathbbm{i}\cap\mathbbm{k})^-\setminus\mathbbm{j}=(\mathbbm{q}\cap\mathbbm{s})^-\setminus\mathbbm{r}$ if and only if there is $t\in[1, 2^{n_+}]$ such that $(\mathbf{q}, \mathbf{r}, \mathbf{s})\in\mathbb{C}_{g,t}$. In particular, $D_{\mathbf{q},\mathbf{r},\mathbf{s}}D_g=\delta_{(\mathbbm{i}\cap\mathbbm{k})^-\setminus\mathbbm{j}, (\mathbbm{q}\cap\mathbbm{s})^-\setminus\mathbbm{r}}D_{\mathbf{q},\mathbf{r},\mathbf{s}}$.
\end{lem}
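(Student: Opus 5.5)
We unwind the definitions of $\sim$ and $\approx$ and use the fact that, for $p\nmid k_\mathbf{b}$ with $\mathbbm{b}\subseteq(\mathbbm{a}\cap\mathbbm{c})^\circ\cup(\mathbbm{a}\triangle\mathbbm{c})$, Lemma \ref{L;Lemma2.5} forces $(\mathbbm{a}\cap\mathbbm{c})^+\cap\mathbbm{b}=\varnothing$. Hence $(\mathbbm{a}\cap\mathbbm{c})^\circ\setminus\mathbbm{b}$ splits as the disjoint union of $(\mathbbm{a}\cap\mathbbm{c})^+$ and $(\mathbbm{a}\cap\mathbbm{c})^-\setminus\mathbbm{b}$. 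This splitting is the main tool throughout.

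\emph{First statement.} Both $(\mathbf{i},\mathbf{j},\mathbf{k})$ and $(\mathbf{i},\mathbf{l},\mathbf{k})$ lie in $\mathbb{C}_{g,h}$, so they share the same $\sim$-value $(\mathbbm{i}\cap\mathbbm{k})^\circ\setminus\mathbbm{j}=(\mathbbm{i}\cap\mathbbm{k})^\circ\setminus\mathbbm{l}$. Since $(\mathbf{i},\mathbf{j},\mathbf{k}),(\mathbf{i},\mathbf{l},\mathbf{k})\in\mathbb{P}$, we have $\mathbbm{j}=(\mathbbm{i}\triangle\mathbbm{k})\cup((\mathbbm{i}\cap\mathbbm{k})^\circ\cap\mathbbm{j})$, and similarly for $\mathbbm{l}$. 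Equality of the $\sim$-values therefore gives $\mathbbm{j}=\mathbbm{l}$, and Lemma \ref{L;Lemma2.3} yields $\mathbf{j}=\mathbf{l}$. Existence of $\mathbf{m}$ follows from Lemma \ref{L;Lemma2.16}, since $(\mathbf{k},\mathbf{k})\in\mathbb{C}_g(h)\times\mathbb{C}_g(h)$. Uniqueness of $\mathbf{m}$ follows by the same argument as for $\mathbf{j}=\mathbf{l}$, applied to $(\mathbf{k},\mathbf{m},\mathbf{k})$ (or from the injectivity in Lemma \ref{L;Lemma2.16}).

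\emph{Characterization.} We need to translate the classes $\mathbb{C}_{g,t}$ into conditions on the $-$-part. The intended description is that $(\mathbf{q},\mathbf{r},\mathbf{s})$ (with $p\nmid k_\mathbf{r}$) belongs to some $\mathbb{C}_{g,t}$ exactly when its $\sim$-class meets $\mathbb{C}_g$, i.e.\ when some $(\mathbf{a},\mathbf{b},\mathbf{a})\in\mathbb{C}_g$ satisfies $(\mathbf{a},\mathbf{b},\mathbf{a})\sim(\mathbf{q},\mathbf{r},\mathbf{s})$. By the splitting above, this condition says that $(\mathbbm{a})^+=(\mathbbm{q}\cap\mathbbm{s})^+$ and $\mathbbm{a}^-\setminus\mathbbm{b}=(\mathbbm{q}\cap\mathbbm{s})^-\setminus\mathbbm{r}$. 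Membership in $\mathbb{C}_g$ fixes $\mathbbm{a}^-\setminus\mathbbm{b}$ as the common value $\mathbbm{i}^-\setminus\mathbbm{m}$. Applying the splitting to $(\mathbf{i},\mathbf{j},\mathbf{k})\sim(\mathbf{k},\mathbf{m},\mathbf{k})$ identifies this common value with $(\mathbbm{i}\cap\mathbbm{k})^-\setminus\mathbbm{j}$.

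For the forward direction, if $(\mathbf{q},\mathbf{r},\mathbf{s})\in\mathbb{C}_{g,t}$, pick a diagonal member of $\mathbb{C}_{g,t}$ and compare $-$-parts; this gives the stated equality.

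For the converse, we construct a diagonal witness. Set $\mathbbm{a}=(\mathbbm{q}\cap\mathbbm{s})^+\cup[1,n]^-$ (or any set with this $+$-part whose $-$-part contains the target set), and $\mathbbm{b}=\mathbbm{a}^-\setminus((\mathbbm{q}\cap\mathbbm{s})^-\setminus\mathbbm{r})$. We check three things: $(\mathbf{a},\mathbf{b},\mathbf{a})\in\mathbb{P}$ (because $\mathbbm{b}\subseteq\mathbbm{a}^\circ$); $p\nmid k_\mathbf{b}$ (because $\mathbbm{b}$ lies inside the $-$ indices, using Lemma \ref{L;Lemma2.5}); and that $\mathbbm{a}^-\setminus\mathbbm{b}$ equals the required set, so that $(\mathbf{a},\mathbf{b},\mathbf{a})\in\mathbb{C}_g$ and it is $\sim$ to $(\mathbf{q},\mathbf{r},\mathbf{s})$. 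This explicit construction is the main obstacle. Lemma \ref{L;Lemma3.7} gives a model for it.

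\emph{Final equation.} Use Equation \eqref{Eq;7} and Lemma \ref{L;Lemma2.14}:
$$D_{\mathbf{q},\mathbf{r},\mathbf{s}}D_g=\sum_{(\mathbf{a},\mathbf{b},\mathbf{a})\in\mathbb{C}_g}D_{\mathbf{q},\mathbf{r},\mathbf{s}}D_{\mathbf{a},\mathbf{b},\mathbf{a}}.$$
By Lemma \ref{L;Lemma2.14}, only the term with $\mathbf{a}=\mathbf{s}$ can survive, and it survives precisely when $(\mathbbm{q}\cap\mathbbm{s})^\circ\setminus\mathbbm{r}=\mathbbm{s}^\circ\setminus\mathbbm{b}$. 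In that case $D_{\mathbf{q},\mathbf{r},\mathbf{s}}D_{\mathbf{s},\mathbf{b},\mathbf{s}}=D_{\mathbf{q},[\mathbf{q},\mathbf{r},\mathbf{s},\mathbf{b},\mathbf{s}],\mathbf{s}}$, and a direct check from the definition shows $[\mathbf{q},\mathbf{r},\mathbf{s},\mathbf{b},\mathbf{s}]=\mathbf{r}$.

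By Lemma \ref{L;Lemma3.5}, $\mathbb{C}_g$ contains at most one triple of the form $(\mathbf{s},\mathbf{b},\mathbf{s})$. Such a triple exists and satisfies the matching condition exactly when $(\mathbbm{q}\cap\mathbbm{s})^-\setminus\mathbbm{r}$ equals the common value. In one direction this uses the $\mathbbm{a}$-free part of the characterization; in the other, when no such triple exists, the product is $O$. Together these give the Kronecker-delta formula.
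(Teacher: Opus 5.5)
Your proposal is correct and follows essentially the paper's route. You use Lemma~\ref{L;Lemma2.16} for the first statement (your direct set computation of $\mathbf{j}=\mathbf{l}$ is also fine). For the characterization you use a diagonal witness in $\mathbb{C}_g$ that is $\sim$-equivalent to $(\mathbf{q},\mathbf{r},\mathbf{s})$; the paper's witness $(\mathbf{q}\cap\mathbf{s},\mathbf{q}\cap\mathbf{r}\cap\mathbf{s},\mathbf{q}\cap\mathbf{s})$ is simpler than yours. For the final formula you use Equation~\eqref{Eq;7} with Lemma~\ref{L;Lemma2.14}, and your direct check $[\mathbf{q},\mathbf{r},\mathbf{s},\mathbf{b},\mathbf{s}]=\mathbf{r}$ replaces the paper's appeal to Lemma~\ref{L;Lemma2.15}.

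The only loosely argued point is the existence of the surviving triple $(\mathbf{s},\mathbf{u},\mathbf{s})\in\mathbb{C}_g$ when $(\mathbbm{q}\cap\mathbbm{s})^-\setminus\mathbbm{r}=(\mathbbm{i}\cap\mathbbm{k})^-\setminus\mathbbm{j}$. Your ``$\mathbbm{a}$-free'' remark is vague there, since your constructed witness need not sit at $\mathbf{s}$. It follows, as in the paper, by applying your first statement to $(\mathbf{q},\mathbf{r},\mathbf{s})\in\mathbb{C}_{g,t}$. Equivalently, take $\mathbbm{a}=\mathbbm{s}$ in your construction; this is legitimate because $\mathbbm{s}\setminus\mathbbm{q}\subseteq\mathbbm{r}$ and $p\nmid k_\mathbf{r}$ give $\mathbbm{s}^+=(\mathbbm{q}\cap\mathbbm{s})^+$.
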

\begin{proof}
The first statement follows from Lemma \ref{L;Lemma2.16}. It is obvious to notice that
$(\mathbbm{q}\cap\mathbbm{s})^-\setminus\mathbbm{r}=(\mathbbm{q}\cap\mathbbm{s})^-\setminus(\mathbbm{q}\cap\mathbbm{r}\cap\mathbbm{s})$ and $(\mathbbm{q}\cap\mathbbm{s})^\circ\setminus\mathbbm{r}=(\mathbbm{q}\cap\mathbbm{s})^\circ\setminus(\mathbbm{q}\cap\mathbbm{r}\cap\mathbbm{s})$. Notice that  $\mathbbm{q}\cap\mathbbm{r}\cap\mathbbm{s}\subseteq(\mathbbm{q}\cap\mathbbm{s})^\circ$ and $p\nmid k_{\mathbf{q}\cap\mathbf{r}\cap\mathbf{s}}$ by Lemma \ref{L;Lemma2.5}. The equation  $(\mathbbm{i}\cap\mathbbm{k})^-\setminus\mathbbm{j}=(\mathbbm{q}\cap\mathbbm{s})^-\setminus\mathbbm{r}$ thus implies that $(\mathbf{q}\cap\mathbf{s}, \mathbf{q}\cap\mathbf{r}\cap\mathbf{s}, \mathbf{q}\cap\mathbf{s})\in\mathbb{C}_g$ and $(\mathbf{q}, \mathbf{r}, \mathbf{s})\in\mathbb{C}_{g,t}$ for some $t\in[1, 2^{n_+}]$. The second statement follows as the proof of the remaining direction is trivial. If $(\mathbbm{i}\cap\mathbbm{k})^-\setminus\mathbbm{j}\neq(\mathbbm{q}\cap\mathbbm{s})^-\setminus\mathbbm{r}$, then $D_{\mathbf{q},\mathbf{r},\mathbf{s}}D_g=O$ by combining the second statement, Equation \eqref{Eq;7}, Lemma \ref{L;Lemma2.14}. Assume that $(\mathbbm{i}\cap\mathbbm{k})^-\setminus\mathbbm{j}=(\mathbbm{q}\cap\mathbbm{s})^-\setminus\mathbbm{r}$. The second statement and the first statement imply that $(\mathbf{s}, \mathbf{u}, \mathbf{s})\in\mathbb{C}_g$ and $(\mathbbm{q}\cap\mathbbm{s})^\circ\setminus\mathbbm{r}=\mathbbm{s}^\circ\setminus\mathbbm{u}$. So $D_{\mathbf{q}, \mathbf{r}, \mathbf{s}}D_g=D_{\mathbf{q}, \mathbf{r}, \mathbf{s}}E_\mathbf{s}^*D_g=D_{\mathbf{q}, \mathbf{r}, \mathbf{s}}D_{\mathbf{s}, \mathbf{u}, \mathbf{s}}=D_{\mathbf{q}, \mathbf{r}, \mathbf{s}}$ by combining Equations \eqref{Eq;6}, \eqref{Eq;4}, \eqref{Eq;2}, Lemmas \ref{L;Lemma3.5}, \ref{L;Lemma2.14}, \ref{L;Lemma2.15}, the first statement. The desired lemma follows.
\end{proof}
\begin{lem}\label{L;Lemma6.5}
Assume that $g\in[1, 2^{n_-}]$. Then the quotient $\F$-algebra of the block algebra $\mathbb{T}D_g$ of $\mathbb{T}$ with respect to its Jacobson radical $\mathrm{Rad}(\mathbb{T}D_g)$ has an $\F$-basis $\{D_{\mathbf{a},\mathbf{b},\mathbf{c}}+\mathrm{Rad}(\mathbb{T}D_g): \exists\ d\in[1, 2^{n_+}], (\mathbf{a}, \mathbf{b}, \mathbf{c})\in\mathbb{C}_{g,d}\}$. In particular, $\mathbb{T}D_g/\mathrm{Rad}(\mathbb{T}D_g)$ is also equal to the following direct sum of the $\F$-linear subspaces of $ \mathbb{T}D_g/\mathrm{Rad}(\mathbb{T}D_g)$
$$\bigoplus_{h=1}^{2^{n_+}}\langle\{D_{\mathbf{a},\mathbf{b},\mathbf{c}}+\mathrm{Rad}(\mathbb{T}D_g): (\mathbf{a}, \mathbf{b}, \mathbf{c})\in\mathbb{C}_{g,h}\}\rangle_{\mathbb{T}D_g/\mathrm{Rad}(\mathbb{T}D_g)}.$$
\end{lem}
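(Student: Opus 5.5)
We prove Lemma~\ref{L;Lemma6.5} by producing a spanning set of $\mathbb{T}D_g$ modulo $\mathrm{Rad}(\mathbb{T}D_g)$ and then checking linear independence by a dimension count.

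\textbf{Spanning.} By Lemma \ref{L;Lemma2.14}, $\mathbb{T}$ has an $\F$-basis consisting of all $B_{\mathbf{a},\mathbf{b},\mathbf{c}}$ with $p\mid k_\mathbf{b}$ together with all $D_{\mathbf{a},\mathbf{b},\mathbf{c}}$ with $p\nmid k_\mathbf{b}$. Multiplying by $D_g$ gives a spanning set of $\mathbb{T}D_g$.
\begin{itemize}
\item Each $B_{\mathbf{a},\mathbf{b},\mathbf{c}}$ with $p\mid k_\mathbf{b}$ lies in $\mathrm{Rad}(\mathbb{T})$ by Lemma \ref{L;Lemma2.9}. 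Hence $B_{\mathbf{a},\mathbf{b},\mathbf{c}}D_g\in\mathrm{Rad}(\mathbb{T})D_g=\mathrm{Rad}(\mathbb{T}D_g)$ by Lemma \ref{L;Lemma6.1}.
\item For a $D$-basis element, Lemma \ref{L;Lemma6.4} gives $D_{\mathbf{q},\mathbf{r},\mathbf{s}}D_g=D_{\mathbf{q},\mathbf{r},\mathbf{s}}$ when $(\mathbf{q},\mathbf{r},\mathbf{s})\in\mathbb{C}_{g,t}$ for some $t\in[1,2^{n_+}]$, and $D_{\mathbf{q},\mathbf{r},\mathbf{s}}D_g=O$ otherwise.
\end{itemize}
It follows that $\mathbb{T}D_g/\mathrm{Rad}(\mathbb{T}D_g)$ is spanned by the cosets $D_{\mathbf{a},\mathbf{b},\mathbf{c}}+\mathrm{Rad}(\mathbb{T}D_g)$ with $(\mathbf{a},\mathbf{b},\mathbf{c})\in\bigcup_{d}\mathbb{C}_{g,d}$.

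\textbf{Independence.} We compare dimensions, using the fact that the classes $\mathbb{C}_{g,d}$ are pairwise disjoint.

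First, $\dim_\F\mathbb{T}D_g=4^{n-n_+-n_{\mathbf{h},\mathbf{i},\mathbf{h}}}5^{n_+}$ by Theorem \ref{T;Dimension}.

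Second, by Lemmas \ref{L;Lemma6.1} and \ref{L;Lemma4.3}, $\dim_\F\mathrm{Rad}(\mathbb{T}D_g)$ is the number of triples $(\mathbf{a},\mathbf{b},\mathbf{c})$ counted in the proof of Theorem \ref{T;Dimension} that also satisfy $p\mid k_{(\mathbf{a}\triangle\mathbf{c})\cup\mathbf{b}}$.

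By Lemma \ref{L;Lemma2.5}, $p\nmid k_{(\mathbf{a}\triangle\mathbf{c})\cup\mathbf{b}}$ holds exactly when $\mathbf{b}=\mathbf{0}$ and $(\mathbbm{a}\triangle\mathbbm{c})^+=\varnothing$. So the complementary count is the number of pairs $(\mathbf{a},\mathbf{c})$ satisfying four conditions:
\begin{itemize}
\item $(\mathbf{c},\mathbf{d},\mathbf{c})\in\mathbb{C}_g$ for some $\mathbf{d}$;
\item $n_{\mathbf{c},\mathbf{a}\cup\mathbf{d},\mathbf{c}}=0$;
\item $\mathbbm{a}^+=\mathbbm{c}^+$;
\item $\mathbbm{a}^-$ and $\mathbbm{c}^-$ both contain $\mathbbm{h}^-\setminus\mathbbm{i}$ (this is the form the previous condition takes, by Lemma \ref{L;Lemma3.5}).
\end{itemize}
We count these by the same binomial bookkeeping as in Theorem \ref{T;Dimension}, indexing by $r,s,t$. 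The $+$-coordinates contribute a factor $2^{n_+}$ (a common choice for $\mathbbm{a}^+=\mathbbm{c}^+$). The $-$-coordinates outside $\mathbbm{h}^-\setminus\mathbbm{i}$ and the coordinates of size two each contribute a factor $4$. This gives a total of $4^{n-n_+-n_{\mathbf{h},\mathbf{i},\mathbf{h}}}2^{n_+}$.

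On the other side, by Lemma \ref{L;Lemma2.16}, $|\mathbb{C}_{g,d}|=|\mathbb{C}_g(d)|^2=4^{n-n_+-n_{\mathbf{h},\mathbf{i},\mathbf{h}}}$. Summing over $d\in[1,2^{n_+}]$ gives $\sum_d|\mathbb{C}_{g,d}|=2^{n_+}4^{n-n_+-n_{\mathbf{h},\mathbf{i},\mathbf{h}}}$, which matches the dimension of the quotient computed above. Since the spanning set has exactly this many elements, it is a basis.

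\textbf{Direct sum.} The direct-sum decomposition now follows at once: the $\mathbb{C}_{g,h}$ partition the index set of the basis.

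\textbf{Main obstacle.} The delicate step is the complementary count, namely making sure that the condition $n_{\mathbf{c},\mathbf{a}\cup\mathbf{d},\mathbf{c}}=0$ interacts with $\mathbbm{a}^+=\mathbbm{c}^+$ exactly as claimed. If this count proves awkward, the fallback is to prove independence directly instead. Suppose a nontrivial combination $\sum c_{\mathbf{a},\mathbf{b},\mathbf{c}}D_{\mathbf{a},\mathbf{b},\mathbf{c}}$ lay in $\mathrm{Rad}(\mathbb{T})$. The $D$'s and the radical $B$'s together form a basis (Lemma \ref{L;Lemma2.14}), and $\mathrm{Rad}(\mathbb{T})$ is spanned by the $B$'s with $p\mid k_\mathbf{b}$ (Lemma \ref{L;Lemma2.9}). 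Every $D_{\mathbf{a},\mathbf{b},\mathbf{c}}$ has leading term $\overline{k_{\mathbf{c}\cap\mathbf{b}}}^{-1}B_{\mathbf{a},\mathbf{b},\mathbf{c}}$ with $p\nmid k_\mathbf{b}$. Choosing a $\preceq$-minimal $\mathbf{b}$ among the terms with nonzero coefficient then yields a contradiction.
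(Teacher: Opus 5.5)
Your proof is correct. Its spanning half is the paper's argument: take the Lemma \ref{L;Lemma2.14} basis times $D_g$, absorb the radical $B$'s via Lemmas \ref{L;Lemma2.9} and \ref{L;Lemma6.1}, and sort the $D$'s by Lemma \ref{L;Lemma6.4}.

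Where you differ is independence. The paper argues directly. Since $\mathrm{Rad}(\mathbb{T}D_g)=\mathrm{Rad}(\mathbb{T})D_g\subseteq\mathrm{Rad}(\mathbb{T})$ is spanned by the $B_{\mathbf{a},\mathbf{b},\mathbf{c}}$ with $p\mid k_\mathbf{b}$ (Lemma \ref{L;Lemma2.9}), and these together with the $D$'s form a basis of $\mathbb{T}$ (Lemma \ref{L;Lemma2.14}), no nonzero combination of the $D$'s can lie in the radical. That is exactly the first two sentences of your fallback, and they already finish the proof. The leading-term refinement is unnecessary.

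Your main route counts instead, and the count does work, so the ``main obstacle'' is not a real one. The existence of $\mathbf{d}$ with $(\mathbf{c},\mathbf{d},\mathbf{c})\in\mathbb{C}_g$ forces $\mathbbm{h}^-\setminus\mathbbm{i}\subseteq\mathbbm{c}^-$ and $\mathbbm{d}=\mathbbm{c}^-\setminus(\mathbbm{h}^-\setminus\mathbbm{i})$. Hence $n_{\mathbf{c},\mathbf{a}\cup\mathbf{d},\mathbf{c}}=|(\mathbbm{h}^-\setminus\mathbbm{i})\setminus\mathbbm{a}|$. So the complementary pairs are exactly those with $\mathbbm{h}^-\setminus\mathbbm{i}\subseteq\mathbbm{a}\cap\mathbbm{c}$ and $\mathbbm{a}^+=\mathbbm{c}^+$. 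There are $2^{n_+}4^{n-n_+-n_{\mathbf{h},\mathbf{i},\mathbf{h}}}$ of them, matching $\sum_d|\mathbb{C}_{g,d}|$.

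The counting route needs considerably more machinery: Theorem \ref{T;Dimension}, the explicit radical basis of Lemma \ref{L;Lemma6.1} with the injectivity from Lemma \ref{L;Lemma4.3}, and Lemma \ref{L;Lemma2.16}. In exchange it gives $\dim_\F\mathrm{Rad}(\mathbb{T}D_g)$ immediately. The paper only gets that dimension later, in Theorem \ref{T;Jacobson}, by subtracting the dimension of the semisimple quotient. The paper's argument is shorter and needs no enumeration at all.
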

\begin{proof}
Let $\mathbb{U}=\{D_{\mathbf{a},\mathbf{b},\mathbf{c}}+\mathrm{Rad}(\mathbb{T}D_g): \exists\ d\in[1, 2^{n_+}], (\mathbf{a}, \mathbf{b}, \mathbf{c})\in\mathbb{C}_{g,d}\}$ and notice that $\mathbb{U}\!\subseteq\!\mathbb{T}D_g/\mathrm{Rad}(\mathbb{T}D_g)$ by Lemma \ref{L;Lemma6.4}. Let $\mathbb{V}\!=\!\{D_{\mathbf{a},\mathbf{b},\mathbf{c}}:\exists\ d\in[1, 2^{n_+}], (\mathbf{a}, \mathbf{b}, \mathbf{c})\!\in\!\mathbb{C}_{g,d}\}$. By combining Lemmas \ref{L;Lemma6.1}, \ref{L;Lemma2.9}, \ref{L;Lemma2.14}, notice that each nonzero $\F$-linear combination of the matrices in $\mathbb{V}$ is not contained in $\mathrm{Rad}(\mathbb{T}D_g)$. Therefore $\mathbb{U}$ is an $\F$-linearly independent subset of $\mathbb{T}D_g/\mathrm{Rad}(\mathbb{T}D_g)$. For the remaining part of the first statement, let $\mathbb{W}\!\!=\!\!\{\!B_{\mathbf{a}, (\mathbf{a}\triangle\mathbf{c})\cup\mathbf{b}, \mathbf{c}}\!D_g\!\!:\! \mathbf{a}, \mathbf{b},\mathbf{c}\!\in\!\mathbb{E}, \mathbf{b}\!\preceq\!(\mathbf{a}\cap\mathbf{c})^+, \exists\ (\mathbf{c}, \mathbf{d}, \mathbf{c})\!\in\!\mathbb{C}_g, n_{\mathbf{c}, \mathbf{a}\cup\mathbf{d}, \mathbf{c}}\!\!=\!\!0, p\!\mid\! k_{(\mathbf{a}\triangle\mathbf{c})\cup\mathbf{b}}\}$. By combining Lemmas \ref{L;Lemma4.8}, \ref{L;Lemma2.14}, \ref{L;Lemma6.4}, notice
that each matrix in $\mathbb{T}D_g$ is an $\F$-linear combination of the matrices in $\mathbb{V}\cup\mathbb{W}$. Lemma \ref{L;Lemma6.1} thus implies that each element in $\mathbb{T}D_g/\mathrm{Rad}(\mathbb{T}D_g)$ is an $\F$-linear combination of the elements in $\mathbb{U}$. The above discussion thus implies that $\mathbb{U}$ is an $\F$-basis of $\mathbb{T}D_g/\mathrm{Rad}(\mathbb{T}D_g)$. The first statement follows. The desired lemma follows from an application of the first statement.
\end{proof}
\begin{lem}\label{L;Lemma6.6}
Assume that $g\!\in\![1, 2^{n_-}]$ and $h\!\in\![1, 2^{n_+}]$. Then the quotient $\F$-algebra of the block algebra $\mathbb{T}D_g$ of $\mathbb{T}$ with respect to its Jacobson radical $\mathrm{Rad}(\mathbb{T}D_g)$ has a two-sided ideal $\langle\{D_{\mathbf{a}, \mathbf{b}, \mathbf{c}}+\mathrm{Rad}(\mathbb{T}D_g):(\mathbf{a},\mathbf{b},\mathbf{c})\in\mathbb{C}_{g, h}\}\rangle_{\mathbb{T}D_g/\mathrm{Rad}(\mathbb{T}D_g)}$. Moreover, assume that $(\mathbf{i}, \mathbf{j}, \mathbf{i})\in\mathbb{C}_g$. Then $n_{\mathbf{i},\mathbf{j}, \mathbf{i}}$ is independent of the choice of $(\mathbf{i},\mathbf{j}, \mathbf{i})$ and the $\F$-dimension of $\langle\{\!D_{\mathbf{a}, \mathbf{b}, \mathbf{c}}\!+\!\mathrm{Rad}(\mathbb{T}D_g)\!:\! (\mathbf{a},\mathbf{b},\mathbf{c})\!\!\in\!\!\mathbb{C}_{g, h}\!\}\rangle_{\mathbb{T}D_g/\mathrm{Rad}(\mathbb{T}D_g)}$ is equal to $4^{n\!-\!n_+\!-\!n_{\mathbf{i},\mathbf{j},\mathbf{i}}}$.
\end{lem}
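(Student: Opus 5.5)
We first show that the span $\mathbb{I}_h=\langle\{D_{\mathbf{a},\mathbf{b},\mathbf{c}}+\mathrm{Rad}(\mathbb{T}D_g):(\mathbf{a},\mathbf{b},\mathbf{c})\in\mathbb{C}_{g,h}\}\rangle_{\mathbb{T}D_g/\mathrm{Rad}(\mathbb{T}D_g)}$ is closed under multiplication by the spanning set of $\mathbb{T}D_g/\mathrm{Rad}(\mathbb{T}D_g)$ given in Lemma \ref{L;Lemma6.5}. Take $(\mathbf{i},\mathbf{j},\mathbf{k})\in\mathbb{C}_{g,h}$ and $(\mathbf{q},\mathbf{r},\mathbf{s})\in\mathbb{C}_{g,t}$. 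By Lemma \ref{L;Lemma2.14}, $D_{\mathbf{q},\mathbf{r},\mathbf{s}}D_{\mathbf{i},\mathbf{j},\mathbf{k}}$ is zero unless $\mathbf{s}=\mathbf{i}$ and $(\mathbbm{q}\cap\mathbbm{s})^\circ\setminus\mathbbm{r}=(\mathbbm{i}\cap\mathbbm{k})^\circ\setminus\mathbbm{j}$, i.e. unless the two triples are $\sim$-equivalent, which forces $t=h$. In that case the product is $D_{\mathbf{q},[\mathbf{q},\mathbf{r},\mathbf{i},\mathbf{j},\mathbf{k}],\mathbf{k}}$ with $p\nmid k_{[\mathbf{q},\mathbf{r},\mathbf{i},\mathbf{j},\mathbf{k}]}$ (Lemma \ref{L;Lemma2.14}). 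Then Lemma \ref{L;Lemma2.15}, applied with $\mathbf{l}=[\mathbf{q},\mathbf{r},\mathbf{i},\mathbf{j},\mathbf{k}]$, gives $(\mathbbm{q}\cap\mathbbm{k})^\circ\setminus\mathbbm{l}=(\mathbbm{i}\cap\mathbbm{k})^\circ\setminus\mathbbm{j}$. So the product triple again lies in $\mathbb{C}_{g,h}$. The same argument works for right multiplication. Since these classes of elements span the quotient by Lemma \ref{L;Lemma6.5}, $\mathbb{I}_h$ is a two-sided ideal. (Implicitly, $\mathbb{C}_{g,h}$ consists of triples with $D_{\mathbf{a},\mathbf{b},\mathbf{c}}D_g=D_{\mathbf{a},\mathbf{b},\mathbf{c}}$ by Lemma \ref{L;Lemma6.4}, so these elements live in $\mathbb{T}D_g$.)

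For the dimension, Lemma \ref{L;Lemma6.5} already shows that the cosets $D_{\mathbf{a},\mathbf{b},\mathbf{c}}+\mathrm{Rad}(\mathbb{T}D_g)$ with $(\mathbf{a},\mathbf{b},\mathbf{c})\in\mathbb{C}_{g,h}$ are linearly independent. Hence $\dim\mathbb{I}_h=|\mathbb{C}_{g,h}|$, which by Lemma \ref{L;Lemma2.16} equals $|\mathbb{C}_g(h)|^2$. The preliminaries state that $|\mathbb{C}_g(h)|=2^{n-n_+-n_{\mathbf{r},\mathbf{s},\mathbf{r}}}$ for any $(\mathbf{r},\mathbf{s},\mathbf{r})\in\mathbb{C}_g$, so squaring gives $4^{n-n_+-n_{\mathbf{i},\mathbf{j},\mathbf{i}}}$.

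It remains to check that $n_{\mathbf{i},\mathbf{j},\mathbf{i}}$ is independent of the choice. By definition $n_{\mathbf{i},\mathbf{j},\mathbf{i}}=|\mathbbm{i}^-\setminus\mathbbm{j}|$, and membership in $\mathbb{C}_g$ fixes the set $\mathbbm{i}^-\setminus\mathbbm{j}$ (the defining condition of $\approx$). So this is immediate, as already noted in Theorem \ref{T;Dimension}.

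The only delicate point is the ideal property, specifically that the product triple remains in the same class $\mathbb{C}_{g,h}$. This is exactly what Lemma \ref{L;Lemma2.15} provides once the nonvanishing condition of Lemma \ref{L;Lemma2.14} is used.
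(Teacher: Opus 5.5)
Your proof is correct and is essentially the paper's argument, with the steps written out. The ideal property comes from the spanning set of Lemma \ref{L;Lemma6.5}, the product rule of Lemma \ref{L;Lemma2.14} and the class-preservation of Lemma \ref{L;Lemma2.15}; the dimension comes from the linear independence in Lemma \ref{L;Lemma6.5}, the bijection of Lemma \ref{L;Lemma2.16} and the formula $|\mathbb{C}_g(h)|=2^{n-n_+-n_{\mathbf{i},\mathbf{j},\mathbf{i}}}$. These are exactly the ingredients the paper cites in its one-line proof.
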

\begin{proof}
The first statement follows from combining Lemmas \ref{L;Lemma6.5}, \ref{L;Lemma2.14}, \ref{L;Lemma2.15}. It is obvious to see that
$n_{\mathbf{i},\mathbf{j}, \mathbf{i}}$ is independent of the choice of $(\mathbf{i},\mathbf{j}, \mathbf{i})$. For the other part of the remaining statement, recall that $|\mathbb{C}_g(h)|=2^{n-n_+-n_{\mathbf{i}, \mathbf{j}, \mathbf{i}}}$. Therefore Lemma \ref{L;Lemma2.16} implies that $|\mathbb{C}_{g, h}|=4^{n-n_+-n_{\mathbf{i},\mathbf{j},\mathbf{i}}}$. The desired lemma follows from Lemma \ref{L;Lemma6.5}.
\end{proof}
\begin{lem}\label{L;Lemma6.7}
Assume that $g\!\in\![1, 2^{n_-}]$ and $h\!\in\![1, 2^{n_+}]$. Then the quotient $\F$-algebra of the block algebra $\mathbb{T}D_g$ of $\mathbb{T}$ with respect to its Jacobson radical $\mathrm{Rad}(\mathbb{T}D_g)$ has a two-sided ideal $\langle\{D_{\mathbf{a}, \mathbf{b}, \mathbf{c}}+\mathrm{Rad}(\mathbb{T}D_g):(\mathbf{a},\mathbf{b},\mathbf{c})\in\mathbb{C}_{g, h}\}\rangle_{\mathbb{T}D_g/\mathrm{Rad}(\mathbb{T}D_g)}$. Moreover, assume that $(\mathbf{i}, \mathbf{j}, \mathbf{i})\in\mathbb{C}_g$. Then $n_{\mathbf{i},\mathbf{j}, \mathbf{i}}$ is independent of the choice of $(\mathbf{i},\mathbf{j}, \mathbf{i})$ and
$\langle\{D_{\mathbf{a}, \mathbf{b}, \mathbf{c}}+\mathrm{Rad}(\mathbb{T}D_g):(\mathbf{a},\mathbf{b},\mathbf{c})\in\mathbb{C}_{g, h}\}\rangle_{\mathbb{T}D_g/\mathrm{Rad}(\mathbb{T}D_g)}\cong\mathrm{M}_{2^{n-n_+-n_{\mathbf{i}, \mathbf{j}, \mathbf{i}}}}(\F)$ as $\F$-algebras.
\end{lem}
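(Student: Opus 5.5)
The first two statements (the two-sided ideal property and the independence of $n_{\mathbf{i},\mathbf{j},\mathbf{i}}$) are already contained in Lemma \ref{L;Lemma6.6}. So it remains to exhibit an $\F$-algebra isomorphism from the ideal $\mathbb{J}=\langle\{D_{\mathbf{a},\mathbf{b},\mathbf{c}}+\mathrm{Rad}(\mathbb{T}D_g):(\mathbf{a},\mathbf{b},\mathbf{c})\in\mathbb{C}_{g,h}\}\rangle_{\mathbb{T}D_g/\mathrm{Rad}(\mathbb{T}D_g)}$ onto a full matrix algebra. My plan is to show that the cosets $D_{\mathbf{a},\mathbf{b},\mathbf{c}}+\mathrm{Rad}(\mathbb{T}D_g)$ form a system of matrix units indexed by $\mathbb{C}_g(h)\times\mathbb{C}_g(h)$.

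By Lemma \ref{L;Lemma2.16}, the map $(\mathbf{a},\mathbf{b},\mathbf{c})\mapsto(\mathbf{a},\mathbf{c})$ is a bijection $\mathbb{C}_{g,h}\to\mathbb{C}_g(h)\times\mathbb{C}_g(h)$. For $\mathbf{a},\mathbf{c}\in\mathbb{C}_g(h)$, write $e_{\mathbf{a},\mathbf{c}}=D_{\mathbf{a},\mathbf{b},\mathbf{c}}+\mathrm{Rad}(\mathbb{T}D_g)$ for the unique $\mathbf{b}$ with $(\mathbf{a},\mathbf{b},\mathbf{c})\in\mathbb{C}_{g,h}$. Lemma \ref{L;Lemma6.5} shows that these elements are $\F$-linearly independent, so there are $|\mathbb{C}_g(h)|^2$ of them, and $|\mathbb{C}_g(h)|=2^{n-n_+-n_{\mathbf{i},\mathbf{j},\mathbf{i}}}$.

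The key step is to verify $e_{\mathbf{a},\mathbf{c}}e_{\mathbf{c}',\mathbf{d}}=\delta_{\mathbf{c},\mathbf{c}'}e_{\mathbf{a},\mathbf{d}}$. We use Lemma \ref{L;Lemma2.14}: $D_{\mathbf{a},\mathbf{b},\mathbf{c}}D_{\mathbf{c}',\mathbf{b}',\mathbf{d}}$ vanishes unless $\mathbf{c}=\mathbf{c}'$ and $(\mathbbm{a}\cap\mathbbm{c})^\circ\setminus\mathbbm{b}=(\mathbbm{c}\cap\mathbbm{d})^\circ\setminus\mathbbm{b}'$. The second condition holds automatically, because both triples lie in the same $\sim$-class $\mathbb{C}_{g,h}$. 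The product then equals $D_{\mathbf{a},\mathbf{l},\mathbf{d}}$, where $\mathbf{l}=[\mathbf{a},\mathbf{b},\mathbf{c},\mathbf{b}',\mathbf{d}]$ and $p\nmid k_{\mathbf{l}}$.

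The main obstacle is showing $(\mathbf{a},\mathbf{l},\mathbf{d})\in\mathbb{C}_{g,h}$, so that $\mathbf{l}$ is the $\mathbf{b}$-index attached to $(\mathbf{a},\mathbf{d})$. I plan to get this from Lemma \ref{L;Lemma2.15}, applied in its contrapositive-free form. Concretely, I will compute directly from the definition of $[\cdot]$ that $(\mathbbm{a}\cap\mathbbm{d})^\circ\setminus\mathbbm{l}$ equals the common value $(\mathbbm{a}\cap\mathbbm{c})^\circ\setminus\mathbbm{b}$. Alternatively, I will apply Lemma \ref{L;Lemma2.15} with the third triple taken to be the $\mathbb{C}_{g,h}$-triple on $(\mathbf{a},\mathbf{d})$, combined with the uniqueness in Lemma \ref{L;Lemma6.4}. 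Membership in $\mathbb{C}_g$-class $g$ then follows because $D_{\mathbf{a},\mathbf{l},\mathbf{d}}D_g\neq O$ by Lemma \ref{L;Lemma6.4}.

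Finally, the linear map $\mathbb{J}\to\mathrm{M}_{\mathbb{C}_g(h)}(\F)$ sending $e_{\mathbf{a},\mathbf{c}}$ to $M_{\mathbf{a},\mathbf{c}}(\mathbb{C}_g(h))$ is bijective on bases, since the dimensions agree by Lemma \ref{L;Lemma6.6}. It is multiplicative by the matrix-unit relations, so it is an $\F$-algebra isomorphism. Since $\mathrm{M}_{\mathbb{C}_g(h)}(\F)\cong\mathrm{M}_{2^{n-n_+-n_{\mathbf{i},\mathbf{j},\mathbf{i}}}}(\F)$, this gives the claim.
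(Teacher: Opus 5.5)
Your proposal is correct and follows the paper's own proof. Both arguments use Lemma \ref{L;Lemma2.16} to identify the cosets $D_{\mathbf{a},\mathbf{b},\mathbf{c}}+\mathrm{Rad}(\mathbb{T}D_g)$, $(\mathbf{a},\mathbf{b},\mathbf{c})\in\mathbb{C}_{g,h}$, with the matrix units $M_{\mathbf{a},\mathbf{c}}(\mathbb{C}_g(h))$, take the multiplication rule from Lemmas \ref{L;Lemma2.14} and \ref{L;Lemma2.15}, and take the basis from Lemma \ref{L;Lemma6.5}.

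Three small points on the closure step $(\mathbf{a},\mathbf{l},\mathbf{d})\in\mathbb{C}_{g,h}$:
\begin{itemize}
\item The route that works is to apply Lemma \ref{L;Lemma2.15} to the triple $(\mathbf{a},[\mathbf{a},\mathbf{b},\mathbf{c},\mathbf{b}',\mathbf{d}],\mathbf{d})$ produced by Lemma \ref{L;Lemma2.14}. Equivalently, compute directly that $(\mathbbm{a}\cap\mathbbm{d})^\circ\setminus\mathbbm{l}=(\mathbbm{a}\cap\mathbbm{c}\cap\mathbbm{d})^\circ\setminus(\mathbbm{b}\cup\mathbbm{b}')$, which is the common value.
\item Your alternative, plugging the pre-chosen $\mathbb{C}_{g,h}$-triple on $(\mathbf{a},\mathbf{d})$ into Lemma \ref{L;Lemma2.15}, would presuppose the product formula you are trying to prove.
\item The closing appeal to $D_g$ via Lemma \ref{L;Lemma6.4} is redundant. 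Being $\sim$-equivalent to a member of $\mathbb{C}_{g,h}$ already places the triple in $\mathbb{C}_{g,h}$.
\end{itemize}
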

\begin{proof}
By Lemma \ref{L;Lemma6.6}, it suffices to check the final formula. For any $\mathbf{k}, \mathbf{l}\in\mathbb{C}_g(h)$, recall that $M_{\mathbf{k},\mathbf{l}}(\mathbb{C}_g(h))$ is the $\{\overline{0}, \overline{1}\}$-matrix in $\mathrm{M}_{\mathbb{C}_g(h)}(\F)$ whose unique nonzero entry is the $(\mathbf{k}, \mathbf{l})$-entry. In particular, $M_{\mathbf{k}, \mathbf{l}}(\mathbb{C}_g(h))M_{\mathbf{m}, \mathbf{q}}(\mathbb{C}_g(h))=\delta_{\mathbf{l}, \mathbf{m}}M_{\mathbf{k},\mathbf{q}}(\mathbb{C}_g(h))$ for any $\mathbf{k}, \mathbf{l}, \mathbf{m}, \mathbf{q}\in\mathbb{C}_g(h)$. For any $(\mathbf{k}, \mathbf{l}, \mathbf{m}), (\mathbf{q}, \mathbf{r}, \mathbf{s})\in\mathbb{C}_{g, h}$, the combination of Lemmas \ref{L;Lemma2.14}, \ref{L;Lemma2.15}, \ref{L;Lemma6.4} implies that there is a unique $(\mathbf{k}, \mathbf{t}, \mathbf{s})\in\mathbb{C}_{g, h}$ satisfying the equation
$(D_{\mathbf{k},\mathbf{l},\mathbf{m}}+\mathrm{Rad}(\mathbb{T}D_g))(D_{\mathbf{q},\mathbf{r},\mathbf{s}}+\mathrm{Rad}(\mathbb{T}D_g))\!=\!
\delta_{\mathbf{m},\mathbf{q}}(D_{\mathbf{k},\mathbf{t},\mathbf{s}}+\mathrm{Rad}(\mathbb{T}D_g))$. By Lemmas \ref{L;Lemma2.16} and \ref{L;Lemma6.5}, the $\F$-linear map from $\langle\{D_{\mathbf{a}, \mathbf{b}, \mathbf{c}}+\mathrm{Rad}(\mathbb{T}D_g):(\mathbf{a},\mathbf{b},\mathbf{c})\in\mathbb{C}_{g, h}\}\rangle_{\mathbb{T}D_g/\mathrm{Rad}(\mathbb{T}D_g)}$ to $\mathrm{M}_{\mathbb{C}_g(h)}(\F)$ sending
$D_{\mathbf{k},\mathbf{l},\mathbf{m}}+\mathrm{Rad}(\mathbb{T}D_g)$ to $M_{\mathbf{k},\mathbf{m}}(\mathbb{C}_g(h))$ for any $(\mathbf{k},\mathbf{l},\mathbf{m})\in\mathbb{C}_{g, h}$ is thus an $\F$-algebra isomorphism. Recall that $|\mathbb{C}_g(h)|=2^{n-n_+-n_{\mathbf{i},\mathbf{j},\mathbf{i}}}$. So this equation implies that $\mathrm{M}_{\mathbb{C}_g(h)}\!(\F)\!\cong\!\mathrm{M}_{2^{n\!-\!n_+\!-\!n_{\mathbf{i}, \mathbf{j}, \mathbf{i}}}}\!(\F)$ as $\F$-algebras. The desired lemma follows.
\end{proof}
We are now ready to display the main result of this section and another corollary.
\begin{thm}\label{T;Jacobson}
Assume that $g\!\in\![1, 2^{n_-}]$, $\mathbf{h}, \mathbf{i}, \mathbf{j}\!\in\!\mathbb{E}$, $\mathbf{i}\!\!\preceq\!\!(\mathbf{h}\cap\mathbf{j})^+$. Then $(\!\mathbf{h},(\mathbf{h}\triangle\mathbf{j})\!\cup\!\mathbf{i}, \mathbf{j})\!\!\in\!\!\mathbb{P}$ and the Jacobson radical $\mathrm{Rad}(\mathbb{T}D_g)$ of the block algebra $\mathbb{T}D_g$ of $\mathbb{T}$ has an $\F$-basis $\{B_{\mathbf{a}, (\mathbf{a}\triangle\mathbf{c})\cup\mathbf{b}, \mathbf{c}}D_g\!:\! \mathbf{a}, \mathbf{b},\mathbf{c}\!\in\!\mathbb{E}, \mathbf{b}\!\preceq\!(\mathbf{a}\cap\mathbf{c})^+, \exists\ (\mathbf{c}, \mathbf{d}, \mathbf{c})\!\in\!\mathbb{C}_g, n_{\mathbf{c}, \mathbf{a}\cup\mathbf{d}, \mathbf{c}}\!=\!0, p\!\mid\! k_{(\mathbf{a}\triangle\mathbf{c})\cup\mathbf{b}}\}$. Moreover, the nilpotent index of $\mathrm{Rad}(\mathbb{T}D_g)$ is equal to $2n_++1$. On the other hand, assume that $(\mathbf{k}, \mathbf{l}, \mathbf{k})\in\mathbb{C}_g$. Then $n_{\mathbf{k}, \mathbf{l},\mathbf{k}}$ is independent of the choice of $(\mathbf{k}, \mathbf{l}, \mathbf{k})$ and there is an $\F$-algebra isomorphism from $\mathbb{T}D_g/\mathrm{Rad}(\mathbb{T}D_g)$ to $2^{n_+}\mathrm{M}_{2^{n-n_+-n_{\mathbf{k}, \mathbf{l}, \mathbf{k}}}}(\F)$.
In particular, the $\F$-dimension of $\mathrm{Rad}(\mathbb{T}D_g)$ is equal to $4^{n-n_+-n_{\mathbf{k},\mathbf{l},\mathbf{k}}}5^{n_+}-2^{2n-n_+-2n_{\mathbf{k},\mathbf{l},\mathbf{k}}}$.
\end{thm}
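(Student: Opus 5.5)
The theorem is essentially an assembly of Lemmas \ref{L;Lemma6.1}, \ref{L;Lemma6.3}, \ref{L;Lemma6.5}, \ref{L;Lemma6.6}, \ref{L;Lemma6.7} together with Theorem \ref{T;Dimension}. I will establish its claims in the order they are stated.

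First, the membership $(\mathbf{h},(\mathbf{h}\triangle\mathbf{j})\cup\mathbf{i},\mathbf{j})\in\mathbb{P}$ and the $\F$-basis of $\mathrm{Rad}(\mathbb{T}D_g)$ are exactly the second statement of Lemma \ref{L;Lemma6.1}. The nilpotent index $2n_++1$ is Lemma \ref{L;Lemma6.3}. The independence of $n_{\mathbf{k},\mathbf{l},\mathbf{k}}$ from the choice of $(\mathbf{k},\mathbf{l},\mathbf{k})\in\mathbb{C}_g$ is already recorded in Theorem \ref{T;Dimension}. It also follows directly: $n_{\mathbf{k},\mathbf{l},\mathbf{k}}=|\mathbbm{k}^-\setminus\mathbbm{l}|$, and this set is constant on the $\approx$-class $\mathbb{C}_g$.

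For the semisimple quotient, Lemma \ref{L;Lemma6.5} writes $\mathbb{T}D_g/\mathrm{Rad}(\mathbb{T}D_g)$ as an $\F$-linear direct sum of the $2^{n_+}$ subspaces spanned by the images of $\{D_{\mathbf{a},\mathbf{b},\mathbf{c}}:(\mathbf{a},\mathbf{b},\mathbf{c})\in\mathbb{C}_{g,h}\}$. Lemma \ref{L;Lemma6.6} shows each summand is a two-sided ideal, and Lemma \ref{L;Lemma6.7} shows each is isomorphic to $\mathrm{M}_{2^{n-n_+-n_{\mathbf{k},\mathbf{l},\mathbf{k}}}}(\F)$.

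The remaining point is that these ideals multiply to zero across distinct $h$, so that the linear direct sum is a direct product of algebras. Take $(\mathbf{a},\mathbf{b},\mathbf{c})\in\mathbb{C}_{g,h}$ and $(\mathbf{a}',\mathbf{b}',\mathbf{a}'')\in\mathbb{C}_{g,h'}$ with $h\neq h'$. By Lemma \ref{L;Lemma2.14}, the product $D_{\mathbf{a},\mathbf{b},\mathbf{c}}D_{\mathbf{a}',\mathbf{b}',\mathbf{a}''}$ is zero unless $\mathbf{c}=\mathbf{a}'$ and $(\mathbbm{a}\cap\mathbbm{c})^\circ\setminus\mathbbm{b}=(\mathbbm{c}\cap\mathbbm{a}'')^\circ\setminus\mathbbm{b}'$. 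That second equality says the two triples lie in the same $\sim$-class, which is impossible since $h\neq h'$. So the product is $O$.

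Given this orthogonality, the sum of the unities of the $2^{n_+}$ blocks is the identity of the quotient, and the quotient is isomorphic to $2^{n_+}\mathrm{M}_{2^{n-n_+-n_{\mathbf{k},\mathbf{l},\mathbf{k}}}}(\F)$. The main care point is precisely this orthogonality check; the rest is bookkeeping.

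Finally, for the dimension, Theorem \ref{T;Dimension} gives $\dim_\F\mathbb{T}D_g=4^{n-n_+-n_{\mathbf{k},\mathbf{l},\mathbf{k}}}5^{n_+}$. The quotient has dimension
\[2^{n_+}\cdot 4^{n-n_+-n_{\mathbf{k},\mathbf{l},\mathbf{k}}}=2^{2n-n_+-2n_{\mathbf{k},\mathbf{l},\mathbf{k}}}.\]
Subtracting the second from the first yields the stated formula for $\dim_\F\mathrm{Rad}(\mathbb{T}D_g)$.
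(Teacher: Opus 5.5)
Your proposal is correct and follows the paper's proof. Lemmas \ref{L;Lemma6.1} and \ref{L;Lemma6.3} give the basis and the nilpotent index, Lemmas \ref{L;Lemma6.5}--\ref{L;Lemma6.7} give the decomposition of $\mathbb{T}D_g/\mathrm{Rad}(\mathbb{T}D_g)$ into $2^{n_+}$ full matrix algebras, and Theorem \ref{T;Dimension} yields the dimension of the radical by subtraction. Your explicit orthogonality check via Lemma \ref{L;Lemma2.14} is valid but automatic, since two-sided ideals $I_h, I_{h'}$ forming a direct sum satisfy $I_hI_{h'}\subseteq I_h\cap I_{h'}=\{0\}$.
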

\begin{proof}
By Lemmas \ref{L;Lemma6.1} and \ref{L;Lemma6.3}, it suffices to check the third statement and the fourth statement. The third statement thus follows from Lemmas \ref{L;Lemma6.5} and \ref{L;Lemma6.7}. The desired theorem follows from an application of Theorem \ref{T;Dimension} and the third statement.
\end{proof}
\begin{cor}\label{C;Corollary6.9}
Assume that $g\in[1, 2^{n_-}]$. Then the nilpotent index of the Jacobson radical $\mathrm{Rad}(\mathbb{T}D_g)$ of the block algebra $\mathbb{T}D_g$ of $\mathbb{T}$ is independent of the choice of $\mathbf{x}$. Moreover, the $\F$-dimension of $\mathrm{Rad}(\mathbb{T}D_g)$ is also independent of the choice of $\mathbf{x}$. Furthermore, the following are equivalent: $\mathbb{T}D_g$ is a semisimple $\F$-subalgebra of $\mathbb{T}$; $\mathbb{T}\!D_1\!, \mathbb{T}\!D_2,\ldots, \!\mathbb{T}\!D_{2^{n_-}}$ are semisimple $\F$\!-\!subalgebras of $\mathbb{T}$; and $\mathfrak{S}$ is a $p'$\!-\!valenced scheme.
\end{cor}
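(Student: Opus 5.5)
The plan is to read all three assertions directly off Theorem \ref{T;Jacobson}, Theorem \ref{T;Dimension} and Lemma \ref{L;Lemma6.2}, noting that every numerical invariant involved depends only on $n$, $n_+$, $n_-$ and $n_{\mathbf{k},\mathbf{l},\mathbf{k}}$, and not on $\mathbf{x}$.

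For the first statement, Theorem \ref{T;Jacobson} (equivalently Lemma \ref{L;Lemma6.3}) gives the nilpotent index of $\mathrm{Rad}(\mathbb{T}D_g)$ as $2n_++1$. Here $n_+=|[1,n]^+|$ is defined only from the cardinalities $|\mathbb{U}_a|$ and $p$, so it does not depend on $\mathbf{x}$.

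For the second statement, Theorem \ref{T;Jacobson} gives
\[
\dim_\F\mathrm{Rad}(\mathbb{T}D_g)=4^{n-n_+-n_{\mathbf{k},\mathbf{l},\mathbf{k}}}5^{n_+}-2^{2n-n_+-2n_{\mathbf{k},\mathbf{l},\mathbf{k}}},
\]
for any $(\mathbf{k},\mathbf{l},\mathbf{k})\in\mathbb{C}_g$. This is the one step needing care, because the labelling $\mathbb{C}_1,\ldots,\mathbb{C}_{2^{n_-}}$ and the index $g$ are set up after fixing $\mathbf{x}$. The sets themselves, however, are defined purely combinatorially from $\mathbb{P}$, the valencies and $\approx$, and none of these involve $\mathbf{x}$, since $\mathbb{P}$ is determined by the intersection numbers. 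So $\mathbb{C}_g$ and $n_{\mathbf{k},\mathbf{l},\mathbf{k}}=|\mathbbm{k}^-\setminus\mathbbm{l}|$ do not depend on $\mathbf{x}$. Changing $\mathbf{x}$ changes only the matrices $D_g$, not the index data, and hence not the formula.

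For the equivalences, Lemma \ref{L;Lemma2.6} says $\mathbb{T}D_g$ is semisimple exactly when $\mathrm{Rad}(\mathbb{T}D_g)=\{O\}$. By Lemma \ref{L;Lemma6.2} this holds exactly when $n_+=0$, which in turn holds exactly when $\mathfrak{S}$ is $p'$-valenced. That condition is independent of $g$. Therefore one block algebra $\mathbb{T}D_g$ is semisimple if and only if every block algebra $\mathbb{T}D_1,\ldots,\mathbb{T}D_{2^{n_-}}$ is, if and only if $\mathfrak{S}$ is $p'$-valenced. Equivalently, one can use the dimension formula: $\mathrm{Rad}(\mathbb{T}D_g)=\{O\}$ if and only if $5^{n_+}=2^{n_+}$, i.e., $n_+=0$.
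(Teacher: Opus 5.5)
Your proposal is correct and follows the paper's own proof. The first two statements are read off from Theorem \ref{T;Jacobson}, and the equivalences come from combining Lemma \ref{L;Lemma2.6} with Lemma \ref{L;Lemma6.2}; your remark that $\mathbb{C}_g$ and $n_{\mathbf{k},\mathbf{l},\mathbf{k}}$ depend only on index data, so that only the matrices $D_g$ change with $\mathbf{x}$, usefully spells out what the paper leaves implicit.
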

\begin{proof}
The first statement and the second statement follow from Theorem \ref{T;Jacobson}. The desired corollary follows from the combination of Theorem \ref{T;Jacobson}, Lemmas \ref{L;Lemma6.2}, \ref{L;Lemma2.6}.
\end{proof}
We are now ready to finish the whole paper by giving an example of Theorem \ref{T;Jacobson}.
\begin{eg}\label{E;Example6.10}
Assume that $n=|\mathbb{U}_1|=2$ and $|\mathbb{U}_2|=3$. If $p\neq2$, Example \ref{E;Example3.22} implies that $\mathbb{T}$ has exactly two block algebras. If $p\neq2$, Corollary \ref{C;Corollary6.9} implies that the only two distinct block algebras of $\mathbb{T}$ are semisimple $\F$-subalgebras of $\mathbb{T}$. For the remaining case $p=2$, Example \ref{E;Example3.22} and Theorem \ref{T;Jacobson} imply that the nilpotent index of the Jacobson radical of the unique block algebra of $\mathbb{T}$ is equal to three. For the case $p=2$, Example \ref{E;Example3.22} and Theorem \ref{T;Jacobson} imply that the Jacobson radical of the unique block algebra of $\mathbb{T}$ is a twelve-dimensional nilpotent two-sided ideal of $\mathbb{T}$.
\end{eg}
\subsection*{Disclosure statement} No relevant financial or nonfinancial interests are reported.
\subsection*{Data availability statement}All used data are contained in this submitted paper.
\subsection*{Ethical approval statement}The ethical approval of this paper is not applicable.


\end{document}